\title{Flip Graphs on Self-Complementary Ideals of Chain Products}
\author{Serena An and Holden Mui}
\date{January 2, 2024}
\begin{document}

\begin{asydef}
    void rectangle(real x, real y, real w, real h, pen c = rgb("97ffff"), pair o = (0, 0)) {
        pen outline = 0.8*c + 0.2*black;
        pair p00 = (x, y)+o;
        pair p01 = (x, y+h)+o;
        pair p10 = (x+w, y)+o;
        pair p11 = (x+w, y+h)+o;
        fill(p00 -- p01 -- p11 -- p10 -- cycle, c);
        draw(p00 -- p01 -- p11 -- p10 -- cycle, outline);
    }
    
    void darkrectangle(real x, real y, real w, real h, pair o = (0, 0)) {
        rectangle(x, y, w, h, rgb("79cdcd"), o);
    }

    void backperimeter(real w, real h, pair o = (0, 0)) {
        pen outline = black + opacity(0.05);
        for (int x = 1; x < w; ++x) {
            draw(shift(o)*((x, 0) -- (x, h)), outline);
        }
        for (int y = 1; y < h; ++y) {
            draw(shift(o)*((0, y) -- (w, y)), outline);
        }
    }

    void frontperimeter(real w, real h, pair o = (0, 0)) {
        pen outline = black;
        draw(shift(o)*((0, 0) -- (0, h) -- (w, h) -- (w, 0) -- cycle), outline);
    }
    
    real r = 0.75; // adjustable, between 0 and 1
    pair v1 = r*dir(210) + (1-r)*1/2*dir(210);
    pair v2 = r*dir(330) + (1-r)*dir(0);
    pair v3 = dir(90);
    
    pair p(real x, real y, real z, pair o = (0, 0)) {
        return o+x*v1+y*v2+z*v3;
    }
    
    void box(real x, real y, real z, real l1, real l2, real l3, bool l = false, pair o = (0, 0)) {
        pen p1 = rgb("97ffff");
        pen p2 = rgb("8deeee");
        pen p3 = rgb("79cdcd");
        pen outline = black + opacity(0.1);
        pair p000 = p(x, y, z, o);
        pair p001 = p(x, y, z+l3, o);
        pair p010 = p(x, y+l2, z, o);
        pair p011 = p(x, y+l2, z+l3, o);
        pair p100 = p(x+l1, y, z, o);
        pair p101 = p(x+l1, y, z+l3, o);
        pair p110 = p(x+l1, y+l2, z, o);
        pair p111 = p(x+l1, y+l2, z+l3, o);
        fill(p100 -- p110 -- p111 -- p101 -- cycle, p3);
        fill(p010 -- p110 -- p111 -- p011 -- cycle, p2);
        fill(p001 -- p101 -- p111 -- p011 -- cycle, p1);
        draw(p001 -- p011 -- p010 -- p110 -- p100 -- p101 -- cycle, outline);
        draw(p011 -- p111 ^^ p101 -- p111 ^^ p110 -- p111, outline);
        if (l) {
            pair c = (p001 + p111)/2;
            transform t = (0, 0, v1.x, v2.x, v1.y, v2.y)*rotate(135);
            label(t*string(z+1), c);
        }
    }

    void ideal(int[][] heights, bool labels = false, pair o = (0, 0)) {
        for (int i = 0; i < heights.length; ++i) {
            for (int j = 0; j < heights[i].length; ++j) {
                for (int h = 0; h < heights[i][j]; ++h) {
                    bool l;
                    if (h == heights[i][j] - 1) {
                        l = true;
                    } else {
                        l = false;
                    }
                    box(i, j, h, 1, 1, 1, l && labels, o);
                }
            }
        }
    }

    void backframe(real l1, real l2, real l3, pair o = (0, 0)) {
        pen o1 = black + opacity(0.1);
        pen o2 = black + opacity(0.05);
        pair p000 = p(0, 0, 0, o);
        pair p001 = p(0, 0, l3, o);
        pair p010 = p(0, l2, 0, o);
        pair p100 = p(l1, 0, 0, o);
        draw(p000--p001 ^^ p000--p010 ^^ p000--p100, o1);
        for (int x = 1; x < l1; ++x) {
            draw(shift(x*v1)*(p010--p000--p001), o2);
        }
        for (int y = 1; y < l2; ++y) {
            draw(shift(y*v2)*(p100--p000--p001), o2);
        }
        for (int z = 1; z < l3; ++z) {
            draw(shift(z*v3)*(p100--p000--p010), o2);
        }
    }
    
    void frontframe(real l1, real l2, real l3, pair o = (0, 0)) {
        pen outline = black + opacity(0.5);
        pair p001 = p(0, 0, l3, o);
        pair p010 = p(0, l2, 0, o);
        pair p011 = p(0, l2, l3, o);
        pair p100 = p(l1, 0, 0, o);
        pair p101 = p(l1, 0, l3, o);
        pair p110 = p(l1, l2, 0, o);
        pair p111 = p(l1, l2, l3, o);
        draw(p111--p110 ^^ p111--p101 ^^ p111--p011, outline);
        draw(p001--p011--p010--p110--p100--p101--cycle, outline);
    }
\end{asydef}

\maketitle
\begin{abstract}
    In this paper, we introduce a flip operation on self-complementary ideals of chain product posets and study the resulting flip graphs. We give asymptotics for the number of vertices in these graphs, compute their diameters, and give bounds for their radii. We also define similar flip operations on self-complementary ideals of the chain product $[2r]\times [2r]\times [2r]$ satisfying additional symmetries, and we achieve similar results for the resulting flip graphs. 
\end{abstract}

\tableofcontents

\section{Introduction}\label{section:introduction}

Self-complementary ideals of chain products originate from a generalization of maximal intersecting families. The graph of maximal intersecting families of a fixed set $S$ is formed by taking all maximal intersecting families as vertices, and connecting two maximal intersecting families if they differ in exactly one set. Asymptotics for the number of maximal intersecting families are given in \cite{brouwer2013counting}; this paper also computes this vertex count exactly for all $\abs{S} \leq 9$. The structure of this graph is studied in \cite{loeb1997graph}, which computes bounds and some exact values for its diameter and radius. Another related graph is studied in \cite{meyerowitz1995maximal}, where maximal intersecting families that are the same under a permutation of $S$ are mapped to the same vertex.

Because the poset of all subsets of a finite set of size $n$ is isomorphic to an $n$-fold product of the two-element chain, this setup admits a natural generalization to chains of arbitrary length. In this setting, maximal intersecting families correspond to self-complementary ideals. The corresponding graph, which we refer to as a \emph{flip graph}, is formed by taking all self-complementary ideals of a chain product as vertices and connecting two ideals if they differ in the minimum possible number of elements. Questions about the structure of the flip graph include vertex count bounds, diameter bounds, and radius bounds.

Additional types of flip graphs can be created by imposing symmetry restrictions on the self-complementary ideals that form the vertices of the flip graph. One such example is the flip graph on all cyclically symmetric self-complementary (CSSC) ideals of a product of three chains of equal length; two ideals are connected if they differ in exactly three elements. Another such example considers the set of all totally symmetric self-complementary (TSSC) ideals; in this setting, some edges need to be weighted differently to account for the fact that some vertices of the poset lie on axes of symmetry. The total number of vertices in these graphs are computed in \cite{kuperberg1994symmetries} and \cite{andrews1994plane}, respectively, via connections to plane partitions.

The main results can be summarized in the table below. The vertex count for the flip graphs on cyclically symmetric and totally symmetric self-complementary ideals are known exactly; asymptotics are given for all three cases. The diameter of the flip graph is also known exactly in all three cases. The radius is known exactly only for the flip graph on cyclically symmetric self-complementary ideals; bounds are given for the other two cases.

\begin{table}[htbp]
\centering
\begin{tabular}{c|c||c|c|c}
poset & symmetry & vertex count & diameter & radius \\ \hline
$[\ell_1] \times \dots \times [\ell_d]$ & self-complementary & \ref{theorem:scvertex1}, \ref{theorem:scvertex2}, \ref{theorem:scvertex3} & \ref{theorem:scdiameter} & \ref{theorem:scradius}, \ref{theorem:scradius3d} \\
$[2r] \times [2r] \times [2r]$ & CSSC & \ref{theorem:csscvertex} & \ref{theorem:csscdiameter} & \ref{theorem:csscradius} \\
$[2r] \times [2r] \times [2r]$ & TSSC & \ref{theorem:tsscvertex} & \ref{theorem:tsscdiameter} & \ref{conjecture:tsscradius} 
\end{tabular}
\end{table}

\subsection{Organization}

In \cref{section:preliminaries}, the relevant terms and definitions are stated. In \cref{section:sc}, results for the vertex count, diameter, and radius of flip graphs on self-complementary ideals are given. In \cref{section:cssc}, results for the vertex count, diameter, and radius of flip graphs on cyclically symmetric self-complementary ideals are given. In \cref{section:tssc}, results for the vertex count, diameter, and radius of flip graphs on totally symmetric self-complementary ideals are given. In \cref{section:code}, we provide links to the code we wrote to compute graph properties for various small cases, and we provide some sample code output. In \cref{section:futuredirections}, future directions for research are proposed. In \cref{section:acknowledgments}, we give our acknowledgments.

The structure of this paper encourages jumping around. In fact, each section after the preliminaries section can be read independently from the others.

\section{Preliminaries}\label{section:preliminaries}
\subsection{Posets}

A \emph{poset} $(P, \leq_P)$ is a set $P$ with a binary relation $\leq_P$ that satisfies the following three conditions for all $a, b, c \in P$:
\begin{itemize}
    \item $a \leq_P a$ (reflexivity),
    \item $a \leq_P b$ and $b \leq_P a$ imply $a = b$ (antisymmetry), and
    \item $a \leq_P b$ and $b \leq_P c$ imply $a \leq_P c$ (transitivity).
\end{itemize}
When the binary relation $\leq_P$ is unambiguous, it is denoted with the symbol $\leq$. Similarly, posets are denoted with their base set for brevity.

A poset is \emph{self-dual} if there exists an involution $\varphi: P \to P$ such that
\[a \leq_P  b \iff \varphi(b) \leq_P \varphi(a).\]
When referring to a self-dual poset, it is assumed that the map $\varphi$ is given and fixed.

The \emph{product} $P_1 \times \dots \times P_n$ of a finite number of posets $P_1, \dots, P_n$ is the poset on all tuples $(a_1, \dots, a_n)$ with $a_1 \in P_1, \dots, a_n \in P_n$ and the relation \[(a_1, \dots, a_n) \leq_{P_1 \times \dots\times P_n} (b_1, \dots, b_n) \iff \text{$a_1 \leq_{P_1} b_1$, $\dots$, $a_n \leq_{P_n} b_n$.}\]
Observe that a product of self-dual posets is self-dual; if $P_1, \dots, P_n$ come with involutions $\varphi_1, \dots, \varphi_n$, then the involution on $P_1 \times \dots \times P_n$ is given by
\[(a_1, \dots, a_n) \mapsto (\varphi_1(a_1), \dots, \varphi_n(a_n)).\]

One example of a poset, and also the primary poset of importance in this paper, is a chain. Given a positive integer $\ell$, the \emph{chain} with $\ell$ elements is the poset with elements $\{1, 2, \dots, \ell\}$ and the binary relation given by the natural ordering on the integers. This poset is denoted with the notation $[\ell]$. Chains are self-dual since the involution $\varphi: \{1, \dots, \ell\} \to \{1, \dots, \ell\}$ sending $i$ to $\ell+1-i$ for each $i \in \{1, \dots, \ell\}$ reverses the natural ordering on the integers.

\subsection{Ideals}

Let $P$ be a poset. An \emph{ideal} of $P$ is a subset $I \subseteq P$ with the property that \[\text{$a \in I$ and $b \leq a$} \implies b \in I\]
for all $a\in I, b\in P$. Given an ideal $I$, an element $a\in I$ is \emph{maximal} if there does not exist $b\in I$ with $b \neq a$ and $a \leq b$.

\begin{remark}
    The standard definition of an ideal states that ideals are nonempty, but for our purposes, we will allow empty ideals.
\end{remark}

Given a self-dual poset $P$, an ideal $I\subset P$ is \emph{self-complementary} if 
\[a\in I\iff \varphi(a)\notin I.\] 
Observe that if $I$ is self-complementary, then $I \sqcup \varphi(I) = P$, where $\varphi(I) = \{\varphi(a) \mid a \in I\}$. In particular, only self-dual posets with an even number of elements can have self-complementary ideals.

If the poset $P$ is a product of chains $[\ell_1] \times \dots \times [\ell_d]$ for a sequence of positive integers $\ell_1, \dots, \ell_d$, an ideal $I \subset P$ is self-complementary if 
\[(a_1, \dots, a_d) \in I \iff (\ell_1 + 1 - a_1, \dots, \ell_d + 1 - a_d)\notin I.\]

If the chains that comprise the poset all have the same number of elements, it is possible for ideals to have additional types of symmetries. Let $\ell$ be a positive integer; an ideal $I \subseteq [\ell] \times [\ell] \times [\ell]$ is \emph{cyclically symmetric} if
\[(a_1, a_2, a_3) \in I \implies \left(a_{\sigma(1)}, a_{\sigma(2)}, a_{\sigma(3)}\right) \in I\]
for all 3-cycles $\sigma \in S_3$. Similarly, an ideal $I \subseteq [\ell] \times [\ell] \times [\ell]$ is \emph{totally symmetric} if
\[(a_1, a_2, a_3) \in I \implies \left(a_{\sigma(1)}, a_{\sigma(2)}, a_{\sigma(3)}\right) \in I\]
for all permutations $\sigma \in S_3$.

\cref{fig:scexample} is a visualization of a self-complementary ideal of the poset $[6] \times [6] \times [6]$. Elements of the ideal are represented as cubes, and the poset itself is represented by the rectangular skeleton.  

\cref{fig:csscexample} and \cref{fig:tsscexample} are examples of a cyclically symmetric self-complementary ideal and a totally symmetric self-complementary ideal, respectively, of the poset $[6] \times [6] \times [6]$.

\begin{figure}[p]
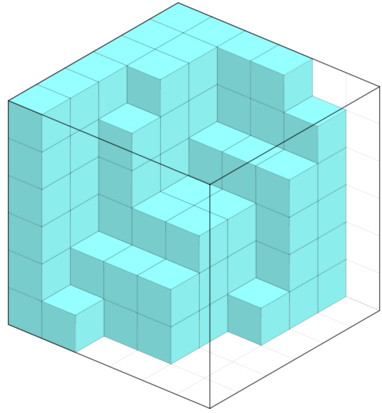

    \centering
    \begin{diagram}[0.3]
        backframe(6, 6, 6);
        ideal(new int[][] {{6, 6, 6, 6, 5, 0},
                           {6, 6, 4, 4, 4, 0}, 
                           {6, 6, 3, 3, 1, 0},
                           {6, 5, 3, 3, 0, 0},
                           {6, 2, 2, 2, 0, 0},
                           {6, 1, 0, 0, 0, 0}});
        frontframe(6, 6, 6);
    \end{diagram}
    \caption{A self-complementary ideal of the poset $[6] \times [6] \times [6]$.}
    \label{fig:scexample}
\end{figure}

\begin{figure}[p]
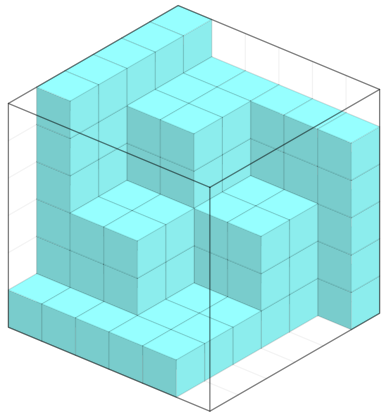

    \centering
    \begin{diagram}[0.3]
        backframe(6, 6, 6);
        ideal(new int[][] {{6, 5, 5, 5, 5, 5},
                           {6, 5, 5, 3, 3, 0},
                           {6, 5, 5, 3, 3, 0},
                           {6, 3, 3, 1, 1, 0},
                           {6, 3, 3, 1, 1, 0},
                           {1, 1, 1, 1, 1, 0}});
        frontframe(6, 6, 6);
    \end{diagram}
    \caption{A cyclically symmetric self-complementary ideal of the poset $[6] \times [6] \times [6]$.}
    \label{fig:csscexample}
\end{figure}

\begin{figure}[p]
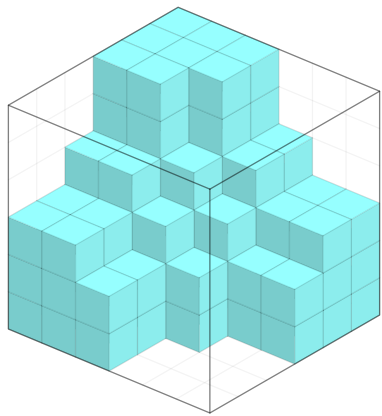

    \centering
    \begin{diagram}[0.3]
        backframe(6, 6, 6);
        ideal(new int[][] {{6, 6, 6, 4, 3, 3},
                           {6, 6, 6, 4, 3, 3},
                           {6, 6, 4, 3, 2, 2},
                           {4, 4, 3, 2, 0, 0},
                           {3, 3, 2, 0, 0, 0},
                           {3, 3, 2, 0, 0, 0}});
        frontframe(6, 6, 6);
    \end{diagram}
    \caption{A totally symmetric self-complementary ideal of the poset $[6] \times [6] \times [6]$.}
    \label{fig:tsscexample}
\end{figure}

\subsection{Graphs}
Given a graph $G$, let let $V(G)$ denote its vertex set and let $v(G) = \abs{V(G)}$ denote its vertex count.

Let $G$ be a connected, unweighted graph. The \emph{distance} between two vertices $v, v' \in V(G)$ is the minimum possible number of edges in any path connecting $v$ and $v'$. Similarly, let $G$ be a connected, weighted graph with nonnegative edge weights. The \emph{distance} between two vertices $v, v' \in V(G)$ is the minimum possible sum of the edge weights along any path connecting $v$ and $v'$. Observe that the distance in a weighted graph with edge weights all equal to 1 coincides with the distance in the corresponding unweighted graph. The distance between two vertices $u, v \in V(G)$ is denoted $\dist(u, v)$.

Let $G$ be a connected graph that can be weighted or unweighted. The \emph{eccentricity} of a vertex $v \in G$ is the maximum distance between $v$ and any vertex in $G$. The \emph{diameter} of $G$ is the maximum eccentricity of any vertex $v \in G$, and the \emph{radius} of $G$ is the minimum eccentricity of any vertex $v \in G$. The \emph{center} of $G$ is the set of all vertices with eccentricity equal to the radius. The \emph{perimeter} of $G$ is the set of all vertices with eccentricity equal to the diameter. 

\section{Self-Complementary Ideals}\label{section:sc}
Given a self-dual poset $P$, the \emph{flip graph on self-complementary ideals of $P$} is the unweighted graph whose vertices are the self-complementary ideals of $P$, and whose edges connect self-complementary ideals $I, J \subset P$ for which
\[\abs{I \setminus J} = \abs{J \setminus I} = 1.\]
When the context is clear, this graph will simply be referred to as the \emph{flip graph}. The graph is named this way because traversing an edge of the graph can be thought of as a ``flip'': taking a maximal element $a$ of some ideal and replacing it with its dual $\varphi(a)$. \Cref{fig:scgraph} shows the flip graph structure for the specific case $P = [2] \times [3] \times [4]$.

\begin{figure}[htbp]
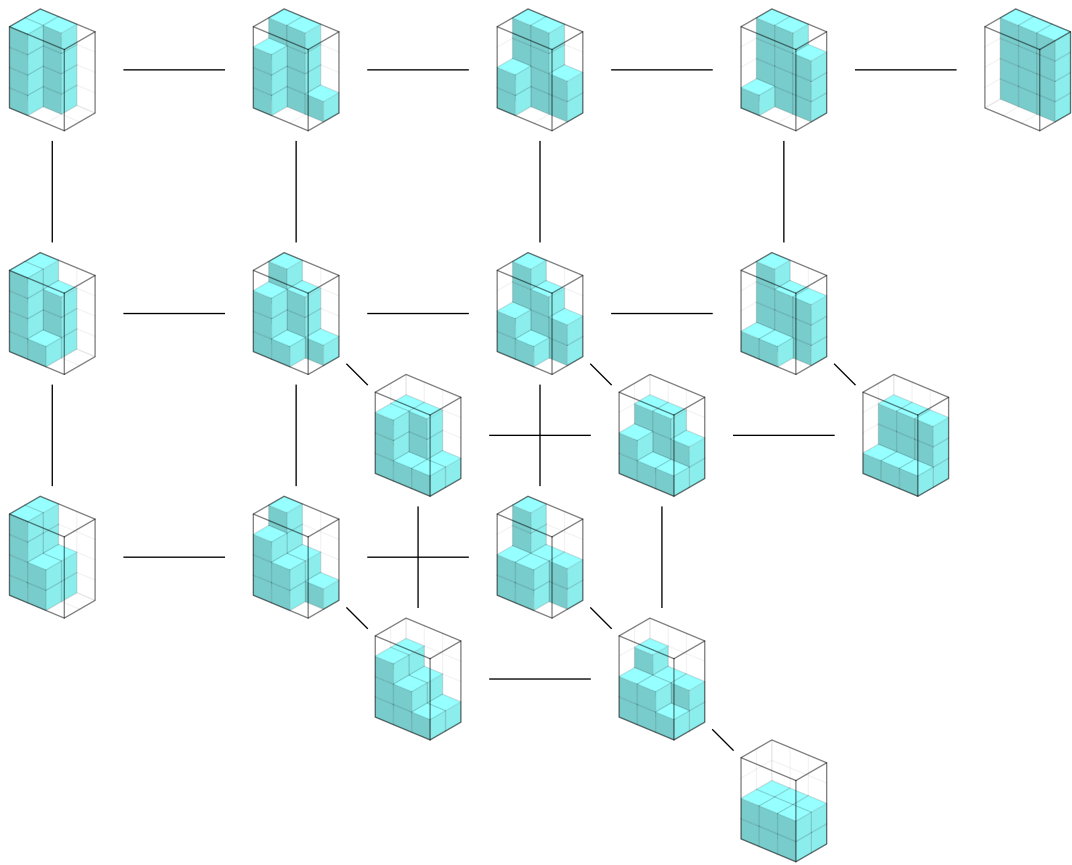

    \centering
    \begin{diagram}[0.9]
        void v(int[][] heights, pair o) {
            pair s = (2*v1+3*v2+4*v3)/2;
            fill(circle(o, 3.5), white);
            backframe(2, 3, 4, o-s);
            ideal(heights, o-s);
            frontframe(2, 3, 4, o-s);
        }
        int[][][] ideals = new int[][][] {
            {{4, 4, 4}, {0, 0, 0}},
            {{4, 4, 3}, {1, 0, 0}}, 
            {{4, 4, 2}, {2, 0, 0}}, 
            {{4, 4, 1}, {3, 0, 0}}, 
            {{4, 4, 0}, {4, 0, 0}}, 
            {{4, 3, 3}, {1, 1, 0}}, 
            {{4, 3, 2}, {2, 1, 0}}, 
            {{4, 3, 1}, {3, 1, 0}}, 
            {{4, 3, 0}, {4, 1, 0}}, 
            {{4, 2, 2}, {2, 2, 0}}, 
            {{4, 2, 1}, {3, 2, 0}}, 
            {{4, 2, 0}, {4, 2, 0}}, 
            {{3, 3, 3}, {1, 1, 1}}, 
            {{3, 3, 2}, {2, 1, 1}}, 
            {{3, 3, 1}, {3, 1, 1}}, 
            {{3, 2, 2}, {2, 2, 1}}, 
            {{3, 2, 1}, {3, 2, 1}}, 
            {{2, 2, 2}, {2, 2, 2}}};
        real s = 12;
        pair[] locations = new pair[] {
            (4*s, 2*s),
            (3*s, 2*s),
            (2*s, 2*s),
            (1*s, 2*s),
            (0*s, 2*s),
            (3*s, 1*s),
            (2*s, 1*s),
            (1*s, 1*s),
            (0*s, 1*s),
            (2*s, 0*s),
            (1*s, 0*s),
            (0*s, 0*s),
            (3.5*s, 0.5*s),
            (2.5*s, 0.5*s),
            (1.5*s, 0.5*s),
            (2.5*s, -0.5*s),
            (1.5*s, -0.5*s),
            (3*s, -1*s)};
        int[][] edges = new int[][] {
            {0, 1},
            {1, 2},
            {1, 5},
            {2, 3},
            {2, 6},
            {3, 4},
            {3, 7},
            {4, 8},
            {5, 6},
            {5, 12},
            {6, 7},
            {6, 9},
            {6, 13},
            {7, 8},
            {7, 10},
            {7, 14},
            {8, 11},
            {9, 10},
            {9, 15},
            {10, 11},
            {10, 16},
            {12, 13},
            {13, 14},
            {13, 15},
            {14, 16},
            {15, 16},
            {15, 17}
        };
        for (int i = 0; i < edges.length; ++i) {
            int v1 = edges[i][0];
            int v2 = edges[i][1];
            draw(locations[v1] -- locations[v2]);
        }
        for (int i = 0; i < ideals.length; ++i) {
            v(ideals[i], locations[i]);
        }
    \end{diagram}
    \caption{The flip graph on self-complementary ideals of $[2] \times [3] \times [4]$.}
    \label{fig:scgraph}
\end{figure}

The following lemma, although simple to prove, is important for establishing bounds on the distance between self-complementary ideals in the flip graph.

\begin{lemma}\label{lemma:scdistance}
    Let $I$ and $J$ be self-complementary ideals of a self-dual poset $P$. Then the distance from $I$ to $J$ in the flip graph of $P$ is $\abs{I \setminus J}$. In particular, the flip graph of any self-dual poset is connected.
\end{lemma}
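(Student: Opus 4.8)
I would prove the two inequalities $\dist(I,J)\ge\abs{I\setminus J}$ and $\dist(I,J)\le\abs{I\setminus J}$ separately, the second being the substantive direction. Two preliminary observations set things up. First, since $I$ and $J$ are self-complementary, $I\sqcup\varphi(I)=J\sqcup\varphi(J)=P$, so $\abs{I}=\abs{J}=\abs{P}/2$; hence $\abs{I\setminus J}=\abs{J\setminus I}$ and $I=J$ exactly when $I\setminus J=\varnothing$. Second, every edge of the flip graph is a genuine ``flip'': if $K$ is self-complementary and $K'$ is adjacent to $K$ with $K\setminus K'=\{x\}$, then self-complementarity forces $x\in K$, $\varphi(x)\notin K$, and $\varphi(x)\in K'$, so $K'=(K\setminus\{x\})\cup\{\varphi(x)\}$ with $x$ maximal in $K$.

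\textbf{Lower bound.} Track the quantity $f(K)=\abs{K\setminus J}$ along a path. By the second observation, traversing one edge replaces some $x\in K$ by $\varphi(x)\notin K$, and since $\{\varphi(x)\}$ is disjoint from $(K\setminus J)\setminus\{x\}$, the value $f$ changes by at most $1$. Along any path from $I$ to $J$ the value of $f$ drops from $\abs{I\setminus J}$ to $0$, so the path has at least $\abs{I\setminus J}$ edges.

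\textbf{Upper bound.} Induct on $k=\abs{I\setminus J}$, with $k=0$ trivial. For $k\ge 1$ the key lemma is that there is an edge $I\sim I'$ with $\abs{I'\setminus J}=k-1$. To construct it, choose an element $c$ minimal in the nonempty set $J\setminus I$. Since $J$ is an ideal and $c$ is minimal in $J\setminus I$, everything strictly below $c$ lies in $I\cap J$; in particular everything strictly below $c$ lies in $I$, and $\varphi(c)\not<c$ (otherwise $\varphi(c)\in J$, contradicting $c\in J$). Put $a:=\varphi(c)$. Then $a\in I$ (from $c\notin I$), $a\notin J$ (from $c\in J$), and $a$ is maximal in $I$: if $d>a$ with $d\in I$, then $\varphi(d)<c$ and $\varphi(d)\notin I$, contradicting that everything below $c$ lies in $I$. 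Finally $I':=(I\setminus\{a\})\cup\{c\}$ is again a self-complementary ideal: removing the maximal element $a$ leaves an ideal, adding $c$ is legal because everything strictly below $c$ is still present (with $\varphi(c)=a$ not being below $c$), and self-complementarity is preserved since only the pair $\{a,\varphi(a)\}$ changes membership. As $a\in I\setminus J$ and $c\in J$, we get $I'\setminus J=(I\setminus J)\setminus\{a\}$, so $\abs{I'\setminus J}=k-1$. The inductive hypothesis gives $\dist(I',J)=k-1$, hence $\dist(I,J)\le k$. Combining the bounds yields $\dist(I,J)=\abs{I\setminus J}$, which is finite, so the flip graph is connected.

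\textbf{Main obstacle.} The only step requiring real care is the construction of the distance-decreasing flip: identifying the right element to flip (the dual of a minimal element of $J\setminus I$) and checking that the resulting set is genuinely an ideal, which hinges entirely on the order relationship between $c$ and $\varphi(c)$ forced by $c$ lying in the ideal $J$.
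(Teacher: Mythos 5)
Your proof is correct and follows essentially the same route as the paper: the lower bound because each flip changes $\abs{I\setminus J}$ by at most one, and the upper bound by repeatedly flipping a maximal element of $I\setminus J$ (your element $\varphi(c)$, the dual of a minimal element of $J\setminus I$, is exactly such a maximal element since $\varphi(J\setminus I)=I\setminus J$ and $\varphi$ reverses order). The only difference is that you verify in detail the facts the paper leaves implicit, namely that this flip is legal and again yields a self-complementary ideal.
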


\begin{proof}
    The distance from $I$ to $J$ must be at least $\abs{I \setminus J}$ since each flip decreases $\abs{I \setminus J}$ by at most 1. Thus, it suffices to give a path from $I$ to $J$ using exactly $\abs{I \setminus J}$ flips. Indeed, repeatedly flipping a maximal element in $I \setminus J$ works.
\end{proof}

Given a positive integer $d$ and a sequence of positive integers $\ell_1, \dots, \ell_d$, let $G(\ell_1, \dots, \ell_d)$ denote the flip graph on self-complementary ideals of $[\ell_1] \times \dots \times [\ell_d]$.

\subsection{Vertex Count}

Consider the graph $G(\ell_1, \dots, \ell_d)$ for some sequence of positive integers $\ell_1, \dots, \ell_d$. When $d \leq 3$, the exact vertex count of $G(\ell_1, \dots, \ell_d)$ is known.

\begin{theorem}\label{theorem:scvertex1}
    Let $d \leq 3$ be a positive integer, and let $\ell_1, \dots, \ell_d$ be a sequence of positive integers with even product.
    \begin{itemize}
        \item If $d=1$, the number of vertices in $G(\ell_1)$ is 1.
        \item If $d=2$, the number of vertices in $G(\ell_1, \ell_2)$ is $\displaystyle\binom{\floor{\ell_1/2}+\floor{\ell_2/2}}{\floor{\ell_1/2}}$.
        \item If $d=3$, the number of vertices in $G(\ell_1, \ell_2, \ell_3)$ is \[ \left(\prod_{i_1=1}^{\floor{\ell_1/2}}\prod_{i_2=1}^{\ceil{\ell_2/2}}\prod_{i_3=1}^{{\ell_3/2}} \frac{i_1+i_2+i_3-1}{i_1+i_2+i_3-2}\right)\left(\prod_{i_1=1}^{\ceil{\ell_1/2}}\prod_{i_2=1}^{\floor{\ell_2/2}}\prod_{i_3=1}^{{\ell_3/2}} \frac{i_1+i_2+i_3-1}{i_1+i_2+i_3-2}\right),\]
        assuming $\ell_3$ is even.
    \end{itemize}
\end{theorem}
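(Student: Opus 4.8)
The plan is to set up a bijection between self-complementary ideals and certain tuples of lattice objects, and then apply known enumeration formulas. For $d=1$, the chain $[\ell_1]$ has even length, and a self-complementary ideal must contain exactly the bottom $\ell_1/2$ elements: if it contained $i$ but not $i-1$ it wouldn't be an ideal, and the self-complementarity condition $a\in I\iff \varphi(a)=\ell_1+1-a\notin I$ forces $\abs{I}=\ell_1/2$; hence there is exactly one such ideal.

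For $d=2$, I would observe that an ideal $I\subseteq[\ell_1]\times[\ell_2]$ is exactly a monotone lattice path / Young-diagram-type staircase: writing $r_a=\abs{\{b: (a,b)\in I\}}$ for $a=1,\dots,\ell_1$, the ideal condition says $r_1\geq r_2\geq\cdots\geq r_{\ell_1}$ with each $r_a\in\{0,1,\dots,\ell_2\}$. The self-complementarity condition $(a,b)\in I\iff(\ell_1+1-a,\ell_2+1-b)\notin I$ translates to $r_a+r_{\ell_1+1-a}=\ell_2$ for all $a$. When $\ell_1$ is even this leaves $r_1\geq\cdots\geq r_{\ell_1/2}$ free with values in $\{\lceil\ell_2/2\rceil,\dots,\ell_2\}$ (a weakly decreasing sequence of length $\ell_1/2$ from a set of size $\lceil\ell_2/2\rceil+1$... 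I need to be careful about the parity of $\ell_2$), and when $\ell_1$ is odd it forces the middle value and again leaves a constrained weakly decreasing sequence; either way a short count via the hockey-stick/stars-and-bars identity gives $\binom{\lfloor\ell_1/2\rfloor+\lfloor\ell_2/2\rfloor}{\lfloor\ell_1/2\rfloor}$, and the symmetry of the binomial coefficient handles the two cases of which of $\ell_1,\ell_2$ is even uniformly.

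For $d=3$ with $\ell_3$ even, I would identify a self-complementary ideal of $[\ell_1]\times[\ell_2]\times[\ell_3]$ with a plane partition fitting in a box, subject to the complementation symmetry. Concretely, let $h(a,b)=\abs{\{c:(a,b,c)\in I\}}$; the ideal condition makes $h$ a weakly decreasing function of $(a,b)$ with values in $\{0,\dots,\ell_3\}$, i.e. a plane partition in an $\ell_1\times\ell_2\times\ell_3$ box, and self-complementarity becomes the ``complementation'' symmetry $h(a,b)+h(\ell_1+1-a,\ell_2+1-b)=\ell_3$. Self-complementary plane partitions in a box are a classically enumerated symmetry class (this is essentially the ``self-complementary plane partitions'' class in the Andrews/Mills–Robbins–Rumsey/Stanley circle of results), and the count splits into a product of two MacMahon-type box formulas according to whether one cuts the $\ell_1$ or $\ell_2$ direction in half, yielding exactly the displayed product of two triple products $\prod\frac{i_1+i_2+i_3-1}{i_1+i_2+i_3-2}$. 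I would cite Stanley's enumeration of symmetry classes of plane partitions for the formula and just verify that our self-complementarity condition matches the standard complementation symmetry and that the two factors correspond to the two box shapes $\lfloor\ell_1/2\rfloor\times\lceil\ell_2/2\rceil\times(\ell_3/2)$ and $\lceil\ell_1/2\rceil\times\lfloor\ell_2/2\rfloor\times(\ell_3/2)$.

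The main obstacle is the $d=3$ case: making the reduction to self-complementary plane partitions fully rigorous requires carefully checking that the complementation involution on plane partitions, when the box has a direction of even length, decomposes the symmetric plane partition into two independent halves (one in each of the two sub-boxes above) with no residual constraint, so that the count is genuinely the product of the two MacMahon box-formula evaluations rather than something more subtle. The $d\le 2$ cases are routine stars-and-bars once the translation to weakly decreasing sequences is in place, and $d=1$ is immediate.
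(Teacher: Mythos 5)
Your proposal is correct and follows essentially the same route as the paper: $d=1$ is immediate, $d=2$ reduces to counting weakly decreasing sequences (equivalently, the paper's rotationally symmetric lattice paths) subject to the complementation constraint, and $d=3$ translates self-complementary ideals into self-complementary plane partitions in a box and cites Stanley's enumeration of symmetry classes. One small remark on your flagged ``main obstacle'' for $d=3$: Stanley's proof does not literally decompose a self-complementary plane partition into two independent sub-box plane partitions (the product form of the answer does not reflect an independent product of counts), so trying to make that decomposition ``fully rigorous'' is the wrong direction; but since both you and the paper simply match the complementation symmetry and invoke Stanley's formula, no such decomposition is needed.
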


\begin{proof}
    If $d=1$ and $\ell_1$ is even, there is only one self-complementary ideal.

    If $d=2$, there is a bijection between self-complementary ideals of $[\ell_1] \times [\ell_2]$ and rotationally symmetric up-left lattice paths from $(\ell_1, 0)$ to $(0, \ell_2)$. Observing that each rotationally symmetric path bijects to a up-left lattice path from $(\ell_1, 0)$ to $(\ceil{\ell_1/2}, \floor{\ell_2/2})$ gives the desired result. (An example of the bijection is given in \cref{fig:sc2d}.)

    If $d=3$, the result is given in \cite{stanley1986symmetries}.
\end{proof}

\begin{figure}[htbp]
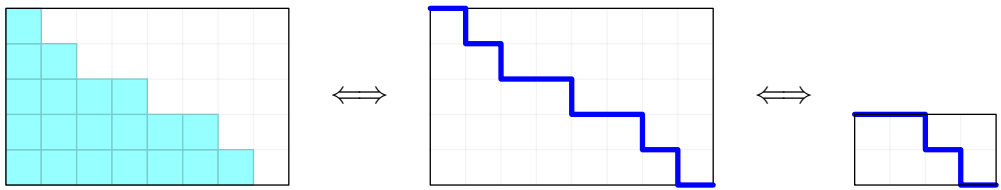

    \centering
    \begin{diagram}[0.8]
        backperimeter(8, 5); 
        rectangle(0, 0, 1, 1);
        rectangle(1, 0, 1, 1);
        rectangle(2, 0, 1, 1);
        rectangle(3, 0, 1, 1);
        rectangle(4, 0, 1, 1);
        rectangle(5, 0, 1, 1);
        rectangle(6, 0, 1, 1);
        rectangle(0, 1, 1, 1);
        rectangle(1, 1, 1, 1);
        rectangle(2, 1, 1, 1);
        rectangle(3, 1, 1, 1);
        rectangle(4, 1, 1, 1);
        rectangle(5, 1, 1, 1);
        rectangle(0, 2, 1, 1);
        rectangle(1, 2, 1, 1);
        rectangle(2, 2, 1, 1);
        rectangle(3, 2, 1, 1);
        rectangle(0, 3, 1, 1);
        rectangle(1, 3, 1, 1);
        rectangle(0, 4, 1, 1);
        frontperimeter(8, 5);
        pair s = (12, 0);
        backperimeter(8, 5, s);
        frontperimeter(8, 5, s);
        draw(shift(s)*((0,5)--(1,5)--(1,4)--(2,4)--(2,3)--(4,3)--(4,2)--(6,2)--(6,1)--(7,1)--(7,0)--(8,0)), blue+linewidth(2));
        pair s = (24, 0);
        backperimeter(4, 2, s);
        draw(shift(s)*((0,2)--(2,2)--(2,1)--(3,1)--(3,0)--(4,0)), blue+linewidth(2));
        frontperimeter(4, 2, s);
        label("$\iff$", (10, 2.5));
        label("$\iff$", (22, 2.5));
    \end{diagram}
    \caption{An example of the bijection in the case $d=2$.}
    \label{fig:sc2d}
\end{figure}

Determining exact answers for $d \geq 4$ in general seems to be a difficult problem; even getting asymptotically tight results appears to be difficult due to the intractability of counting solid partitions. However, some statements about asymptotics can be made, depending on which parameters are fixed and which are varied.

\begin{theorem}\label{theorem:scvertex2}
    Let $d$ be a positive integer, and let $\ell_1, \dots, \ell_{d-1}$ be a fixed sequence of positive integers. Then as $n$ tends to infinity, we have
    \[v(G(\ell_1, \dots, \ell_{d-1}, n)) \sim Cn^{\floor{\ell_1 \cdots \ell_{d-1} / 2}}\]
    for some constant $C$. (If $\ell_1 \cdots \ell_{d-1}$ is even, the asymptotic runs over all positive integers $n$. If $\ell_1 \cdots \ell_{d-1}$ is odd, the asymptotic runs over all even positive integers $n$.)
\end{theorem}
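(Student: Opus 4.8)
The plan is to re-encode the self-complementary ideals counted by $v(G(\ell_1,\dots,\ell_{d-1},n))$ as the lattice points of a fixed rational polytope dilated by $n$, and then apply the elementary fact that lattice-point counts in dilates of a bounded region with negligible boundary are asymptotic to $(\text{volume})\cdot(\text{dilation})^{\dim}$. Write $P=[\ell_1]\times\dots\times[\ell_{d-1}]$, let $N=\ell_1\cdots\ell_{d-1}$, and let $\varphi_P$ denote the product involution on $P$. An ideal of $P\times[n]$ is the same thing as an order-reversing map $h\colon P\to\{0,1,\dots,n\}$, where $h(p)$ records the height of the column of the ideal over $p$; chasing the definition, the self-complementary condition is equivalent to $h(p)+h(\varphi_P(p))=n$ for every $p\in P$. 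So $v(G(\ell_1,\dots,\ell_{d-1},n))$ is the number of $h\in\mathbb{Z}^P$ with $0\le h(p)\le n$ for all $p$, with $h(p)\ge h(p')$ whenever $p\le p'$, and with $h(p)+h(\varphi_P(p))=n$ for all $p$.

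Now reduce to a lattice-point count. The involution $\varphi_P$ has exactly $m:=\floor{N/2}$ orbits of size two, together with a single fixed point (the center of $P$) when $N$ is odd and no fixed point when $N$ is even. Choosing a representative $p_i$ from each two-element orbit, the relation $h(\varphi_P(p_i))=n-h(p_i)$ — and $h(p_0)=n/2$ at the fixed point $p_0$, which forces $n$ to be even precisely in the case $N$ odd, exactly as in the statement — shows that $h$ is determined by $(h(p_1),\dots,h(p_m))\in\mathbb{Z}^m$. Substituting into the inequalities above turns them into a finite system of linear inequalities, with rational coefficients, in the unknowns $h(p_i)$ and the parameter $n$. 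Dividing through by $n$, this system defines a bounded rational polytope $\mathcal{Q}\subseteq[0,1]^m$, and one checks that $h\mapsto(h(p_1),\dots,h(p_m))$ is a bijection from the set of self-complementary ideals of $P\times[n]$ onto $\mathbb{Z}^m\cap n\mathcal{Q}$. Hence $v(G(\ell_1,\dots,\ell_{d-1},n))=\abs{\mathbb{Z}^m\cap n\mathcal{Q}}$.

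The crux is to show that $\mathcal{Q}$ is full-dimensional, i.e.\ $\dim\mathcal{Q}=m$, so that its volume is positive (it is automatically at most $1$). Equivalently, I need a point satisfying all of the defining inequalities strictly; translated back, I want an assignment $x\colon P\to(0,1)$ with $x(p)+x(\varphi_P(p))=1$ for all $p$ and $x(p)>x(p')$ for every $p<p'$. Take the rank function $\mathrm{rk}(a_1,\dots,a_{d-1})=\sum_i(a_i-1)$, which is strictly increasing along $P$ and satisfies $\mathrm{rk}(\varphi_P(p))=R-\mathrm{rk}(p)$ with $R=\sum_i(\ell_i-1)$; then for a sufficiently small constant $c>0$ the assignment $x(p)=\tfrac12+c\bigl(R-2\,\mathrm{rk}(p)\bigr)$ takes values in $(0,1)$, is antisymmetric in the required sense, and is strictly order-reversing. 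So $\mathcal{Q}$ is full-dimensional and $\mathrm{vol}(\mathcal{Q})>0$.

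Finally, since $\mathcal{Q}$ is a bounded rational polytope, $\abs{\mathbb{Z}^m\cap n\mathcal{Q}}/n^m\to\mathrm{vol}(\mathcal{Q})$ as $n\to\infty$ (a Riemann-sum estimate, or Ehrhart's theorem; this holds along any sequence of $n$, in particular along the even $n$ relevant when $N$ is odd). Combining the steps gives $v(G(\ell_1,\dots,\ell_{d-1},n))\sim\mathrm{vol}(\mathcal{Q})\cdot n^{\floor{N/2}}$, proving the theorem with $C=\mathrm{vol}(\mathcal{Q})$. I expect the main obstacle to be the full-dimensionality argument — one must be sure the symmetric slice of the order polytope does not collapse into a lower-dimensional face — while a secondary point needing care is the bookkeeping around the fixed point of $\varphi_P$ when $N$ is odd, which is exactly what restricts the asymptotic to even $n$ in that case.
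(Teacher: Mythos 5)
Your proposal is correct and uses essentially the same approach as the paper: both encode self-complementary ideals of $P\times[n]$ as height/order functions on a ``half'' of $P$ with $\floor{N/2}$ free coordinates and identify the leading asymptotic constant with a polytope volume (you phrase the limit via Ehrhart, the paper via a uniform-random continuous approximation, but these give the same $C$). Your explicit full-dimensionality argument to guarantee $C>0$ is a nice addition that the paper leaves implicit.
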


\begin{proof}
    Let $P$ denote the poset
    \[P = \begin{cases}
        [\ell_1] \times \dots \times [\ell_{d-1}] & \text{if $\ell_1 \cdots \ell_{d-1}$ even} \\ 
        \left([\ell_1] \times \dots \times [\ell_{d-1}]\right) \setminus (\tfrac{1}{2}(\ell_1+1), \dots, \tfrac{1}{2}(\ell_{d-1}+1)) & \text{if $\ell_1 \cdots \ell_{d-1}$ odd.}
    \end{cases}\]
    It suffices to find asymptotics for the number of order-preserving functions $f: P \to \{0, \dots, n\}$ satisfying $\varphi_{\{0, \dots, n\}}(f(a)) = f(\varphi_P(a))$ for all $a \in P$, where $\varphi_{\{0, \dots, n\}}$ and $\varphi_P$ denote the self-dual involution on $\{0, \dots, n\}$ and $P$, respectively; this condition is equivalent to the corresponding ideal in $P \times [n]$ being self-complementary. To do this, pick any self-complementary ideal $I \subset P$. Observe that $\abs{I} = \floor{\tfrac{1}{2} \ell_1 \cdots \ell_{d-1}}$ and that any function $g: I \to \{0, \dots, n\}$ extends uniquely to a function $\tilde{g}: P \to \{0, \dots, n\}$ satisfying $\varphi_{\{0, \dots, n\}}(\tilde{g}(a)) = \tilde{g}(\varphi_P(a))$ for all $a \in P$. (If $\ell_1\cdots\ell_{d-1}$ is odd, then $\tilde{g}((\frac{1}{2}(\ell_1+1), \dots, \frac{1}{2}(\ell_{d-1}+1))) = \frac{1}{2}n$ is forced.) An example of this extension is given in \cref{fig:scasymptoticcorrespondence}.

    Construct a function $h: I \to [0, 1)$ such that each of its $\abs{I}$ outputs are chosen uniformly and independently, and define $g: I \to \{0, \dots, n\}$ via $a \mapsto \floor{(n+1)h(a)}$; note that this uniformly selects a function from $I \to \{0, \dots, n\}$. In the limit of large $n$, the probability that $\tilde{g}$ is order-preserving is asymptotically equal to the probability that $h$ is order-preserving, since the probability that two outputs of $g$ have the same floor tends to zero. If $C$ denotes the probability that $h$ is order-preserving, the total number of order-preserving functions $f: P \to \{0, \dots, n\}$ satisfying $\varphi_{\{0, \dots, n\}}(\tilde{g}(a)) = \tilde{g}(\varphi_P(a))$ for all $a \in P$ is asymptotic to $Cn^{\abs{I}} = Cn^{\floor{\ell_1\cdots\ell_{d-1}/2}}$, as desired.
\end{proof}

\begin{figure}[htbp]
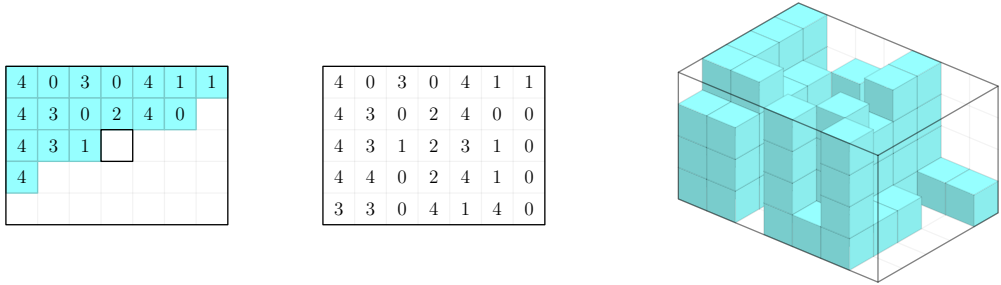

    \centering
    \begin{diagram}[0.8]
        backperimeter(7, 5); 
        rectangle(0, 4, 1, 1);
        rectangle(1, 4, 1, 1);
        rectangle(2, 4, 1, 1);
        rectangle(3, 4, 1, 1);
        rectangle(4, 4, 1, 1);
        rectangle(5, 4, 1, 1);
        rectangle(6, 4, 1, 1);
        rectangle(0, 3, 1, 1);
        rectangle(1, 3, 1, 1);
        rectangle(2, 3, 1, 1);
        rectangle(3, 3, 1, 1);
        rectangle(4, 3, 1, 1);
        rectangle(5, 3, 1, 1);
        rectangle(0, 2, 1, 1);
        rectangle(1, 2, 1, 1);
        rectangle(2, 2, 1, 1);
        rectangle(0, 1, 1, 1);
        frontperimeter(7, 5);
        draw((3, 3) -- (3, 2) -- (4, 2) -- (4, 3) -- cycle);

        pair s = (10, 0);
        backperimeter(7, 5, s);
        frontperimeter(7, 5, s);
        int[][] outputs1 = new int[][] {
            {4, 0, 3, 0, 4, 1, 1},
            {4, 3, 0, 2, 4, 0},
            {4, 3, 1},
            {4},
        };
        int[][] outputs2 = new int[][] {
            {4, 0, 3, 0, 4, 1, 1},
            {4, 3, 0, 2, 4, 0, 0},
            {4, 3, 1, 2, 3, 1, 0},
            {4, 4, 0, 2, 4, 1, 0},
            {3, 3, 0, 4, 1, 4, 0}
        };
        for (int i = 0; i < outputs1.length; ++i) {
            for (int j = 0; j < outputs1[i].length; ++j) {
                label(scale(0.6)*("$"+(string(outputs1[i][j]))+"$"), (j, 4-i)+ (1/2, 1/2));
            }
        }
        for (int i = 0; i < outputs2.length; ++i) {
            for (int j = 0; j < outputs2[i].length; ++j) {
                label(scale(0.6)*("$"+(string(outputs2[i][j]))+"$"), shift(s)*(j, 4-i)+ (1/2, 1/2));
            }
        }

        pair s = (25, 3);
        backframe(5, 7, 4, s);
        ideal(outputs2, s);
        frontframe(5, 7, 4, s);
    \end{diagram}
    \caption{A sample function $g: I \to \{0, \dots, n\}$ (that is not order-preserving) and its corresponding $\tilde{g}: I \to \{0, \dots, n\}$ for $(\ell_1, \ell_2, n) = (5, 7, 4)$. The corresponding subset of $[5] \times [7] \times [4]$ is also included.}
    \label{fig:scasymptoticcorrespondence}
\end{figure}

For the next asymptotic result, the following crude bound will be needed.

\begin{lemma}\label{lemma:idealupperbound}
    Let $n$ and $d$ be positive integers with $d \geq 2$. Then there are at most $4^{n^{d-1}}$ ideals (not necessarily self-complementary) in $[n]^d$.
\end{lemma}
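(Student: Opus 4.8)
The plan is to bound ideals of $[n]^d$ by the number of their antichains of maximal elements, and then bound the number of possible antichains. An ideal is determined by its set of maximal elements, which form an antichain in $[n]^d$. So it suffices to count antichains. The idea is to project onto the last $d-1$ coordinates: for an antichain $A$, and for each point $(a_1, \dots, a_{d-1}) \in [n]^{d-1}$, look at the fiber of $A$ above it (the set of first coordinates $a_0$ with $(a_0, a_1, \dots, a_{d-1}) \in A$).

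First I would observe that this fiber, for a fixed $(a_1, \dots, a_{d-1})$, has at most one element: if $(a_0, a_1, \dots, a_{d-1})$ and $(a_0', a_1, \dots, a_{d-1})$ both lie in $A$ with $a_0 < a_0'$, they are comparable, contradicting that $A$ is an antichain. Hence an antichain is specified by choosing, for each of the $n^{d-1}$ points of $[n]^{d-1}$, either ``no point in the fiber'' or one of the $n$ possible values for the first coordinate — at most $(n+1)^{n^{d-1}}$ choices. This already gives a bound, but $(n+1)^{n^{d-1}}$ is not obviously at most $4^{n^{d-1}}$ for small $n$, so a slightly better argument is needed: rather than recording the first-coordinate value freely, note that along the ideal structure the fiber value is constrained, but the cleanest fix is instead to count ideals directly via their indicator data on a single ``layer''.

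Alternatively, and more robustly, I would argue as follows. An ideal $I \subseteq [n]^d$ is determined by its $n$ ``slices'' $I_k = \{(a_1,\dots,a_{d-1}) : (a_1,\dots,a_{d-1},k)\in I\}$ for $k = 1, \dots, n$, each of which is an ideal of $[n]^{d-1}$, and these satisfy $I_1 \supseteq I_2 \supseteq \dots \supseteq I_n$. So $I$ corresponds to a chain of length $n$ in the lattice of ideals of $[n]^{d-1}$; the number of such chains is at most $M^n$ where $M$ is the number of ideals of $[n]^{d-1}$ — but this recursion does not obviously close to $4^{n^{d-1}}$ either. The genuinely clean route is the direct one: the number of ideals of any poset is at most $2^{(\text{number of elements})}$ is false, but the number of \emph{antichains} is at most $2^{w}$ where $w$ is the size of the largest antichain, and in $[n]^d$ every antichain has size at most $\binom{\text{something}}{\cdot}$ — this is getting complicated, so I expect the author instead uses the fiber projection: an ideal $I$ of $[n]^d$ is determined by the function $[n]^{d-1} \to \{0,1,\dots,n\}$ sending $(a_1,\dots,a_{d-1})$ to $\max\{k : (a_1,\dots,a_{d-1},k) \in I\}$ (or $0$ if no such $k$), giving at most $(n+1)^{n^{d-1}} \le 4^{n^{d-1}}$ for... no, still fails at $n=1,2,3$.

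The hard part — and where I would focus — is squeezing the constant down from $n+1$ to $4$. The resolution: since the projection function $f: [n]^{d-1} \to \{0,\dots,n\}$ coming from an ideal is itself order-reversing (larger base point forces smaller or equal height), we are counting order-reversing maps, and for these one can partition $[n]^{d-1}$ and use that $f$ changes value at most $n$ times along any chain; bounding the number of order-reversing maps $[n]^{d-1}\to\{0,\dots,n\}$ by $4^{n^{d-1}}$ follows because such a map is determined by its $n$ level sets $f^{-1}(\{k,k+1,\dots,n\})$, a nested chain of ideals of $[n]^{d-1}$, and an easy induction on $d$ (base case $d=2$: nested chains of ideals of $[n]$, which are just weakly decreasing sequences, counted by $\binom{2n}{n} \le 4^n$) pushes the bound through. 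So the structure is: (1) reduce to counting order-reversing maps $[n]^{d-1}\to\{0,\dots,n\}$; (2) set up the induction on $d$ via level sets; (3) check the base case $d=2$ gives $\binom{2n}{n} \le 4^n$; (4) verify the inductive step multiplies correctly. I expect step (4), getting the exponents to combine to exactly $n^{d-1}$ with base $4$, to be the only real subtlety, and it should work because each of the $n$ level sets is an ideal of $[n]^{d-2}$-indexed data counted by the previous bound $4^{n^{d-2}}$, and $(4^{n^{d-2}})^n = 4^{n^{d-1}}$.
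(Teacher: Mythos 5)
Your final argument is correct and is essentially the paper's proof: induct on $d$, with base case $d=2$ counted by $\binom{2n}{n}\le 4^n$, and for the inductive step note that an ideal of $[n]^d$ is determined by its $n$ slices (your nested level sets), each an ideal of $[n]^{d-1}$, giving at most $\bigl(4^{n^{d-2}}\bigr)^n = 4^{n^{d-1}}$. The detours (antichains, the $(n+1)^{n^{d-1}}$ fiber bound, and the worry that the slice recursion ``does not obviously close'') are unnecessary --- the slice recursion you dismissed midway is exactly the paper's argument, since the nestedness of the slices can simply be discarded when bounding.
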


\begin{proof}
    Induct on $d$. For the base case of $d = 2$, there are \smash{$\binom{2n}{n} \leq 4^n$} such ideals. For the inductive step, observe that each layer of an ideal in $[n]^d$ is an ideal in $[n]^{d-1}$. There are $d$ such layers and at most $4^{n^{d-2}}$ choices for each layer by the inductive step, so there are at most $\bigl(4^{n^{d-2}}\bigr)^n = 4^{n^{d-1}}$ ideals in $[n]^d$.
\end{proof}

\begin{theorem}\label{theorem:scvertex3}
    Let $d$ be a fixed positive integer. As $n$ tends to infinity, we have
    \[\log v(G(\underbrace{n, \dots, n}_{\text{$d$ $n$'s}})) = \Theta(n^{d-1}).\]
    (The asymptotic runs over all even positive integers $n$).
\end{theorem}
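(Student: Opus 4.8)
The plan is to prove the two matching bounds $\log v(G(n,\dots,n))\le C_1 n^{d-1}$ and $\log v(G(n,\dots,n))\ge C_2 n^{d-1}$ for $d\ge 2$ (the case $d=1$ being degenerate). The upper bound is immediate from \cref{lemma:idealupperbound}: every self-complementary ideal of $[n]^d$ is in particular an ideal, so there are at most $4^{n^{d-1}}$ of them and $\log v(G(n,\dots,n))\le(\log 4)\,n^{d-1}$.

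For the lower bound I would exhibit $\exp(\Omega(n^{d-1}))$ self-complementary ideals by locating a thick slab near the center of $[n]^d$ on which one can make $\Omega(n^{d-1})$ essentially independent binary choices. Write $\varphi(a_1,\dots,a_d)=(n+1-a_1,\dots,n+1-a_d)$ and $\sigma(x)=\sum_i x_i$, so $\sigma(\varphi(x))=d(n+1)-\sigma(x)$, and set $c=d(n+1)/2$. In either case below I use that the level sizes $|\sigma^{-1}(r)|$ are the coefficients of $(1+z+\cdots+z^{n-1})^d$, hence symmetric and unimodal, and that there are only $dn-d+1=O(n)$ levels summing to $n^d$, so the central ones have size $\Omega(n^{d-1})$. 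If $c$ is an integer (equivalently $d$ is even, since $n$ is even), then $M=\sigma^{-1}(c)$ is an antichain with $\varphi(M)=M$ and no $\varphi$-fixed point; adjoining to $\{x:\sigma(x)<c\}$ one chosen element from each of the $|M|/2$ orbits of $\varphi$ on $M$ gives a down-set whose union with its $\varphi$-image is $\{x:\sigma(x)\neq c\}\cup M=[n]^d$, hence a self-complementary ideal, and distinct choices give distinct ideals. This yields $v(G(n,\dots,n))\ge 2^{|M|/2}=\exp(\Omega(n^{d-1}))$.

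Suppose instead that $c$ is not an integer (so $d$ is odd). Let $L^{\pm}=\sigma^{-1}(c\pm\tfrac12)$ be the two central levels, so $\varphi(L^-)=L^+$ and $|L^-|=\Omega(n^{d-1})$. For $S\subseteq L^-$ put
\[
I_S=\{x:\sigma(x)\le c-\tfrac{3}{2}\}\;\cup\;S\;\cup\;\{y\in L^+:\varphi(y)\notin S\}.
\]
A short computation gives $I_S\sqcup\varphi(I_S)=[n]^d$ for every $S$, so $I_S$ is automatically self-complementary; and $I_S$ fails to be a down-set only when some $y\in L^+\cap I_S$ covers some $w\in L^-\setminus S$, i.e.\ only when $L^-\setminus S$ contains a pair $\{w,\varphi(w+e_i)\}$. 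Thus $I_S$ is a self-complementary ideal precisely when $L^-\setminus S$ is an independent set in the graph $H$ on vertex set $L^-$ joining $a$ to $\varphi(a+e_i)$ for each coordinate $i$ with $a_i<n$. Every vertex of $H$ has degree at most $d$, since all its neighbors have the form $\varphi(a)-e_i$; hence $H$ has an independent set $T$ with $|T|\ge|L^-|/(d+1)=\Omega(n^{d-1})$, and taking $S=L^-\setminus T'$ over the $2^{|T|}$ subsets $T'\subseteq T$ produces that many admissible and pairwise distinct (they differ on $L^-$) self-complementary ideals. This gives $v(G(n,\dots,n))\ge\exp(\Omega(n^{d-1}))$ again, and combining with the upper bound yields $\log v(G(n,\dots,n))=\Theta(n^{d-1})$.

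I expect the odd case to be the main obstacle: one must verify carefully that $I_S$ is self-complementary for \emph{every} $S$ (so that only downward-closedness is constrained), identify exactly which $S$ are admissible, and confirm that the conflict graph $H$ has bounded degree — only then does the elementary fact that a graph on $N$ vertices of bounded maximum degree has $2^{\Omega(N)}$ independent sets complete the argument. The crude constants coming from \cref{lemma:idealupperbound} and from the greedy independent-set bound are harmless, since only the order of growth is asserted.
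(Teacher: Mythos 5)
Your proof is correct, and your upper bound is identical to the paper's (both invoke \cref{lemma:idealupperbound} directly). The lower bound, however, takes a genuinely different route. The paper constructs a single ``jagged'' down-set $S$ whose boundary is offset depending on whether $a_d \leq n/2$ or $a_d > n/2$; this offset is tuned so that the set $M$ of maximal elements of $S$ is $\varphi$-invariant, has size $\Omega(n^{d-1})$ (it explicitly contains a translate of the discrete simplex $\{b \in \mathbb{Z}_{\geq 0}^d : \sum b_i = n/2-1\}$), and the $2^{|M|/2}$ ways of deleting half of each $\varphi$-orbit from $S$ produce distinct self-complementary ideals, with no case split on the parity of $d$. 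You instead work directly with the rank function $\sigma$ and split on whether the central rank $c = d(n+1)/2$ is an integer. When it is ($d$ even), the single central level is already a $\varphi$-invariant fixed-point-free antichain of size $\Omega(n^{d-1})$ and the ``delete half of each orbit'' move applies cleanly; when it is not ($d$ odd), you use the two central levels, observe that self-complementarity is automatic for every $S \subseteq L^-$ and only downward-closedness constrains $S$, encode that constraint as independence in a conflict graph $H$ of maximum degree $\leq d$ (using the neat identity $\varphi(a+e_i) = \varphi(a)-e_i$), and invoke a greedy independent-set bound to produce $2^{\Omega(n^{d-1})}$ admissible choices. Both arguments are sound for $d \geq 2$ ($d=1$ is vacuous in both treatments, as you note, and the theorem as literally stated is off for $d=1$ anyway since $\log v = 0 \neq \Theta(1)$). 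The paper's construction buys uniformity and an entirely explicit lower bound on $|M|$ with no appeal to unimodality; yours buys a cleaner even-$d$ case at the cost of an auxiliary graph argument in the odd-$d$ case and the use of the (standard but unproved here) unimodality of the coefficients of $(1+z+\cdots+z^{n-1})^d$.
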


\begin{proof}
    The upper bound is given by \Cref{lemma:idealupperbound}. For the lower bound, it suffices to construct $2^{\Theta(n^{d-1})}$ self-complementary ideals of $[n]^d$. 
    
    Consider the set
    \[S = \left\{(a_1, \dots, a_d) \in [n^d] \left\vert\medspace
    \begin{aligned}
        a_d &\leq \tfrac{1}{2}n \text{ and } a_1 + \dots + a_d \leq d(\tfrac{1}{2}n+1)-1, \text{ or} \\ 
        a_d &> \tfrac{1}{2}n \text{ and } a_1 + \dots + a_d \leq d(\tfrac{1}{2}n)+1
    \end{aligned}\right.\right\}.\]
    An example of this set is given in \cref{fig:maximums}.

    \begin{figure}[htbp]
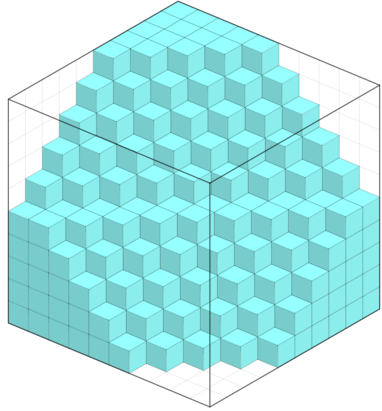

        \centering
        \begin{diagram}[0.3]
            int n = 10;
            backframe(n, n, n);
            for (int x = 0; x < n; ++x) {
                for (int y = 0; y < n; ++y) {
                    for (int z = 0; z < n; ++z) {
                        if (x + y + z <= 3*n/2-1-floor(2*z/n)) {
                            box(x, y, z, 1, 1, 1);
                        }
                    }
                }
            }
            frontframe(n, n, n);
        \end{diagram}
        \caption{$S$ for the case $n=10$, $d=3$.}
        \label{fig:maximums}
    \end{figure}

    Let $M$ be the set of maximal elements in $S$; observe that $\varphi(m) \in M$ for each $m \in M$, where $\varphi$ is the self-dual involution on $[n]^d$. Therefore, every subset $M'\subset M$ such that $M = M' \sqcup \varphi(M')$ gives rise to a self-complementary ideal $S \setminus M'$ of $[n]^d$.
    Bounding the size of $\frac{1}{2}\abs{M}$ gives
    \[\tfrac{1}{2} \abs{M} \geq \lvert\{(b_1, \dots, b_d) \in \bbZ_{\geq_0}^d \mid b_1 + \dots + b_d = \tfrac{1}{2}n-1\}\rvert = \binom{\tfrac{1}{2}n-1+d-1}{d-1} = \Theta(n^{d-1}),\]
    since a suitable translation of the set $\{(b_1, \dots, b_d) \in \bbZ_{\geq_0}^d \mid b_1 + \dots + b_d = \tfrac{1}{2}n-1\}$ is a subset of $M$.
    This gives
    $2^{\abs{M}/2} \geq 2^{\Theta(n^{d-1})}$
    such self-complementary ideals, as desired.
\end{proof}

\begin{remark}
    For $d=2$, the asymptotic is exactly
    \[v(G(n, n)) = \binom{n}{\frac{1}{2}n} \sim C\frac{2^n}{\sqrt{n}}.\]
    for $C=\frac{2}{\sqrt{2\pi}} \approx 0.798$. The logarithmic asymptotic is thus $\log v(G(n, n)) \sim \log(2) n \approx 0.693n$. For $d=3$, the asymptotic is exactly
    \[v(G(n, n, n)) = \left(\prod_{i_1=1}^{n/2}\prod_{i_2=1}^{n/2}\prod_{i_3=1}^{n/2} \frac{i_1+i_2+i_3-1}{i_1+i_2+i_3-2}\right)^2 \sim Cn^{-1/6}e^{9n}\left(\frac{9\sqrt[4]{3}}{8}\right)^{n^2}\]
    for $C = 2^{1/3}3^{-1/6}e^{2 \zeta'(-1)-18}$. The logarithmic asymptotic is thus $\log v(G(n, n, n)) \sim \log(9 \sqrt[4]{3}/8)n^2 \approx 0.392 n^2$. 
    For $d \geq 4$, the exact logarithmic asymptotic appears to be open.
\end{remark}

\begin{conjecture}\label{conjecture:scvertex4}
    Let $\ell$ be an fixed even positive integer. As $n$ tends to infinity, we have
    \[\log v(G(\underbrace{\ell, \dots, \ell}_{\text{$n$ $\ell$'s}})) \sim C\frac{(2\ell)^n }{\sqrt{n}},\]
    where $C=\frac{\log 2}{\sqrt{2\pi(2\ell-1)(2\ell+1)/3}}$.
\end{conjecture}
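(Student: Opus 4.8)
The plan is to reduce the conjecture to two separate problems. Write $w_n$ for the \emph{width} of $[\ell]^n$, i.e.\ the size of its largest antichain. The combinatorial heart of the matter is to prove that
\[
v(G(\underbrace{\ell,\dots,\ell}_{n}))=2^{(\frac12+o(1))\,w_n},
\]
after which $\log v(G(\ell,\dots,\ell))\sim\tfrac12(\log 2)\,w_n$, and it remains only to find the asymptotics of $w_n$. This second (analytic) step is routine: $w_n$ is the largest coefficient of $(1+x+\dots+x^{\ell-1})^n$, hence equals $\ell^n$ times the mode probability of a sum of $n$ i.i.d.\ variables uniform on $\{0,\dots,\ell-1\}$, and the local central limit theorem evaluates that mode probability as $(1+o(1))/\sqrt{2\pi n\sigma^2}$ with $\sigma^2=\tfrac{1}{12}(\ell^2-1)$, which delivers the stated asymptotic with its constant $C$. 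The case $\ell=2$ of the combinatorial step is the theorem of Brouwer, Mills, Mills, and Verbeek \cite{brouwer2013counting} that the number of maximal intersecting families on an $n$-set is $2^{(\frac12+o(1))\binom{n}{\lfloor n/2\rfloor}}$; the work is to carry this over from the Boolean lattice to an arbitrary product of chains.

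For the lower bound $v\ge 2^{(\frac12-o(1))w_n}$: when $n$ is even, the middle rank level of $[\ell]^n$ is a self-conjugate antichain $A$ of size $w_n$ on which $\varphi$ acts freely, and for any subset $A'\subseteq A$ meeting each $\varphi$-orbit in exactly one element, the set $I_{A'}=\{b:\rho(b)<m\}\cup A'$ (with $m$ the middle rank) is easily checked to be a self-complementary ideal; distinct choices of $A'$ give distinct ideals, so $v\ge 2^{w_n/2}$ on the nose. When $n$ is odd there is no self-conjugate middle level, so one instead perturbs a fixed reference self-complementary ideal within a central band of width $\omega(1)$ and $o(n)$: the admissible perturbations form the independent sets of an auxiliary conflict graph on the central levels whose maximum degree is $O(n)$ but whose odd girth tends to infinity, and one needs a Kahn-type estimate that this graph has $2^{(\frac12-o(1))w_n}$ independent sets.

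For the upper bound $v\le 2^{(\frac12+o(1))w_n}$: the assignment $I\mapsto\{\text{maximal elements of }I\}$ identifies $v$ with the number of antichains $M$ of $[\ell]^n$ for which $\downarrow M$ and $\uparrow\varphi(M)$ partition $[\ell]^n$. The total number of antichains of $[\ell]^n$ is $2^{(1+o(1))w_n}$ (the product-of-chains analogue of Kleitman's bound on Dedekind numbers), which is off by a factor of two in the exponent, so the real task is to exploit self-complementarity to halve it. The mechanism should be that a self-complementary ideal $I$ is recovered from (i) its restriction to the middle rank level(s), which meets each $\varphi$-orbit in exactly one element and so has at most $2^{w_n/2}$ possibilities, together with (ii) only $o(w_n)$ further bits: at ranks farther than $\omega(\sqrt n)$ from the center, $I$ is forced by this restriction together with downward and upward closure, while the residual freedom within $O(\sqrt n)$ of the center is $2^{o(w_n)}$ by a graph-container or entropy estimate bounding the number of ways $I$ can behave on the central band once its middle-level restriction is fixed.

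I expect step (ii) of the upper bound to be the main obstacle: the container/entropy technology of Sapozhenko, Kahn, and others that controls the ``middle slice'' of the hypercube must be rebuilt for $[\ell]^n$, where the relevant slice is the middle rank level of a product of chains and where vertices of the Hasse diagram have degree growing with $n$, so the standard two-step container arguments require new input. The odd-$n$ lower bound is a secondary difficulty of the same flavor; the analytic step is genuinely easy and merely supplies the constant once (i) and (ii) are in hand.
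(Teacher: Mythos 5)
This statement is labeled a conjecture in the paper and the paper offers no proof of it---only a one-line remark citing \cite{brouwer2013counting} (for the case $\ell = 2$, where self-complementary ideals of $[2]^n$ are maximal intersecting families) and \cite{kleitman1969dedekind}. So there is no proof in the paper to compare your proposal against; what you have written is a research program, and you are commendably explicit that the container/entropy step for the central band of $[\ell]^n$ (your step (ii)), along with the odd-$n$ lower bound, is the genuinely open part. Your two-step reduction---combinatorics to establish $v = 2^{(\frac12 + o(1))w_n}$, then a local central limit theorem for the width $w_n$---is exactly the structure the paper's remark gestures at, and your lower-bound construction for even $n$ via a free $\varphi$-action on the middle rank level is correct (freeness holds because $\ell$ even means $(\ell+1)/2 \notin \bbZ$).

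However, there is a concrete inconsistency you should not have glossed over. Your analytic step is correct as stated: the width of $[\ell]^n$ is the largest coefficient of $(1 + x + \cdots + x^{\ell-1})^n$, and with $\sigma^2 = (\ell^2-1)/12$ the local CLT gives $w_n \sim \ell^n / \sqrt{2\pi n (\ell^2-1)/12}$, so
\[
\tfrac{1}{2}(\log 2)\,w_n \;\sim\; \frac{\log 2}{\sqrt{2\pi(\ell-1)(\ell+1)/3}}\cdot\frac{\ell^n}{\sqrt{n}}.
\]
This does \emph{not} ``deliver the stated asymptotic with its constant $C$,'' because the conjecture as printed reads $C\,(2\ell)^n/\sqrt{n}$ with $C = \log 2\,/\sqrt{2\pi(2\ell-1)(2\ell+1)/3}$; the factor $(2\ell)^n$ is off from $\ell^n$ by $2^n$. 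A sanity check at $\ell = 2$ confirms your formula and contradicts the conjecture's: Stirling gives $\binom{n}{n/2}\sim 2^n\sqrt{2/(\pi n)}$, so BMMV's $\log v \sim \frac12(\log 2)\binom{n}{n/2} \sim \frac{\log 2}{\sqrt{2\pi}}\cdot\frac{2^n}{\sqrt n}$, matching your version, not the $(2\ell)^n = 4^n$ version. The paper almost certainly intends the chain length to be the $2r$ appearing elsewhere in the text and has written $2\ell$ where $\ell$ was meant (equivalently, mentally substituted $r$ for $\ell$). You should flag this as a likely typo and state the corrected constant explicitly rather than asserting agreement that is not there; a referee checking the $\ell=2$ case against BMMV would catch the mismatch immediately.
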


This is a generalization of \cite{brouwer2013counting}, which proves this asymptotic for $n=2$. A key result used is \cite{kleitman1969dedekind}, which bounds the total number of ideals in $[2]^n$ as $n$ tends to infinity.

\subsection{Diameter}
This section will compute the exact value of the diameter of $G(\ell_1, \dots, \ell_d)$ for every sequence of positive integers $\ell_1, \dots, \ell_d$.

\begin{lemma}\label{lemma:inequality}
    Let $n$ be a positive integer, and let $x_1 \geq \dots \geq x_n$ and $y_1 \geq \dots \geq y_n$ be nonincreasing sequences of real numbers. Then
    \[\frac{1}{n} \sum_{i=1}^n x_iy_i \ge \left(\frac{1}{n} \sum_{i=1}^n x_i\right) \left(\frac{1}{n} \sum_{i=1}^n y_i\right).\]
\end{lemma}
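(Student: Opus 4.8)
The plan is to prove this via the standard ``doubling'' trick for Chebyshev's sum inequality, which turns the claim into a manifestly nonnegative double sum. Concretely, I would consider the quantity
\[
\Sigma = \sum_{i=1}^n \sum_{j=1}^n (x_i - x_j)(y_i - y_j).
\]
The first key observation is that $\Sigma \ge 0$ term by term: for any indices $i, j$, the numbers $x_i - x_j$ and $y_i - y_j$ have the same sign (or at least one of them is zero), because both sequences are sorted in the same (nonincreasing) order — if $i \le j$ then $x_i - x_j \ge 0$ and $y_i - y_j \ge 0$, and if $i \ge j$ both are $\le 0$. Hence each summand $(x_i - x_j)(y_i - y_j)$ is a product of two reals of the same sign, so it is nonnegative, and therefore $\Sigma \ge 0$.

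The second step is a routine expansion. Writing $(x_i - x_j)(y_i - y_j) = x_i y_i - x_i y_j - x_j y_i + x_j y_j$ and summing over $i, j$, the terms $\sum_{i,j} x_i y_i$ and $\sum_{i,j} x_j y_j$ each contribute $n \sum_{i=1}^n x_i y_i$, while $\sum_{i,j} x_i y_j$ and $\sum_{i,j} x_j y_i$ each contribute $\bigl(\sum_{i=1}^n x_i\bigr)\bigl(\sum_{i=1}^n y_i\bigr)$. This gives
\[
\Sigma = 2n \sum_{i=1}^n x_i y_i - 2\left(\sum_{i=1}^n x_i\right)\left(\sum_{i=1}^n y_i\right).
\]
Combining with $\Sigma \ge 0$ yields $n \sum_{i=1}^n x_i y_i \ge \bigl(\sum_{i=1}^n x_i\bigr)\bigl(\sum_{i=1}^n y_i\bigr)$, and dividing both sides by $n^2$ gives exactly the stated inequality.

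There is essentially no hard part here: the only thing to be careful about is the sign argument in the first step, where one must note that the hypothesis is precisely that the two sequences are ordered \emph{the same way}, so that $(x_i - x_j)$ and $(y_i - y_j)$ are never of opposite sign; this is what makes every term of $\Sigma$ nonnegative. (An alternative route would be induction on $n$ using an Abel-summation rearrangement, but the double-sum argument is shorter and avoids bookkeeping, so that is the one I would present.)
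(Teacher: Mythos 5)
Your argument is correct and is essentially the same as the paper's second (``alternative'') proof: the paper observes that $\frac{1}{n^2}\sum_{1 \le i < j \le n}(x_i-x_j)(y_i-y_j) \ge 0$ equals the difference of the two sides, which is exactly your double sum up to the trivial factor of two (the diagonal terms vanish and each off-diagonal pair appears twice). You have simply written out the expansion explicitly and used the unrestricted sum instead of the $i<j$ half-sum.
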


\begin{proof}
    This is an application of the rearrangement inequality. Alternatively, observe that
    \[\frac{1}{n^2} \sum_{1 \leq i < j \leq n} (x_i - x_j)(y_i - y_j)\ge 0\]
    is the difference between the left hand side and the right hand side of the original inequality.
\end{proof}

Now, given a poset $P$ and an ideal $I \subseteq P$, let $\mu(I)$ denote the \emph{density} $\mu(I) = \frac{\abs{I}}{\abs{P}}$. The following lemma is effectively a ``correlation inequality'' and is a generalization of the Harris-Kleitman inequality \cite{harris1960lower, kleitman1966families}.

\begin{lemma}\label{lemma:idealcorrelation}
    Let $\ell_1, \dots, \ell_d$ be a sequence of positive integers, and let $I$ and $J$ be ideals of $[\ell_1] \times \dots \times [\ell_d]$. Then
    $\mu(I \cap J) \geq \mu(I) \mu(J)$.
\end{lemma}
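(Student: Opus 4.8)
The plan is to prove the inequality by induction on the number of chains $d$, slicing each ideal into layers along the last coordinate.

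\medskip
\noindent\emph{Base case.} When $d = 1$, every ideal of the chain $[\ell_1]$ has the form $\{1, \dots, k\}$. Writing $I = \{1, \dots, a\}$ and $J = \{1, \dots, b\}$ with (say) $a \le b$, we have $I \cap J = I$, so $\mu(I \cap J) = a/\ell_1 \ge (a/\ell_1)(b/\ell_1) = \mu(I)\mu(J)$ because $b \le \ell_1$.

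\medskip
\noindent\emph{Inductive step.} Write $P = [\ell_1] \times \dots \times [\ell_d] = Q \times [\ell_d]$ with $Q = [\ell_1] \times \dots \times [\ell_{d-1}]$. For $k \in \{1, \dots, \ell_d\}$ set $I_k = \{q \in Q : (q,k) \in I\}$ and $J_k = \{q \in Q : (q,k) \in J\}$. Since $I$ and $J$ are ideals of $P$, each $I_k$ and $J_k$ is an ideal of $Q$, and since $(q, k') \le (q, k)$ for $k' \le k$, the layers are nested: $I_1 \supseteq \dots \supseteq I_{\ell_d}$ and $J_1 \supseteq \dots \supseteq J_{\ell_d}$. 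Put $x_k = \mu_Q(I_k)$ and $y_k = \mu_Q(J_k)$, so that $x_1 \ge \dots \ge x_{\ell_d}$ and $y_1 \ge \dots \ge y_{\ell_d}$. The inductive hypothesis applied inside $Q$ gives $\mu_Q(I_k \cap J_k) \ge x_k y_k$ for every $k$. Summing over the layers, observing that $\frac{1}{\ell_d}\sum_k \mu_Q(I_k) = \abs{I}/\abs{P} = \mu(I)$ (and similarly for $J$ and for $I \cap J$), and then invoking \cref{lemma:inequality} on the nonincreasing sequences $(x_k)$ and $(y_k)$, we obtain
\[
\mu(I \cap J) = \frac{1}{\ell_d}\sum_{k=1}^{\ell_d} \mu_Q(I_k \cap J_k) \ge \frac{1}{\ell_d}\sum_{k=1}^{\ell_d} x_k y_k \ge \left(\frac{1}{\ell_d}\sum_{k=1}^{\ell_d} x_k\right)\left(\frac{1}{\ell_d}\sum_{k=1}^{\ell_d} y_k\right) = \mu(I)\mu(J),
\]
completing the induction.

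\medskip
The only conceptual point is that the correlation between $I$ and $J$ splits into a ``within-layer'' part, handled by the inductive hypothesis, and a ``between-layer'' part, handled by \cref{lemma:inequality}; the monotonicity of the layer densities $x_k, y_k$ is exactly the hypothesis needed to apply that lemma. Consequently there is no analytic obstacle here — the main thing to get right is the bookkeeping of the slicing and the identity $\frac{1}{\ell_d}\sum_k \mu_Q(I_k) = \mu(I)$.
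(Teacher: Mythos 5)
Your proof is correct and follows essentially the same route as the paper's: induction on $d$, slicing along the last coordinate into nested layers whose densities are nonincreasing, applying the inductive hypothesis within each layer, and then invoking \cref{lemma:inequality} to handle the across-layer correlation. The base case is phrased slightly more explicitly (spelling out $I \cap J = I$ when $a \le b$), but it is the same observation as the paper's $\abs{I\cap J} = \min(\abs I, \abs J)$.
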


\begin{proof}
    Induct on $d$. The base case of $d=1$ follows from the inequality $\frac{\min(\abs{I}, \abs{J})}{\ell_1}\ge \frac{\abs{I}}{\ell_1}\cdot \frac{\abs{J}}{\ell_1}$, since $\abs{I\cap J} = \min(\abs{I}, \abs{J})$ in this case.

    For the inductive step, let $I$ and $J$ be ideals of $[\ell_1] \times \dots \times [\ell_d]$, and for each index $i\in \{1, \dots, \ell_d\}$ define 
    \begin{align*}
        I_i &= \{(a_1, \dots, a_{d-1}) \in [\ell_1] \times \dots \times [\ell_{d-1}] \mid (a_1, \dots, a_{d-1}, i) \in I\}, \\
        J_i &= \{(a_1, \dots, a_{d-1}) \in [\ell_1] \times \dots \times [\ell_{d-1}] \mid (a_1, \dots, a_{d-1}, i) \in J\}.
    \end{align*}
    Observe that both $I_i$ and $J_i$ are ideals of $[\ell_1] \times \dots \times [\ell_{d-1}]$ for each index $i\in \{1, \dots, \ell_d\}$, and that $\mu(I_1) \geq \dots \geq \mu(I_{\ell_d})$ and $\mu(J_1) \geq \dots \geq \mu(J_{\ell_d})$.
    By the inductive hypothesis and \cref{lemma:inequality}, we have
    \begin{align*}
        \mu(I \cap J) = \frac{1}{\ell_d} \sum_{i=1}^{\ell_d} \mu(I_i \cap J_i) &\geq \frac{1}{\ell_d}\sum_{i=1}^{\ell_d} \mu(I_i) \mu(J_i) \\
        &\geq \left(\frac{1}{\ell_d}\sum_{i=1}^{\ell_d} \mu(I_i)\right) \left(\frac{1}{\ell_d}\sum_{i=1}^{\ell_d} \mu(J_i)\right) = \mu(I)\mu(J),
    \end{align*}
    completing the induction.
\end{proof}

\begin{corollary}\label{corollary:scidealcorrelation1}
    Let $\ell_1, \dots, \ell_d$ be a sequence of positive integers with $\ell_1\cdots\ell_d$ even. Then for any self-complementary ideals $I, J \subset [\ell_1] \times \dots \times [\ell_d]$, we have
    $\abs{I \cap J} \geq \frac{1}{4}\ell_1 \cdots \ell_d.$
\end{corollary}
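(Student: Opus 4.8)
The plan is to apply \cref{lemma:idealcorrelation} directly, after computing the densities of $I$ and $J$. First I would recall that since $I$ is self-complementary, we have $I \sqcup \varphi(I) = P$ where $P = [\ell_1] \times \dots \times [\ell_d]$, and since $\varphi$ is an involution (in particular a bijection), $\abs{\varphi(I)} = \abs{I}$. Hence $2\abs{I} = \abs{P} = \ell_1 \cdots \ell_d$, so $\mu(I) = \tfrac{1}{2}$; the same argument gives $\mu(J) = \tfrac{1}{2}$. (This is where the hypothesis that $\ell_1 \cdots \ell_d$ is even is implicitly used — it is exactly the condition under which self-complementary ideals exist.)

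Next I would invoke \cref{lemma:idealcorrelation}, which applies since $I$ and $J$ are ideals of a chain product, to get
\[\mu(I \cap J) \geq \mu(I)\mu(J) = \tfrac{1}{2} \cdot \tfrac{1}{2} = \tfrac{1}{4}.\]
Unwinding the definition of density, $\mu(I \cap J) = \abs{I \cap J} / \abs{P} = \abs{I \cap J}/(\ell_1 \cdots \ell_d)$, so multiplying through by $\ell_1 \cdots \ell_d$ yields $\abs{I \cap J} \geq \tfrac{1}{4}\ell_1 \cdots \ell_d$, as desired.

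There is essentially no obstacle here: all the work has been done in \cref{lemma:idealcorrelation}, and the corollary is just the specialization to the density-$\tfrac12$ case. The only thing to be slightly careful about is making sure the self-complementary condition genuinely forces $\mu = \tfrac12$ (rather than merely $\mu \le \tfrac12$ or similar), which follows cleanly from the disjoint-union decomposition $P = I \sqcup \varphi(I)$ noted in the preliminaries.
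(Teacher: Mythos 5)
Your proposal is correct and is essentially identical to the paper's proof: both observe that self-complementarity forces $\mu(I) = \mu(J) = \tfrac{1}{2}$ and then apply \cref{lemma:idealcorrelation} to conclude $\mu(I \cap J) \geq \tfrac{1}{4}$. The only difference is that you spell out the disjoint-union argument for $\mu(I) = \tfrac12$, which the paper takes as immediate.
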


\begin{proof}
    Since $\mu(I) = \mu(J) = \frac{1}{2}$, we have
    \[\frac{\abs{I \cap J}}{\ell_1 \cdots \ell_d} = \mu(I \cap J) \geq \mu(I) d(J) = \frac{1}{4}\]
    by \cref{lemma:idealcorrelation}.
\end{proof}

\begin{corollary}\label{corollary:scidealcorrelation2}
    Let $\ell_1, \dots, \ell_d$ be a sequence of positive integers, all of which are odd except for $\ell_k$. Then for any self-complementary ideals $I, J\subset [\ell_1] \times \dots \times [\ell_d]$, we have
    $\abs{I \cap J} \geq \frac{1}{4}(\ell_1 \cdots \ell_d + \ell_k)$.
\end{corollary}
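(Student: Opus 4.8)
The plan is to induct on $d$, slicing $[\ell_1]\times\cdots\times[\ell_d]$ along an \emph{odd} coordinate so that the exceptional even coordinate $\ell_k$ is preserved in the smaller poset and the inductive hypothesis can be applied to the middle slice. After permuting coordinates, assume $k=1$, so $\ell_1$ is even and $\ell_2,\dots,\ell_d$ are odd. The base case is $d=1$: the poset $[\ell_1]$ has a unique self-complementary ideal, namely $\{1,\dots,\ell_1/2\}$, so $I=J$ and $\abs{I\cap J}=\ell_1/2=\tfrac14(\ell_1+\ell_1)$.

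For $d\ge 2$ the coordinate $\ell_d$ is odd. Let $P'=[\ell_1]\times\cdots\times[\ell_{d-1}]$, with involution $\varphi'$, and for $i\in\{1,\dots,\ell_d\}$ set $I_i=\{a\in P':(a,i)\in I\}$ and $J_i=\{a\in P':(a,i)\in J\}$. I would first record the standard facts that each $I_i,J_i$ is an ideal of $P'$, that they are nested ($I_1\supseteq\cdots\supseteq I_{\ell_d}$, and likewise for $J$), and that self-complementarity of $I$ forces $I_i=P'\setminus\varphi'(I_{\ell_d+1-i})$ (similarly for $J$). In particular, writing $m=(\ell_d+1)/2$, the middle slices $I_m$ and $J_m$ are self-complementary ideals of $P'$ — a poset with exactly one even factor, namely $\ell_1$ — so the inductive hypothesis gives $\abs{I_m\cap J_m}\ge\tfrac14(\abs{P'}+\ell_1)$. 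Since $\abs{I\cap J}=\sum_{i=1}^{\ell_d}\abs{I_i\cap J_i}$, it remains to estimate the off-center terms, which I would group into the pairs $\{i,i'\}$ with $i'=\ell_d+1-i$ and $1\le i<m$.

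For such a pair, the relation $I_i=P'\setminus\varphi'(I_{i'})$ together with inclusion–exclusion yields $\abs{I_i\cap J_i}=\abs{P'}-\abs{I_{i'}}-\abs{J_{i'}}+\abs{I_{i'}\cap J_{i'}}$, so
\[\abs{I_i\cap J_i}+\abs{I_{i'}\cap J_{i'}}=\abs{P'}-\abs{I_{i'}}-\abs{J_{i'}}+2\abs{I_{i'}\cap J_{i'}}.\]
Applying \cref{lemma:idealcorrelation} to the ideals $I_{i'},J_{i'}$ of $P'$ gives $\abs{I_{i'}\cap J_{i'}}\ge\abs{I_{i'}}\abs{J_{i'}}/\abs{P'}$, and nesting (together with $\abs{I_m}=\abs{J_m}=\tfrac12\abs{P'}$) gives $\abs{I_{i'}},\abs{J_{i'}}\le\tfrac12\abs{P'}$. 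Writing $N=\abs{P'}$, $u=\abs{I_{i'}}$, $v=\abs{J_{i'}}$, the pair therefore contributes at least $N-u-v+2uv/N$, and the elementary identity
\[\tfrac{N^2}{2}-Nu-Nv+2uv=(N-2u)\left(\tfrac{N}{2}-v\right)\ge 0\]
shows this is at least $\tfrac{N}{2}=\tfrac12\abs{P'}$. Summing over the $(\ell_d-1)/2$ pairs and adding the middle term,
\[\abs{I\cap J}\ \ge\ \tfrac14(\abs{P'}+\ell_1)+\tfrac{\ell_d-1}{2}\cdot\tfrac12\abs{P'}\ =\ \tfrac{\ell_d}{4}\abs{P'}+\tfrac14\ell_1\ =\ \tfrac14(\ell_1\cdots\ell_d+\ell_1),\]
which is the desired bound after undoing the coordinate permutation.

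The crux — and the reason \cref{corollary:scidealcorrelation1} alone is insufficient — is twofold: one must slice along an odd coordinate so that the induction (not merely \cref{corollary:scidealcorrelation1}) applies to the self-complementary middle slice, and one must extract an \emph{exact} $\tfrac12\abs{P'}$ from each off-center pair. The latter is what makes the nesting bound $\abs{I_{i'}},\abs{J_{i'}}\le\tfrac12\abs{P'}$ essential; without it, \cref{lemma:idealcorrelation} alone would only deliver a weaker per-pair estimate and the $+\ell_k$ improvement would be lost. Everything else is bookkeeping with the quantities $\abs{I_i}$, $\abs{J_i}$, and $\abs{I_i\cap J_i}$.
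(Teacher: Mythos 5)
Your proof is correct, and the skeleton --- induct on $d$, slice off the middle layer $a_d=\tfrac{1}{2}(\ell_d+1)$ along an odd coordinate so that the exceptional even factor $\ell_k$ is preserved, and apply the inductive hypothesis to that self-complementary middle slice --- is exactly the paper's argument. The only difference is in how the non-middle slices are handled. The paper simply notes that the restriction of $I$ (resp.\ $J$) to $[\ell_1]\times\cdots\times[\ell_{d-1}]\times\bigl([\ell_d]\setminus\{\tfrac{1}{2}(\ell_d+1)\}\bigr)$ is again a self-complementary ideal of a chain product with an even number of elements, since $\ell_d-1$ is even, and invokes \cref{corollary:scidealcorrelation1} once to get the bound $\tfrac{1}{4}\ell_1\cdots\ell_{d-1}(\ell_d-1)$ for the whole complement. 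Your pairing argument --- pairing slice $i$ with $i'=\ell_d+1-i$, using the self-complementarity relation $I_i = P'\setminus\varphi'(I_{i'})$, \cref{lemma:idealcorrelation}, and the nesting bound $\abs{I_{i'}},\abs{J_{i'}}\le\tfrac{1}{2}\abs{P'}$ to extract $\ge\tfrac{1}{2}\abs{P'}$ per pair --- reproves this same bound by hand; indeed your per-pair estimate is precisely \cref{corollary:scidealcorrelation1} applied to the two-layer subposet $P'\times\{i,i'\}\cong P'\times[2]$. Both computations give identical totals, so this is the paper's proof with one reusable corollary unrolled into its elementary constituents. You could shorten your write-up by citing \cref{corollary:scidealcorrelation1} directly for the complement, as the paper does.
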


\begin{proof}
    Induct on $d$. The base case of $d=1$ is true because the flip graph on $[\ell_1]$ has only one self-complementary ideal: the ideal with $\frac{1}{2}\ell_1$ elements.

    For the inductive step, suppose without loss of generality that $\ell_d$ is odd. Consider the sets 
    \begin{align*}
        I' &= \{(a_1, \dots, a_d)\in I\mid a_d = \tfrac{1}{2}(\ell_d + 1)\}\\
        J' &= \{(a_1, \dots, a_d)\in J\mid a_d = \tfrac{1}{2}(\ell_d + 1)\}.
    \end{align*} 
    and
    \begin{align*}
        I'' &= \left\{(a_1, \dots, a_{d-1}) \in [\ell_1] \times \dots \times [\ell_{d-1}] \mid (a_1, \dots, a_{d-1}, \tfrac{1}{2}(\ell_d+1)) \in I' \right\} \\
        J'' &= \left\{(a_1, \dots, a_{d-1}) \in [\ell_1] \times \dots \times [\ell_{d-1}] \mid (a_1, \dots, a_{d-1}, \tfrac{1}{2}(\ell_d+1)) \in J' \right\}.
    \end{align*}
    Observe that $I''$ and $J''$ are self-complementary ideals in $[\ell_1] \times \dots \times [\ell_{d-1}]$, since the involution on the self-dual poset $[\ell_1] \times \dots \times [\ell_d]$ maps elements with last coordinate $\frac{1}{2}(\ell_d+1)$ to elements with last coordinate $\frac{1}{2}(\ell_d+1)$. Similarly, $I \setminus I'$ and $J \setminus J'$ are self-complementary ideals in $[\ell_1] \times \dots \times [\ell_{d-1}] \times ([\ell_d] \setminus \{\frac{1}{2}(\ell_d+1)\})$. Therefore,
    \begin{align*}
        \abs{I \cap J} &= \abs{I'' \cap J''} + \abs{(I \setminus I') \cap (J \setminus J')} \\
        &\geq \tfrac{1}{4}(\ell_1 \cdots \ell_{d-1} + \ell_k) + \tfrac{1}{4}(\ell_1 \cdots \ell_{d-1} (\ell_d-1)) \\
        &= \tfrac{1}{4}(\ell_1 \cdots \ell_d + \ell_k)
    \end{align*}
    by the inductive hypothesis and \cref{corollary:scidealcorrelation1}.
\end{proof}

\begin{theorem}\label{theorem:scdiameter}
    Let $\ell_1, \dots, \ell_d$ be a sequence of positive integers, and let $V$ denote their product. The diameter of the flip graph on self-complementary ideals of $[\ell_1] \times \dots \times [\ell_d]$ is
    \[
        \begin{cases}
            0 &\text{if all of $\ell_1, \dots, \ell_d$ are odd,} \\
            \frac{1}{4}V & \text{if at least two of $\ell_1, \dots, \ell_d$ are even, and} \\
            \frac{1}{4}(V - \ell_k) & \text{if $\ell_k$ is even and the rest are odd.}
        \end{cases}
    \]
\end{theorem}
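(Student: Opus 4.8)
The plan is to rephrase the diameter as an intersection-minimization problem. By \cref{lemma:scdistance} the distance between self-complementary ideals $I$ and $J$ is $\abs{I\setminus J}$, and since every self-complementary ideal of a poset with $V$ elements has exactly $V/2$ elements, $\abs{I\setminus J}=\tfrac12 V-\abs{I\cap J}$. Hence, whenever $V$ is even, the diameter of the flip graph equals $\tfrac12 V-\min_{I,J}\abs{I\cap J}$, the minimum being over ordered pairs of self-complementary ideals. If all $\ell_i$ are odd then $V$ is odd, so the poset has an odd number of elements and hence no self-complementary ideals: the flip graph is empty and its diameter is $0$ by convention. If at least two of the $\ell_i$ are even, then $V$ is even and \cref{corollary:scidealcorrelation1} gives $\abs{I\cap J}\ge\tfrac14 V$, so the diameter is at most $\tfrac14 V$; if $\ell_k$ is the unique even entry, \cref{corollary:scidealcorrelation2} gives $\abs{I\cap J}\ge\tfrac14(V+\ell_k)$, so the diameter is at most $\tfrac14(V-\ell_k)$. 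It then remains only to construct pairs $(I,J)$ attaining these bounds.

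Suppose $\ell_1$ and $\ell_2$ are even. Take
\[I=\{(a_1,\dots,a_d):a_1\le \ell_1/2\},\qquad J=\{(a_1,\dots,a_d):a_2\le \ell_2/2\}.\]
Each is readily checked to be a self-complementary ideal: for $I$, the reflection of a point with $a_1\le\ell_1/2$ has first coordinate $\ell_1+1-a_1>\ell_1/2$, and conversely. Moreover $I\cap J=\{a_1\le\ell_1/2,\ a_2\le\ell_2/2\}$ has exactly $\tfrac{\ell_1}{2}\cdot\tfrac{\ell_2}{2}\cdot\ell_3\cdots\ell_d=\tfrac14 V$ elements, so $\dist(I,J)=\tfrac14 V$ and the diameter is exactly $\tfrac14 V$.

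Now suppose $\ell_k$ is the unique even entry; relabel so $k=d$, and set $Q=[\ell_1]\times\cdots\times[\ell_{d-1}]$, a self-dual poset with an odd number $\abs Q=V/\ell_d$ of elements and a unique $\varphi_Q$-fixed point $c$. Fix a linear extension $\lhd$ of $Q$ compatible with duality, meaning $x\lhd y\iff\varphi_Q(y)\lhd\varphi_Q(x)$; for instance, rank elements by $\sum_i a_i$ and break ties lexicographically, both of which comparisons reverse under $\varphi_Q$. Let $J^-=\{x\in Q:x\lhd c\}$. Since for each $x\ne c$ exactly one of $x$ and $\varphi_Q(x)$ precedes $c$, the set $J^-$ is an ideal of $Q$ with $\abs{J^-}=\tfrac12(\abs Q-1)$ and $J^-\cap\varphi_Q(J^-)=\emptyset$. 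Put $J^+=Q\setminus\varphi_Q(J^-)$, an ideal of $Q$ that contains $J^-$ and has $\abs{J^+}=\tfrac12(\abs Q+1)$, and define
\[I=Q\times\{1,\dots,\ell_d/2\},\qquad J=\bigl(J^+\times\{1,\dots,\ell_d/2\}\bigr)\cup\bigl(J^-\times\{\tfrac{\ell_d}{2}+1,\dots,\ell_d\}\bigr).\]
One checks that $I$ and $J$ are self-complementary ideals of $[\ell_1]\times\cdots\times[\ell_d]$: down-closedness of $J$ uses $J^-\subseteq J^+$, and its self-complementarity reduces to the pair of identities $J^+=Q\setminus\varphi_Q(J^-)$ and $J^-=Q\setminus\varphi_Q(J^+)$ (the second follows from the first by applying $\varphi_Q$). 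Since $I\cap J=J^+\times\{1,\dots,\ell_d/2\}$ has $\abs{J^+}\cdot\tfrac{\ell_d}{2}=\tfrac{\ell_d}{4}(\abs Q+1)=\tfrac14(V+\ell_d)$ elements, $\dist(I,J)=\tfrac14(V-\ell_d)$, which matches the upper bound and completes this case.

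The one genuinely nonroutine ingredient is the last construction: it requires an ideal of $Q$ of size exactly $\tfrac12(\abs Q+1)$ that is disjoint from its own image under $\varphi_Q$, and this is precisely what a duality-compatible linear extension of $Q$ supplies. Everything else—checking that the displayed sets are ideals, verifying self-complementarity, and the cardinality counts—is mechanical, since the substantive work has already been done in \cref{corollary:scidealcorrelation1,corollary:scidealcorrelation2}.
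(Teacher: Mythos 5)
Your proof is correct and follows essentially the same route as the paper: the upper bounds come from \cref{lemma:scdistance} together with \cref{corollary:scidealcorrelation1,corollary:scidealcorrelation2}, and your extremal pairs coincide with the paper's (in the unique-even case your $J$ equals the paper's ideal $(I'\times[\ell_d])\cup\{(c,i):i\le\ell_d/2\}$ with $I'=J^-$, and your $I$ is the paper's $J$). The only difference is that you explicitly build the auxiliary self-complementary ideal of the punctured poset via a duality-compatible linear extension, where the paper simply asserts its existence.
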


\begin{proof}
    If all of $\ell_1, \dots, \ell_d$ are odd, then no self-complementary ideals exist. Thus, the flip graph is empty, and the diameter is zero.
    
    Now, suppose that at least two of $\ell_1, \dots, \ell_d$ are even.
    \begin{itemize}
        \item To prove that the distance between any two self-complementary ideals is at most $\frac{1}{4}V$, let $I$ and $J$ be self-complementary ideals in $[\ell_1] \times \dots \times [\ell_d]$. By \cref{lemma:scdistance} and \cref{corollary:scidealcorrelation1}, the distance between $I$ and $J$ is
        \[\abs{I \setminus J} = \abs{I} - \abs{I \cap J} \leq \tfrac{1}{2}V - \tfrac{1}{4}V = \tfrac{1}{4}V.\]
        \item Suppose that $i$ and $j$ are distinct indices for which both $\ell_i$ and $\ell_j$ are even. Then the self-complementary ideals
        \begin{align*}
            I &= \{(a_1, \dots, a_d) \in [\ell_1] \times \dots \times [\ell_d] \mid a_i \leq \tfrac{1}{2}\ell_i\} \\
            J &= \{(a_1, \dots, a_d) \in [\ell_1] \times \dots \times [\ell_d] \mid a_j \leq \tfrac{1}{2}\ell_j\}
        \end{align*}
        (see \cref{fig:scdiameter1} for an example) satisfy
        \[\abs{I \setminus J} = \abs{\left\{(a_1, \dots, a_d) \in [\ell_1] \times \dots \times [\ell_d] \mid \text{$a_i \leq \tfrac{1}{2}\ell_i$ and $a_j > \tfrac{1}{2}\ell_j$}\right\}} = \tfrac{1}{4}V.\]
        By \cref{lemma:scdistance}, the distance between $I$ and $J$ is $\frac{1}{4}V$.
    \end{itemize}
    Therefore the diameter of the flip graph of $[\ell_1] \times \dots \times [\ell_d]$ is $\frac{1}{4}V$ when at least two of $\ell_1, \dots, \ell_d$ are even.
    
    Finally, suppose that $\ell_k$ is even and the rest are odd. If $d=1$, the result follows because the diameter of a one-vertex graph is zero; hence, suppose $d \geq 2$.
    \begin{itemize}
        \item To prove that the distance between any two self-complementary ideals is at most $\frac{1}{4}(V-\ell_k)$, let $I$ and $J$ be self-complementary ideals in $[\ell_1] \times \dots \times [\ell_d]$. By \cref{lemma:scdistance} and \cref{corollary:scidealcorrelation2}, the distance between $I$ and $J$ is
        \[\abs{I \setminus J} = \abs{I} - \abs{I \cap J} \leq \tfrac{1}{2}V - \tfrac{1}{4}(V+\ell_k) = \tfrac{1}{4}(V-\ell_k).\]
        \item Without loss of generality, assume $\ell_d$ is even, and let $I'$ be a self-complementary ideal of $[\ell_1] \times \dots \times [\ell_{d-1}] \setminus \left\{\left(\frac{1}{2}(\ell_1+1), \dots, \frac{1}{2}(\ell_{d-1}+1)\right)\right\}$. Then the self-complementary ideals
        \begin{align*}
            I &= \left(I' \times [\ell_d]\right) \cup \left\{\left(\tfrac{1}{2}(\ell_1+1), \dots, \tfrac{1}{2}(\ell_{d-1}+1), i\right) \mid i \in \{1, \dots, \tfrac{1}{2} \ell_d\}\right\} \\
            J &= \{(a_1, \dots, a_d) \in [\ell_1] \times \dots \times [\ell_d] \mid a_d \leq \tfrac{1}{2} \ell_d\}
        \end{align*}
        (see \cref{fig:scdiameter2} for an example) satisfy
        \begin{align*}
            \abs{I \setminus J} = \abs{I' \times \{ \tfrac{1}{2}\ell_{d} + 1, \dots, \ell_d\}} = \tfrac{1}{2}(\ell_1 \dots \ell_{d-1}-1) \cdot \tfrac{1}{2}\ell_d = \tfrac{1}{4}(V - \ell_d).
        \end{align*}
        By \cref{lemma:scdistance}, the distance between $I$ and $J$ is $\frac{1}{4}(V-\ell_d)$. \hfill\mbox{\qedhere}
    \end{itemize}
\end{proof}

\begin{figure}[htbp]
    \centering
    \begin{diagram}[0.5]
        pair s = (0, 0);
        backframe(5, 6, 4, s);
        ideal(new int[][] {{4, 4, 4, 0, 0, 0},
                           {4, 4, 4, 0, 0, 0}, 
                           {4, 4, 4, 0, 0, 0},
                           {4, 4, 4, 0, 0, 0},
                           {4, 4, 4, 0, 0, 0}}, s);
        frontframe(5, 6, 4, s);
        pair s = (15, 0);
        backframe(5, 6, 4, s);
        ideal(new int[][] {{2, 2, 2, 2, 2, 2},
                           {2, 2, 2, 2, 2, 2}, 
                           {2, 2, 2, 2, 2, 2},
                           {2, 2, 2, 2, 2, 2},
                           {2, 2, 2, 2, 2, 2}}, s);
        frontframe(5, 6, 4, s);
    \end{diagram}
    \caption{An example of $I$ and $J$ for the case $(\ell_1, \ell_2, \ell_3) = (5, 6, 4)$.}
    \label{fig:scdiameter1}
\end{figure}

\begin{figure}[htbp]
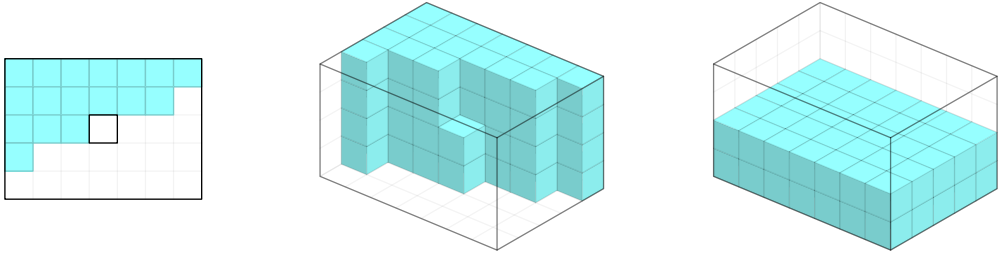

    \centering
    \begin{diagram}[0.8]
        backperimeter(7, 5); 
        rectangle(0, 4, 1, 1);
        rectangle(1, 4, 1, 1);
        rectangle(2, 4, 1, 1);
        rectangle(3, 4, 1, 1);
        rectangle(4, 4, 1, 1);
        rectangle(5, 4, 1, 1);
        rectangle(6, 4, 1, 1);
        rectangle(0, 3, 1, 1);
        rectangle(1, 3, 1, 1);
        rectangle(2, 3, 1, 1);
        rectangle(3, 3, 1, 1);
        rectangle(4, 3, 1, 1);
        rectangle(5, 3, 1, 1);
        rectangle(0, 2, 1, 1);
        rectangle(1, 2, 1, 1);
        rectangle(2, 2, 1, 1);
        rectangle(0, 1, 1, 1);
        frontperimeter(7, 5);
        draw((3, 2) -- (3, 3) -- (4, 3) -- (4, 2) -- cycle);
        pair s = (15, 3);
        backframe(5, 7, 4, s);
        ideal(new int[][] {{4, 4, 4, 4, 4, 4, 4},
                           {4, 4, 4, 4, 4, 4, 0}, 
                           {4, 4, 4, 2, 0, 0, 0},
                           {4, 0, 0, 0, 0, 0, 0},
                           {0, 0, 0, 0, 0, 0, 0}}, s);
        frontframe(5, 7, 4, s);
        pair s = (29, 3);
        backframe(5, 7, 4, s);
        ideal(new int[][] {{2, 2, 2, 2, 2, 2, 2},
                           {2, 2, 2, 2, 2, 2, 2}, 
                           {2, 2, 2, 2, 2, 2, 2},
                           {2, 2, 2, 2, 2, 2, 2},
                           {2, 2, 2, 2, 2, 2, 2}}, s);
        frontframe(5, 7, 4, s);
    \end{diagram}
    \caption{An example of $I'$, $I$, and $J$ for the case $(\ell_1, \ell_2, \ell_3) = (5, 7, 4)$.}
    \label{fig:scdiameter2}
\end{figure}

\subsection{Radius}
This section will compute the radius of $G(\ell_1, \dots, \ell_d)$ for certain sequences of positive integers $\ell_1, \dots, \ell_d$. The answer is known exactly in the specific case when both of the following conditions hold:
\begin{itemize}
    \item all of $\ell_1, \dots, \ell_d$ are even, and
    \item $d$ is odd or some $\ell_i$ is divisible by 4.
\end{itemize}

Some of these results are generalizations of \cite{meyerowitz1995maximal}, which gives a tight radius bound for $[2]^d$ when $d$ is odd.

\begin{lemma}\label{lemma:scradiuslowerbound1}
    Let $d$ be a positive integer. Let $\ell_1, \dots, \ell_d$ be a sequence of even positive integers, and let $V$ denote their product. Then the radius of $G(\ell_1, \dots, \ell_d)$ is at least
    \[{\left(\frac{1}{4} - \frac{1}{2^{d+1}}\binom{d-1}{\floor{\frac{1}{2}(d-1)}}\right)V}.\]
\end{lemma}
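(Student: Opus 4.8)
The plan is to lower-bound the radius by producing, for *every* self-complementary ideal $C$ (the candidate "center"), a self-complementary ideal $I$ that is far from $C$ in the flip graph. By \cref{lemma:scdistance} this distance is $\abs{I \setminus C} = \tfrac12 V - \abs{I \cap C}$, so it suffices to find, for each $C$, some self-complementary $I$ with $\abs{I\cap C}$ small — specifically at most $\left(\tfrac14 + \tfrac{1}{2^{d+1}}\binom{d-1}{\floor{(d-1)/2}}\right)V$. Rather than choosing a single clever $I$, I would average over a well-chosen family of self-complementary ideals and show the average intersection with $C$ is small; then some member of the family works.

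The natural family is the "coordinate-halfspace" ideals together with their variants. For each coordinate $i$, let $H_i = \{(a_1,\dots,a_d) : a_i \le \tfrac12\ell_i\}$, which is self-complementary since $\ell_i$ is even. More generally, for a sign vector $\varepsilon \in \{0,1\}^d$, reflecting $H_1$ across the hyperplanes indicated by $\varepsilon$ (i.e. replacing "$a_i \le \tfrac12\ell_i$" by "$a_i > \tfrac12\ell_i$" when $\varepsilon_i = 1$ for $i$ in some fixed subset) does not obviously give an ideal — so the correct family is probably the $d$ ideals $H_1,\dots,H_d$ together with the "symmetric difference" type ideals built from an odd number of coordinates, exploiting that an element lies in the model ideal iff the number of coordinates with $a_i \le \tfrac12\ell_i$ exceeds $d/2$ (a "majority" ideal, which is an ideal precisely when... this parity condition is where the hypothesis that $d$ is odd or some $\ell_i \equiv 0 \bmod 4$ will enter for the matching upper bound, but for the lower bound here we only need *enough* self-complementary ideals). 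Concretely: partition $[\ell_i]$ into its lower and upper halves, so each point of the poset gets a sign string $s \in \{+,-\}^d$; the ideal $M_T = \{x : \#\{i : s_i(x) = -\} > d/2\} $-type constructions, and the basic $H_i$'s, give a family $\mathcal F$. The key computation is $\mathbb{E}_{I \in \mathcal F}\, \abs{I \cap C}$.

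To compute this average I would slice $C$ by sign string: for each $s \in \{+,-\}^d$ let $C_s$ be the number of elements of $C$ whose sign string is $s$, and let $n_s = \prod_i \tfrac12\ell_i = V/2^d$ be the total number of poset elements with sign string $s$ (all blocks have equal size since each $\ell_i$ is even). Self-complementarity of $C$ says $C_s + C_{\bar s} = n_s$ for the antipodal string $\bar s$. The intersection $\abs{H_i \cap C} = \sum_{s: s_i = -} C_s$, and similarly each $I\in\mathcal F$ picks out a union of sign-blocks. Averaging over a symmetric family $\mathcal F$, the coefficient of each $C_s$ in $\mathbb{E}\,\abs{I\cap C}$ becomes (by symmetry of $\mathcal F$ under coordinate permutations and the self-complementary pairing) a constant plus a term governed by how often a "balanced-as-possible" block is selected — this is exactly where $\binom{d-1}{\floor{(d-1)/2}}$ appears, as the number of sign strings with $\floor{(d-1)/2}$ minus-signs among $d-1$ coordinates. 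Using $C_s + C_{\bar s} = n_s$ to pair terms, the average collapses to $\left(\tfrac14 + \tfrac{1}{2^{d+1}}\binom{d-1}{\floor{(d-1)/2}}\right)V$ independent of $C$, giving the bound.

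The main obstacle is choosing the averaging family $\mathcal F$ correctly so that (a) every member is genuinely a self-complementary ideal — the downward-closure condition is the delicate part, since arbitrary unions of sign-blocks are not ideals — and (b) the family is symmetric enough under the hyperoctahedral action that the coefficient of each $C_s$ in the average depends only on the "type" of $s$, making the self-complementary cancellation clean. I expect the right family to be the $H_i$ together with ideals of the form "$a_i \le \tfrac12\ell_i$ for a majority of a fixed $(d-1)$-subset, plus boundary adjustments," and verifying these are ideals (monotone under the poset order) and self-complementary is the technical heart; once that is in hand, the intersection count is a bookkeeping exercise in summing $C_s$ over blocks and invoking $C_s + C_{\bar s} = n_s$.
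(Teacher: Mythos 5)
Your overall strategy is the same as the paper's: bound the radius below by the average distance from the candidate center $C$ to the $d$ coordinate-halfspace ideals $H_i$, slice $C$ into the $2^d$ sign-blocks $c_t$ (each of size $V/2^d$), and exploit the pairing $c_t + c_{\bar t} = V/2^d$ forced by self-complementarity. But there is a genuine gap at the key computation. The average $\tfrac1d\sum_i\abs{C\cap H_i}$ does \emph{not} collapse to a constant independent of $C$: writing $\sum_i \abs{C\cap H_i} = \sum_t (d-\abs{t})\,c_t$ and pairing $t$ with its antipode $\bar t$ only gives, for each pair with $\abs{t}=s<d/2$, a contribution $s\cdot V/2^d + (d-2s)c_t$, which still depends on the block counts (for instance $C=H_1$ and $C$ a majority-type ideal yield different averages). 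To reach the stated bound you must additionally invoke the inequality $c_t \le V/2^d$ for every block with fewer than $d/2$ upper-half coordinates; only then does collecting $(d-2s)\binom{d}{s}$ over $s<d/2$ produce the $\binom{d-1}{\floor{\frac{1}{2}(d-1)}}$ term, and what you obtain is an upper bound on the average (tight exactly when all those minority blocks are full), not an identity. This inequality-plus-bookkeeping step is the heart of the lemma, and your write-up asserts its conclusion rather than proving it.

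Separately, the hedging about the averaging family is unnecessary and partly counterproductive. The lower bound needs only $H_1,\dots,H_d$, each of which is a self-complementary ideal for free because every $\ell_i$ is even; no majority ideals or ``boundary adjustments'' are needed (those constructions belong to the matching upper bound, \cref{lemma:scradiusupperbound1}), and including a majority ideal in the family would actually weaken the averaged bound, since the candidate center equal to that ideal meets it in $\tfrac12 V$ elements. Once the family is fixed as $H_1,\dots,H_d$ and the per-block inequality above is supplied, your argument becomes the paper's proof.
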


\begin{proof}
    Let $r$ denote the radius of $G(\ell_1, \dots, \ell_d)$, and let $C$ be any self-complementary ideal of $[\ell_1] \times \dots \times [\ell_d]$ contained in the center of $G(\ell_1, \dots, \ell_d)$. Additionally, for each index $i \in \{1, \dots, d\}$ let $I_i$ be the self-complementary ideal
    \[I_i = \{(a_1, \dots, a_d) \in [\ell_1] \times \dots \times [\ell_d] \mid a_i \leq \tfrac{1}{2}\ell_i\}.\]
    Then
    \[r \geq \max_{i \in \{1, \dots, d\}} \dist(C, I_i) \geq \frac{1}{d} \sum_{i=1}^d \dist(C, I_i) = \frac{1}{d} \sum_{i=1}^d \abs{C \setminus I_i} = \frac{1}{2}V - \frac{1}{d} \sum_{i=1}^d \abs{C \cap I_i}.\]
    Now, for each every tuple $(t_1, \dots, t_d) \in \{0, 1\}^d$, define
    \[c_{(t_1, \dots, t_d)} = \abs{\{(a_1, \dots, a_d) \in C \mid \bbmone_{a_1 > \ell_1/2} = t_1, \dots, \bbmone_{a_d > \ell_d/2} = t_d\}}.\]
    Since $C$ is self-complementary, we have
    \[c_{(t_1, \dots, t_d)} + c_{(1-t_1, \dots, 1-t_d)} = \frac{V}{2^d}\]
    for every tuple $(t_1, \dots, t_d) \in \{0, 1\}^d$.
    Using this fact gives
    \[\sum_{i=1}^d \abs{C \cap I_i} = \sum_{i=1}^d \sum_{\substack{(t_1, \dots, t_d) \in \{0, 1\}^d \\ t_i = 0}} c_{(t_1, \dots, t_d)}
    = \sum_{s = 0}^d \sum_{\substack{(t_1, \dots, t_d) \in \{0, 1\}^d \\ t_1 + \dots + t_d = s}} (d-s)c_{(t_1, \dots, t_d)}.\]
    Using the equalities $c_{(t_1, \dots, t_d)} + c_{(1-t_1, \dots, 1-t_d)} = \frac{V}{2^d}$ for all $(t_1, \dots, t_d) \in \{0, 1\}^d$ and the inequalities $c_{(t_1, \dots, t_d)} \leq \frac{V}{2^d}$ for $t_1+\dots+t_d < \frac{1}{2}d$ gives a bound of 
    \begin{align*}
        \sum_{i=1}^d \abs{C \cap I_i}
        &\leq 
        \begin{cases}
            d\left(\frac{1}{4} + \frac{1}{2^{d+1}} \binom{d-1}{(d-1)/2}\right) V & \text{if $d$ is odd} \\
            d\left(\frac{1}{4} + \frac{1}{2^{d+1}} \binom{d-1}{(d-2)/2}\right) V & \text{if $d$ is even}
        \end{cases}\\
        &= d\left(\frac{1}{4} + \frac{1}{2^{d+1}} \binom{d-1}{\floor{\frac{1}{2}(d-1)}}\right) V
    \end{align*}
    after collecting the binomial sums.
    Therefore,
    \[r \geq \frac{1}{2}V - \frac{1}{d} \sum_{i=1}^d \abs{C \cap I_i} \geq \frac{1}{2}V - \left(\frac{1}{4} + \frac{1}{2^{d+1}} \binom{d-1}{\floor{\frac{1}{2}(d-1)}}\right) V = \left(\frac{1}{4} - \frac{1}{2^{d+1}} \binom{d-1}{\floor{\frac{1}{2}(d-1)}} \right)V\]
    for all positive integers $d$.
\end{proof}

\begin{remark}
    Equality in the above lemma can only occur when the inequality $c_{(t_1, \dots, t_d)} \leq \frac{V}{2^d}$ is tight for all $(t_1, \dots, t_d) \in \{0, 1\}^d$ satisfying $t_1 + \dots + t_d < \frac{1}{2}d$. An example of the equality case for the poset $[6] \times [8] \times [4]$ is given in \cref{fig:sccenter}.
\end{remark}

The next theorem, known as the Erd\H{o}s-Ko-Rado theorem, is a famous result bounding the size of an intersecting family of $r$-element sets. It will give a tight bound for the radius in the case when all of the side lengths $\ell_1, \dots, \ell_d$ are even and the dimension $d$ is odd. 

\begin{theorem}[Erd\H os-Ko-Rado]\label{theorem:erdoskorado}
    Let $n$ and $r$ be nonnegative integers with $n \geq 2r$. Let $S$ be an $n$-element set, and let $\mcF$ be a set of $r$-element subsets of $S$ for which every pair of sets in $\mcF$ share an element. Then $\abs{\mcF} \leq \binom{n-1}{r-1}$. 
\end{theorem}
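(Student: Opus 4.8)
The plan is to give Katona's cyclic permutation (double-counting) proof, which needs no shifting or compression machinery. Assume $r \ge 1$; the case $r = 0$ is trivial, since a family of $0$-element sets is intersecting only if it is empty, in which case $\abs{\mcF} = 0 = \binom{n-1}{-1}$. Note that $n \ge 2r \ge 2$.

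The heart of the argument is a local claim about one cyclic arrangement of the $n$ elements of $S$ around a circle. Call a set of $r$ consecutive elements an \emph{arc}; there are exactly $n$ arcs. I claim at most $r$ of them belong to $\mcF$. If none does, this is clear, so suppose some arc $A \in \mcF$. Every other arc in $\mcF$ must meet $A$, and one checks that exactly $2r-1$ arcs meet $A$: the arc $A$ itself, the $r-1$ arcs obtained by sliding $A$ rightward by $1, \dots, r-1$ positions, and the $r-1$ arcs obtained by sliding it leftward by $1, \dots, r-1$ positions. The key structural observation is that these $2r-1$ arcs split as $A$ together with $r-1$ disjoint pairs: for $j = 1, \dots, r-1$, pair the rightward slide by $j$ with the leftward slide by $r-j$; their union is a run of exactly $2r$ consecutive positions around the circle, so, because $n \ge 2r$, the two arcs in the pair are disjoint. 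Since $\mcF$ is intersecting, it contains at most one arc from each of the $r-1$ disjoint pairs, plus possibly $A$ itself, hence at most $r$ arcs in total.

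Now double-count the incidences $(\pi, B)$, where $\pi$ ranges over the $n!$ bijections placing the elements of $S$ at the $n$ fixed positions of a labeled $n$-cycle and $B \in \mcF$ is an arc of $\pi$. Summing over $\pi$ and using the claim, there are at most $r \cdot n!$ incidences. Summing over $B$, for each fixed $r$-element set $B$ the number of placements in which $B$ is an arc is $n \cdot r! \cdot (n-r)!$: choose which block of $r$ consecutive positions $B$ occupies ($n$ ways), arrange $B$ inside it ($r!$ ways), and arrange the remaining elements outside ($(n-r)!$ ways), independently of $B$. Equating the two counts gives $\abs{\mcF}\cdot n\cdot r!\,(n-r)! \le r\cdot n!$, which simplifies to $\abs{\mcF} \le \binom{n-1}{r-1}$.

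The only step that is not pure bookkeeping is the structural observation that the $2r-1$ arcs meeting $A$ decompose into $A$ plus $r-1$ disjoint pairs; this is precisely where the hypothesis $n \ge 2r$ is used, to ensure that a pair of arcs occupying $2r$ consecutive circle positions does not wrap around and overlap itself. Verifying the count of $2r-1$ meeting arcs and the count $n\cdot r!\,(n-r)!$ of realizing placements is routine.
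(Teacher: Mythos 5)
Your proposal is correct and is precisely the argument the paper invokes: the paper does not reprove the theorem but simply cites Katona's cycle (double-counting) proof, and your writeup is a faithful, self-contained rendition of that same argument, with the pairing of arcs and the count $n\cdot r!\,(n-r)!$ both verified correctly. No gaps to report.
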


\begin{proof}
    A beautiful proof by Katona is given in \cite{katona1972simple}.
\end{proof}

\begin{lemma}\label{lemma:scradiusupperbound1}
    Let $d$ be an odd positive integer, and let $\ell_1, \dots, \ell_d$ be a sequence of even positive integers with product $V$. Then the radius of the flip graph on $[\ell_1] \times \dots \times [\ell_d]$ is at most
    \[\left(\frac{1}{4} - \frac{1}{2^{d+1}} \binom{d-1}{\frac{1}{2}(d-1)}\right)V.\]
    Additionally, the self-complementary ideal
    \[\{(a_1, \dots, a_d) \in [\ell_1] \times \dots \times [\ell_d] \mid \bbmone_{a_1 \leq \ell_1/2} + \dots + \bbmone_{a_d \leq \ell_d/2} > \tfrac{1}{2}d\}\]
    achieves this bound.
\end{lemma}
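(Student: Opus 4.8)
Write $C$ for the self-complementary ideal exhibited in the statement (it is easily verified to be a self-complementary ideal). The plan is to bound the \emph{eccentricity} of $C$ in the flip graph; since the radius is the minimum eccentricity over all vertices, this immediately yields the claimed upper bound and exhibits $C$ as a witness achieving it. By \cref{lemma:scdistance}, the eccentricity of $C$ equals $\max_J \abs{C \setminus J}$, the maximum ranging over all self-complementary ideals $J$ of $[\ell_1] \times \dots \times [\ell_d]$, so we fix such a $J$ and bound $\abs{C \setminus J}$.

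The first step is a reduction to the Boolean cube $[2]^d$. Partition $[\ell_1] \times \dots \times [\ell_d]$ into the $V/2^d$ ``antipodal subcubes'' $P_{\vec v} = \{(a_1, \dots, a_d) : a_i \in \{v_i, \ell_i + 1 - v_i\} \text{ for all } i\}$, indexed by tuples $\vec v = (v_1, \dots, v_d)$ with $1 \le v_i \le \ell_i/2$. Each $P_{\vec v}$ is isomorphic as a poset to $[2]^d$, via the map sending $a_i$ to $1$ or $2$ according as $a_i = v_i$ or $a_i = \ell_i + 1 - v_i$; under this isomorphism the involution $\varphi$ of $[\ell_1] \times \dots \times [\ell_d]$ restricts, on each $P_{\vec v}$, to the antipodal involution of $[2]^d$. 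Consequently $J \cap P_{\vec v}$ is a self-complementary ideal of $[2]^d$, while $C \cap P_{\vec v}$ corresponds to the ``majority ideal'' $C_0 = \{A \subseteq [d] : \abs{A} \le (d-1)/2\}$ of $2^{[d]} \cong [2]^d$ (identifying coordinate value $2$ with membership in $A$). Since the $P_{\vec v}$ partition the poset, $\abs{C \setminus J} = \sum_{\vec v} \abs{(C \cap P_{\vec v}) \setminus (J \cap P_{\vec v})} \le \tfrac{V}{2^d} E$, where $E$ denotes the eccentricity of $C_0$ in the flip graph of $[2]^d$. It thus remains to prove $E \le 2^{d-2} - \tfrac{1}{2}\binom{d-1}{(d-1)/2}$.

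The second step identifies $E$ with an extremal problem on intersecting families. The self-complementary ideals of $2^{[d]}$ are exactly the complements $2^{[d]} \setminus \mcF$ of maximal intersecting families $\mcF$, and a short computation gives $\abs{C_0 \setminus (2^{[d]} \setminus \mcF)} = \abs{C_0 \cap \mcF} = \abs{\{A \in \mcF : \abs{A} \le (d-1)/2\}}$; hence $E$ is the largest possible number of ``small'' sets (those of size at most $(d-1)/2$) in a maximal intersecting family of $2^{[d]}$. Write $d = 2m+1$. For each $k$ with $1 \le k \le m$, the members of $\mcF$ of size exactly $k$ form an intersecting family of $k$-subsets of a $(2m+1)$-element set, so since $2m + 1 \ge 2k$, \cref{theorem:erdoskorado} bounds their number by $\binom{2m}{k-1}$ (the layer $k = 0$ is empty, as $\emptyset$ belongs to no intersecting family). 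Summing over $k$ and using the symmetry of Pascal's triangle, $E \le \sum_{k=1}^{m} \binom{2m}{k-1} = \sum_{j=0}^{m-1} \binom{2m}{j} = 2^{2m-1} - \tfrac{1}{2}\binom{2m}{m} = 2^{d-2} - \tfrac{1}{2}\binom{d-1}{(d-1)/2}$, which together with the first step gives the eccentricity of $C$ at most $\bigl(\tfrac14 - \tfrac{1}{2^{d+1}}\binom{d-1}{(d-1)/2}\bigr)V$, as needed.

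The step requiring the most care is the reduction: one must check that $J \cap P_{\vec v}$ is not just an ideal but a \emph{self-complementary} ideal of the subcube. That it is a down-set is immediate from $J$ being a down-set, but self-complementarity genuinely uses that $\varphi$ preserves each $P_{\vec v}$ and acts there as the antipodal map. This is indispensable, since for an arbitrary ideal $I$ of $[2]^d$ the quantity $\abs{C_0 \setminus I}$ can be as large as $2^{d-1}$, vastly exceeding $E$. With the reduction in hand the Erd\H os--Ko--Rado step is routine, the only arithmetic being the binomial identity above.
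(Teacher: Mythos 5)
Your proof is correct and follows essentially the same route as the paper's: partition $[\ell_1]\times\dots\times[\ell_d]$ into $V/2^d$ antipodal copies of $[2]^d$, reduce to the Boolean cube, interpret the discrepancy with the majority ideal as the small sets of an intersecting family, and bound it layer by layer with \cref{theorem:erdoskorado}, arriving at the same binomial sum. The only difference is cosmetic: you phrase the cube step through the complementary maximal intersecting family $\mcF = 2^{[d]}\setminus I$, whereas the paper directly observes that the $1$-coordinate sets of a self-complementary ideal already form an intersecting family.
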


\begin{proof}
    It suffices to prove the result for $\ell_1 = \dots = \ell_d = 2$, since $[\ell_1] \times \dots \times [\ell_d]$ can be partitioned into $\frac{V}{2^d}$ copies of $[2]^d$, where the center of symmetry of each copy of $[2]^d$ is the same as the center of symmetry of $[\ell_1] \times \dots \times [\ell_d]$. An example of this partition can be found in \cref{fig:cspartition1}.
    \begin{figure}[htbp]
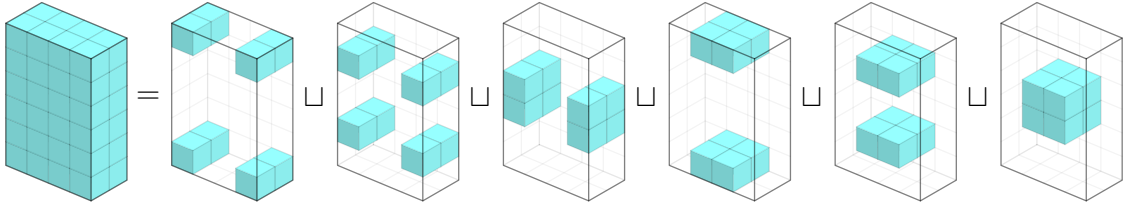

        \centering
        \begin{diagram}[0.9]
            backframe(2, 4, 6);
            ideal(new int[][] {{6, 6, 6, 6}, {6, 6, 6, 6}});
            frontframe(2, 4, 6);
            int[][] blocks = new int[][] {
                {0, 0, 0},
                {0, 0, 1},
                {0, 0, 2},
                {0, 1, 0},
                {0, 1, 1},
                {0, 1, 2}
            };
            for (int i = 1; i <= 6; ++i) {
                pair t = (7*i, 0);
                if (i == 1) {
                    label("$=$", (-2.5, 2)+t);
                } else {
                    label("$\sqcup$", (-2.5, 2)+t);
                }
                int x = blocks[i-1][0];
                int y = blocks[i-1][1];
                int z = blocks[i-1][2];
                backframe(2, 4, 6, t);
                box(x, y, z, 1, 1, 1, t);
                box(x, y, 5-z, 1, 1, 1, t);
                box(x, 3-y, z, 1, 1, 1, t);
                box(x, 3-y, 5-z, 1, 1, 1, t);
                box(1-x, y, z, 1, 1, 1, t);
                box(1-x, y, 5-z, 1, 1, 1, t);
                box(1-x, 3-y, z, 1, 1, 1, t);
                box(1-x, 3-y, 5-z, 1, 1, 1, t);
                frontframe(2, 4, 6, t);
            }
        \end{diagram}
        \caption{The partition of $[2] \times [4] \times [6]$ into six copies of $[2]^3$.}
        \label{fig:cspartition1}
    \end{figure}
    
    Let
    \[C = \{(a_1, \dots, a_d) \in [2]^d \mid \bbmone_{a_1 = 1} + \dots + \bbmone_{a_d = 1} > \tfrac{1}{2}d\}.\]
    be a self-complementary ideal in $[2]^d$, and let $I$ be any self-complementary ideal in $[2]^d$. By \cref{lemma:scdistance}, it suffices to show that
    \[\abs{I \setminus C} \leq \left(\frac{1}{4} - \frac{1}{2^{d+1}} \binom{d-1}{\frac{1}{2}(d-1)}\right)2^d.\]
    Observe that if $(a_1, \dots, a_d)$ and $(b_1, \dots, b_d)$ are elements in $I$, then there is some index $i$ for which $a_i = b_i = 1$, since $I$ is self-complementary. This means $I \setminus C$ can be interpreted as an intersecting family of subsets of $\{1, \dots, d\}$ by mapping each tuple $(a_1, \dots, a_d)$ to the set $\{i \in [d] \mid a_i = 1\}$.
    By \cref{theorem:erdoskorado}, we have 
    \begin{align*}
    \abs{I \setminus C} &\leq \sum_{i = 0}^{(d-3)/2} \binom{d-1}{i} = \frac{1}{2}\left(2^{d-1} - \binom{d-1}{\frac{1}{2}(d-1)}\right) = \left(\frac{1}{4} - \frac{1}{2^{d+1}} \binom{d-1}{\floor{\frac{1}{2}(d-1)}}\right)2^d,
    \end{align*}
    since every tuple in $I \setminus C$ has fewer than $\frac{d}{2}$ 1's. Lastly, the self-complementary ideal \[\{(a_1, \dots, a_d) \in [\ell_1] \times \dots \times [\ell_d] \mid \bbmone_{a_1 \leq \ell_1/2} + \dots + \bbmone_{a_d \leq \ell_d/2} > \tfrac{1}{2}d\}\]
    achieves this bound because it corresponds to the equality cases in \cref{theorem:erdoskorado}.
\end{proof}

An example of a self-complementary ideal in the center of the flip graph of $[6] \times [8] \times [4]$ is depicted in \cref{fig:sccenter}.

\begin{figure}[htbp]
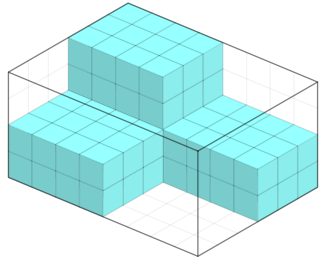

    \centering
    \begin{diagram}[0.25]
        backframe(6, 8, 4);
        ideal(new int[][] {{4, 4, 4, 4, 2, 2, 2, 2},
                           {4, 4, 4, 4, 2, 2, 2, 2},
                           {4, 4, 4, 4, 2, 2, 2, 2},
                           {2, 2, 2, 2, 0, 0, 0, 0},
                           {2, 2, 2, 2, 0, 0, 0, 0},
                           {2, 2, 2, 2, 0, 0, 0, 0}});
        frontframe(6, 8, 4);
    \end{diagram}
    \caption{The unique equality case for the case $(\ell_1, \ell_2, \ell_3) = (6, 8, 4)$.}
    \label{fig:sccenter}
\end{figure}

\begin{lemma}\label{lemma:scradiusupperbound2}
    Let $d$ be a positive integer, and let $\ell_1, \dots, \ell_d$ be a sequence of even positive integers with product $V$. Suppose $\ell_d$ is divisible by 4. Then the radius of $G(\ell_1, \dots, \ell_d)$ is at most
    \[\left(\frac{1}{4} - \frac{1}{2^{d+1}} \binom{d-1}{\floor{\frac{1}{2}(d-1)}}\right)V.\]
    Additionally, if $d$ is odd, then the self-complementary ideal
    \[\{(a_1, \dots, a_d) \in [\ell_1] \times \dots \times [\ell_d] \mid \bbmone_{a_1 \leq \ell_1/2} + \dots + \bbmone_{a_d \leq \ell_d/2} > \tfrac{1}{2}d\}\]
    achieves this bound. Similarly, if $d$ is even, then the self-complementary ideal 
    \[\{(a_1, \dots, a_d) \in [\ell_1] \times \dots \times [\ell_d] \mid \bbmone_{a_1 \leq \ell_1/2} + \dots + \bbmone_{a_{d-1} \leq \ell_{d-1}/2} + \tfrac{1}{2}\bigl\lceil\tfrac{4a_d}{\ell_d}-1\bigr\rceil > \tfrac{1}{2}d\}\]
    achieves this bound.
\end{lemma}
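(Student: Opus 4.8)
The plan is to reduce the statement to the odd-dimensional case, which is already Lemma~\ref{lemma:scradiusupperbound1}. When $d$ is odd there is nothing to prove: the asserted radius bound and the asserted optimal ideal are exactly those of Lemma~\ref{lemma:scradiusupperbound1} (the hypothesis $4\mid\ell_d$ is not needed there). So assume $d$ is even. Since $4\mid\ell_d$, the number $\ell_d/2$ is even, so $P:=[\ell_1]\times\dots\times[\ell_{d-1}]\times[\ell_d/2]\times[2]$ is a product of $d+1$ chains of even length, with $d+1$ now \emph{odd}; write $Q:=[\ell_1]\times\dots\times[\ell_d]$. The crucial remark is that the map $\psi\colon[\ell_d/2]\times[2]\to[\ell_d]$ given by $\psi(a,b)=a+\tfrac{\ell_d}{2}(b-1)$ is an order-preserving bijection commuting with the self-dual involutions, although it is \emph{not} a poset isomorphism (its inverse is not order-preserving). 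Tensoring $\psi$ with the identity on the first $d-1$ factors produces an order-preserving bijection $\Psi\colon P\to Q$ with $\Psi\circ\varphi_P=\varphi_Q\circ\Psi$.

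The next step is to transport self-complementary ideals across $\Psi$. Because $\Psi$ is order-preserving and intertwines the involutions, $I\mapsto\Psi^{-1}(I)$ carries self-complementary ideals of $Q$ to self-complementary ideals of $P$. Since $\Psi$ is bijective, $\lvert\Psi^{-1}(I)\setminus\Psi^{-1}(J)\rvert=\lvert I\setminus J\rvert$, so Lemma~\ref{lemma:scdistance} gives $\dist_{G(Q)}(I,J)=\dist_{G(P)}(\Psi^{-1}(I),\Psi^{-1}(J))$ for all self-complementary ideals $I,J$ of $Q$. Therefore, whenever $\widehat C$ is a self-complementary ideal of $P$ whose forward image $\Psi(\widehat C)$ happens to be a down-set (hence a self-complementary ideal of $Q$), the eccentricity of $\Psi(\widehat C)$ in $G(Q)$ equals $\max_I\lvert I\setminus\Psi(\widehat C)\rvert=\max_I\lvert\Psi^{-1}(I)\setminus\widehat C\rvert$, which is at most the eccentricity of $\widehat C$ in $G(P)$ --- the inequality only because $\Psi^{-1}$ need not be surjective onto the self-complementary ideals of $P$. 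In particular the radius of $G(Q)$ is at most the eccentricity of $\widehat C$ in $G(P)$.

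Now take $\widehat C$ to be the ``majority'' self-complementary ideal of $P$ provided by Lemma~\ref{lemma:scradiusupperbound1} (applicable because $P$ has an odd number, $d+1$, of even-length factors), whose eccentricity in $G(P)$ is $\bigl(\tfrac14-\tfrac1{2^{d+2}}\binom{d}{d/2}\bigr)V$. All that remains is to check $\Psi(\widehat C)$ is a down-set of $Q$: the majority ideal of $P$ is cut out by ``$\text{(sum of coordinatewise weights)}>\tfrac{d+1}{2}$,'' where each weight is an order-reversing $\{0,1\}$-valued function on its factor, and pushing the pair of weights on the $[\ell_d/2]\times[2]$-factor forward through $\psi$ yields a single weight on $[\ell_d]$ that is again order-reversing --- one computes that it is constant on the four quarters of $[\ell_d]$, with values $2,1,1,0$. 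Hence $\Psi(\widehat C)$ is again of the form ``$\text{(sum of order-reversing weights)}>\text{threshold}$,'' so a down-set, and being self-complementary it is a vertex of $G(Q)$. It follows that the radius of $G(Q)$ is at most $\bigl(\tfrac14-\tfrac1{2^{d+2}}\binom{d}{d/2}\bigr)V=\bigl(\tfrac14-\tfrac1{2^{d+1}}\binom{d-1}{\lfloor(d-1)/2\rfloor}\bigr)V$, using the identity $\binom{d}{d/2}=2\binom{d-1}{\lfloor(d-1)/2\rfloor}$. A short rewriting of the quarter-values $2,1,1,0$ identifies $\Psi(\widehat C)$ with the ideal displayed in the statement, and this ideal achieves the bound because its eccentricity is at most the bound just proved and at least the radius of $G(Q)$, which is at least the same bound by Lemma~\ref{lemma:scradiuslowerbound1}.

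I expect the only genuine subtlety to be the asymmetry of $\Psi$: ideals pull back from $Q$ to $P$ for free, but push forward from $P$ to $Q$ only when their defining inequality is compatible with the coordinate map $\psi$. This is exactly why one must use the explicit product description of the majority ideal of $P$ --- rather than an arbitrary center of $G(P)$ --- and verify directly that $\Psi(\widehat C)$ remains downward closed. The remaining ingredients (order-preservation and involution-compatibility of $\psi$, transport of the self-complementary condition along $\Psi^{-1}$, and the binomial identity) are routine.
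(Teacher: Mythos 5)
Your proof is correct and follows essentially the same strategy as the paper: both transfer the odd-dimensional radius bound of Lemma~\ref{lemma:scradiusupperbound1} across a bijection between $[\ell_1]\times\cdots\times[\ell_d]$ and a $(d+1)$-fold product of even chains, using that the bijection is order-preserving from the $(d+1)$-chain side so that self-complementary ideals pull back. The paper first partitions $[\ell_1]\times\cdots\times[\ell_d]$ into copies of $[2]^{d-1}\times[4]$ and then uses the bijection $[2]^{d-1}\times[4]\to[2]^{d+1}$; you skip that reduction and apply the bijection $[\ell_d/2]\times[2]\to[\ell_d]$ directly, which is a mild simplification. You are also more careful than the paper at the one nontrivial point: the paper merely asserts that its explicit $C$ maps to the center of $[2]^{d+1}$, whereas you compute the pushed-forward weight $(2,1,1,0)$ on the four quarters of $[\ell_d]$ and check it is order-reversing, hence that $\Psi(\widehat C)$ is a down-set. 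Incidentally, the displayed formula in the lemma statement has a sign slip in the $a_d$-weight --- $\tfrac12\lceil 4a_d/\ell_d - 1\rceil$ is increasing in $a_d$, so the set it cuts out is an up-set rather than an ideal; the ideal you construct (equivalently, the paper's $C$ in the proof, with the non-strict inequality corrected to strict) is the intended one.
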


\begin{proof}
    If $d$ is odd, the result follows from \cref{lemma:scradiusupperbound1}, so assume henceforth that $d$ is even and $a_d$ is divisible by 4. It suffices to prove the result for $\ell_1 = \dots = \ell_{d-1} = 2$ and $\ell_d = 4$, since $[\ell_1] \times \dots \times [\ell_d]$ can be partitioned into $\frac{V}{2^{d+1}}$ copies of $[2]^{d-1} \times [4]$, where the center of symmetry of each copy of $[2]^{d-1} \times [4]$ is the same as the center of symmetry of $[\ell_1] \times \dots \times [\ell_d]$, and if $(a_1, \dots, a_{d-1}, a_d)$ lies in a copy, then so does $(a_1, \dots, a_{d-1}, \frac{1}{2}\ell_d+1 - a_d)$, $(a_1, \dots, a_{d-1}, \frac{1}{2}\ell_d + a_d)$, and $(a_1, \dots, a_{d-1}, \ell_d+1-a_d)$.

    \begin{figure}[htbp]
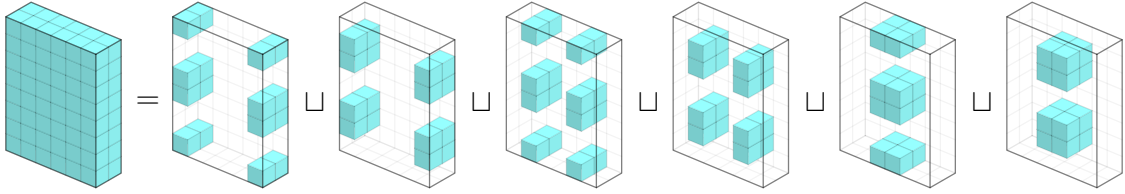

        \centering
        \begin{diagram}[0.9]
            backframe(2, 6, 8);
            ideal(new int[][] {{8, 8, 8, 8, 8, 8}, {8, 8, 8, 8, 8, 8}});
            frontframe(2, 6, 8);
            int[][] blocks = new int[][] {
                {0, 0, 0},
                {0, 0, 1},
                {0, 1, 0},
                {0, 1, 1},
                {0, 2, 0},
                {0, 2, 1}
            };
            for (int i = 1; i <= 6; ++i) {
                pair t = (10*i, 0);
                if (i == 1) {
                    label("$=$", (-3, 2)+t);
                } else {
                    label("$\sqcup$", (-3, 2)+t);
                }
                int x = blocks[i-1][0];
                int y = blocks[i-1][1];
                int z = blocks[i-1][2];
                backframe(2, 6, 8, t);
                box(x, y, z, 1, 1, 1, t);
                box(x, y, 3-z, 1, 1, 1, t);
                box(x, y, 4+z, 1, 1, 1, t);
                box(x, y, 7-z, 1, 1, 1, t);
                box(x, 5-y, z, 1, 1, 1, t);
                box(x, 5-y, 3-z, 1, 1, 1, t);
                box(x, 5-y, 4+z, 1, 1, 1, t);
                box(x, 5-y, 7-z, 1, 1, 1, t);
                box(1-x, y, z, 1, 1, 1, t);
                box(1-x, y, 3-z, 1, 1, 1, t);
                box(1-x, y, 4+z, 1, 1, 1, t);
                box(1-x, y, 7-z, 1, 1, 1, t);
                box(1-x, 5-y, z, 1, 1, 1, t);
                box(1-x, 5-y, 3-z, 1, 1, 1, t);
                box(1-x, 5-y, 4+z, 1, 1, 1, t);
                box(1-x, 5-y, 7-z, 1, 1, 1, t);
                frontframe(2, 6, 8, t);
            }
        \end{diagram}
        \caption{The partition of $[2] \times [6] \times [8]$ into six copies of $[2] \times [2] \times [4]$.}
        \label{fig:cspartition2}
    \end{figure}

    Let
    \[C = \{(a_1, \dots, a_d) \in [2]^{d-1} \times [4] \mid \bbmone_{a_1=1} + \dots + \bbmone_{a_{d-1}=1} + \tfrac{4-a_d}{2} \geq \tfrac{d}{2}\}\]
    be a self-complementary ideal in $[2]^{d-1} \times [4]$, and let $I$ be any self-complementary ideal in $[2]^{d-1} \times [4]$. It suffices to show that
    \[\abs{I \setminus C} \leq \left(\frac{1}{4} - \frac{1}{2^{d+1}} \binom{d-1}{\frac{1}{2}(d-2)}\right)2^{d+1}.\]
    Let $\varphi: [2]^{d-1} \times [4] \to [2]^{d+1}$ be the map given by
    \[\textstyle(a_1, \dots, a_{d-1}, a_d) \mapsto
    \begin{cases}
        (a_1, \dots, a_{d-1}, 1, 1) & \text{if } a_d = 1 \\ 
        (a_1, \dots, a_{d-1}, 1, 2) & \text{if } a_d = 2 \\ 
        (a_1, \dots, a_{d-1}, 2, 1) & \text{if } a_d = 3 \\ 
        (a_1, \dots, a_{d-1}, 2, 2) & \text{if } a_d = 4 \\ 
    \end{cases}.\]
    Observe that $\varphi$ maps self-complementary ideals of $[2]^{d-1} \times [4]$ to self-complementary ideals of $[2]^{d+1}$. Since $\varphi$ maps $C$ to an element in the center of $[2]^{d+1}$, we apply \cref{lemma:scradiusupperbound1} to obtain
    \[\abs{I \setminus C} = \abs{\varphi(I) \setminus \varphi(C)} \leq \left(\frac{1}{4} - \frac{1}{2^{d+2}} \binom{d}{\frac{1}{2}d}\right) 2^{d+1} = \left(\frac{1}{4} - \frac{1}{2^{d+1}} \binom{d-1}{\frac{1}{2}(d-2)}\right)2^{d+1}. \qedhere\]
\end{proof}

\begin{conjecture}\label{conjecture:radiusinevendimension}
    Let $d$ be an even positive integer. Then the radius of the flip graph of $[2]^d$ is at most
    \[\ceil{\left(\frac{1}{4} - \frac{1}{2^{d+1}} \binom{d-1}{\frac{1}{2}(d-2)}\right) 2^d}.\]
    Additionally, there exists a self-complementary ideal achieving this bound containing 
    \[\{(a_1, \dots, a_d) \in [2]^d \mid \bbmone_{a_1=1} + \dots + \bbmone_{a_d=1} > \tfrac{1}{2}d\}\]
    as a subset.
\end{conjecture}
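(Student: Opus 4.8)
The lower bound is already available: \cref{lemma:scradiuslowerbound1} with $V = 2^d$ gives that the radius is at least $\bigl(\tfrac14 - \tfrac{1}{2^{d+1}}\binom{d-1}{\frac12(d-2)}\bigr)2^d = 2^{d-2} - \tfrac12\binom{d-1}{\frac12(d-2)}$, and since the radius is an integer it is at least the ceiling of this, which is exactly $2^{d-2} - \floor{\tfrac12\binom{d-1}{\frac12(d-2)}}$. So the content is the matching upper bound, realized by a center of the stated form. I would work entirely in set-theoretic language: a self-complementary ideal of $[2]^d$ is the same as a maximal intersecting family $\mathcal F$ on $[d]$ (a self-complementary up-set for $\subseteq$), via $a \leftrightarrow \{i : a_i = 1\}$; by \cref{lemma:scdistance} the distance between two such ideals is $\abs{\mathcal F \setminus \mathcal F'}$, and the ideal $\{a : \bbmone_{a_1=1}+\dots+\bbmone_{a_d=1} > \tfrac12 d\}$ becomes the ``majority family'' $\{A \subseteq [d] : \abs A > d/2\}$.

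\textbf{The center.} Set $N = \binom{d-1}{\frac12(d-2)} = \tfrac12\binom{d}{d/2}$. A self-complementary ideal containing the majority family has exactly $N$ further elements, all of size $d/2$, and these necessarily form a transversal $\mathcal M$ of the $N$ complementary pairs inside $\binom{[d]}{d/2}$; so the conjectured center must be $\mathcal C = \{A : \abs A > d/2\} \cup \mathcal M$ (which is easily checked to be a maximal intersecting family). The first sub-goal is to choose $\mathcal M$ \emph{balanced}, i.e.\ so that every element of $[d]$ lies in at least $\floor{N/2}$ members of $\mathcal M$; since the element-degrees of $\mathcal M$ sum to $\tfrac d2 N$, for $N$ even this forces $\mathcal M$ to be exactly $(N/2)$-regular. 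Such transversals exist for small $d$ (for $d=6$ one may take the transversal fixed by the exceptional transitive action of $A_5$ on $6$ points, which has no element of all-even cycle type and hence stabilizes a transversal, $5$-regular by transitivity), and the general existence should follow from an explicit construction or a rounding of the fractional transversal $x_A \equiv \tfrac12$, all of whose element-marginals equal $N/2$.

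\textbf{Reduction to one inequality.} For an arbitrary maximal intersecting $\mathcal F$, let $\mathcal F_< = \{A \in \mathcal F : \abs A < d/2\}$, let $a(\mathcal F) = \abs{\mathcal F \cap \mathcal M}$, and let
\[\delta(\mathcal F) = \Bigl(\sum_{j=0}^{d/2-2}\binom{d-1}{j}\Bigr) - \abs{\mathcal F_<} = (2^{d-2} - N) - \abs{\mathcal F_<} \ge 0\]
be the deficiency of $\abs{\mathcal F_<}$ below the bound forced by applying \cref{theorem:erdoskorado} at each level $1, \dots, d/2-1$ (using $\sum_{j=0}^{d/2-1}\binom{d-1}{j} = 2^{d-2}$). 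Since the middle layer of $\mathcal F$ is a transversal of the $N$ pairs, one obtains the clean identity
\[\dist(\mathcal C, \mathcal F) = \abs{\mathcal F \setminus \mathcal C} = \abs{\mathcal F_<} + \bigl(N - a(\mathcal F)\bigr) = 2^{d-2} - \bigl(\delta(\mathcal F) + a(\mathcal F)\bigr),\]
so the eccentricity of $\mathcal C$ is $2^{d-2} - \min_{\mathcal F}\bigl(\delta(\mathcal F) + a(\mathcal F)\bigr)$, and everything reduces to showing $\delta(\mathcal F) + a(\mathcal F) \ge \floor{N/2}$ for every $\mathcal F$. When $\delta(\mathcal F) = 0$, the equality case of \cref{theorem:erdoskorado} at level $1$ forces a singleton into $\mathcal F$, hence (up-closedness) the whole star through some $x$, so $\mathcal F$ is that star and $a(\mathcal F) = \deg_{\mathcal M}(x) \ge \floor{N/2}$ by balance; at the other extreme $\mathcal F_< = \emptyset$ and the claim becomes the purely numerical inequality $2^{d-2} \ge N + \floor{N/2}$, which holds for every even $d$.

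\textbf{The obstacle.} The crux is the intermediate range $0 < \delta(\mathcal F) < 2^{d-2}-N$: one must show that if the lower part of $\mathcal F$ falls short of the maximal (star) size by $\delta$, then its middle layer still meets $\mathcal M$ in at least $\floor{N/2} - \delta$ sets. The natural tools are a \emph{quantitative} stability form of \cref{theorem:erdoskorado} (Hilton--Milner/Frankl-type, but with the precise constant $\floor{N/2}$, which the off-the-shelf statements do not deliver) combined with the balance of $\mathcal M$; or a direct compression argument driving $\mathcal F$ toward a star, the difficulty being that the natural compressions preserve $\abs{\mathcal F_<}$ and the maximal-intersecting property but alter $\abs{\mathcal F \cap \mathcal M}$ for a fixed $\mathcal M$. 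A cleaner route may be to lower-bound $a(\mathcal F)$ using only that the upper shadow of $\mathcal F_<$ inside $\binom{[d]}{d/2}$ is automatically contained in the middle layer of $\mathcal F$, and then relate the size of that shadow, its overlap with $\mathcal M$, and $\abs{\mathcal F_<}$ by a Kruskal--Katona-type estimate. Because the bound is tight already at $d=2,4$ and, via the $A_5$-transversal, at $d=6$, the construction of $\mathcal M$ and the numerics must be pinned down exactly and cross-checked against the computations of \cref{section:code}.
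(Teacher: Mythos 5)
You have not proved the statement, but neither does the paper: \cref{conjecture:radiusinevendimension} is presented there as a conjecture, with only a remark reducing it to \cref{conjecture:chvatal3}, a special case of Chv\'atal's conjecture (\cref{conjecture:chvatal1}) that the paper verifies only for $d \leq 6$ (via the explicit uniform families and the two ad hoc proofs given after \cref{conjecture:chvatal2}). Your partial argument is sound as far as it goes, and in fact it reproduces the paper's reduction almost exactly: your ``balanced transversal'' $\mathcal{M}$ is the complemented form of the paper's \emph{uniform} set $\mcH$ (take $\mcH = \{[d] \setminus A \mid A \in \mathcal{M}\}$; the degree conditions $\deg_{\mathcal{M}}(x) \geq \floor{N/2}$ and $\deg_{\mcH}(x) \leq \ceil{N/2}$ are equivalent since the degrees sum to $N$ over the $N$ complementary pairs, and existence is not the issue --- the paper cites Lemma 13 of \cite{meyerowitz1995maximal} for it, so your $A_5$ construction at $d=6$ and the fractional-rounding sketch are unnecessary). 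Your identity $\dist(\mathcal{C}, \mcF) = \abs{\mcF_<} + (N - a(\mcF))$ and the target inequality $\delta(\mcF) + a(\mcF) \geq \floor{N/2}$ are, after this translation, precisely the statement that every intersecting subfamily of $\mcD = \mcH \cup \{A \mid \abs{A} \leq \tfrac12 d - 1\}$ has size at most $\ceil{\tfrac14\binom{d}{d/2}} + \sum_{i=1}^{d/2-1}\binom{d-1}{i-1}$, i.e.\ \cref{conjecture:chvatal3} itself.

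So the gap you flag as ``the obstacle'' --- the range $0 < \delta(\mcF) < 2^{d-2} - N$ --- is not a technical loose end but the entire open content of the conjecture. You are right that Hilton--Milner/Frankl-type stability does not deliver the constant $\floor{N/2}$, that compressions disturb $\abs{\mcF \cap \mathcal{M}}$ for a fixed $\mathcal{M}$, and that shadow estimates alone are not known to suffice; this is exactly why the paper ties the statement to Chv\'atal's conjecture rather than proving it. Your extreme cases ($\delta = 0$ forcing a star, and $\mcF_< = \emptyset$ giving the numerical check $2^{d-2} - N \geq \floor{N/2}$, tight at $d = 2, 4$) are correct but are also the easy cases. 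If you want to make genuine progress beyond the paper, the realistic targets are either a proof of \cref{conjecture:chvatal2}/\cref{conjecture:chvatal3} for the next values of $d$ (the paper's two $d=6$ proofs --- a partition of $\mcD$ into five intersecting-bounded blocks, and an indicator-function averaging over $\mcH$ --- are the natural things to try to generalize, and the paper notes why they resist generalization), or an argument that works for \emph{some} choice of $\mathcal{M}$ exploiting its structure, which is all the conjecture requires.
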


\begin{remark}
    The truth of a specific case (\cref{conjecture:chvatal3}) of Chv\'atal's conjecture (\cref{conjecture:chvatal1}) implies the truth of this conjecture. To see why, let $\mcH$ be a uniform set of subsets of $\{1, \dots, d\}$, satisfying the conditions of \cref{conjecture:chvatal3}.
    Let $\varphi$ be the map from elements of $[2]^d$ to subsets of $\{1, \dots, d\}$ given by
    \[(a_1, \dots, a_d) \mapsto \{i \in [d] \mid a_i = 1\},\]
    and let $C = [2]^d \setminus \{\varphi^{-1}(A) \mid A \in \mcH\}$. Observe that $C$ is an self-complementary ideal of $[2]^d$, since
    \begin{itemize}
        \item for each subset $A \subseteq \{1, \dots, d\}$, exactly one of $A$ or $\{1, \dots, d\} \setminus A$ is contained in $\mcH$, and
        \item $\varphi$ maps the order structure on $[2]^d$ to the reverse inclusion structure on subsets of $\{1, \dots, d\}$ (that is, $a \leq_{[2]^d} b \iff \varphi(a) \supseteq \varphi(b)$), so $C$ is an ideal.
    \end{itemize}
    Now by \cref{lemma:scdistance}, it suffices to show that \[\abs{I \setminus C} \leq \ceil{\left(\frac{1}{4} - \frac{1}{2^{d+1}} \binom{d-1}{\frac{1}{2}(d-2)}\right)2^d}\] for any self-complementary ideal $I\subset [2]^d$. Indeed,
    \[\abs{I \setminus C} = \abs{\varphi(I) \setminus \varphi(C)} = \abs{\varphi(I) \cap \mcH}.\]
    Since $I$ is a self-complementary ideal, every pair of sets in $\varphi(I)$ must share an element. By the hypothesis of the conjecture, we have
    \[\abs{\varphi(I) \cap \mcH} \leq \ceil{\frac{1}{4}\binom{d}{\frac{1}{2}d}} + \sum_{i=1}^{\frac{1}{2}d-1} \binom{d-1}{i-1} = \ceil{\left(\frac{1}{4} - \frac{1}{2^{d+1}} \binom{d-1}{\frac{1}{2}(d-2)}\right)2^d}.\]
    Finally, observe that $C$ contains
    \[\{(a_1, \dots, a_d) \in [2]^d \mid \bbmone_{a_1=1} + \dots + \bbmone_{a_d=1} > \tfrac{1}{2}d\}\]
    as a subset, as desired. 
\end{remark}

\begin{corollary}\label{corollary:scradiusupperbound3}
    Assume \cref{conjecture:radiusinevendimension}. Let $d$ be a positive integer, and let $\ell_1, \dots, \ell_d$ be a sequence of even positive integers with product $V$. Suppose none of $\ell_1, \dots, \ell_d$ are divisible by 4. Then the radius of $G(\ell_1, \dots, \ell_d)$ is at most
    \[\ceil{\left(\frac{1}{4} - \frac{1}{2^{d+1}} \binom{d-1}{\floor{\frac{1}{2}(d-1)}}\right)V}.\]
    Additionally, there exists a self-complementary ideal achieving this bound containing 
    \[\{(a_1, \dots, a_d) \in [\ell_1] \times \dots \times [\ell_d] \mid \bbmone_{a_1 \leq \ell_1/2} + \dots + \bbmone_{a_d \leq \ell_d/2} > \tfrac{1}{2}d\}\]
    as a subset.
\end{corollary}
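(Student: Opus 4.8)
The plan is to split on the parity of $d$. If $d$ is odd there is nothing new: \cref{lemma:scradiusupperbound1} applies since all $\ell_i$ are even, and because $\floor{\tfrac12(d-1)}=\tfrac12(d-1)$ one checks that $\left(\tfrac14-\tfrac1{2^{d+1}}\binom{d-1}{(d-1)/2}\right)V$ is already an integer, so the ceiling does nothing. Hence the real content is the case $d$ even, which I would attack by imitating the partition arguments in the proofs of \cref{lemma:scradiusupperbound1,lemma:scradiusupperbound2}.

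Abbreviate $\alpha=\tfrac14-\tfrac1{2^{d+1}}\binom{d-1}{(d-2)/2}$, so the target bound is $\ceil{\alpha V}$, and invoke \cref{conjecture:radiusinevendimension} to obtain a self-complementary ideal $C_0$ of $[2]^d$ of eccentricity at most $\ceil{\alpha 2^d}$ that contains the corner $\{\vec a\in[2]^d\mid\bbmone_{a_1=1}+\dots+\bbmone_{a_d=1}>\tfrac12 d\}$. Writing $\ell_i=2u_i$ (each $u_i$ odd, as $4\nmid\ell_i$), I would partition $[\ell_1]\times\dots\times[\ell_d]$ into the $M:=u_1\cdots u_d=V/2^d$ subposets $B_{\vec k}=\{(a_1,\dots,a_d)\mid a_i\in\{k_i,\ell_i+1-k_i\}\}$, $\vec k\in[u_1]\times\dots\times[u_d]$, each isomorphic to $[2]^d$ and invariant under the self-dual involution. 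Two observations drive everything: (i) the trace $I\cap B_{\vec k}$ of any self-complementary ideal $I$ is again a self-complementary ideal of $[2]^d$, since downward-closure and self-complementarity are inherited by restriction; and (ii) if self-complementary ideals $C_{\vec k}$ of the $B_{\vec k}$ are chosen with $C:=\bigcup_{\vec k}C_{\vec k}$ downward-closed, then $C$ is a self-complementary ideal and, because the $B_{\vec k}$ partition the ground set, $\dist(C,I)=|I\setminus C|=\sum_{\vec k}|(I\cap B_{\vec k})\setminus C_{\vec k}|$ for every self-complementary $I$ by \cref{lemma:scdistance}. So the task reduces to choosing the $C_{\vec k}$ to keep $C$ an ideal that contains the required corner while forcing $\sum_{\vec k}|(I\cap B_{\vec k})\setminus C_{\vec k}|\le\ceil{\alpha V}=\ceil{M\alpha 2^d}$ for all $I$.

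When $\alpha 2^d$ is an integer I would simply take every $C_{\vec k}=C_0$: each summand is at most $\ceil{\alpha 2^d}=\alpha 2^d$, the total is $M\alpha 2^d=\ceil{\alpha V}$, the union is downward-closed just as in \cref{lemma:scradiusupperbound1}, and it contains the stated set because every copy of $C_0$ does. The genuinely delicate case is $\alpha 2^d$ a half-integer --- which, by Kummer's theorem applied to $\binom{d-1}{(d-2)/2}$, occurs exactly when $d$ is a power of $2$ --- since then the uniform choice overshoots the target by $\tfrac12(M-1)$. Here I would instead place, in a subset of the copies chosen so that $C$ stays downward-closed, a version of $C_0$ obtained by permuting the $d$ coordinates inside $B_{\vec k}\cong[2]^d$; since $C_0$ and all of its coordinate permutations contain the symmetric corner $\{\vec b\mid\sum b_i<\tfrac12 d\}$, the glued $C$ still contains the required set. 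The saving is extracted from the monotonicity of the traces: fixing all coordinates of $\vec k$ but one and increasing that coordinate makes the ``$0$-half'' of $I\cap B_{\vec k}$ shrink under inclusion, which severely constrains the set of copies where $I\cap B_{\vec k}$ disagrees with $C_{\vec k}$, and a direct count --- the exact analogue of the threshold bookkeeping that realizes $G([2u_1]\times[2])$ as a path of radius $\ceil{u_1/2}$ --- then bounds $\sum_{\vec k}|(I\cap B_{\vec k})\setminus C_{\vec k}|$ by $\ceil{M\alpha 2^d}$.

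I expect the one place real care is needed to be this last step: matching up which copies receive a permuted $C_0$ so that $C$ is simultaneously downward-closed, contains the corner, and has its disagreement region small enough to hit $\ceil{M\alpha 2^d}$ for every $I$. Everything else --- the odd-$d$ reduction, the integer case, and the two general facts about traces --- is essentially forced by the partition, so the half-integer bookkeeping is where the proof's weight sits.
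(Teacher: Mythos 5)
Your reduction to the case $d$ even, and your uniform partition of $[\ell_1]\times\dots\times[\ell_d]$ into $M=V/2^d$ involution-invariant copies of $[2]^d$, are fine as far as they go: the traces $I\cap B_{\vec k}$ are indeed self-complementary ideals of the copies, and when $\alpha 2^d$ is an integer (i.e.\ $d$ not a power of $2$) taking $C_{\vec k}=C_0$ in every copy does give the bound, essentially as in \cref{lemma:scradiusupperbound1}. But in the one case where the statement has any content beyond \cref{lemma:scradiusupperbound2} --- $d$ a power of $2$, so each copy can cost $\alpha 2^d+\tfrac12$ --- your argument is not a proof. You need to save $(M-1)/2$ relative to the naive bound, i.e.\ you must exhibit a single fixed assignment of per-copy centers (kept globally downward-closed and containing the majority corner) such that for \emph{every} self-complementary $I$ the total excess over $\alpha 2^d$, summed over all $M=u_1\cdots u_d$ copies, is at most $\tfrac12$. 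The mechanism you offer --- permuting coordinates of $C_0$ in some copies, plus ``monotonicity of the traces'' along one coordinate of $\vec k$ and an analogy with the path graph $G([2u_1]\times[2])$ --- is a one-dimensional heuristic for a genuinely multidimensional constraint; nothing in the sketch shows that the disagreement pattern of an arbitrary $I$ across the full grid of copies can be controlled this tightly, and even in the smallest case $d=2$, $\ell_1=\ell_2=6$ one has to find a specific staircase of per-copy choices and check all $20$ ideals against it. So the half-integer case, which you correctly identify as carrying the proof's weight, is asserted rather than established; that is a genuine gap.

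The paper avoids this difficulty entirely by choosing a different partition. Setting $c_i=\{\tfrac14\ell_i+\tfrac12,\ \tfrac34\ell_i+\tfrac12\}$ (integers since $\ell_i\equiv 2\pmod 4$), it writes $P=P_0\sqcup P_1\sqcup\dots\sqcup P_d$ with $P_0=c_1\times\dots\times c_d$ a single centrally placed copy of $[2]^d$ and $P_i=[\ell_1]\times\dots\times[\ell_{i-1}]\times([\ell_i]\setminus c_i)\times c_{i+1}\times\dots\times c_d$. Each $P_i$ with $i\geq 1$ has a chain factor of length $\ell_i-2\equiv 0\pmod 4$, so the \emph{unconditional} \cref{lemma:scradiusupperbound2} supplies a central ideal $C_i$ with eccentricity at most $\bigl(\tfrac14-\tfrac1{2^{d+1}}\binom{d-1}{\frac12(d-2)}\bigr)\abs{P_i}$, with no ceiling; \cref{conjecture:radiusinevendimension} is invoked only on the $2^d$-element block $P_0$. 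Gluing and summing then incurs the ceiling exactly once, giving $\ceil{\alpha V}$ with no global bookkeeping over many copies. If you want to salvage your route, you would need to prove the missing combinatorial statement about simultaneous trace disagreements; the paper's partition shows this can be bypassed altogether.
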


\begin{proof}
    Let $P$ denote the poset $[\ell_1] \times \dots \times [\ell_d]$. For each index $i \in \{1, \dots, d\}$, define $c_i = \{\tfrac{1}{4}\ell_i+\tfrac{1}{2}, \tfrac{3}{4}\ell_i+\tfrac{1}{2}\}$. Define the following sequence of subsets of $P$:
    \begin{align*}
        P_0 &= c_1 \times c_2 \times \dots \times c_{d-1} \times c_d \\
        P_1 &= \{[\ell_1] \setminus c_1\} \times c_2 \times \dots \times c_{d-1} \times c_d \\
        P_2 &= [\ell_1] \times \{[\ell_2] \setminus c_2\} \times \dots \times c_{d-1} \times c_d \\
         & \vdots \\
        P_{d-1} &= [\ell_1] \times [\ell_2] \times \dots \times \{[\ell_{d-1}] \setminus c_{d-1}\} \times c_d \\
        P_d &= [\ell_1] \times [\ell_2] \times \dots \times [\ell_{d-1}] \times \{[\ell_d] \setminus c_d\}.
    \end{align*}
    Observe that $P = P_0 \sqcup \dots \sqcup P_d$. (An example of this partition is given in \cref{fig:sc2mod4partition}.)
    
    \begin{figure}[htbp]
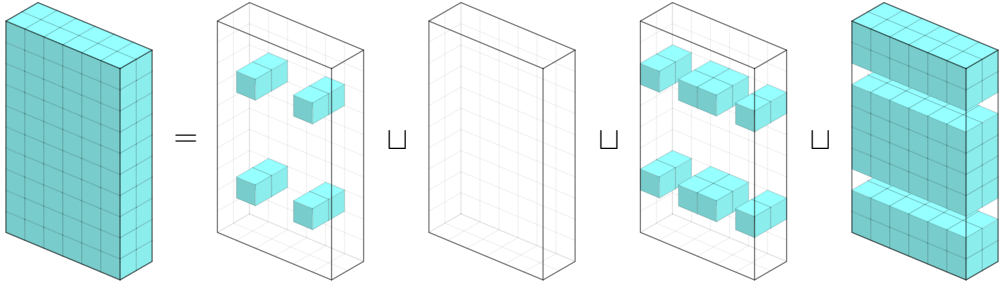

        \centering
        \begin{diagram}[0.8]
            backframe(2, 6, 10);
            ideal(new int[][] {{10, 10, 10, 10, 10, 10},
                               {10, 10, 10, 10, 10, 10}});
            frontframe(2, 6, 10);
            
            pair s = (10, 0);
            backframe(2, 6, 10, s);
            box(0, 1, 2, 1, 1, 1, s);
            box(0, 1, 7, 1, 1, 1, s);
            box(0, 4, 2, 1, 1, 1, s);
            box(0, 4, 7, 1, 1, 1, s);
            box(1, 1, 2, 1, 1, 1, s);
            box(1, 1, 7, 1, 1, 1, s);
            box(1, 4, 2, 1, 1, 1, s);
            box(1, 4, 7, 1, 1, 1, s);
            frontframe(2, 6, 10, s);
    
            pair s = (20, 0);
            backframe(2, 6, 10, s);
            frontframe(2, 6, 10, s);
            
            pair s = (30, 0);
            backframe(2, 6, 10, s);
    
            for (int x = 0; x < 2; ++x) {
                for (int y = 0; y < 6; ++y) {
                    for (int z = 0; z < 10; ++z) {
                        if (z == 2 || z == 7) {
                            if (y != 1 && y != 4) {
                                box(x, y, z, 1, 1, 1, s);
                            }
                        }
                    }
                }
            }
            
            frontframe(2, 6, 10, s);
            pair s = (40, 0);
            backframe(2, 6, 10, s);
            for (int x = 0; x < 2; ++x) {
                for (int y = 0; y < 6; ++y) {
                    for (int z = 0; z < 10; ++z) {
                        if (z != 2 && z != 7) {
                            box(x, y, z, 1, 1, 1, s);
                        }
                    }
                }
            }
            frontframe(2, 6, 10, s);
            label("$=$", (7, 3.5));
            label("$\sqcup$", (17, 3.5));
            label("$\sqcup$", (27, 3.5));
            label("$\sqcup$", (37, 3.5));
        \end{diagram}
        \caption{The partition of the poset $[2] \times [6] \times [10]$.}
        \label{fig:sc2mod4partition}
    \end{figure}
    
    Let $C_0 \subset P_0$ be an ideal satisfying \cref{conjecture:radiusinevendimension}. Similarly, let $C_1 \subset P_1, \dots, C_d \subset P_d$ be ideals satisfying \cref{lemma:scradiusupperbound2}, which exist since each of $P_1, \dots, P_d$ have at least one dimension divisible by 4. Let $C = C_0 \sqcup \cdots \sqcup C_d$.

    $C$ is an ideal of $P$ because the construction given in \cref{lemma:scradiusupperbound2} implies that for every maximal element $a \in C_i$, the set $\{b \in P \mid b \leq a\} \subseteq I$ and $\{b \in P \mid b > a\} \subseteq P \setminus I$. Furthermore, it is self-complementary because $C_0, \dots, C_d$ are all self-complementary. Therefore, for any self-complementary ideal $I \subset P$,
    \begin{align*}
        \dist(C, I) &\leq \dist(C_0, I \cap C_0) + \dots + \dist(C_d, I \cap C_d) \\
        &\leq \ceil{\left(\frac{1}{4} - \frac{1}{2^{d+1}} \binom{d-1}{\frac{1}{2}(d-2)}\right) \abs{P_0}} + \sum_{i=1}^d \left(\frac{1}{4} - \frac{1}{2^{d+1}} \binom{d-1}{\frac{1}{2}(d-2)}\right)\abs{P_i} \\
        &= \ceil{\left(\frac{1}{4} - \frac{1}{2^{d+1}} \binom{d-1}{\frac{1}{2}(d-2)}\right) \left(\abs{P_0} + \dots + \abs{P_d}\right)} \\
        &= \ceil{\left(\frac{1}{4} - \frac{1}{2^{d+1}} \binom{d-1}{\frac{1}{2}(d-2)}\right) V}
    \end{align*}
    by \cref{conjecture:radiusinevendimension} and \cref{lemma:scradiusupperbound2}, since $I \cap C_i$ is an self-complementary ideal of $C_i$ for each $i \in \{0, \dots, d\}$ by construction.
\end{proof}

Combining all previous results gives the following theorem.

\begin{theorem}\label{theorem:scradius}
    Let $d$ be a positive integer, and let $\ell_1, \dots, \ell_d$ be a sequence of even positive integers with product $V$. Suppose that $d$ is odd or that at least one $\ell_i$ is divisible by 4. Then the radius of $G(\ell_1, \dots, \ell_d)$ is
    \[\left(\frac{1}{4} - \frac{1}{2^{d+1}} \binom{d-1}{\floor{\frac{1}{2}(d-1)}}\right)V.\]
    Additionally, if \cref{conjecture:radiusinevendimension} is true and there are no assumptions on $d$ or any $\ell_i$ being divisible by 4, then the radius of $G(\ell_1, \dots, \ell_d)$ is
    \[\ceil{\left(\frac{1}{4} - \frac{1}{2^{d+1}} \binom{d-1}{\floor{\frac{1}{2}(d-1)}}\right)V} = \begin{cases} \left(\frac{1}{4} - \frac{1}{2^{d+1}} \binom{d-1}{\floor{(d-1)/2}}\right)V & \text{if $d$ is not a power of 2} \\ 
    \left(\frac{1}{4} - \frac{1}{2^{d+1}} \binom{d-1}{\floor{(d-1)/2}}\right)V + \frac{1}{2} & \text{if $d$ is a power of 2,} \end{cases}.\]
\end{theorem}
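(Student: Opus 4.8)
The plan is to assemble the matching bounds already established and then evaluate a ceiling. Set $R=\left(\frac14-\frac{1}{2^{d+1}}\binom{d-1}{\floor{(d-1)/2}}\right)V$, the claimed value. The lower bound radius $\ge R$ is precisely \cref{lemma:scradiuslowerbound1}; since the radius is a nonnegative integer, this already upgrades to radius $\ge\ceil R$. So only the upper bound needs work.

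For the upper bound I would first note that permuting the chain factors is harmless: for any permutation $\sigma$ of $\{1,\dots,d\}$ the coordinate shuffle is a poset isomorphism $[\ell_1]\times\dots\times[\ell_d]\cong[\ell_{\sigma(1)}]\times\dots\times[\ell_{\sigma(d)}]$ commuting with the self-dual involutions, hence an isomorphism of flip graphs. For the first (unconditional) statement: if $d$ is odd, \cref{lemma:scradiusupperbound1} gives radius $\le R$; if some $\ell_i$ is divisible by $4$, relabel so that $\ell_d$ is, and \cref{lemma:scradiusupperbound2} gives radius $\le R$. In either case radius $=R$ by the lower bound, and in particular $R\in\bbZ$ so $\ceil R=R$. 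For the conditional statement: when $d$ is odd or some $\ell_i$ is divisible by $4$ we are already done (radius $=R=\ceil R$), and otherwise $d$ is even and every $\ell_i\equiv2\pmod4$, in which case \cref{corollary:scradiusupperbound3} — the sole appeal to \cref{conjecture:radiusinevendimension} — yields radius $\le\ceil R$. Combining with radius $\ge\ceil R$ gives radius $=\ceil R$ in all cases.

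It remains to compute $\ceil R$, i.e.\ to decide when $R\in\bbZ$. Since all $\ell_i$ are even, $2^d\mid V$; for $d\ge2$ this makes $\tfrac14 V$ an integer, so $R$ differs from an integer only by the fractional part of $\frac{V}{2^{d+1}}\binom{d-1}{\floor{(d-1)/2}}$. If some $\ell_i$ is divisible by $4$ then $2^{d+1}\mid V$ and that term is an integer, so $\ceil R=R$, matching the first statement. If instead every $\ell_i\equiv2\pmod4$, write $V=2^d m$ with $m$ odd; the term becomes $\tfrac12 m\binom{d-1}{\floor{(d-1)/2}}$, an integer exactly when $\binom{d-1}{\floor{(d-1)/2}}$ is even. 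So in this regime $\ceil R=R$ if that binomial coefficient is even and $\ceil R=R+\tfrac12$ if it is odd. For odd $d$ the coefficient is $\binom{2k}{k}$ with $k=(d-1)/2$, which is even for $k\ge1$ and equals $1$ when $d=1$; either way $\ceil R=R$, so no correction ever occurs for odd $d$.

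Finally, for even $d$, set $d-1=2k+1$ with $k=(d-2)/2$, and use $\binom{2k+1}{k}=\tfrac12\binom{2k+2}{k+1}$ together with Kummer's theorem, which gives $v_2\binom{2m}{m}=s_2(m)$ where $s_2(m)$ is the number of $1$'s in the binary expansion of $m$. This yields $v_2\binom{d-1}{\floor{(d-1)/2}}=s_2(k+1)-1=s_2(d/2)-1$, which vanishes — making the coefficient odd and $\ceil R=R+\tfrac12$ — exactly when $d/2$, hence $d$, is a power of $2$; otherwise $\ceil R=R$. This is the stated case split. The whole argument is routine bookkeeping except for this final $2$-adic valuation, which is the one place that requires more than quoting an earlier result; that, together with remembering to justify the harmless reordering of the $\ell_i$, are the only points needing care.
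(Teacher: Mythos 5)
Your proposal is correct and follows the same overall strategy as the paper's proof, which simply combines \cref{lemma:scradiuslowerbound1} (lower bound) with \cref{lemma:scradiusupperbound1}, \cref{lemma:scradiusupperbound2}, and \cref{corollary:scradiusupperbound3} (upper bounds). What you add that the paper leaves implicit is genuinely useful: the remark that a coordinate permutation is a flip-graph isomorphism (needed to apply \cref{lemma:scradiusupperbound2}, which is stated with $\ell_d$ divisible by $4$, when some other $\ell_i$ happens to be the one divisible by $4$), and the $2$-adic computation via Kummer's theorem that actually derives the power-of-$2$ case split in the displayed formula, which the paper asserts without justification. Your analysis also surfaces a subtlety worth flagging: the right-hand case split for $\ceil R$ only evaluates correctly when no $\ell_i$ is divisible by $4$ and $d\ge 2$ — if some $\ell_i\equiv 0\pmod 4$ then $R\in\bbZ$ regardless of whether $d$ is a power of $2$, and likewise for $d=1$ — so the display should be read as the evaluation of the ceiling in the residual case not covered by the unconditional part of the theorem.
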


\begin{proof}
    The lower bound in all cases is given by \cref{lemma:scradiuslowerbound1}. The upper bound, assuming $d$ is odd, is given by \cref{lemma:scradiusupperbound1}. The upper bound, assuming one dimension is divisible by 4, is given by \cref{lemma:scradiusupperbound2}. The upper bound, assuming nothing aside from the evenness of $\ell_1, \dots, \ell_d$, is given by \cref{corollary:scradiusupperbound3}, which is true because \cref{conjecture:chvatal3} implies \cref{conjecture:radiusinevendimension}, which implies \cref{corollary:scradiusupperbound3}.
\end{proof}

Now, we turn our attention towards the case when at least one of $\ell_1, \dots, \ell_d$ is odd.

\begin{lemma}\label{lemma:scradiusupperbound4}
    Assume \cref{conjecture:radiusinevendimension}. Given a sequence of positive integers $\ell_1, \dots, \ell_d$, let $r(\ell_1, \dots, \ell_d)$ denote the radius of $G(\ell_1, \dots, \ell_d)$. Suppose $\ell_1, \dots, \ell_{d'}$ are odd, and $\ell_{d'+1}, \dots, \ell_d$ are even. Then for every positive integer $k$, we have
    \[r(\ell_1, \dots, \ell_{d'}) \leq \sum_{S \subseteq \{1, \dots, d\}} r(\{\ell_i-1 \mid i \in S\} \cup \{\ell_{d'+1}, \dots, \ell_d\}).\]
\end{lemma}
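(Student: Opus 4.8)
The plan is to prove the inequality (with the left‑hand side understood as the radius of $G(\ell_1,\dots,\ell_d)$) by exhibiting a single self‑complementary ideal $C$ of $P:=[\ell_1]\times\dots\times[\ell_d]$ whose distance to every self‑complementary ideal is at most $\sum_{S\subseteq\{1,\dots,d'\}} r(\{\ell_i-1:i\in S\}\cup\{\ell_{d'+1},\dots,\ell_d\})$. The key device is a partition of $P$ that is transparent to the involution. For $i\le d'$ let $m_i=\tfrac12(\ell_i+1)$ be the fixed point of the involution on $[\ell_i]$; the induced subposet $[\ell_i]\setminus\{m_i\}$ is a chain isomorphic to $[\ell_i-1]$. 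For $S\subseteq\{1,\dots,d'\}$ set $P_S=\{a\in P: a_i=m_i \text{ for } i\in\{1,\dots,d'\}\setminus S,\ a_i\ne m_i \text{ for } i\in S\}$. Then $P=\bigsqcup_S P_S$, each $P_S$ is $\varphi$‑invariant, and as self‑dual posets $P_S\cong Q_S:=\prod_{i\in S}[\ell_i-1]\times\prod_{i>d'}[\ell_i]$ with $\varphi$ restricting to the standard involution on each $Q_S$. Every $Q_S$ has even side lengths and $e_S:=\abs S+(d-d')$ factors, and $r(Q_S)=r(\{\ell_i-1:i\in S\}\cup\{\ell_{d'+1},\dots,\ell_d\})$ since permuting chains preserves the flip graph.

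First, a distance identity: for any self‑complementary ideals $I,C$ of $P$, the sets $I\cap P_S$ and $C\cap P_S$ are self‑complementary ideals of $Q_S$, so \cref{lemma:scdistance} applied in $P$ and in each $Q_S$ gives $\dist(I,C)=\abs{I\setminus C}=\sum_S\abs{(I\cap P_S)\setminus(C\cap P_S)}=\sum_S\dist(I\cap P_S,C\cap P_S)$. Hence it suffices to produce a self‑complementary ideal $C$ of $P$ whose restriction to each $P_S$ is a \emph{central} self‑complementary ideal of $Q_S$: then $\dist(I,C)\le\sum_S r(Q_S)$ for every $I$, and we are done.

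To build $C$, use the ``threshold'' central ideals coming from the earlier results. For $a\in P$ let $\ell(a)$ and $u(a)$ count the coordinates in the lower and upper half of their chain respectively, where for $i\le d'$ the cases are $a_i<m_i$, $a_i=m_i$, $a_i>m_i$, and for $i>d'$ only $a_i\le\ell_i/2$ or $a_i>\ell_i/2$; on $P_S$, $\ell$ coincides with the lower‑half count in $Q_S$. For each $S$, \cref{lemma:scradiusupperbound1} (if $e_S$ is odd), \cref{lemma:scradiusupperbound2} (if some side length of $Q_S$ is divisible by $4$), or \cref{corollary:scradiusupperbound3} (otherwise, via the assumed \cref{conjecture:radiusinevendimension}) yields a central self‑complementary ideal $C_S$ of $Q_S$ with $\{a\in Q_S:\ell(a)>e_S/2\}\subseteq C_S\subseteq\{a\in Q_S:\ell(a)\ge e_S/2\}$ --- each produces a central ideal containing the open half‑space, and self‑complementarity then forces it to avoid the reflected half‑space $\{\ell<e_S/2\}$. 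Put $C=\bigsqcup_S C_S$. It is self‑complementary because $\varphi(C)=\bigsqcup_S(Q_S\setminus C_S)=P\setminus C$. It is an order ideal: if $a\le_P b$ and $b\in C$, coordinatewise monotonicity gives $\ell(a)\ge\ell(b)$ and $u(a)\le u(b)$; if $\ell(b)>u(b)$ then $\ell(a)>u(a)$, so writing $S$ for the block of $a$ we have $\ell(a)>e_S/2$, hence $a\in C_S$; if $\ell(b)=u(b)$ then $\ell(a)=\ell(b)$ and $u(a)=u(b)$, so $a$ and $b$ have identical lower‑, middle‑, and upper‑half coordinate sets, hence lie in the same block $P_S$, and $a\in C_S$ since $C_S$ is an ideal and $b\in C_S$. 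Thus $C\cap P_S=C_S$ for each $S$, and combining with the distance identity, $\max_I\dist(I,C)\le\sum_S r(Q_S)$.

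I expect the main obstacle to be precisely the verification that $C$ is a genuine ideal: the monotonicity of $\ell$ and $u$ under $\le_P$ is easy, but one must check carefully that distinct blocks $P_S$ meet the ``diagonal'' $\{\ell=u\}$ in mutually incomparable sets, which is what makes the tie‑breaking built into each $C_S$ harmless in $P$. A secondary, bookkeeping‑flavored point is confirming that the central ideals delivered by \cref{lemma:scradiusupperbound1}, \cref{lemma:scradiusupperbound2}, and \cref{corollary:scradiusupperbound3} really do lie between the two half‑spaces --- immediate from the threshold formula in the first case, built into the statement of \cref{corollary:scradiusupperbound3} in the third, and obtained from the lift $[2]^{e_S-1}\times[4]\to[2]^{e_S+1}$ used in that lemma's proof in the second. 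The remaining ingredients --- the partition, the isomorphisms $P_S\cong Q_S$, and the final summation over $S$ --- are routine.
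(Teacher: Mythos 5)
Your proposal is correct and takes essentially the same route as the paper: the same partition of $[\ell_1]\times\dots\times[\ell_d]$ according to which odd-length coordinates sit at their chain's midpoint, the same gluing $C=\bigsqcup_S C_S$ of the central ideals supplied by \cref{lemma:scradiusupperbound1}, \cref{lemma:scradiusupperbound2}, and \cref{corollary:scradiusupperbound3}, the same half-space containment/avoidance argument for the ideal property, and the same blockwise summation via \cref{lemma:scdistance} (your verification of the ideal property is in fact more careful than the paper's). The only slip is in your tie case: from $\ell(b)=u(b)$ one cannot conclude $\ell(a)=\ell(b)$ and $u(a)=u(b)$ directly, but if either equality fails then $\ell(a)\ge\ell(b)=u(b)\ge u(a)$ becomes strict, so $a$ lies in the open half-space and is in $C$ anyway, and the remaining case is exactly the one you treat.
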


\begin{proof}
    Let $P$ denote $[\ell_1]\times\cdots\times [\ell_d]$.
    Without loss of generality, assume $\ell_1, \dots, \ell_{d'}$ are odd, and $\ell_{d'+1}, \dots, \ell_d$ are even. For each index $i \in \{1, \dots, d'\}$, let $m_i = \{\frac{1}{2}(\ell_i+1)\}$. For each subset $S \subseteq \{1, \dots, d'\}$, define
    \[P_S = \left(\begin{cases}m_1 & \text{if $1 \in S$} \\ [\ell_1] \setminus m_1 & \text{if $1 \notin S$} \end{cases}\right) \times \dots \times \left(\begin{cases}m_{d'} & \text{if $d' \in S$} \\ [\ell_{d'}] \setminus m_{d'} & \text{if $d' \notin S$} \end{cases}\right) \times [\ell_{d'+1}] \times \dots \times [\ell_d].\]
    Observe that
    \[[\ell_1] \times \dots \times [\ell_d] = \bigsqcup_{S \subseteq \{1, \dots, d'\}} P_S.\] (An example of this partition is given in \cref{fig:cspartition3}.)

    \begin{figure}[htbp]
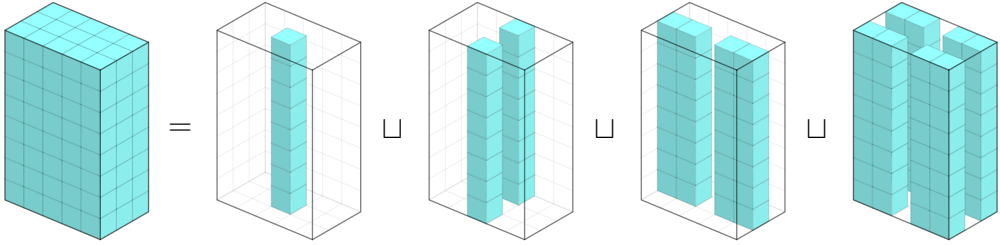

        \centering
        \begin{diagram}[0.8]
            backframe(3, 5, 8);
            ideal(new int[][] {{8, 8, 8, 8, 8}, {8, 8, 8, 8, 8}, {8, 8, 8, 8, 8}});
            frontframe(3, 5, 8);
    
            pair s = (10, 0);
            backframe(3, 5, 8, s);
            for (int x = 0; x < 3; ++x) {
                for (int y = 0; y < 5; ++y) {
                    for (int z = 0; z < 8; ++z) {
                        if (x == 1 && y == 2) {
                            box(x, y, z, 1, 1, 1, s);
                        }
                    }
                }
            }
            frontframe(3, 5, 8, s);
    
            pair s = (20, 0);
            backframe(3, 5, 8, s);
            for (int x = 0; x < 3; ++x) {
                for (int y = 0; y < 5; ++y) {
                    for (int z = 0; z < 8; ++z) {
                        if (x != 1 && y == 2) {
                            box(x, y, z, 1, 1, 1, s);
                        }
                    }
                }
            }
            frontframe(3, 5, 8, s);
    
            pair s = (30, 0);
            backframe(3, 5, 8, s);
            for (int x = 0; x < 3; ++x) {
                for (int y = 0; y < 5; ++y) {
                    for (int z = 0; z < 8; ++z) {
                        if (x == 1 && y != 2) {
                            box(x, y, z, 1, 1, 1, s);
                        }
                    }
                }
            }
            frontframe(3, 5, 8, s);
    
            pair s = (40, 0);
            backframe(3, 5, 8, s);
            for (int x = 0; x < 3; ++x) {
                for (int y = 0; y < 5; ++y) {
                    for (int z = 0; z < 8; ++z) {
                        if (x != 1 && y != 2) {
                            box(x, y, z, 1, 1, 1, s);
                        }
                    }
                }
            }
            frontframe(3, 5, 8, s);
    
            label("$=$", (6, 2));
            label("$\sqcup$", (16, 2));
            label("$\sqcup$", (26, 2));
            label("$\sqcup$", (36, 2));
        \end{diagram}
        \caption{The partition of the poset $[3] \times [5] \times [8]$.}
        \label{fig:cspartition3}
    \end{figure}
    
    For each poset $P_S$, construct an element $C_S$ of the center according to \cref{lemma:scradiusupperbound1}, \cref{lemma:scradiusupperbound2}, or \cref{corollary:scradiusupperbound3}.
    Let $C = \sqcup_{S \subseteq \{1, \dots, d'\}} C_S$. Then $C$ is an ideal because every $C_S$ contains \[\{(a_1, \dots, a_d) \in P_S \mid \bbmone_{a_1 \leq \ell_1/2} + \dots + \bbmone_{a_d \leq \ell_d/2} > \tfrac{1}{2}d\}\]
    as a subset, and dually does not contain any element of \[\{(a_1, \dots, a_d) \in P_S \mid \bbmone_{a_1 \leq \ell_1/2} + \dots + \bbmone_{a_d \leq \ell_d/2} < \tfrac{1}{2}d\}.\]
    Thus for any self-complementary ideal $I \subset P$, we have
    \[\abs{I \setminus P} = \sum_{S \subseteq \{1, \dots, d'\}} (I \cap P_S) \setminus C_S \leq \sum_{S \subseteq \{1, \dots, d\}} r(\{\ell_i-1 \mid i \in S\} \cup \{\ell_{d'+1}, \dots, \ell_d\})\]
    by \cref{lemma:scdistance}, since each $I \cap P_S$ is self-complementary in $P_S$.
\end{proof}

An almost-exact answer is obtainable in the $d=3$ case. In fact, the following is true:
\begin{theorem}\label{theorem:scradius3d}
    Let $d \leq 3$ be a positive integer, and let $\ell_1, \dots, \ell_d$ be positive integers. Let $r(\ell_1, \dots, \ell_d)$ and $d(\ell_1, \dots, \ell_d)$ denote the radius and diameter of $G(\ell_1, \dots, \ell_d)$. Then \[r(\ell_1, \dots, \ell_d) = \ceil{\tfrac{1}{2}d(\ell_1, \dots, \ell_d)},\]
    unless $(\ell_1, \ell_2, \ell_3) \equiv (2, 3, 3) \pmod{4}$ or permutations, in which case it is possible that 
    \[r(\ell_1, \dots, \ell_d) = \tfrac{1}{2}d(\ell_1, \dots, \ell_d)+1.\]
\end{theorem}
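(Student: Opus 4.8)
The plan is to sandwich $r := r(\ell_1, \dots, \ell_d)$ between $\ceil{\tfrac12 D}$ and $\ceil{\tfrac12 D} + \varepsilon$, where $D := d(\ell_1, \dots, \ell_d)$ is the diameter (given exactly by \cref{theorem:scdiameter}) and $\varepsilon$ equals $1$ precisely when $(\ell_1, \ell_2, \ell_3) \equiv (3, 3, 2) \pmod{4}$ up to permutation and $0$ otherwise; since $D$ turns out to be even in that exceptional case, the two bounds pin down $r$ as claimed. The lower bound $r \ge \ceil{\tfrac12 D}$ comes for free from the standard fact that $D \le 2r$ in any connected graph (for a center vertex $c$ one has $\dist(u,v) \le \dist(u,c)+\dist(c,v) \le 2r$), together with \cref{lemma:scdistance} (connectedness) and the integrality of $r$. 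Throughout I would assume $V := \ell_1 \cdots \ell_d$ is even, since otherwise the flip graph is empty and both sides vanish, and that every $\ell_i \ge 2$, since a chain factor $[1]$ can be deleted without affecting the flip graph or any of the relevant quantities.

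For the upper bound I would split on $d'$, the number of odd entries among $\ell_1, \dots, \ell_d$. When $d' = 0$ (all side lengths even), \cref{theorem:scradius} applies \emph{unconditionally} --- for $d = 3$ because $d$ is odd, and for $d \le 2$ because \cref{conjecture:radiusinevendimension} is vacuous at $d = 1$ and trivially true at $d = 2$ (the flip graph of $[2]^2$ is a single edge) --- and a short computation via \cref{theorem:scdiameter} shows its formula equals $\ceil{\tfrac12 D}$. When $d' \ge 1$, I would apply \cref{lemma:scradiusupperbound4} after relabeling so that $\ell_1, \dots, \ell_{d'}$ are odd: it bounds $r$ by a sum, over subsets $T \subseteq \{1, \dots, d'\}$, of the radii of flip graphs on posets with side lengths $\{\ell_i - 1 : i \in T\} \cup \{\ell_{d'+1}, \dots, \ell_d\}$. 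These posets all have even side lengths and dimension at most $d \le 3$ (those of dimension $0$ or $1$ contributing $0$), so every term is covered by the $d' = 0$ analysis, and the whole estimate is unconditional for $d \le 3$.

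It then remains to evaluate the resulting sum and match it against $\ceil{\tfrac12 D}$. Writing $\ell_i = 2a_i+1$ for the odd sides and $\ell_i = 2a_i$ for the even ones, each summand becomes a product of the relevant $a_i$, except that a $2$-dimensional even-sided piece with both sides $\equiv 2 \pmod{4}$ contributes an extra $\ceil{\cdot/2}$, and $\ceil{\tfrac12 D}$ rewrites in parallel. For $d \le 2$, and for $d = 3$ with exactly two even side lengths, the two expressions coincide on the nose. The only remaining subcase is $d = 3$ with exactly one even side, say $\ell_3 = 2c$ and $\ell_1 = 2a+1$, $\ell_2 = 2b+1$: here \cref{lemma:scradiusupperbound4} yields $r \le abc + \ceil{\tfrac12 ac} + \ceil{\tfrac12 bc}$, while $\ceil{\tfrac12 D} = abc + \ceil{\tfrac12 c(a+b)}$. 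Since $\ceil{\tfrac12 x} + \ceil{\tfrac12 y}$ equals $\ceil{\tfrac12(x+y)}$ unless $x$ and $y$ are both odd --- in which case it is larger by exactly $1$ --- the two bounds agree unless $a$, $b$, $c$ are all odd, i.e.\ unless $(\ell_1, \ell_2, \ell_3) \equiv (3, 3, 2) \pmod{4}$ up to permutation. In that case $D = 2abc + ac + bc$ is even, so $\ceil{\tfrac12 D} = \tfrac12 D$ and one obtains only $\tfrac12 D \le r \le \tfrac12 D + 1$, which is exactly the stated possible exception.

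I expect the only genuine difficulty to lie in this last piece of bookkeeping: one must verify that every $2$-dimensional piece produced by \cref{lemma:scradiusupperbound4} whose side lengths are both $\equiv 2 \pmod{4}$ can be controlled without invoking the open \cref{conjecture:radiusinevendimension}. This is precisely why the theorem is confined to $d \le 3$: only then are the pieces small enough that the conjecture is needed solely in dimension $2$, where it is immediate, so the $2$-dimensional radius formula of \cref{theorem:scradius}, and with it \cref{lemma:scradiusupperbound4}, may be used unconditionally.
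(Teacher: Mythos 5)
Your proposal is correct and follows essentially the same route as the paper: the lower bound $r \geq \lceil\tfrac12 D\rceil$ from $D \leq 2r$, and the upper bound via \cref{theorem:scradius} for all-even side lengths together with \cref{lemma:scradiusupperbound4} when odd sides are present, with the same case analysis producing $abc + \lceil\tfrac12 ac\rceil + \lceil\tfrac12 bc\rceil$ versus $abc + \lceil\tfrac12 c(a+b)\rceil$ and the same identification of the exceptional class $(3,3,2) \pmod 4$. The only difference is that you make explicit (and correctly) why the appeal to \cref{conjecture:radiusinevendimension} is harmless here — only its $d=2$ instance is needed for the two-dimensional pieces with both sides $\equiv 2 \pmod 4$, where it is verified directly — a point the paper's own proof leaves implicit.
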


\begin{proof}
    To prove the upper bound, we use \cref{theorem:scdiameter} and \cref{lemma:scradiusupperbound4} and split into cases.
    \begin{itemize}
        \item If $\ell_1 \cdots \ell_d$ is odd, then $r(\ell_1, \dots, \ell_d) = d(\ell_1, \dots, \ell_d) = 0$. 
        \item If $d=1$ and $\ell_1$ is even, then $r(\ell_1) = d(\ell_1) = 0$.
        \item If $d=2$ and $\ell_1, \ell_2$ are both even, then
        \[r(\ell_1, \ell_2) = \ceil{(\tfrac{1}{4} - \tfrac{1}{8}) \ell_1 \ell_2} = \ceil{\tfrac{1}{8} \ell_1\ell_2} = \ceil{\tfrac{1}{2} d(\ell_1, \ell_2)}.\]
        \item If $d=2$, $\ell_1$ is even, and $\ell_2 =2k+1$ is odd, then
        \[r(\ell_1, \ell_2) \leq r(\ell_1, 2k) + r(\ell_1) = \ceil{\tfrac{1}{8}\ell_1 \cdot 2k} = \ceil{\tfrac{1}{2} \cdot \tfrac{1}{4}(\ell_1\ell_2 - \ell_1)} = \ceil{\tfrac{1}{2}d(\ell_1, \ell_2)}.\]
        \item If $d=3$ and $\ell_1, \ell_2, \ell_3$ are all even, then
        \[r(\ell_1, \ell_2, \ell_3) = (\tfrac{1}{4} - \tfrac{1}{8}) \ell_1 \ell_2 \ell_3 = \tfrac{1}{8} \ell_1 \ell_2 \ell_3 = \tfrac{1}{2} d(\ell_1, \ell_2, \ell_3).\]
        \item If $d=3$, $\ell_1, \ell_2$ are even, and $\ell_3 = 2k+1$ is odd, then
        \[
            r(\ell_1, \ell_2, \ell_3) \leq r(\ell_1, \ell_2, 2k) + r(\ell_1, \ell_2)
            = \tfrac{1}{8}\ell_1 \ell_2 \cdot 2k + \ceil{\tfrac{1}{8}\ell_1 \ell_2}
            = \ceil{\tfrac{1}{8}\ell_1\ell_2\ell_3} = \ceil{\tfrac{1}{2} d(\ell_1, \ell_2, \ell_3)}.
        \]
        \item If $d=3$, $\ell_1 = 2k_1+1$ and $\ell_2 = 2k_2+1$ are odd, and $\ell_3$ is even, then
        \begin{align*}
            r(\ell_1, \ell_2, \ell_3) &\leq r(2k_1, 2k_2, \ell_3) + r(2k_1, \ell_3) + r(2k_2, \ell_3) + r(\ell_3) \\
            &= \tfrac{1}{8} 2k_1 \cdot 2k_2 \cdot \ell_3 + \ceil{\tfrac{1}{8} \cdot 2k_1 \ell_3} + \ceil{\tfrac{1}{8} \cdot 2k_2 \ell_3} \\
            &= \ceil{\tfrac{1}{8} (2k_1 \cdot 2k_2 + 2k_1 + 2k_2) \ell_3} + \bbmone_{\text{$k_1$, $k_2$, $\ell_3/2$ odd}} \\
            &= \ceil{\tfrac{1}{8} (\ell_1 \ell_2 \ell_3 - \ell_3)} + \bbmone_{\text{$k_1$, $k_2, \ell_3/2$ odd}}.
        \end{align*}
    \end{itemize}
    To prove the lower bound, observe that $r(\ell_1, \dots, \ell_d) \geq \ceil{\frac{1}{2}d(\ell_1, \dots, \ell_d)}$.
\end{proof}

This result illustrates the difficulty of obtaining a ``nice'' formula for the radius when $\ell_1, \dots, \ell_d$ can be any sequence of positive integers.

\subsection{Relation to Chv\'atal's Conjecture}

This section is essentially an addendum to the previous section about radius bounds, since some results from the previous section rely on the truth of conjectures stated here.

The following conjecture, known as Chv\'atal's conjecture, can be thought of as an extension of the Erd\H{o}s-Ko-Rado theorem (\cref{theorem:erdoskorado}) to sets of subsets closed under taking subsets. If proven, it would imply tight radius bounds for flip graphs of all products of even-length chains.

\begin{conjecture}[Chv\'atal]\label{conjecture:chvatal1}
    Let $S$ be a finite set, and let $\mcD$ be a set of subsets of $S$ closed under taking subsets. Let $\mcF$ be a subset of $\mcD$ for which every pair of sets in $\mcF$ share an element. Then
    \[\abs{\mcF} \leq \max_{s \in S} \abs{\{A \in \mathcal{F} \mid s \in A\}}.\]
\end{conjecture}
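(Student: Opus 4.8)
Chv\'atal's conjecture (\cref{conjecture:chvatal1}) is a well-known open problem, so a realistic plan has to start from the one regime that is genuinely understood — the \emph{shifted} case — and isolate what remains. Call a downset $\mcD$ on $S=\{1,\dots,n\}$ \emph{left-compressed} if, whenever $A\in\mcD$, $j\in A$, and $i<j$ with $i\notin A$, we also have $(A\setminus\{j\})\cup\{i\}\in\mcD$. For such $\mcD$ one checks directly that the principal star $\{A\in\mcD\mid 1\in A\}$ is a largest star, is intersecting, and dominates every intersecting subfamily (this is essentially Chv\'atal's original observation; Sch\"onheim, Sterboul, Snevily, Borg and others have since added further special cases, e.g. when the maximal sets of $\mcD$ share a common element). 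The plan is therefore to reduce the general pair $(\mcD,\mcF)$ to the left-compressed case: repeatedly apply the compression operators $S_{ij}$ (which are classically known to send a downset to a downset) simultaneously to $\mcD$ and to $\mcF$, and prove that at each step $\mcF$ remains an intersecting subfamily \emph{of the current $\mcD$} and $\abs{\mcF}$ does not decrease; once $\mcD$ is fully compressed we read off the bound from the star at $1$.

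\textbf{Key steps.} First I would fix the element $n$ and set up the link decomposition: write $\mcD_0$ for the restriction of $\mcD$ to subsets of $S\setminus\{n\}$ and $\mcD_1=\{A\setminus\{n\}\mid n\in A\in\mcD\}$ for the link, noting that both are downsets on $S\setminus\{n\}$ and that $\mcD_1\subseteq\mcD_0$; split $\mcF=\mcF_0\sqcup\mcF_1$ analogously with $\mcF_1=\{A\setminus\{n\}\mid n\in A\in\mcF\}$. Second, I would record the structural facts that fall out for free: $\mcF_1$ is automatically pairwise intersecting via $n$, $\mcF_0$ is an intersecting subfamily of $\mcD_0$, and $\mcF_0$ and $\mcF_1$ are \emph{cross-intersecting} as families on $S\setminus\{n\}$. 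Third, I would try to close an induction on $\abs{S}$: the inductive hypothesis bounds $\abs{\mcF_0}$ by a star of $\mcD_0$, and one needs a matching ``cross'' statement bounding $\abs{\mcF_0}+\abs{\mcF_1}$ by $\max\bigl(\abs{\mcD_1},\ \max_{s\ne n}[\mathrm{star}_{\mcD_0}(s)+\mathrm{star}_{\mcD_1}(s)]\bigr)$, which is exactly a principal star of $\mcD$. This forces a fourth step: formulate and prove a cross-intersecting analogue of Chv\'atal's conjecture for nested downsets $\mcD_1\subseteq\mcD_0$, again attacking it by compression, so that the induction is self-feeding rather than circular. (In the uniform setting — enough for \cref{conjecture:chvatal3} and the radius bounds, where comparison with \cref{theorem:erdoskorado} is available — this cross step is much more tractable and can serve as a test case.)

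\textbf{The main obstacle.} The crux, and the reason the conjecture has resisted this exact line of attack, is the interaction of the two hypotheses. Compression is superb at normalizing \emph{either} a downset \emph{or} an intersecting family in isolation, but no known operator drives $\mcD$ toward the left-compressed shape while keeping $\mcF$ both intersecting and contained in the evolving $\mcD$: applying $S_{ij}$ to $\mcF$ may try to move a set out of $\mcD$, and the natural ``swap'' repair (substitute the pre-image, which already lies in $\mcD$) can destroy the intersecting property. In the link approach the same difficulty resurfaces as the need for a genuinely stronger cross-intersecting statement. I expect that any complete proof must introduce a new invariant or a non-greedy compression scheme controlling $\mcD$ and $\mcF$ together, and that is where the bulk of the work — and the genuine novelty — would have to go.
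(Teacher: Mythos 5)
You have not proved the statement, and you acknowledge as much: \cref{conjecture:chvatal1} is Chv\'atal's conjecture, a long-standing open problem, and the paper itself offers no proof — it records the conjecture as open (citing the survey literature), verifies only the small cases relevant to \cref{conjecture:chvatal2}, and uses the special case \cref{conjecture:chvatal3} purely conditionally in its radius bounds. So there is no paper proof to compare your argument against, and your proposal should be judged as a research plan rather than a proof.

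As a plan, the known ingredients you cite are essentially right: the fully shifted (left-compressed) case is settled, the link decomposition at a fixed element $n$ does produce two nested downsets $\mcD_1\subseteq\mcD_0$ with $\mcF_1$ intersecting through $n$ and $\mcF_0,\mcF_1$ cross-intersecting, and the conjectured cross-intersecting strengthening you formulate is the natural statement one would need to close an induction on $\abs{S}$. But the decisive step is exactly the one you flag and do not supply: there is no argument here that simultaneous compression keeps $\mcF$ an intersecting subfamily of the evolving $\mcD$ (the ``swap repair'' can break intersection, as you note), nor a proof of the cross-intersecting analogue for nested downsets — and that analogue is at least as strong as the conjecture itself, so as stated the induction is circular unless that lemma is established independently. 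Concretely, then, the gap is the entire content of the conjecture: steps one and two of your outline are bookkeeping that is already standard, and steps three and four assert precisely what is unknown. This is an honest and well-informed assessment of why the problem is hard, but it does not constitute a proof, and no proof should be expected given that the source paper treats the statement as open.
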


\begin{remark}
    This conjecture seems to be open in all but a small number of cases. According to \cite{friedgut2018chvatal},
    the intractability of this conjecture stems from the inability to choose a suitable $s$ in the maximum, and that current progress is limited.
    For the purposes of this paper, the conjecture only needs to be resolved in the case given in \cref{conjecture:chvatal3}.
\end{remark}

Before the next conjecture is stated, consider the following definition. Let $d$ be an even positive integer, and call a set $\mcH$ of subsets of $S = \{1, \dots, d\}$ \emph{uniform} if
\begin{itemize}
    \item every subset has size $\frac{1}{2}d$,
    \item for every subset $A \subset S$ of size $\frac{1}{2}d$, exactly one of $A$ and $S \setminus A$ is in $\mcH$, and
    \item for every element $s \in S$, $s$ appears in at most $\ceil{\frac{1}{4} \binom{2d}{d}}$ subsets of $\mcH$.
\end{itemize}
Observe that if $d$ is not a power of 2, then $\frac{1}{4} \binom{2d}{d}$ is an integer, so every element must appear exactly $\frac{1}{4} \binom{2d}{d}$ times. The existence of uniform subsets is given by \cite{meyerowitz1995maximal} in Lemma 13.

\begin{conjecture}\label{conjecture:chvatal2}
    Let $d$ be an even positive integer. There exists a uniform set of subsets $\mcH$ such that if
    \[\mcD = \mcH \cup \{A \subset \{1, \dots, d\} \mid \abs{A} = \tfrac{1}{2} d - 1\},\]
    then every subset $\mcF$ of $\mcD$ for which every pair of sets in $\mcF$ share an element satisfies
    \[\mcF \leq \ceil{\frac{1}{4}\binom{d}{\frac{1}{2}d}}+ \binom{d-1}{\tfrac{1}{2}d-2}.\]
\end{conjecture}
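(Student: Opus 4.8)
The plan is to stratify $\mcF$ by set size. Write $\mcF = \mcF_1 \sqcup \mcF_2$, where $\mcF_1 = \mcF \cap \mcH$ collects the $\tfrac12 d$-element sets of $\mcF$ and $\mcF_2$ collects its $(\tfrac12 d - 1)$-element sets. The first observation is that $\mcF_1$ is \emph{automatically} intersecting: two distinct $\tfrac12 d$-subsets of $[d]$ are disjoint only when they are complementary, and $\mcH$ contains at most one set from each complementary pair. So the only genuine constraints are that $\mcF_2$ is intersecting and that $\mcF_1$ and $\mcF_2$ are cross-intersecting. Since $\mcF_2$ is an intersecting family of $(\tfrac12 d - 1)$-subsets of a $d$-element set and $d > 2(\tfrac12 d - 1)$, the Erd\H os--Ko--Rado theorem (\cref{theorem:erdoskorado}) gives $\abs{\mcF_2} \le \binom{d-1}{d/2-2}$, with equality only when $\mcF_2$ is a star; this accounts for the second summand of the claimed bound. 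If $\mcF_2 = \emptyset$ one checks directly that $\abs{\mcF_1} \le \abs{\mcH} = \tfrac12\binom{d}{d/2} \le \ceil{\tfrac14\binom{d}{d/2}} + \binom{d-1}{d/2-2}$, so the remaining work is the case $\mcF_2 \neq \emptyset$.

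For that case I would pass to complements. For $A \in \binom{[d]}{d/2}$ and $B \in \binom{[d]}{d/2-1}$ one has $A \cap B = \emptyset$ exactly when $B \subseteq [d] \setminus A$; hence $A$ meets every member of $\mcF_2$ exactly when $[d]\setminus A$ lies outside the upper shadow $\nabla\mcF_2 := \{C \in \binom{[d]}{d/2} : B \subseteq C \text{ for some } B \in \mcF_2\}$. Writing $\widehat{\mcH} = \{[d]\setminus A : A \in \mcH\}$, which again contains exactly one set per complementary pair, the cross-intersecting condition becomes $\{[d]\setminus A : A \in \mcF_1\} \subseteq \widehat{\mcH}\setminus \nabla\mcF_2$, so that $\abs{\mcF_1} \le \abs{\mcH} - \abs{\widehat{\mcH} \cap \nabla\mcF_2}$. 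Combined with the Erd\H os--Ko--Rado bound, it then suffices to show that $\abs{\mcH} - \abs{\widehat{\mcH}\cap\nabla\mcF_2} + \abs{\mcF_2}$ never exceeds $\ceil{\tfrac14\binom{d}{d/2}} + \binom{d-1}{d/2-2}$. Here the Kruskal--Katona theorem supplies a lower bound on $\abs{\nabla\mcF_2}$ in terms of $\abs{\mcF_2}$, and uniformity of $\mcH$ is what controls the overlap $\abs{\widehat{\mcH}\cap\nabla\mcF_2}$: in the conjectured extremal configuration $\mcF_2$ is the full star through some element $s$, $\nabla\mcF_2$ is exactly the set of $\tfrac12 d$-subsets containing $s$, and $\abs{\widehat{\mcH}\cap\nabla\mcF_2} = \abs{\{A \in \mcH : s \notin A\}} \ge \abs{\mcH} - \ceil{\tfrac14\binom{d}{d/2}}$, which makes the target bound exactly tight.

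The existence clause --- the choice of $\mcH$ --- I would discharge by taking the uniform system of \cite{meyerowitz1995maximal}, Lemma 13, in which every element of $[d]$ lies in either $\floor{\tfrac14\binom{d}{d/2}}$ or $\ceil{\tfrac14\binom{d}{d/2}}$ members of $\mcH$. This single choice handles all of the full-star extremal configurations at once, and it is the only place where the ``there exists a uniform $\mcH$'' quantifier is used.

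The step I expect to be the main obstacle is the intermediate regime, where $\mcF_2$ is a Hilton--Milner-type family lying well below the Erd\H os--Ko--Rado maximum. There the Kruskal--Katona bound on $\abs{\nabla\mcF_2}$ alone does not control $\abs{\widehat{\mcH}\cap\nabla\mcF_2}$ --- an up-set can be large and still concentrate on one side of the complementary pairs --- and the crude bound $\abs{\mcF_1}\le\abs{\mcH}$ is insufficient too, so one must instead bound the ``non-star part'' of the family of $\tfrac12 d$-subsets that cross-intersect $\mcF_2$, which amounts to a stability statement for cross-intersecting families interacting with the structure of $\mcH$. This is exactly the content that, via the reduction in the remark preceding \cref{corollary:scradiusupperbound3}, is in the spirit of the special case of Chv\'atal's conjecture (\cref{conjecture:chvatal1}) needed elsewhere in the paper; absent a resolution of that case, a complete proof would seem to require a bespoke stability argument for cross-intersecting families subject to the one-per-complementary-pair and uniformity constraints, which is why the statement is posed only as a conjecture.
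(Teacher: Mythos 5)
The statement you were asked to prove is explicitly a \emph{conjecture}, and the paper does not supply a proof; the only evidence given is a verification for $d\le 6$ with hand-picked uniform $\mcH$, including two ad hoc arguments for $d=6$. So there is no ``paper's own proof'' to match against, and you are right to frame your submission as a programme rather than a proof.

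Your reductions, as far as they go, are correct. Splitting $\mcF=\mcF_1\sqcup\mcF_2$ by set size; noting that $\mcF_1$ is automatically intersecting because two $\tfrac12 d$-subsets of $[d]$ can only be disjoint by being complementary while $\mcH$ contains one per complementary pair; the Erd\H os--Ko--Rado bound $\abs{\mcF_2}\le\binom{d-1}{d/2-2}$; the arithmetic $\abs{\mcH}=\tfrac12\binom{d}{d/2}\le\ceil{\tfrac14\binom{d}{d/2}}+\binom{d-1}{d/2-2}$ disposing of $\mcF_2=\emptyset$; and the complementation dictionary turning cross-intersection into $\{[d]\setminus A : A\in\mcF_1\}\subseteq\widehat{\mcH}\setminus\nabla\mcF_2$ are all sound. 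Your tightness analysis for the full star is also correct: if $\mcF_2$ is the star through $s$, uniformity gives $\abs{\{A\in\mcH : s\in A\}}\le\ceil{\tfrac14\binom{d}{d/2}}$, which makes $\abs{\mcF_1}+\abs{\mcF_2}$ meet the claimed bound with equality.

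The step you flag as the obstacle really is the obstacle, and it is not a small one. Kruskal--Katona controls $\abs{\nabla\mcF_2}$ but says nothing about its overlap with $\widehat{\mcH}$, which chooses one set per complementary pair with no a priori structure, so $\abs{\widehat{\mcH}\cap\nabla\mcF_2}$ can in principle be far from $\tfrac12\abs{\nabla\mcF_2}$; and the crude $\abs{\mcF_1}\le\abs{\mcH}$ is too weak as soon as $\mcF_2\neq\emptyset$. Filling this in would require a stability theorem for cross-intersecting families that interacts with the uniformity of $\mcH$, which is in effect the open fragment of Chv\'atal's conjecture (\cref{conjecture:chvatal1}) that the paper is isolating. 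Your quantity $\abs{\mcH}-\abs{\widehat{\mcH}\cap\nabla\mcF_2}+\abs{\mcF_2}$ is the right thing to bound, but the proposal does not supply the argument, and you correctly say so.

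For comparison, the paper's two verifications for $d=6$ are of a different character. Proof 1 partitions $\mcD$ into five explicit five-element ``flower'' blocks each of which can meet an intersecting family in at most two sets, so $\abs{\mcF}\le 10$ immediately. Proof 2 sums a local indicator inequality of the form $\bbmone_{123}+\tfrac12(\bbmone_{45}+\bbmone_{46}+\bbmone_{56})\le 1+\tfrac12\bbmone_{45}\bbmone_{46}\bbmone_{56}$ over $\mcH$ and uses integrality to shave off the half. Both are packing or averaging tricks tailored to the explicit shape of $\mcH$, not shadow arguments, and the paper notes they resist generalization because uniform families $\mcH$ lose structure as $d$ grows. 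Your route is a reasonable alternative framing but does not close the gap, and neither does the paper.
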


\begin{remark}
    This conjecture is true for $d \leq 6$. For $d=2$, select $\mcH = \{\{1\}\}$. For $d=4$, select $\mcH = \{\{1, 2\}, \{1, 3\}, \{2, 3\}\}$. For $d=6$, select
    \[\mcH=\{\{1, 2, 3\}, \{1, 2, 4\},  \{1, 3, 6\}, \{1, 4, 5\}, \{1, 5, 6\}, \{2, 3, 5\}, \{2, 4, 6\}, \{2, 5, 6\}, \{3, 4, 6\}, \{3, 4, 5\}\}.\]
    The fact that these choices of $\mcH$ work for $d = 2$ and $d=4$ is easy to verify. For $d=6$, we present two interesting proofs for $\abs{F} \leq \frac{1}{4} \binom{6}{3} + \binom{5}{1} = 10$. However, these proofs seem difficult to generalize to larger values of $d$, because it appears that for large $d$ the structure of uniform $\mcH$ is not well-behaved. 
\end{remark}

\begin{proof}[Proof 1]
    Define
    \begin{align*}
        \mcD_1 &= \{\{4, 5\}, \{1, 2, 3\}, \{5, 6\}, \{1, 2, 4\}, \{3, 6\}\}, \\
        \mcD_2 &= \{\{2, 3\}, \{1, 4, 5\}, \{2, 6\}, \{3, 4, 5\}, \{1, 6\}\}, \\
        \mcD_3 &= \{\{4, 6\}, \{2, 3, 5\}, \{1, 4\}, \{2, 5, 6\}, \{1, 3\}\}, \\
        \mcD_4 &= \{\{2, 5\}, \{1, 3, 6\}, \{2, 4\}, \{1, 5, 6\}, \{3, 4\}\}, \\
        \mcD_5 &= \{\{3, 5\}, \{2, 4, 6\}, \{1, 5\}, \{3, 4, 6\}, \{1, 2\}\}.
    \end{align*}
    Observe that $\mcD = \mcD_1 \sqcup \mcD_2 \sqcup \mcD_3 \sqcup \mcD_4 \sqcup \mcD_5$
    so
    \[\abs{\mcF \cap \mcD} = \abs{\mcF \cap \mcD_1} + \abs{\mcF \cap \mcD_2} + \abs{\mcF \cap \mcD_3} + \abs{\mcF \cap \mcD_4} + \abs{\mcF \cap \mcD_5} \leq 2 + 2 + 2 + 2 + 2 = 10. \qedhere\]
\end{proof}

\begin{proof}[Proof 2]
    Let $\mcF$ be a subset of $\mcD$ for which every pair of sets in $\mcF$ share an element. For every subset $A \subseteq \{1, 2, 3, 4, 5, 6\}$, define
    \[\bbmone_A = \begin{cases} 1 & \text{if $A \in \mcF$} \\ 0 & \text{otherwise.}\end{cases}\]
    Observe that
    \[\bbmone_{123} + \tfrac{1}{2}(\bbmone_{45} + \bbmone_{46} + \bbmone_{56}) \leq 1 + \tfrac{1}{2} \bbmone_{45}\bbmone_{46}\bbmone_{56},\]
    and analogous symmetric statements hold.
    Summing this inequality over all sets in $\mcH$ gives
    \[\sum_{A \in \mcH} \bbmone_{A} + \smashoperator{\sum_{\substack{A \subset \{1, \dots, 6\} \\ \abs{A} = 2}}} \bbmone_{A} \leq 10 + \tfrac{1}{2} \smashoperator{\sum_{\substack{a, b, c \in \{1, \dots, 6\} \\ \{1, \dots, 6\} \setminus \{a, b, c\} \in \mcH}}} \bbmone_{ab}\bbmone_{ac}\bbmone_{bc} \leq 10 + \tfrac{1}{2}.\]
    Since the left hand side is an integer, it follows that \[\sum_{A \in \mcH} \bbmone_{A} + \smashoperator{\sum_{\substack{A \subset \{1, \dots, 6\} \\ \abs{A} = 2}}} \bbmone_A = \sum_{A \in \mcD} \bbmone_A \leq 10. \qedhere\]
\end{proof}

\begin{conjecture}\label{conjecture:chvatal3}
    Let $d$ be an even positive integer. There exists a uniform subset $\mcH$ such that if
    \[\mcD = \mcH \cup \{A \subset \{1, \dots, d\} \mid \abs{A} \leq \tfrac{1}{2} d - 1\},\]
    then every subset $\mcF$ of $\mcD$ for which every pair of sets in $\mcF$ share an element satisfies
    \[\mcF \leq \ceil{\frac{1}{4}\binom{d}{\frac{1}{2}d}} + \sum_{i=1}^{\frac{1}{2}d-1} \binom{d-1}{i-1}.\]
\end{conjecture}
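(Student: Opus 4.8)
Since every proper subset of a $(d/2)$-element set has at most $d/2-1$ elements, the family $\mcD = \mcH \cup \{A \subseteq \{1,\dots,d\} : \abs{A} \le d/2-1\}$ is closed under taking subsets, so the statement asserts Chv\'atal's conjecture for this particular downset $\mcD$, together with the claim that --- for a suitably chosen uniform $\mcH$ --- no intersecting subfamily is larger than the ``star'' $\{A \in \mcD : s \in A\}$ at a single point $s$, which has at most $\ceil{\tfrac14\binom{d}{d/2}} + \sum_{i=1}^{d/2-1}\binom{d-1}{i-1}$ members by uniformity. The plan is to reduce to \cref{conjecture:chvatal2} by an upward-compression argument and then to treat that two-layer extremal problem as the real content.

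For the reduction, let $\mcF \subseteq \mcD$ be intersecting and repeatedly apply the following: if $\mcF$ contains a set $C$ with $\abs{C} \le d/2-2$ for which some $x \notin C$ has $C \cup \{x\} \notin \mcF$, replace $C$ by $C \cup \{x\}$ --- this keeps $\mcF$ intersecting, since any superset of $C$ meets everything $C$ met, and preserves both $\abs{\mcF}$ and membership in $\mcD$; if instead such a $C$ exists with no valid $x$, simply delete $C$ from $\mcF$. Each step strictly decreases the nonnegative integer $\sum_{A \in \mcF}(d/2-\abs{A})$, so the process terminates; moreover each deletion decreases the number of sets of size at most $d/2-2$ in $\mcF$ by one while replacements never increase it, so the total number of deletions is at most the number of such sets in the original $\mcF$, which is at most $\sum_{i=1}^{d/2-2}\binom{d-1}{i-1}$ by applying \cref{theorem:erdoskorado} one size-level at a time. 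At termination $\mcF \subseteq \mcH \cup \binom{\{1,\dots,d\}}{d/2-1}$, the setting of \cref{conjecture:chvatal2}, so granting that conjecture,
\[\abs{\mcF} \le \left(\ceil{\tfrac14\binom{d}{d/2}} + \binom{d-1}{d/2-2}\right) + \sum_{i=1}^{d/2-2}\binom{d-1}{i-1} = \ceil{\tfrac14\binom{d}{d/2}} + \sum_{i=1}^{d/2-1}\binom{d-1}{i-1},\]
which is exactly the claimed bound (and one checks the extremal star at a point is tracked with equality throughout). So it suffices to prove \cref{conjecture:chvatal2} for every even $d$.

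The remaining --- and genuinely hard --- step is \cref{conjecture:chvatal2} itself, where $\mcF = \mcF_1 \sqcup \mcF_2$ with $\mcF_1 \subseteq \mcH$ (sets of size $d/2$, automatically pairwise intersecting since $\mcH$ contains exactly one set from each complementary pair) and $\mcF_2$ a set of $(d/2-1)$-subsets, each meeting every set of $\mcF_1$ and every other set of $\mcF_2$. I would attack this either as in ``Proof 1'' of the $d=6$ case --- partition $\mcH \cup \binom{\{1,\dots,d\}}{d/2-1}$ into blocks, each the downset of an Erd\H{o}s--Ko--Rado-extremal configuration on a smaller ground set, and apply \cref{theorem:erdoskorado} blockwise --- or as in ``Proof 2'' --- record, for each $C \in \mcF_2$, the local inequality forced by the $d/2+1$ size-$(d/2)$ sets lying inside $\{1,\dots,d\}\setminus C$, sum over $\mcH$, use uniformity to control $\abs{\mcF_1}$, and close with an integrality step (which is what produces the ceiling, i.e. the extra $\tfrac12$ when $d$ is a power of $2$). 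In either approach one would use the freedom to choose the uniform $\mcH$ (whose existence is due to \cite{meyerowitz1995maximal}) so that the block partition, or the required family of local inequalities, exists. The main obstacle is precisely this: \cref{conjecture:chvatal2} is essentially a special case of Chv\'atal's conjecture, and, as the paper notes, the structure of uniform sets $\mcH$ is not well understood for large $d$ --- when $\mcF_1$ is a large subset of $\mcH$ there is no extra leverage beyond the choice of $\mcH$ --- so although each individual even $d$ is checkable (as for $d \le 6$), a uniform proof for all even $d$ appears to require new structural insight into extremal uniform families.
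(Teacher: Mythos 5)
Your reduction is correct, and it is essentially the argument the paper itself gives in the remark following \cref{conjecture:chvatal3}: partition $\mcD$ into $\mcH \cup \{A : \abs{A} = \tfrac{1}{2}d-1\}$ (handled by \cref{conjecture:chvatal2}) and the levels of size $0, \dots, \tfrac{1}{2}d-2$ (each handled by \cref{theorem:erdoskorado}), and add the bounds. Your upward-compression process is a valid but unnecessary detour: it lands on exactly the same arithmetic, because the number of deletions is controlled by the same level-by-level Erd\H{o}s--Ko--Rado counts applied to the original family, whereas simply splitting $\abs{\mcF}$ according to the sizes of its members gives the bound immediately, with no termination or deletion-accounting argument needed. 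Keep in mind that this statement is a conjecture in the paper precisely because both routes are conditional on \cref{conjecture:chvatal2}, which is established there only for $d \le 6$; you identify this correctly, and your sketched strategies for general $d$ (blockwise partitions as in Proof 1, or summed local inequalities plus an integrality step as in Proof 2) mirror the paper's $d=6$ arguments but, as you acknowledge, do not yet constitute a proof.
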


\begin{remark}
    This conjecture follows from \cref{conjecture:chvatal2} and \cref{theorem:erdoskorado} after partitioning
    \[\mcD = \left(\mcH \cup \{A \subset \{1, \dots, d\} \mid \abs{A} = \tfrac{1}{2}d-1\}\right) \cup \smash\bigcup_{i \in \{0, \dots, \frac{1}{2}d-2\}} \{A \subset \{1, \dots, d\} \mid \abs{A} = i\}.\]
    This conjecture is also a specific case of Chv\'atal's conjecture (\cref{conjecture:chvatal1}).
\end{remark}

\section{Cyclically Symmetric Self-Complementary Ideals}\label{section:cssc}
The flip graph on self-complementary ideals of a poset can be modified to accommodate additional symmetries the ideals may have. In particular, there are flip graphs whose vertices are the cyclically symmetric self-complementary (CSSC) ideals of a poset $P = [\ell] \times [\ell] \times [\ell] = [2r] \times [2r] \times [2r]$ for an even positive integer $\ell = 2r$.

Given two distinct CSSC ideals $I, J \subset P$, $I$ and $J$ differ by a \emph{flip} if there exists a point $(a_1, a_2, a_3)\in I$ such that the replacements
\begin{align*}
    (a_1, a_2, a_3)&\mapsto (2r + 1 - a_1, 2r + 1 - a_2, 2r + 1 - a_3)\\
    (a_2, a_3, a_1)&\mapsto (2r + 1 - a_2, 2r + 1 - a_3, 2r + 1 - a_1)\\
    (a_3, a_1, a_2)&\mapsto (2r + 1 - a_3, 2r + 1 - a_1, 2r + 1 - a_2)
\end{align*}
yield $J$. The \emph{flip graph on cyclically symmetric self-complementary ideals of $P$} is an unweighted graph whose edges are constructed between CSSC ideals of $P$ that differ by a flip. \cref{fig:csscgraph} shows the flip graph structure for the specific case $P = [4] \times [4] \times [4]$. 

\begin{figure}[htbp]
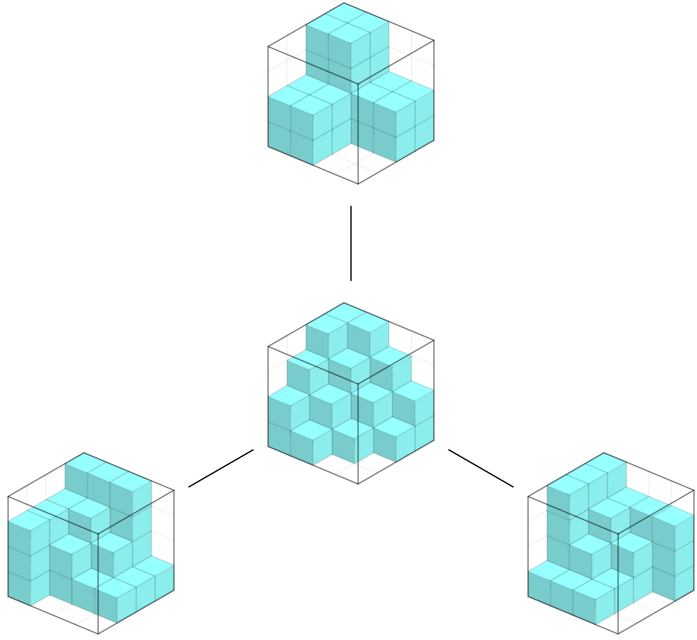

    \centering
    \begin{diagram}[0.6]
        void v(int[][] heights, pair o) {
            pair s = (4*v1+4*v2+4*v3)/2;
            fill(circle(o, 4.5), white);
            backframe(4, 4, 4, o-s);
            ideal(heights, o-s);
            frontframe(4, 4, 4, o-s);
        }
        int[][][] ideals = new int[][][] {
            {{4, 4, 3, 2},
             {4, 3, 2, 1},
             {3, 2, 1, 0},
             {2, 1, 0, 0}},
            {{4, 4, 2, 2},
             {4, 4, 2, 2},
             {2, 2, 0, 0},
             {2, 2, 0, 0}},
            {{4, 4, 4, 1},
             {3, 3, 2, 1},
             {3, 2, 1, 1},
             {3, 0, 0, 0}},
            {{4, 3, 3, 3},
             {4, 3, 2, 0},
             {4, 2, 1, 0},
             {1, 1, 1, 0}}};
        real s = 12;
        pair[] locations = new pair[] {
            (0, 0),
            dir(90)*s,
            dir(210)*s,
            dir(330)*s};
        int[][] edges = new int[][] {
            {0, 1},
            {0, 2},
            {0, 3}};
        for (int i = 0; i < edges.length; ++i) {
            int v1 = edges[i][0];
            int v2 = edges[i][1];
            draw(locations[v1] -- locations[v2]);
        }
        for (int i = 0; i < ideals.length; ++i) {
            v(ideals[i], locations[i]);
        }
    \end{diagram}
    \caption{The flip graph on cyclically symmetric self-complementary ideals of $[4] \times [4] \times [4]$.}
    \label{fig:csscgraph}
\end{figure}

Observe that at least two of $a_1, a_2, a_3$ must be distinct for the above flip to yield another ideal, because
\[\{(1, 1, 1), (2, 2, 2), \dots, (r, r, r)\}\subset I\]
for any self-complementary ideal $I \subset P$. Thus the points $(a_1, a_2, a_3)$, $(a_2, a_3, a_1)$, and $(a_3, a_1, a_2)$ above are distinct, and we have
\[\abs{I \setminus J} = \abs{J \setminus I} = 3\]
for any two CSSC ideals that differ by a flip. Observe that this condition is also sufficient: if CSSC ideals $I$ and $J$ satisfy $\abs{I \setminus J} = \abs{J \setminus I} = 3$ and $(a_1, a_2, a_3)\in I\setminus J$, then flipping $(a_1, a_2, a_3)$, $(a_2, a_3, a_1)$, and $(a_3, a_1, a_2)$ in $I$ yields the ideal $J$. This readily implies the following lemma:

\begin{lemma}\label{lemma:csscdistance}
    Let $I$ and $J$ be cyclically symmetric self-complementary ideals of $[2r]\times[2r]\times[2r]$. Then the distance from $I$ to $J$ in the flip graph is $\frac{1}{3}\abs{I \setminus J}$.
\end{lemma}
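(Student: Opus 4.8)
The plan is to imitate the proof of \cref{lemma:scdistance}, replacing the flip of a single maximal element by the flip of a $3$-element cyclic orbit of maximal elements, and to organize the argument around one observation: the three points $\varphi(a),\varphi(b),\varphi(c)$ introduced by a flip already lie in the target ideal when one flips ``toward'' it.

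First I would verify that $\tfrac{1}{3}\abs{I\setminus J}$ is meaningful, i.e.\ that $3\mid\abs{I\setminus J}$. The set $I\setminus J$ is invariant under cyclic rotation of the coordinates because $I$ and $J$ both are, and the only rotation-fixed points of $[2r]^3$ are the diagonal points $(a,a,a)$; such a point lies in $I$ iff $a\le r$ iff it lies in $J$, since both ideals contain $\{(1,1,1),\dots,(r,r,r)\}$ and self-complementarity forbids either from containing a diagonal point above $(r,r,r)$. Hence the cyclic group of order $3$ acts freely on $I\setminus J$. For the lower bound, a single flip changes $I$ in exactly three positions (the orbit members are distinct, none being diagonal), so it decreases $\abs{I\setminus J}$ by at most $3$; a path from $I$ to $J$ must reduce $\abs{I\setminus J}$ from its initial value to $0$, so it has at least $\tfrac{1}{3}\abs{I\setminus J}$ edges.

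For the matching upper bound I would induct on $\abs{I\setminus J}$, with base case $I=J$. Assuming $I\ne J$, pick $a\in I\setminus J$ that is maximal in $I$; such an $a$ exists because ascending inside $I$ from a point of $I\setminus J$ stays in $I\setminus J$ (if $z\in I$ and $z\ge a\notin J$ then $z\notin J$, $J$ being an ideal), so the ascent terminates at a maximal element of $I$ still in $I\setminus J$. Let $O=\{a,b,c\}$ be the cyclic orbit of $a$: three distinct points, all maximal in $I$, all in $I\setminus J$. Set $K=(I\setminus O)\cup\varphi(O)$, the result of flipping at $a$; it is visibly cyclically symmetric and self-complementary. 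The one step needing care is that $K$ is again an \emph{ideal}. Since $I\setminus O$ is an ideal (we deleted maximal elements), it suffices that for each $y\in\varphi(O)$ every point strictly below $y$ lies in $I\setminus O$. Such a point lies in $I$ because $\varphi(y)\in O$ is maximal in $I$; and it cannot lie in $O$, which is exactly where the observation enters: $\varphi(O)\subseteq J$ (each point of $O$ is in $I\setminus J$, so its $\varphi$-image is in $J$ by self-complementarity of $J$) and $J$ is an ideal, so any $x'\in O$ with $x'<y\in J$ would force $x'\in J$, contradicting $x'\in I\setminus J$. Hence $K$ is a CSSC ideal differing from $I$ by one flip, and since $\varphi(O)\subseteq J$ we get $K\setminus J=(I\setminus J)\setminus O$, of size $\abs{I\setminus J}-3$. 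By the inductive hypothesis $\dist(K,J)\le\tfrac{1}{3}(\abs{I\setminus J}-3)$, so $\dist(I,J)\le 1+\dist(K,J)\le\tfrac{1}{3}\abs{I\setminus J}$.

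I expect the ideal-preservation step to be the only real obstacle, and it dissolves precisely because $\varphi(O)$ sits inside the target ideal $J$, so the ideal axiom for $J$ supplies the needed incomparabilities for free. Everything else — the divisibility, the lower bound, and the orbit bookkeeping — is routine. Combining the two bounds gives $\dist(I,J)=\tfrac{1}{3}\abs{I\setminus J}$.
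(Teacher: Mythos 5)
Your proposal is correct and follows the same strategy the paper uses, which simply defers to the proof of \cref{lemma:scdistance} (repeatedly flip a cyclic orbit of a maximal element in $I\setminus J$). You carefully fill in the detail the paper glosses over — that the result of such a flip is again an ideal, via the observation that $\varphi(O)\subseteq J$ — and this check is correct.
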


\begin{proof}
    This is essentially the same as the proof of \cref{lemma:scdistance}.
\end{proof}

Given a positive integer $r$, let $G_{C_3}(2r, 2r, 2r)$ denote the flip graph on cyclically symmetric self-complementary ideals of $[2r] \times [2r] \times [2r]$. Additionally, for each tuple $(t_1, t_2, t_3) \in \{0, 1\}^3$ define
\[O_{t_1t_2t_3} = \{(a_1, a_2, a_3) \in [2r] \times [2r] \times [2r] \mid t_1 = \bbmone_{a_1 > r}, t_2 = \bbmone_{a_2 > r}, t_3 = \bbmone_{a_3 > r}\}.\]

For ideals $I, J\subseteq P$ let $I \symdif J$ denote their symmetric difference, equal to $(I\setminus J)\cup (J\setminus I)$. Observe that if $I$ and $J$ are CSSC ideals that differ by a flip, then 
\[\abs{I\symdif J} = 6,\] 
and for $t_1t_2t_3\in \{001, 010, 100, 011, 101, 110\}$, we have
\[\abs{(I \symdif J) \cap O_{t_1t_2t_3}} = 1.\]

\subsection{Vertex Count}

The number of vertices in $G_{C_3}(2r, 2r, 2r)$, also equal to the number of cyclically symmetric self-complementary ideals of $[2r] \times [2r] \times [2r]$, was determined by Kuperberg in 1994.

\begin{theorem}\label{theorem:csscvertex}
    Let $r$ be a positive integer. The number of vertices in $G_{C_3}(2r, 2r, 2r)$ is
    \[\left(\prod _{j=0}^{r-1}{\frac {(3j+1)!}{(r+j)!}}\right)^2.\]
\end{theorem}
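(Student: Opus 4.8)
The plan is to reduce the count of cyclically symmetric self-complementary (CSSC) ideals of $[2r]\times[2r]\times[2r]$ to the enumeration of cyclically symmetric transpose-complement plane partitions (equivalently, a known class of symmetry-restricted plane partitions) in a box, which was solved by Kuperberg and Andrews. The first step is to set up the standard dictionary: an ideal $I\subseteq[2r]\times[2r]\times[2r]$ corresponds to the plane partition $\pi$ whose $(i,j)$ entry is the height $\abs{\{k : (i,j,k)\in I\}}$, i.e. the number of boxes stacked over cell $(i,j)$. This is a bijection between order ideals of the chain product and plane partitions fitting in a $2r\times 2r\times 2r$ box. I would then translate the two symmetry conditions through this dictionary: cyclic symmetry of $I$ (invariance under the $3$-cycle on coordinates) becomes cyclic symmetry of $\pi$ as a plane partition (invariance of the associated solid/Young-diagram shape under cyclic rotation of the three axes), and self-complementarity of $I$, $(a_1,a_2,a_3)\in I \iff (2r{+}1{-}a_1,2r{+}1{-}a_2,2r{+}1{-}a_3)\notin I$, becomes the self-complementary condition $\pi_{i,j} + \pi_{2r+1-i,\,2r+1-j} = 2r$ on the plane partition.

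Next I would identify exactly which symmetry class of plane partitions this is. A cyclically symmetric plane partition that is also self-complementary in a $2r$-box is precisely what the literature calls a \emph{cyclically symmetric self-complementary plane partition} (CSSCPP); Kuperberg's 1994 paper \cite{kuperberg1994symmetries} establishes that the number of CSSCPP in a $2r\times 2r\times 2r$ box equals $\bigl(\prod_{j=0}^{r-1}\frac{(3j+1)!}{(r+j)!}\bigr)^2$ — note this is the square of the number of cyclically symmetric transpose-complement plane partitions, itself related to the counting of totally symmetric self-complementary plane partitions (the famous $\prod_{j=0}^{r-1}\frac{(3j+1)!}{(r+j)!}$). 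So after the bijection is in place, the theorem is literally a citation: apply Kuperberg's enumeration. I would also double-check the boundary/diagonal behavior — the containment $\{(1,1,1),\dots,(r,r,r)\}\subset I$ forced by self-complementarity corresponds to the main diagonal entries of $\pi$ being at least $r$, which is automatically consistent with the CSSCPP definition, so there is no off-by-one issue.

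The main obstacle is not the enumeration (that is cited) but making the bijection and the symmetry translations airtight, in particular verifying that "cyclically symmetric order ideal of $[2r]^3$" maps exactly onto the class Kuperberg counts, with the right box size and the right interpretation of "self-complementary" (plane-partition complementation within the $2r$-box versus the poset-theoretic $\varphi$-complement). One must be careful that the three cyclic permutations of coordinates act on the plane-partition picture the way the plane-partition literature's cyclic symmetry does, and that no plane partition is counted twice or missed. Once the reader is convinced the two notions of symmetric self-complementary object coincide, the formula follows immediately. I would therefore structure the proof as: (1) recall the ideal$\leftrightarrow$plane-partition bijection; (2) check cyclic symmetry corresponds; (3) check self-complementarity corresponds; (4) invoke \cite{kuperberg1994symmetries} (and note the connection to \cite{andrews1994plane}) for the final count.
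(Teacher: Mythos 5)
Your proposal is correct and matches the paper's approach: the paper's proof consists solely of citing \cite{kuperberg1994symmetries}, since CSSC ideals of $[2r]^3$ are exactly the cyclically symmetric self-complementary plane partitions in a $2r$-box that Kuperberg enumerates. Your additional spelling-out of the ideal--plane-partition dictionary and the symmetry translations is a fuller version of the same argument, not a different route.
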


\begin{proof}
    This is given in \cite{kuperberg1994symmetries}.
\end{proof}

\begin{corollary}\label{corollary:csscasymptotics}
    As $r$ tends to infinity, we have
    \[v(G_{C_3}(2r, 2r, 2r)) \sim Cr^{-5/18}\left(\frac{27}{16}\right)^{r^2}\]
    for some real number $C$. In particular, $\log v(G_{C_3}(2r, 2r, 2r)) \sim \log(27/16) r^2 \approx 0.523r^2$.
\end{corollary}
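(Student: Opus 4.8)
The plan is to apply Stirling's formula to Kuperberg's exact product (\cref{theorem:csscvertex}) and track the expansion down to the $o(1)$ level. By that theorem, $v(G_{C_3}(2r,2r,2r)) = F(r)^2$ where $F(r) := \prod_{j=0}^{r-1}\frac{(3j+1)!}{(r+j)!}$, so it is enough to prove
\[\log F(r) = \tfrac{r^2}{2}\log\tfrac{27}{16} - \tfrac{5}{36}\log r + \gamma + o(1)\]
for some real constant $\gamma$; squaring and exponentiating then gives $v(G_{C_3}(2r,2r,2r)) \sim e^{2\gamma}\,r^{-5/18}(27/16)^{r^2}$, so the corollary holds with $C = e^{2\gamma}$, and the ``in particular'' clause follows immediately. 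Write $\log F(r) = N_r - D_r$ with $N_r := \sum_{j=0}^{r-1}\log(3j+1)!$ and $D_r := \sum_{j=0}^{r-1}\log(r+j)!$. Expanding each $\log m!$ by Stirling's formula reduces everything to sums of the shapes $\sum m\log m$, $\sum m$, $\sum\log m$, and $\sum\frac1m$, which are evaluated by Euler--Maclaurin together with the classical closed forms: Stirling for $\sum_{m\le N}\log m = \log N!$; Glaisher's formula $\sum_{m=1}^{N} m\log m = \frac{N^2}{2}\log N + \frac N2\log N + \frac1{12}\log N - \frac{N^2}{4} + \log\mathcal{A} + O(N^{-1})$, where $\mathcal{A}$ is the Glaisher--Kinkelin constant; and $\sum_{m=1}^{N}\frac1m = \log N + O(1)$.

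The denominator is handled directly: $D_r = \log\prod_{m=r}^{2r-1}m! = \log\bigl(\prod_{m=1}^{2r-1}m!\bigr) - \log\bigl(\prod_{m=1}^{r-1}m!\bigr)$, and each superfactorial $\prod_{m=1}^{N}m! = \prod_{m=1}^{N}\Gamma(m+1)$ is expanded by summing the Stirling expansion of $\log m!$ as above. For the numerator, I would first establish the exact identity
\[\Bigl(\prod_{j=0}^{r-1}(3j+1)!\Bigr)^{3} = \frac{\Gamma(2/3)^{2}}{3^{3r}\,r!\,\Gamma(r+2/3)^{2}}\,\prod_{m=1}^{3r}m!,\]
obtained by grouping $\{1,\dots,3r\}$ according to residue modulo $3$: one has $\prod_{j=0}^{r-1}(3j+1)!\,(3j+2)!\,(3j+3)! = \prod_{m=1}^{3r}m!$, while $\prod_{j=0}^{r-1}(3j+2)! = \bigl(\prod_{j=0}^{r-1}(3j+2)\bigr)\prod_{j=0}^{r-1}(3j+1)!$ and, using $(3k)! = 3k(3k-1)(3(k-1)+1)!$, $\prod_{j=0}^{r-1}(3j+3)! = \bigl(\prod_{k=1}^{r}3k(3k-1)\bigr)\prod_{j=0}^{r-1}(3j+1)!$; substituting the evaluations $\prod_{j=0}^{r-1}(3j+2) = 3^{r}\Gamma(r+2/3)/\Gamma(2/3)$ and $\prod_{k=1}^{r}3k(3k-1) = 3^{2r}\,r!\,\Gamma(r+2/3)/\Gamma(2/3)$ and solving for $\prod_{j=0}^{r-1}(3j+1)!$ gives the identity. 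Taking logarithms,
\[N_r = \tfrac13\log\Bigl(\prod_{m=1}^{3r}m!\Bigr) - r\log 3 - \tfrac13\log r! - \tfrac23\log\Gamma\bigl(r+\tfrac23\bigr) + \tfrac23\log\Gamma\bigl(\tfrac23\bigr),\]
whose asymptotics come from the superfactorial expansion above together with Stirling applied to $\log r!$ and $\log\Gamma(r+\tfrac23)$.

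It then remains to collect $N_r - D_r$ order by order. One checks that the $r^2\log r$ contributions of $N_r$ and $D_r$ cancel; the remaining degree-two term is $\frac{r^2}{2}(3\log 3 - 4\log 2) = \frac{r^2}{2}\log\frac{27}{16}$; the $r\log r$ and pure linear-in-$r$ contributions vanish; the coefficient of $\log r$ is $-\frac{5}{36}$; and the surviving $O(1)$ terms (the copies of $\log\mathcal{A}$, the Stirling constants $\frac12\log 2\pi$, the harmonic constants, and $\frac23\log\Gamma(\frac23)$) combine into a single constant $\gamma$. Squaring doubles every coefficient and produces the claimed asymptotic. The main obstacle is precisely this bookkeeping of subleading terms, and the step most easily gotten wrong is the vanishing of the $r\log r$ term inside $N_r$: the superfactorial $\prod_{m=1}^{3r}m!$ has top factor $(3r)!$, whose Stirling expansion contributes a term proportional to $r\log r$, and one must track this carefully to see that it exactly cancels the $r\log r$ terms produced by $\log r!$ and $\log\Gamma(r+\tfrac23)$. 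Obtaining the exact coefficient $-\frac{5}{36}$ of $\log r$, and the $o(1)$-level constant $\gamma$, similarly requires carrying the half-integer-order part of Stirling's formula throughout.
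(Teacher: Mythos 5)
Your approach matches the paper's: the paper simply states that the asymptotics ``can be obtained using asymptotic formulas for the superfactorial numbers and triple factorial numbers,'' and your reduction via the identity $\bigl(\prod_{j=0}^{r-1}(3j+1)!\bigr)^{3} = \frac{\Gamma(2/3)^{2}}{3^{3r}\,r!\,\Gamma(r+2/3)^{2}}\prod_{m=1}^{3r}m!$ together with Stirling/Glaisher expansions is precisely a fleshed-out version of that. Your identity and bookkeeping plan are correct, and the leading coefficient $\frac{1}{2}\log\frac{27}{16}$ and power $-\frac{5}{36}$ check out; the paper additionally records the explicit value $C=\bigl(e^{\zeta'(-1)}2^{1/4}\Gamma(2/3)/(3^{1/12}\Gamma(1/3))\bigr)^{2/3}$, which your $\gamma$ would have to reproduce if you carried the constant terms through.
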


\begin{proof}
    In fact, \[C=\left(\frac{e^{\zeta'(-1)}2^{1/4}\Gamma(2/3)}{3^{1/12}\Gamma(1/3)}\right)^{2/3} \approx 0.600.\]
    This can be obtained using asymptotic formulas for the superfactorial numbers and triple factorial numbers.
\end{proof}

\subsection{Diameter}
This section will compute the exact value of the diameter of $G_{C_3}(2r, 2r, 2r)$ for all positive integers $r$.

\begin{lemma}\label{lemma:csscdiameterlowerbound}
    Let $r$ be a positive integer. The diameter of $G_{C_3}(2r, 2r, 2r)$ is at least $\frac{1}{3}(r-1)r(r+1)$.
\end{lemma}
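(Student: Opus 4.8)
The plan is to invoke \cref{lemma:csscdistance}: since $\dist(I,J)=\tfrac13\abs{I\setminus J}$ for CSSC ideals $I,J$ of $[2r]^3$, it suffices to produce two CSSC ideals with $\abs{I\setminus J}\ge (r-1)r(r+1)$. The organizing tool is the decomposition of $[2r]^3$ into the eight octants $O_{t_1t_2t_3}=\{a : \bbmone_{a_i>r}=t_i\}$. Every self-complementary ideal contains $O_{000}$ and misses $O_{111}$ (compare $a\in O_{000}$ with $\varphi(a)\in O_{111}$, noting $a\le\varphi(a)$ coordinatewise), so all the freedom lies in the six ``edge'' octants; cyclic symmetry makes the three weight-one octant contents cyclic rotations of a single down-set $D\subseteq[r]^3$ (in local coordinates $O_{100}\cong[r]^3$, etc.), makes the three weight-two octant contents cyclic rotations of a single down-set, and self-complementarity identifies the latter with $[r]^3\setminus\rho(D)$, where $\rho$ is the componentwise reflection $c\mapsto r+1-c$.

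For one of the two ideals I would take $J=O_{000}\cup O_{100}\cup O_{010}\cup O_{001}$, the set of triples with at most one coordinate exceeding $r$: it is visibly downward closed and cyclically symmetric, and it is self-complementary because it is disjoint from $\varphi(J)=O_{111}\cup O_{011}\cup O_{101}\cup O_{110}$ and has $4r^3=\tfrac12(2r)^3$ elements. For an arbitrary CSSC ideal $I$ one then has $I\setminus J=I\cap(O_{011}\cup O_{101}\cup O_{110})$, whose three pieces are equinumerous by cyclic symmetry, so $\dist(I,J)=\abs{I\cap O_{011}}$. Thus it is enough to construct a single CSSC ideal $I$ with $\abs{I\cap O_{011}}=\tfrac13(r-1)r(r+1)$ (equivalently, with $I\cap O_{100}$ as small as possible).

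I would build such an $I$ by ``denting'' $J$: keep $O_{000}$ full, delete a cyclically symmetric filter $F\subseteq[r]^3$ from each weight-one octant, and insert $\varphi(F)=\rho(F)$ into each weight-two octant, so that $I\cap O_{100}=[r]^3\setminus F$ and $I\cap O_{011}=\rho(F)$, with the remaining octants forced by the cyclic action. Then $I$ is automatically self-complementary and cyclically symmetric, has $\abs{I\cap O_{011}}=\abs F$, and has down-set content in every octant; the only thing left to verify is downward closure across the boundary between a weight-one octant and a neighbouring weight-two octant. Unwinding that requirement turns it into a pair of explicit conditions on $F$ (roughly: for each $f\in F$, certain reflected-and-permuted images of $f$ must lie outside $F$), and one is reduced to exhibiting a filter $F$ of $[r]^3$ meeting these conditions whose $c_3=k$ layer is an up-set of $[r]^2$ of size $(k-1)k$; summing over layers gives $\abs F=\sum_{k=1}^{r}(k-1)k=\tfrac13(r-1)r(r+1)$, hence $\dist(I,J)=\abs F=\tfrac13(r-1)r(r+1)$, as required. (For $r=2$ a concrete valid choice is the filter $\{c\in[2]^3:c_1+c_3\ge 4,\ c_2+c_3\ge 3\}$, which recovers a distance-$2$ leaf of the graph in \cref{fig:csscgraph}.)

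The main obstacle is exactly this last step: choosing the \emph{shape} of $F$ so that the octant contents genuinely patch together into an ideal — the cross-octant downward-closure conditions constrain $F$ well beyond its size, and a naive half-space description of $F$ must be perturbed near its boundary for larger $r$ without disturbing $\abs F$. Everything else — the distance identity from \cref{lemma:csscdistance}, the check that $J$ is a CSSC ideal, and the reduction $\dist(I,J)=\abs{I\cap O_{011}}$ — is routine, and together with the matching upper bound this yields the exact diameter in \cref{theorem:csscdiameter}.
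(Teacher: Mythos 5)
Your setup is correct and matches the paper's: taking $J=O_{000}\cup O_{100}\cup O_{010}\cup O_{001}$ as one endpoint, using \cref{lemma:csscdistance}, and reducing via cyclic symmetry to showing that some CSSC ideal $I$ has $\abs{I\cap O_{011}}=\tfrac13(r-1)r(r+1)$ are all sound steps. But the proof is not complete, and the missing piece is precisely the part you flag as "the main obstacle": you never exhibit, for general $r$, the cyclically symmetric filter $F\subseteq[r]^3$ (equivalently, the second extremal ideal) satisfying the cross-octant downward-closure conditions with layer sizes $(k-1)k$. You neither write down those conditions explicitly nor prove that a filter meeting them exists; the only concrete instance given is $r=2$. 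Since the entire content of the lower bound is the existence of such an extremal ideal, asserting that one can "perturb a naive half-space description near its boundary without disturbing $\abs F$" is a statement of hope rather than a proof, and as it stands the argument does not establish the bound.

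The paper closes exactly this gap with an explicit construction that slots directly into your framework: take $I$ to be the union of three "pyramids," one along each coordinate axis, where each pyramid is a stack of centered square layers of sizes $1\times1, 3\times3, \dots, (2r-1)\times(2r-1)$ (see \cref{fig:csscdiameter}). This set is visibly an ideal, is cyclically symmetric by construction, and is self-complementary, and its intersection with $O_{011}$ is counted layer by layer as $1\cdot 2+2\cdot 3+\dots+(r-1)\cdot r=\tfrac13(r-1)r(r+1)$, which together with your (correct) reduction $\dist(I,J)=\abs{I\cap O_{011}}$ finishes the proof. In other words, rather than characterizing all admissible filters $F$ abstractly, a single concrete ideal whose octant counts can be read off suffices; to repair your write-up, replace the existence claim for $F$ with this (or an equivalent) explicit construction and verify the ideal, symmetry, and complementarity properties directly.
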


\begin{proof}
    Let $I$ denote the CSSC ideal $O_{000}\cup O_{001}\cup O_{010}\cup O_{100}$, and let $J$ denote the CSSC ideal constructed as a union of three ``pyramids" consisting of $1\times 1, 3\times 3, \dots, (2r-1)\times (2r-1)$ squares as layers. \cref{fig:csscdiameter} depicts these ideals in the case $r=5$.
    
    By \cref{lemma:csscdistance}, the distance between $I$ and $J$ in the flip graph is
    \[\tfrac{1}{3}\abs{I \setminus J} = \abs{O_{011} \cap (I \setminus J)} = 1 \cdot 2 + 2 \cdot 3 + \dots + (r-1) \cdot r = \tfrac{1}{3}(r-1)r(r+1).\]
    Therefore, the diameter of the flip graph is at least $\frac{1}{3}(r-1)r(r+1)$.
\end{proof}

\begin{figure}[htbp]
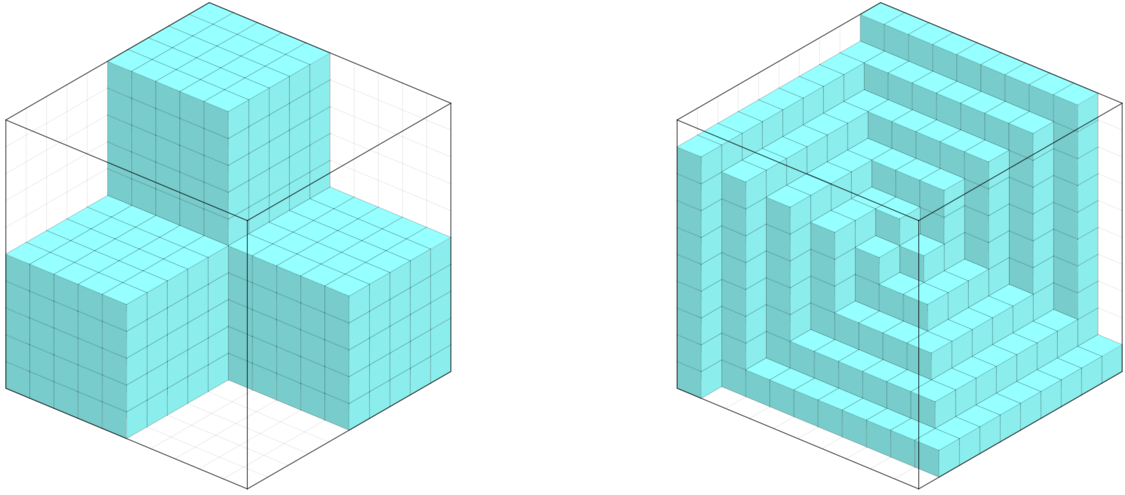

    \centering
    \begin{diagram}[0.9]
        backframe(10, 10, 10);
        ideal(new int[][] {{10, 10, 10, 10, 10, 5, 5, 5, 5, 5},
                           {10, 10, 10, 10, 10, 5, 5, 5, 5, 5}, 
                           {10, 10, 10, 10, 10, 5, 5, 5, 5, 5},
                           {10, 10, 10, 10, 10, 5, 5, 5, 5, 5},
                           {10, 10, 10, 10, 10, 5, 5, 5, 5, 5},
                           {5, 5, 5, 5, 5, 0, 0, 0, 0, 0},
                           {5, 5, 5, 5, 5, 0, 0, 0, 0, 0},
                           {5, 5, 5, 5, 5, 0, 0, 0, 0, 0},
                           {5, 5, 5, 5, 5, 0, 0, 0, 0, 0},
                           {5, 5, 5, 5, 5, 0, 0, 0, 0, 0}});
        frontframe(10, 10, 10);
        pair s = (25, 0);
        backframe(10, 10, 10, s);
        ideal(new int[][] {{10, 10, 10, 10, 10, 10, 10, 10, 10, 1},
                           {9, 9, 9, 9, 9, 9, 9, 9, 2, 1}, 
                           {9, 8, 8, 8, 8, 8, 8, 3, 2, 1},
                           {9, 8, 7, 7, 7, 7, 4, 3, 2, 1},
                           {9, 8, 7, 6, 6, 5, 4, 3, 2, 1},
                           {9, 8, 7, 6, 5, 4, 4, 3, 2, 1},
                           {9, 8, 7, 6, 3, 3, 3, 3, 2, 1},
                           {9, 8, 7, 2, 2, 2, 2, 2, 2, 1},
                           {9, 8, 1, 1, 1, 1, 1, 1, 1, 1},
                           {9, 0, 0, 0, 0, 0, 0, 0, 0, 0}}, s);
        frontframe(10, 10, 10, s);
    \end{diagram}
    \caption{$I$ and $J$ for the case $r=5$. These ideals attain the maximum possible distance in the flip graph on cyclically symmetric self-complementary ideals of $[10] \times [10] \times [10]$.}
    \label{fig:csscdiameter}
\end{figure}

\begin{theorem}\label{theorem:csscdiameter}
    Let $r$ be a positive integer. The diameter of $G_{C_3}(2r, 2r, 2r)$ is $\frac{1}{3}(r-1)r(r+1)$.
\end{theorem}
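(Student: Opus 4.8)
The lower bound is exactly \cref{lemma:csscdiameterlowerbound}, so what remains is to show that any two cyclically symmetric self-complementary ideals $I, J$ of $[2r]\times[2r]\times[2r]$ satisfy $\dist(I, J) \le \frac{1}{3}(r-1)r(r+1)$. By \cref{lemma:csscdistance} and $\abs{I} = \abs{J} = \frac{1}{2}(2r)^3 = 4r^3$, this is equivalent to the bound $\abs{I \cap J} \ge 3r^3 + r$. The plan is to prove this by first stripping off the "corner" octants and then reducing to a statement about ideals of $[r]^3$.

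First I would record the octant structure. For any self-complementary ideal $I$ and any $a$ with $a \le \varphi(a)$ — which in $[2r]^3$ means precisely $a \in O_{000}$ — one has $a \in I$ (else $\varphi(a) \in I$, and $a \le \varphi(a)$ forces $a \in I$), and dually $I \cap O_{111} = \emptyset$; hence $I = O_{000} \sqcup (I \cap M)$, where $M$ is the union of the six mixed octants. As a poset $M$ is a disjoint union of six copies of $[r]^3$; the $3$-cycle permutes $O_{001}, O_{010}, O_{100}$ cyclically and likewise $O_{011}, O_{101}, O_{110}$, while $\varphi$ interchanges the two triples. So cyclic symmetry together with self-complementarity determines $I \cap M$ from the single ideal $\mathcal A_I := I \cap O_{001}$, regarded as an ideal of $[r]^3$: the $O_{010}$ and $O_{100}$ parts are its cyclic shifts, and the parts in the octants with two large coordinates are obtained by applying $\varphi$ to the complements of those shifts. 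A short computation with $\varphi$ (which, after the identifications, acts on $[r]^3$ as the order-reversing antipodal involution) then yields the key identity $\dist(I, J) = \abs{\mathcal A_I \symdif \mathcal A_J}$, reducing everything to showing $\abs{\mathcal A_I \symdif \mathcal A_J} \le \frac{1}{3}(r-1)r(r+1)$ for the ideals of $[r]^3$ arising in this way.

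These ideals are constrained: demanding that the reconstructed $I$ be down-closed in $[2r]^3$ forces $\mathcal F := [r]^3 \setminus \mathcal A_I$ to be a filter in which no two elements $u, v$ satisfy both $u_2 + v_3 \le r+1$ and $u_3 + v_1 \le r+1$. Taking $u = v$ already shows every $u \in \mathcal F$ has $u_3 + \max(u_1, u_2) \ge r+2$, so $\mathcal A_I$ always contains the ideal $\{v : v_3 + \max(v_1, v_2) \le r+1\}$, which has size $\sum_{k=1}^r k^2$. I would then write $\abs{\mathcal A_I \symdif \mathcal A_J} = \abs{\mathcal A_I \cap \mathcal F_J} + \abs{\mathcal A_J \cap \mathcal F_I}$ and bound each term by combining this "cross" constraint with the correlation inequality \cref{lemma:idealcorrelation} (and its order-dual applied to the filters $\mathcal F_I, \mathcal F_J$), locating both $\mathcal F_I$ and $\mathcal F_J$ inside a common small region and refining the estimate there, with the extremal configuration — one of $\mathcal A_I, \mathcal A_J$ all of $[r]^3$ and the other the three-pyramid ideal of \cref{lemma:csscdiameterlowerbound} — handled directly.

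The main obstacle is exactly this final combinatorial step. Plain correlation is insufficient: \cref{corollary:scidealcorrelation1} alone only gives $\abs{I \cap J} \ge 2r^3$, and even knowing $\abs{\mathcal F_I}, \abs{\mathcal F_J} \le \frac{1}{3}(r-1)r(r+1)$ a term-by-term bound $\abs{\mathcal A_I \cap \mathcal F_J} + \abs{\mathcal A_J \cap \mathcal F_I} \le \abs{\mathcal F_I} + \abs{\mathcal F_J}$ is off by a factor of two. One must genuinely use the coupling in the constraint — that it ties the second and third coordinates of one element of $\mathcal F$ to the first and third of another — to force $\mathcal F_I$ and $\mathcal F_J$ to overlap substantially (equivalently, $\mathcal A_I$ and $\mathcal A_J$ to intersect well beyond the correlation bound). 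I expect the cleanest finish is an LP-duality or a direct injective argument on constrained filters of $[r]^3$, and nailing that down is where the real work lies.
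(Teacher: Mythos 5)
Your setup is sound: the octant bookkeeping is correct, the identity $\dist(I,J)=\abs{\mathcal{A}_I\symdif\mathcal{A}_J}$ with $\mathcal{A}_I=I\cap O_{001}$ viewed inside $[r]^3$ checks out (it is the same bookkeeping the paper uses implicitly when it measures distances inside a single octant), and your ``must contain $\{v: v_3+\max(v_1,v_2)\le r+1\}$'' observation is a correct strengthening of \cref{lemma:csscmustinclude}. But the proposal does not prove the theorem: the entire content of the upper bound is the inequality $\abs{\mathcal{A}_I\symdif\mathcal{A}_J}\le\frac13(r-1)r(r+1)$ for arbitrary pairs, and you leave exactly that step open, offering only a hoped-for finish (``LP-duality or a direct injective argument'') while yourself noting that plain correlation via \cref{lemma:idealcorrelation} falls short by a factor of two. (A minor additional slip: the parenthetical bound $\abs{\mathcal{F}_I}\le\frac13(r-1)r(r+1)$ is not justified by what you wrote---the must-include set only gives $\abs{\mathcal{F}_I}\le r^3-\sum_{k=1}^r k^2=\frac16 r(r-1)(4r+1)$---though you concede that bound would not suffice anyway.) So this is a genuine gap, not a proof.

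The paper sidesteps the arbitrary-pair problem entirely, and that is also the cleanest way to close your gap. It fixes the explicit ``staircase'' ideal $C_{2r}=\{a_1+a_2+a_3\le 3r+1\}$ and proves the \emph{radius} bound that every CSSC ideal lies within $\frac16(r-1)r(r+1)$ of $C_{2r}$ (\cref{lemma:csscradiusupperbound}), by induction on $r$: peel off the outer shell, reduce the shell contribution in one octant to a two-dimensional statement about ideals of a $(2r-1)\times(r-1)$ region (after ``swinging'' a vertical layer next to the top layer), and bound that with \cref{lemma:cssc2didealdifference}, using \cref{lemma:csscmustinclude,lemma:csscmustinclude2} to control the boundary cases. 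The diameter upper bound is then immediate from $\mathrm{diameter}\le 2\cdot\mathrm{radius}$, i.e.\ the triangle inequality through $C_{2r}$: one bounds $\abs{\mathcal{A}_I\symdif\mathcal{A}_{C_{2r}}}$ and $\abs{\mathcal{A}_{C_{2r}}\symdif\mathcal{A}_J}$ each by $\frac16(r-1)r(r+1)$, which is precisely the resolution of the factor-of-two obstruction you identified. If you want to salvage your approach, replace ``arbitrary $I,J$'' by ``arbitrary $I$ versus one well-chosen central ideal'' and prove the half-size bound; that is where the paper's real work (the shell induction and the 2D lemma) lives.
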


\begin{proof}
    The lower bound is given by \cref{lemma:csscdiameterlowerbound}. Since the diameter is at most twice the radius, the upper bound is given by \cref{lemma:csscradiusupperbound}.
\end{proof}

\subsection{Radius}

This section will compute the exact value of the radius of $G_{C_3}(2r, 2r, 2r)$ for all positive integers $r$. We first prove two general lemmas about CSSC ideals.

\begin{lemma}\label{lemma:csscmustinclude}
    All cyclically symmetric self-complementary ideals $I\subset [2r]\times [2r]\times [2r]$ must contain the points $(i, i, 2r + 1 - i)$ for each integer $1 \leq i \leq r$.
\end{lemma}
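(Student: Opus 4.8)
\section*{Proof proposal}

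The plan is to argue by contradiction, exploiting the interplay of the three defining properties of a CSSC ideal: being an ideal (downward closed), being self-complementary, and being cyclically symmetric. This is in the same spirit as the standard observation, already used in the excerpt, that every self-complementary ideal contains the diagonal points $(1,1,1),\dots,(r,r,r)$. Note that cyclic symmetry is genuinely needed here: the ideal $\{(a_1,a_2,a_3)\mid a_3\leq r\}$ is self-complementary but contains no point of the form $(i,i,2r+1-i)$, so any proof must make essential use of a $3$-cycle.

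Fix an integer $i$ with $1\leq i\leq r$ and suppose, toward a contradiction, that $(i,i,2r+1-i)\notin I$. The first step is to apply self-complementarity: since $(i,i,2r+1-i)\notin I$, its complement $(2r+1-i,\,2r+1-i,\,i)$ lies in $I$. The second step is to apply cyclic symmetry: rotating coordinates via the $3$-cycle $\sigma=(1\,3\,2)$, which sends $(a_1,a_2,a_3)$ to $(a_3,a_1,a_2)$, gives $(i,\,2r+1-i,\,2r+1-i)\in I$. The third step uses that $I$ is an ideal: because $i\leq r$ we have $i\leq 2r+1-i$, hence
\[(i,\,i,\,2r+1-i)\leq (i,\,2r+1-i,\,2r+1-i)\]
coordinatewise, so downward closure forces $(i,i,2r+1-i)\in I$ — contradicting the assumption. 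Therefore $(i,i,2r+1-i)\in I$ for every $1\leq i\leq r$.

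The lemma is short, so there is no genuine obstacle; the only point requiring care is choosing the right order of moves — complement, then rotate, then drop down — and checking the coordinatewise comparison, which works precisely because the hypothesis $i\leq r$ yields $i\leq 2r+1-i$ (and in particular the two coordinates are never equal, since $2r+1-i=i$ has no integer solution). When writing this up I would state the three steps explicitly and verify the single inequality, keeping the $3$-cycle convention consistent with the definition of cyclic symmetry given in \cref{section:preliminaries}.
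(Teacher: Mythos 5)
Your proposal is correct and takes essentially the same route as the paper: assume $(i,i,2r+1-i)\notin I$, pass to the complement $(2r+1-i,2r+1-i,i)\in I$, apply a $3$-cycle to obtain $(i,2r+1-i,2r+1-i)\in I$, and then use downward closure together with $i\leq 2r+1-i$ to force $(i,i,2r+1-i)\in I$, a contradiction. The paper states the final step more tersely (``a contradiction because $2r+1-i>i$''), but it is invoking exactly the coordinatewise comparison you spell out.
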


\begin{proof}
    If $(i, i, 2r + 1 - i)\notin I$, then $(2r + 1 - i, 2r + 1 - i, i)\in I$ from $I$ being self-complementary. Since $I$ is cyclically symmetric, we also have $(i, 2r + 1 - i, 2r + 1 - i)\in I$, which is a contradiction because $2r + 1 - i > i$.
\end{proof}

\begin{lemma}\label{lemma:csscmustinclude2}
    All cyclically symmetric self-complementary ideals $I\subset [2r]\times [2r]\times [2r]$ must contain at least one of $(1, r, 2r)$ or $(r, 1, 2r)$.
\end{lemma}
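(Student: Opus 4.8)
The plan is to argue by contradiction, following the template of the proof of \cref{lemma:csscmustinclude}. Suppose that $I$ contains neither $(1, r, 2r)$ nor $(r, 1, 2r)$. The idea is to extract from the first exclusion, via self-complementarity and then cyclic symmetry, an element of $I$ that dominates $(r, 1, 2r)$ coordinatewise; downward-closedness of $I$ will then force $(r, 1, 2r) \in I$, the desired contradiction.

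Concretely, I would proceed in three short steps. First, since $(1, r, 2r) \notin I$ and $I$ is self-complementary, its dual $(2r, r+1, 1)$ lies in $I$. Second, since $I$ is cyclically symmetric, the whole cyclic orbit $\{(2r, r+1, 1),\ (r+1, 1, 2r),\ (1, 2r, r+1)\}$ lies in $I$; in particular $(r+1, 1, 2r) \in I$. Third, since $(r, 1, 2r) \le (r+1, 1, 2r)$ componentwise and $I$ is an ideal, we get $(r, 1, 2r) \in I$, contradicting the hypothesis. (By the same argument applied to the other point: if $(r, 1, 2r) \notin I$ then $(r+1, 2r, 1) \in I$, whose cyclic orbit contains $(1, r+1, 2r)$, which dominates $(1, r, 2r)$ and yields the same contradiction.)

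There is no real obstacle here; the only thing to be careful about is choosing, among the three elements of the cyclic orbit of the dual point, the one whose coordinates sit directly above $(r, 1, 2r)$ — the relevant slack is in the first coordinate, which drops from $r+1$ to $r$ — and verifying the cyclic-shift convention against the one fixed in the definition of cyclic symmetry.
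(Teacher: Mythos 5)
Your proof is correct and is essentially identical to the paper's: both deduce $(2r, r+1, 1) \in I$ from self-complementarity, cyclically shift to get $(r+1, 1, 2r) \in I$, and then use downward-closedness to conclude $(r, 1, 2r) \in I$, contradicting the hypothesis.
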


\begin{proof}
    Suppose that $(1, r, 2r)\notin I$, so $(2r, r+1, 1)\in I$, and thus $(r+1, 1, 2r)\in I$. Then $I$ contains $(r, 1, 2r)$, as it is covered by $(r+1, 1, 2r)$.
    The other case is similar.
\end{proof}

Next, we prove a lemma about ideals in two dimensions that will be used when inducting with 3-dimensional ideals.

\begin{lemma}\label{lemma:cssc2didealdifference}
    Let $r\ge 2$ be a positive integer, and let $k\le r-2$ be a nonnegative integer.
    Let $C$ be the self-complementary ideal in $[r-k-1]\times [r+k]$ given by $\{(a_1, a_2)\in [r-k-1]\times [r+k]\mid a_1 + a_2\le r\}$. (An example of $C$ is given in \cref{fig:cssc2didealdifference}.)
    Then for any ideal $I\subseteq [r-k-1]\times [r+k]$, not necessarily self-complementary, we have 
    \[\abs{C\symdif I}\le (k+1) + (k+2) + \cdots + (r-1) = \tfrac{1}{2}(r+k)(r-k-1).\]
    Furthermore, equality holds only when $I$ is the empty ideal or the entire poset.
\end{lemma}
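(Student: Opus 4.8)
The plan is to symmetrize the expression and then reduce to a single clean inequality that a symmetric chain decomposition settles immediately.

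Set $P = [r-k-1]\times[r+k]$, and note $(r-k-1)+(r+k)=2r-1$, so the self-dual involution $\varphi$ of $P$ carries the condition $a_1+a_2\le r$ to $a_1+a_2\ge r+1$; hence $C$ is self-complementary in $P$ and $\abs{C}=\tfrac12\abs{P}=\tfrac12(r+k)(r-k-1)$, which a short computation identifies with $(k+1)+(k+2)+\cdots+(r-1)$. For any ideal $I\subseteq P$ one has $\abs{C\symdif I}=\abs{C}+\abs{I}-2\abs{C\cap I}$, so because $\abs{C}=\tfrac12\abs{P}$ it suffices to prove
\[\abs{C\cap I}\ \ge\ \tfrac12\abs{I},\]
with equality exactly when $I=\emptyset$ or $I=P$ (both of which visibly give $\abs{C\symdif I}=\abs{C}$).

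To prove this I would invoke the classical fact that a product of two chains admits a \emph{symmetric chain decomposition}: write $P=K_1\sqcup\cdots\sqcup K_t$, where each $K_j$ is a chain containing exactly one element of each rank between some $i_j$ and $(2r-3)-i_j$, the rank of $(a_1,a_2)\in P$ being $a_1+a_2-2$ and the top rank of $P$ being $2r-3$. The point is that $C$ is exactly the union of the rank levels $0,1,\dots,r-2$ while $P\setminus C$ is exactly the union of the rank levels $r-1,\dots,2r-3$; hence each $K_j$ meets $C$ in exactly its lower half, $K_j\cap C$ is an initial segment of the chain $K_j$, and $\abs{K_j\cap C}=\tfrac12\abs{K_j}$. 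For an ideal $I$, the set $I\cap K_j$ is also an initial segment of $K_j$, of some size $s_j\le\abs{K_j}$, so $\abs{I\cap C\cap K_j}=\min\!\bigl(s_j,\tfrac12\abs{K_j}\bigr)\ge\tfrac12 s_j$ (both cases $s_j\le\tfrac12\abs{K_j}$ and $s_j>\tfrac12\abs{K_j}$ are immediate). Summing over $j$ gives $\abs{C\cap I}\ge\tfrac12\abs{I}$. For the equality case, $\abs{C\cap I}=\tfrac12\abs{I}$ forces $\min(s_j,\tfrac12\abs{K_j})=\tfrac12 s_j$, i.e.\ $s_j\in\{0,\abs{K_j}\}$, for every $j$: each chain of the decomposition lies entirely inside $I$ or entirely outside it. Since the symmetric chain through the bottom element $(1,1)$ (the unique element of rank $0$) must also contain the top element $(r-k-1,\,r+k)$ (the unique element of rank $2r-3$), this chain being all-or-nothing forces $I=P$ or $I=\emptyset$.

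The only nontrivial ingredient is the structural one: that $P$, being a product of two chains, has a symmetric chain decomposition, and that $C$ sits inside it as the bottom half of every chain. Granting that, the inequality and its equality analysis are routine, so I expect this structural setup — not any delicate estimate — to be the main point of the argument. It is worth noting that a term-by-term comparison of $C$ with $I$ along rows (or along columns) does \emph{not} suffice: the per-row bound $\abs{C\cap I\cap\text{row}}\ge\tfrac12\abs{I\cap\text{row}}$ can fail, so the symmetric chains are genuinely needed to distribute the ``defect'' correctly across the poset.
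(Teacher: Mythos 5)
Your proof is correct, but it takes a genuinely different route from the paper. You symmetrize first: since $C$ is self-complementary, $\abs{C}=\tfrac12\abs{P}$ and the bound is equivalent to $\abs{C\cap I}\ge\tfrac12\abs{I}$, which you then get from a symmetric chain decomposition of the grid $[r-k-1]\times[r+k]$ --- each symmetric chain meets $C$ in exactly its lower half and meets the ideal $I$ in an initial segment, so the inequality holds chain by chain, and equality forces every chain to be all-in or all-out, whence the chain joining the global minimum $(1,1)$ to the global maximum $(r-k-1,r+k)$ pins down $I\in\{\emptyset,P\}$. The paper instead works directly with the column counts $p_i=\abs{\{(a_1,a_2)\in I\mid a_1=i\}}$, pairs column $i$ with column $r-k-i$, proves $\lvert p_i-(r-i)\rvert+\lvert p_{r-k-i}-(k+i)\rvert\le r+k$ by pushing to extreme values of $p_i$ and $p_{r-k-i}$, and sums; its equality analysis runs through $p_1=p_{r-k-1}\in\{0,r+k\}$. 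Your version is more conceptual and more general --- it uses only that $C$ is the set of elements of rank at most $r-2$ in a graded poset admitting a symmetric chain decomposition, and the equality case falls out with no case analysis; your aside that a naive row-by-row (or column-by-column) comparison fails is accurate and is precisely why the paper resorts to pairing opposite columns. The cost is the appeal to the classical existence of symmetric chain decompositions for a product of two chains (de Bruijn--Tengbergen--Kruyswijk, or the explicit ``hook'' chains), an external ingredient the paper's elementary, self-contained argument avoids; citing it is legitimate, though stating the hook decomposition explicitly would make your proof self-contained at little extra length.
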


\begin{figure}[htbp]
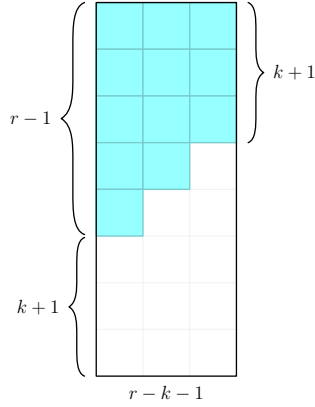

    \centering
    \begin{diagram}[0.25]
        int r = 6;
        int k = 2;
        int w = r-k-1;
        int h = r+k;
        backperimeter(w, h);
        for (int i = 0; i < w; ++i) {
            for (int j = 0; j < r-1-i; ++j) {
                rectangle(i, h-1-j, 1, 1);
            }
        }
        frontperimeter(w, h);

        transform t = scale(0.6);
        real eps = 0.25;
        path p1 = brace((0, k+1) + eps*dir(180)+eps/8*dir(90), (0, h) + eps*dir(180), 1/2);
        draw(p1);
        label(t*"$r-1$", p1, dir(180));
        path p2 = brace((w, h) + eps*dir(0), (w, r-1) + eps*dir(0), 1/3);
        draw(p2);
        label(t*"$k+1$", p2, dir(0));
        path p3 = brace((0, 0) + eps*dir(180), (0, k+1) + eps*dir(180)-eps/8*dir(90), 1/3);
        draw(p3);
        label(t*"$k+1$", p3, dir(180));
        label(t*"$r-k-1$", (0,0)/2 + (w, 0)/2, dir(-90));
    \end{diagram}
    \caption{An example of $C$ for the case $r=6$, $k=2$.}
    \label{fig:cssc2didealdifference}
\end{figure}

\begin{proof}
    For each integer $1\le i\le r-k-1$, define $p_i = \abs{\{(a_1, a_2)\in I\mid a_1 = i\}}$, so $0\le p_{r-k-1}\le \cdots\le p_1\le r+k$.
    Since $C$ contains $r-i$ points with $a_1 = i$ for each integer $1 \leq i \leq r-k-1$, we wish to upper-bound 
    \[\abs{C\symdif I} = \sum_{i = 1}^{r-k-1} |p_i - (r-i)|.\] 
    We will pair up column $i$ with column $r - k -i$ and show that 
    \[|p_i - (r-i)| + |p_{r-k-i} - (k+i)|\le r+k.\] 
    For the following, assume without loss of generality that $i\le r-k-i$, so $0\le p_{r-k-i}\le p_i\le r+k$.  

    For a fixed $p_i$, the maximum value of $|p_i - (r-i)| + |p_{r-k-i} - (k+i)|$ obtained by varying $p_{r-k-i}$ must occur at an extreme: when $p_{r-k-i} = p_i$ or $p_{r-k-i} = 0$. Similarly for a fixed $p_{r-k-i}$, the maximum value obtained by varying $p_i$ occurs when $p_i = p_{r-k-i}$ or when $p_i = r+k$. The maximum value now reduces to a finite case check:
    \begin{itemize}
        \item \textbf{Case 1:} $p_i\neq p_{r-k-i}$. 
        
        Then we must have $p_i = r+k$ and $p_{r-k-i} = 0$, and 
        \[|(r+k) - (r-i)| + |0 - (k+i)| = 2k + 2i\le r+k,\] by the assumption that $i\le r-k-i$. 
        \item \textbf{Case 2:} $p_i = p_{r-k-i}$. 
        
        Setting $x = p_i = p_{r-k-i}$, the maximum value of $|x-(r-i)| + |x - (k+i)|$ must occur at an endpoint of $x = 0$ or $x=r+k$, both of which give $r+k$.
    \end{itemize}

    Summing the inequalities $|p_i - (r-i)| + |p_{r-k-i} - (k+i)|\le r+k$ for $i = 1$ to $r-k-1$ and dividing by 2 yields the desired conclusion of 
    \[\abs{C\symdif I}\le \tfrac{1}{2}(r+k)(r-k-1).\]

    In order for equality to hold, we need $|p_i - (r-i)| + |p_{r-k-i} - (k+i)| = r+k$ for all $1\le i\le \frac{r-k}{2}$. Note that equality cannot actually hold in Case 1 above, because that would require $i = r-k-i$, but then $p_i = p_{r-k-i}$, so Case 2 would apply instead. Thus, to achieve equality, we must apply Case 2 and have $p_i = p_{r-k-i} = 0$ or $p_i = p_{r-k-i} = r+k$ for each integer $1 \leq i \leq \frac{r-k}{2}$.

    In particular, we have that $p_1 = p_{r-k-1} = 0$ or $p_1 = p_{r-k-1} = r+k$. This determines that either $p_i = 0$ for all $i$, or $p_i = r+k$ for all $i$, giving that $I$ is either the empty ideal or the entire poset. 
\end{proof}

\begin{lemma}\label{lemma:csscradiusupperbound}
    The radius of $G_{C_3}(2r, 2r, 2r)$ at most $\frac{1}{6}(r-1)r(r+1)$.
\end{lemma}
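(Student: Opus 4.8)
The plan is to exhibit a single CSSC ideal of eccentricity at most $\tfrac16(r-1)r(r+1)$. The natural candidate is the ``corner simplex''
\[C=\{(a_1,a_2,a_3)\in[2r]\times[2r]\times[2r] : a_1+a_2+a_3\le 3r+1\},\]
which is cyclically symmetric by construction and self-complementary because $a_1+a_2+a_3\le 3r+1$ is equivalent to $(2r{+}1{-}a_1)+(2r{+}1{-}a_2)+(2r{+}1{-}a_3)\ge 3r+2$. For any CSSC ideal $I$, \cref{lemma:csscdistance} gives $\dist(C,I)=\tfrac13|C\setminus I|=\tfrac16|C\symdif I|$ (using $|C\setminus I|=|I\setminus C|$), so it suffices to show
\[|C\symdif I|\le(r-1)r(r+1)\qquad\text{for every CSSC ideal }I\subset[2r]\times[2r]\times[2r].\]

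I would prove this by induction on $r$, peeling off the boundary shell of the cube. The base case $r=1$ is immediate: $[2]^3$ has the unique CSSC ideal $\{a_1+a_2+a_3\le 4\}$, so its flip graph is a single vertex. For the inductive step, restrict a CSSC ideal $I$ of $[2r]^3$ to the inner cube $[2,2r-1]^3\cong[2(r-1)]^3$. The set $I'=\{b\in[2r-2]^3:(b_1{+}1,b_2{+}1,b_3{+}1)\in I\}$ is again a CSSC ideal, since the order relation, the involution $\varphi$, and the cyclic action all commute with the shift $a\mapsto a-1$; and $C$ restricts under this shift to the corner simplex $C'$ for $r-1$. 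Hence $a\leftrightarrow a-1$ identifies $(C\symdif I)\cap[2,2r-1]^3$ with $C'\symdif I'$, so the inductive hypothesis gives $|(C\symdif I)\cap[2,2r-1]^3|\le(r-2)(r-1)r$. Since $(r-2)(r-1)r+3(r-1)r=(r-1)r(r+1)$, the inductive step reduces to the \emph{shell estimate}
\[\Bigl|(C\symdif I)\cap\bigl([2r]^3\setminus[2,2r-1]^3\bigr)\Bigr|\le 3(r-1)r.\]

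For the shell estimate I would first localize using the symmetries. Splitting $[2r]^3$ into the eight orthants $O_{t_1t_2t_3}$ about its center, $C\symdif I$ is empty on $O_{000}$ and $O_{111}$, and the cyclic action together with $\varphi$ makes all six remaining orthants carry the same amount of symmetric difference — likewise for the part of it lying in the (cyclically and $\varphi$-invariant) shell. Identifying $O_{001}$ with $[r]^3$ via $(a_1,a_2,a_3)\mapsto(a_1,a_2,a_3-r)$, the shell estimate becomes the statement that the symmetric difference of $C_0:=\{x+y+z\le 2r+1\}$ and $I_0:=I\cap O_{001}$ on the three outer faces $\{x=1\}\cup\{y=1\}\cup\{z=r\}$ of $[r]^3$ has at most $\binom r2$ cells. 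Here $I_0$ is a down-set of $[r]^3$ containing the forced simplex $\{\max(x,y)+z\le r+1\}$ (by \cref{lemma:csscmustinclude} and cyclic symmetry), and \cref{lemma:csscmustinclude2} together with the down-set property forces at least one of the faces $\{x=1\}$, $\{y=1\}$ to lie entirely in $I_0$ and hence agree with $C_0$, which contains both faces in full. On the remaining faces, the compatibility constraints inherited from $I$ being a genuine ideal of $[2r]^3$ confine $I_0$ enough that its symmetric difference with $C_0$ decomposes into pieces of the form treated by \cref{lemma:cssc2didealdifference} — sub-boxes $[r-k-1]\times[r+k]$ on which $C_0$ is the diagonal half-ideal $\{a_1+a_2\le r\}$, together with one-dimensional degenerate pieces — and summing the bounds $\tfrac12(r+k)(r-k-1)$ over the pieces that actually occur gives $\binom r2$.

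The main obstacle is this last step: pinning down exactly which pieces occur, verifying that the compatibility constraints really do confine $I_0$ to them, and checking that $C_0$ and $I_0$ agree everywhere else. These constraints are essential, not cosmetic — dropping them, $I_0$ could differ from $C_0$ on the three outer faces in about $2r^2$ cells, a constant factor more than the target $\binom r2$ — so the crux is extracting enough from the CSSC structure of $I$, beyond the down-set property of $I_0$, to bring \cref{lemma:cssc2didealdifference} to bear. Once the shell estimate is established, the induction closes.
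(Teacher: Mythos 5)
Your setup is the same as the paper's --- the same candidate center $C_{2r}=\{a_1+a_2+a_3\le 3r+1\}$, the same reduction via \cref{lemma:csscdistance} to bounding $\abs{C_{2r}\symdif I}$, the same induction on $r$ with a core/shell decomposition, and the same localization to the octant $O_{001}$, where the shell estimate $\tfrac12(r-1)r$ is what must be proved. But the proposal has a genuine gap exactly where the paper's proof does all of its work: after using \cref{lemma:csscmustinclude2} to get one full side face of $O_{001}$ into $I$ (which is fine, and matches the paper's without-loss-of-generality step), your treatment of the remaining two faces --- ``the compatibility constraints \dots confine $I_0$ enough that its symmetric difference with $C_0$ decomposes into pieces of the form treated by \cref{lemma:cssc2didealdifference} \dots and summing the bounds \dots gives $\binom r2$'' --- is an assertion, not an argument, and you say so yourself. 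Which rectangles occur, why the CSSC structure forces $I$ onto them, and why nothing is lost elsewhere is the entire content of the lemma; without it the bound you can actually certify on those faces is of order $r^2$ with the wrong constant, which is not enough.

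For comparison, here is how the paper closes that hole. Assuming $(r,1,2r)\in I$, the two ideals agree on the $a_2=1$ part of the shell in $O_{001}$, so the disagreement lives in $V=\{a_1=1,\,a_2\ge 2,\,a_3\le 2r-1\}$ and $T=\{a_2\ge 2,\,a_3=2r\}$; the key move is to \emph{unfold} $V\cup T$ into a single two-dimensional grid $P'=[2r-1]\times[r-1]$ (swinging the vertical face behind the top face), under which the restrictions of $I$ and $C_{2r}$ become ordinary $2$-dimensional ideals $I'$ and $C_{2r}'$. Then a two-case analysis finishes: if $(1,r,2r-1)\in I$, the undetermined region is an $r\times(r-1)$ rectangle and the $k=0$ case of \cref{lemma:cssc2didealdifference} gives $\tfrac12(r-1)r$ at once; otherwise, taking $k\ge 1$ largest with $(1,r,2r-k)\notin I$, self-complementarity and cyclic symmetry force $(r+1,k+1,2r)\in I$, hence the leftmost $k$ columns of $P'$ lie entirely in $I'$ (contributing $1+\cdots+k$ to the symmetric difference), and \cref{lemma:cssc2didealdifference} applied to the remaining $(r+k)\times(r-k-1)$ rectangle contributes at most $(k+1)+\cdots+(r-1)$, for a total of $\tfrac12(r-1)r$ again. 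This unfolding plus the extraction of $k$ from the CSSC structure is precisely the ingredient your outline is missing; until you supply an argument of that kind, the central estimate of your induction remains unproven.
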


\begin{proof}
    Let $C_{2r}$ denote the CSSC ideal of $[2r]\times [2r]\times [2r]$ given by 
    \[\{(a_1, a_2, a_3)\in [2r]\times [2r]\times [2r]\mid a_1 + a_2 + a_3\le 3r + 1\},\]
    and let $I$ denote any CSSC ideal of $[2r] \times [2r] \times [2r]$. 
    $C_{2r}$ for the case $r=5$ is shown in \cref{fig:cssccenter}.

    \begin{figure}[htbp]
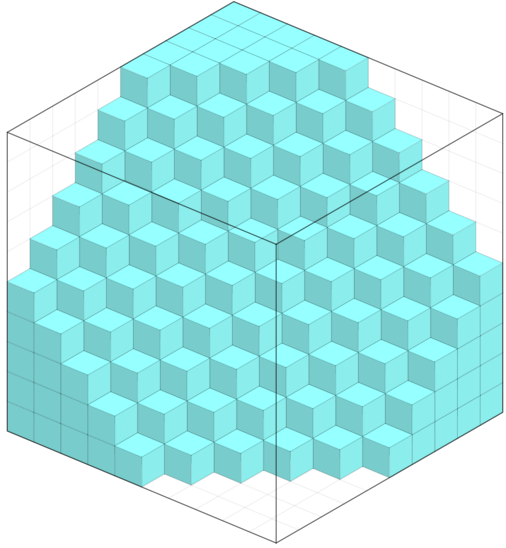

    \centering
    \begin{diagram}[0.4]
        backframe(10, 10, 10);
        ideal(new int[][] {{10, 10, 10, 10, 10, 9, 8, 7, 6, 5},
                           {10, 10, 10, 10, 9, 8, 7, 6, 5, 4}, 
                           {10, 10, 10, 9, 8, 7, 6, 5, 4, 3},
                           {10, 10, 9, 8, 7, 6, 5, 4, 3, 2},
                           {10, 9, 8, 7, 6, 5, 4, 3, 2, 1},
                           {9, 8, 7, 6, 5, 4, 3, 2, 1, 0},
                           {8, 7, 6, 5, 4, 3, 2, 1, 0, 0},
                           {7, 6, 5, 4, 3, 2, 1, 0, 0, 0},
                           {6, 5, 4, 3, 2, 1, 0, 0, 0, 0},
                           {5, 4, 3, 2, 1, 0, 0, 0, 0, 0}});
        frontframe(10, 10, 10);
    \end{diagram}
    \caption{$C_{2r}$ for the case $r=5$. This ideal lies in the center of the flip graph on cyclically symmetric self-complementary ideals of $[10] \times [10] \times [10]$.}
    \label{fig:cssccenter}
\end{figure}
    
    By \cref{lemma:csscdistance}, it suffices to show that
    \[\tfrac{1}{6} \abs{C_{2r} \symdif I} \leq \tfrac{1}{6}(r-1)r(r+1),\]
    or that
    \[\abs{O_{001}\cap (C_{2r}\symdif I)}\leq \tfrac{1}{6}(r-1)r(r+1).\]
    
    We will induct on $r$; the base case of $r=1$ is true because there is only one CSSC ideal, which is the center. Now assuming the result for $r-1$, we will show it for $r$. 
    
    Let the \emph{core} of a CSSC ideal $I\subset [2r]\times [2r]\times [2r]$ refer to its intersection with $\{(a_1, a_2, a_3)\mid 2\le a_i\le 2r-1\}$ (i.e.\ the points in the inner $[2r-2]\times [2r-2]\times [2r-2]$ cube), and let the \emph{shell} refer to the rest of $I$. Observe that the core of a CSSC ideal of $[2r]\times [2r]\times [2r]$ is a CSSC ideal of $[2r-2]\times [2r-2]\times [2r-2]$, and that the core of $C_{2r}$ is $C_{2r-2}$, which is the center in the $r-1$ case by the inductive hypothesis.
    The inductive hypothesis for $r-1$ states that symmetric difference of the cores of $I$ and $C_{2r}$ in $O_{001}$ is at most $\frac{1}{6}(r-2)(r-1)r$. Now it suffices to show that the symmetric difference of the shells of $I$ and $C_{2r}$ in $O_{001}$ is at most $\frac{1}{6}(r-1)r(r+1) - \frac{1}{6}(r-2)(r-1)r = \frac{1}{2}(r-1)r$.  

    From \cref{lemma:csscmustinclude2}, we may assume without loss of generality that $I$ contains $(r, 1, 2r)$, and therefore all points $(a_1, a_2, a_3)$ in the shell in $O_{001}$ with $a_2 = 1$. Note that $C_{2r}$ contains $(r, 1, 2r)$ as well.

    Consider the remaining points in $O_{001}$ that the shells of $C_{2r}$ and $I$ may differ in, which is the union of a vertical layer of points $V = \{(a_1, a_2, a_3)\in O_{001}\mid a_1 = 1, a_2\ge 2, a_3\le 2r-1\}$ and the top layer of points $T = \{(a_1, a_2, a_3)\in O_{001}\mid a_2\ge 2, a_3 = 2r\}$.
    
    Consider swinging the vertical layer $V$ ``behind" the top layer $T$ to form the poset $[2r - 1]\times [r-1]\times [1]$, which we will now view as an ideal in $P' = [2r-1]\times [r-1]$. 
    
    More precisely, the point $(1, a_2, a_3)\in V$ for $2\le a_2\le r$ and $r+1\le a_3\le 2r-1$ corresponds to the point $(a_3 - r, a_2 - 1)$ in $P'$, and the point $(a_1, a_2, 2r)\in T$ for $1\le a_1\le r$ and $2\le a_2\le r$ corresponds to the point $(a_1 + r - 1, a_2 - 1)$ in $P'$. An example of $V$, $T$, and $P'$ is illustrated in \cref{fig:csscvtp'}.  
    
    \begin{figure}[htbp]
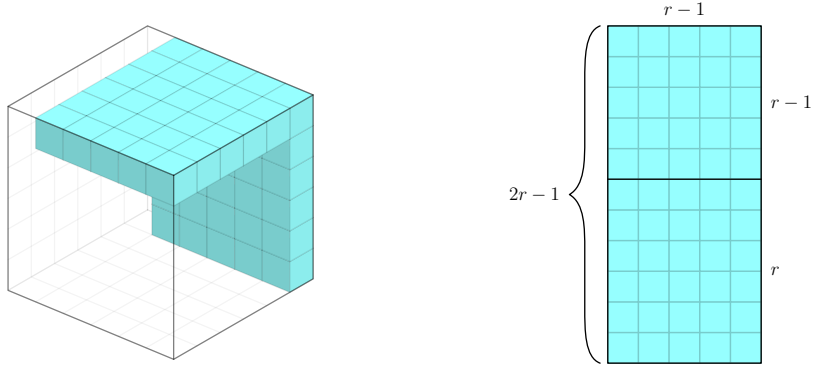

        \centering
        \begin{diagram}[0.65]
            pair s = (-15, 5);
            backframe(6, 6, 6, s);
            for (int a = 1; a < 6; ++a) {
                for (int b = 6; b < 11; ++b) {
                    box(0, a, b-6, 1, 1, 1, s);
                }
            }
            for (int a = 1; a < 6; ++a) {
                for (int b = 0; b < 6; ++b) {
                    box(b, a, 11-6, 1, 1, 1, s);
                }
            }
            frontframe(6, 6, 6, s);
            int r = 6;
            int k = 3;
            int w = r-1;
            int h = k+r+2;
            backperimeter(w, h);
            for (int i = 0; i < 5; ++i) {
                for (int j = 0; j < 11; ++j) {
                    rectangle(i, j, 1, 1);
                }
            }
            draw((0, 6) -- (5, 6));
            frontperimeter(w, h);

            real eps = 0.25;
            transform t = scale(0.6);
            path p1 = brace((0, 0) + eps*dir(180), (0, h) + eps*dir(180), 1);
            draw(p1);
            label(t*"$2r-1$", p1, dir(180));
            label(t*"$r-1$", (0, 11)/2 + (5, 11)/2, dir(90));
            label(t*"$r-1$", (5, 11)/2 + (5, 6)/2, dir(0));
            label(t*"$r$", (5, 6)/2 + (5, 0)/2, dir(0));
        \end{diagram}
        \caption{$V\cup T$ for the case $r=6$ is depicted on the left; note that only $O_{001}$ is drawn. The corresponding $P'$ is shown on the right.}
        \label{fig:csscvtp'}
    \end{figure}

    Let $I'$ and $C_{2r}'$ denote the ideals in $P'$ corresponding to the shells of $I$ and $C_{2r}$ in $V\cup T$.
    It remains to show that $\abs{C_{2r}'\symdif I'}\le \frac{1}{2}(r-1)r$. We split into two cases:

    \begin{itemize}
        \item \textbf{Case 1:} $(1, r, 2r-1)\in I$. 
        
        This corresponds to $I'$ containing the point $(r-2, r-1)$, which we note is also contained in $C_{2r}'$. Then the remainder of $P'$ not covered by the point $(r-2, r-1)$, in which $I'$ and $C_{2r}'$ may differ, is a rectangle with dimensions $r\times (r-1)$. Applying the $k=0$ version of \cref{lemma:cssc2didealdifference} gives that $\abs{C_{2r}'\symdif I'}\le \frac{1}{2}(r-1)r$, as desired. 

        Applying the equality cases of \cref{lemma:cssc2didealdifference} gives that equality here holds when $I' = P'$ or when $I' = \{(a_1, a_2)\in P'\mid a_1\le r-2\}$.
        
        \item \textbf{Case 2:} $(1, r, 2r-1)\notin I$.

        Let $k$ be the largest positive integer such that $(1, r, 2r - k)\notin I$; we have that $k\ge 1$ by assumption and that $k\le r-2$ because $(r, r, r+1)\in I$ by \cref{lemma:csscmustinclude}. Then since $I$ is CSSC, we have that $(2r, r+1, k+1)\in I$, so $(r+1, k+1, 2r)\in I$. This means that $(2r-2, k)\in I'$. In other words, all points in the leftmost $k$ columns of $P'$ are contained in $I'$. In these leftmost $k$ columns, the symmetric difference between $C_{2r}'$ and $I'$ is $1 + 2 + \cdots + k$. 
    
        This leaves us with a $(r+k)\times (r-k-1)$ rectangle in which $C_{2r}'$ and $I'$ may differ. By \cref{lemma:cssc2didealdifference}, the symmetric difference between $C_{2r}'$ and $I'$ in that rectangle is at most $(k+1) + (k+2) + \cdots + (r-1)$. All together, we have that $C_{2r}'$ and $I'$  differ in at most $1 + 2 + \cdots + (r-1) = \frac{1}{2}(r-1)r$ points, as desired. A visual depiction of this logic is given in \cref{fig:cssc2dremainingdifferencewithk}.

        The equality cases of \cref{lemma:cssc2didealdifference} imply that equality here holds when $I' = P'$ or when $I' = \{(a_1, a_2)\in P'\mid a_1\le r-k-2 \text{ or } a_2\le k\}$. However, in this case, we have $I'\neq P'$ by the assumption that $(1, r, 2r-1)\notin I$, so we are left with only the latter equality case.
    \end{itemize}

        \begin{figure}[htbp]
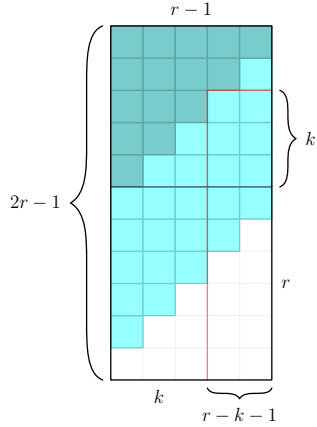

    \centering
        \begin{diagram}[0.25]
            int r = 6;
            int k = 3;
            int w = r-1;
            int h = k+r+2;
            backperimeter(w, h);
            for (int i = 0; i < w; ++i) {
                for (int j = 1; j < r; ++j) {
                    rectangle(i, i+j, 1, 1);
                }
            }
            for (int i = 0; i < w; ++i) {
                for (int j = r; i+j < h; ++j) {
                    darkrectangle(i, i+j, 1, 1);
                }
            }
            draw((0, r) -- (r-1, r), black+opacity(0.5));
            draw((k, 0) -- (k, r+k) -- (r-1, r+k), red+opacity(0.5));
            frontperimeter(w, h);
            transform t = scale(0.6);
            label(t*"$k$", (0, 0)/2 + (k, 0)/2, dir(-90));
            label(t * "$r$", (r-1, 0)/2 + (r-1, r)/2, dir(0));
            label(t*"$r-1$", (0, h)/2 + (w, h)/2, dir(90));

            real eps = 0.25;
            path p1 = brace((0, 0) + eps*dir(180), (0, h) + eps*dir(180), 1);
            draw(p1);
            label(t*"$2r-1$", p1, dir(180));
            path p2 = brace((w, r+k) + eps*dir(0), (w, r) + eps*dir(0), 1/2);
            draw(p2);
            label(t*"$k$", p2, dir(0));
            path p3 = brace((w, 0) + eps*dir(-90), (k, 0) + eps*dir(-90), 1/3);
            draw(p3);
            label(t*"$r-k-1$", p3, dir(-90));
        \end{diagram}
        \caption{An example of $P'$ for the case $r=6$, $k=3$. The highlighted cells are contained in $C_{2r}'$. The darker highlighted cells must also be contained in $I'$ by \cref{lemma:csscmustinclude}.}
        \label{fig:cssc2dremainingdifferencewithk}
    \end{figure}

    This concludes the proof that $\abs{C_{2r}'\symdif I'}\le \frac{1}{2}(r-1)r$. To complete the list of equality cases for the inductive step, we must note that we previously assumed without loss of generality that $(r, 1, 2r)\in I$, since $I$ contains one of $(r, 1, 2r)$ and $(1, r, 2r)$ by \cref{lemma:csscmustinclude2}. The symmetric case of when $(1, r, 2r)\in I$ introduces $r-2$ more equality cases that are analogous to the equality cases in Case 2, but with the first two coordinates swapped. Extending these equality cases in $O_{001}$ to all octants, the $2r-1$ CSSC shells that differ overall from the shell of $C_{2r}$ in the maximum possible $6\cdot \frac{1}{2}(r-1)r$ points are the CSSC shells $S_{k, 2r}$ such that
    \[\{(k, 2r-k, 2r), (2r-k, 2r, k), (2r, k, 2r-k)\}\subset S_{k, 2r}\] for each of $1\le k\le 2r-1$. The shells $S_{k, 2r}$ in the case of $r=5$ are depicted in \cref{fig:csscfurthestshells}. 
\end{proof}

\begin{figure}[htbp]
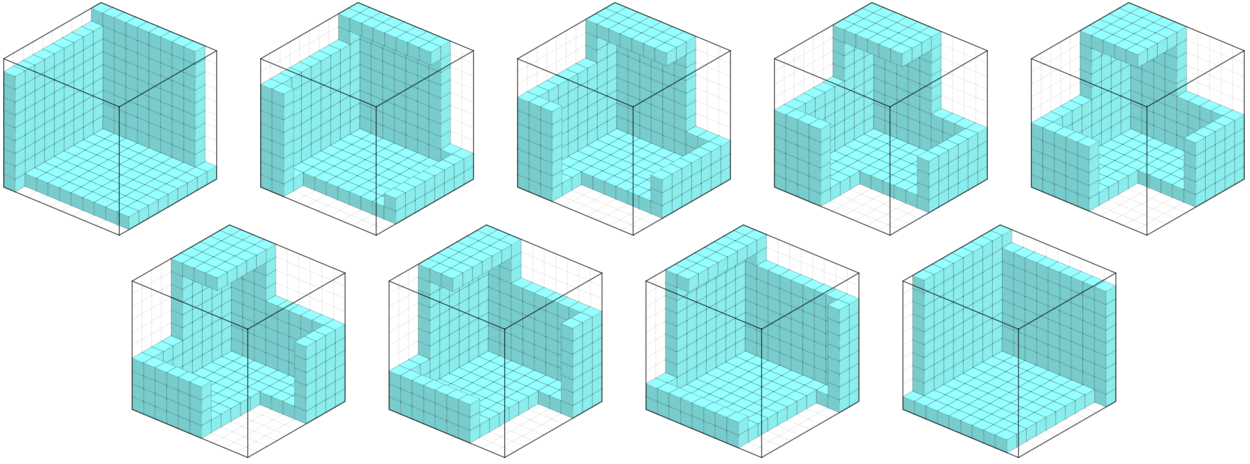

    \centering
    \begin{diagram}[1.0]
        int r = 5;
        int n = 2*r;
        for (int c = 1-r; c < r; ++c) {
            pair s;
            if (c <= 0) {
                s = 20*((c-2.5), 0);
            } else {
                s = 20*((c-7), -sqrt(3)/2);
            }
            backframe(10, 10, 10, s);
            for (int i = 0; i < n; ++i) {
                box(i, 0, 0, 1, 1, 1, s);
                box(0, i, 0, 1, 1, 1, s);
                box(0, 0, i, 1, 1, 1, s);
            }
            for (int a = 1; a < n; ++a) {
                for (int b = 1; b < n; ++b) {
                    if (a >= r-c && b >= r+c) {
                        continue;
                    }
                    box(a, b, 0, 1, 1, 1, s);
                    box(b, 0, a, 1, 1, 1, s);
                    box(0, a, b, 1, 1, 1, s);
                }
            }
            for (int a = 1; a < r+c; ++a) {
                for (int b = 1; b < r-c; ++b) {
                    box(a, b, n-1, 1, 1, 1, s);
                    box(b, n-1, a, 1, 1, 1, s);
                    box(n-1, a, b, 1, 1, 1, s);
                }
            }
            frontframe(10, 10, 10, s);
        }
    \end{diagram}
    \caption{The shells that are equality cases for the inductive step of \cref{lemma:csscradiusupperbound} in the case $r=5$.}
    \label{fig:csscfurthestshells}
\end{figure}

\begin{theorem}\label{theorem:csscradius}
    The radius of $G_{C_3}(2r, 2r, 2r)$ is $\frac{1}{6}(r-1)r(r+1)$.
\end{theorem}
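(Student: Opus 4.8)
The plan is simply to sandwich the radius between the two bounds already established. The upper bound $r\bigl(G_{C_3}(2r,2r,2r)\bigr)\le\frac{1}{6}(r-1)r(r+1)$ is exactly the content of \cref{lemma:csscradiusupperbound}: that lemma exhibits the explicit CSSC ideal $C_{2r}=\{(a_1,a_2,a_3)\mid a_1+a_2+a_3\le 3r+1\}$ and shows, via the core/shell induction, that every CSSC ideal lies within flip-distance $\frac{1}{6}(r-1)r(r+1)$ of it.

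For the matching lower bound I would invoke the elementary fact that in any connected graph the diameter is at most twice the radius, together with \cref{lemma:csscdiameterlowerbound} (equivalently \cref{theorem:csscdiameter}), which produces two CSSC ideals at flip-distance $\frac{1}{3}(r-1)r(r+1)$. Hence
\[
r\bigl(G_{C_3}(2r,2r,2r)\bigr)\;\ge\;\tfrac12\cdot\tfrac13(r-1)r(r+1)\;=\;\tfrac16(r-1)r(r+1).
\]
One point worth noting is that $(r-1)r(r+1)$ is a product of three consecutive integers, hence divisible by $6$, so $\frac16(r-1)r(r+1)$ is a genuine integer; this is precisely what lets the ``at most twice the radius'' inequality pin the radius down exactly, with no rounding gap, so the two bounds coincide and the radius equals $\frac16(r-1)r(r+1)$.

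There is essentially no remaining obstacle here: all the real work has already been absorbed into \cref{lemma:cssc2didealdifference} and \cref{lemma:csscradiusupperbound}, where the inductive core/shell decomposition and the two-dimensional symmetric-difference estimate carry the argument. If desired I would additionally remark that $C_{2r}$ is in fact a center of the flip graph, since \cref{lemma:csscradiusupperbound} bounds its eccentricity by the radius, and the lower bound forces equality.
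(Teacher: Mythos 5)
Your proposal is correct and matches the paper's proof exactly: the upper bound comes from \cref{lemma:csscradiusupperbound} and the lower bound from \cref{lemma:csscdiameterlowerbound} via the fact that the radius is at least half the diameter. Your added remark that $(r-1)r(r+1)$ is divisible by $6$, so no rounding gap arises, is a harmless and reasonable sanity check not spelled out in the paper.
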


\begin{proof}
    The upper bound is given by \cref{lemma:csscradiusupperbound}. Since the radius is at least half the diameter, the lower bound is given by \cref{lemma:csscdiameterlowerbound}.
\end{proof}

\begin{remark}
    From our code, it appears that the center of $G_{C_3}(2r, 2r, 2r)$ contains exactly 1 element, namely $C_{2r}$. This fact should not be hard to prove with a little more time, applying the work already done in this section.
\end{remark}

Furthermore, we can enumerate nicely the number of ideals that are furthest from $C_{2r}$ in the flip graph.

\begin{theorem}\label{theorem:furthestfromcenter}
    The number of ideals of $[2r]\times [2r]\times [2r]$ that are a distance of $\frac{1}{6}(r-1)r(r+1)$ from $C_{2r}$ in $G_{C_3}(2r, 2r, 2r)$ is $3^{r-1}$.
\end{theorem}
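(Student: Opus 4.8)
The plan is to prove, by induction on $r$, that the number $N(r)$ of CSSC ideals of $[2r]\times[2r]\times[2r]$ lying at distance $\tfrac16(r-1)r(r+1)$ from $C_{2r}$ equals $3^{r-1}$. The base case $r=1$ is immediate, since there is exactly one CSSC ideal. For the inductive step I would exploit the core/shell decomposition from the proof of \cref{lemma:csscradiusupperbound}: that argument splits the distance of any CSSC ideal from $C_{2r}$ into a \emph{core} contribution of at most $\tfrac16(r-2)(r-1)r$, coming from the restriction $\bar I$ of the ideal to the inner cube $\{2\le a_i\le 2r-1\}$, plus a \emph{shell} contribution of at most $\tfrac12(r-1)r$. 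Hence a CSSC ideal $I$ is at maximal distance from $C_{2r}$ if and only if (i) $\bar I$ is at maximal distance $\tfrac16(r-2)(r-1)r$ from $C_{2r-2}$ and (ii) the shell of $I$ is one of the $2r-1$ equality shells $S_{k,2r}$ ($1\le k\le 2r-1$) identified there, where $S_{k,2r}$ is the CSSC shell with $(k,2r-k,2r)\in S_{k,2r}$. Since $\bar I$ is recovered from $I$ by restriction, this gives
\[
N(r)=\sum_{\bar I}\#\{k\in\{1,\dots,2r-1\}:\bar I\cup S_{k,2r}\text{ is an ideal}\},
\]
the sum running over the $N(r-1)=3^{r-2}$ maximal-distance cores; so it suffices to show every summand equals $3$.

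To do that I would first extract the explicit shape of the equality shells from the construction in the proof of \cref{lemma:csscradiusupperbound} (see also \cref{fig:csscfurthestshells}): the $a_3=2r$ face of $S_{k,2r}$ is the down-set $\{(a_1,a_2):a_1\le k,\ a_2\le 2r-k\}$ of $[2r]\times[2r]$, its $a_1=1$ face is $[2r]\times[2r]$ with the up-set $\{(a_2,a_3):a_2\ge 2r-k+1,\ a_3\ge k+1\}$ removed, and the other four faces are cyclic images of these. The only comparabilities linking a shell point to a core point go one step inward — a high-face shell point dominates only the core point obtained by decreasing its extremal coordinate to $2r-1$, and a core point dominates only low-face shell points obtained by decreasing one coordinate to $1$ — so down-closedness of $\bar I\cup S_{k,2r}$ reduces to two cyclically-invariant families of conditions: (a) for each point of the $a_3=2r$ face of $S_{k,2r}$ with $a_1,a_2\ge 2$, lowering $a_3$ to $2r-1$ lands in $\bar I$; and (b) the $a_1=2$ layer of $\bar I$, viewed in the $(a_2,a_3)$-plane, is contained in the $a_1=1$ face of $S_{k,2r}$. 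In particular, compatibility depends only on the outermost layer of $\bar I$.

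Since $\bar I$ is a maximal-distance core, its outermost layer is itself an equality shell $S_{k',2r-2}$ for some $k'\in\{1,\dots,2r-3\}$; rewriting the corresponding face formulas in $[2r]^3$-coordinates shows that the $a_3=2r-1$ layer of $\bar I$ is $\{(a_1,a_2):a_1\le k'+1,\ a_2\le 2r-1-k'\}$ and its $a_1=2$ layer is $\{(c_2,c_3):c_2\le 2r-1-k'\text{ or }c_3\le k'+1\}$. Substituting these into (a) and (b) and their cyclic images, a short case check gives: for $2\le k\le 2r-2$ both (a) and (b) force $k=k'+1$, whereas for $k\in\{1,2r-1\}$ both conditions are vacuous. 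Thus $\bar I\cup S_{k,2r}$ is an ideal precisely for $k\in\{1,\,k'+1,\,2r-1\}$, which are three distinct values (the case $r=2$ fits this as well, with the unique CSSC ideal of $[2]^3$ playing the role of $S_{1,2}$). Each maximal-distance core therefore extends to exactly three maximal-distance ideals of $[2r]^3$, so $N(r)=3\,N(r-1)=3^{r-1}$, completing the induction.

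I expect the main obstacle to be the middle steps — the equality shells $S_{k,2r}$ are only described implicitly in the proof of \cref{lemma:csscradiusupperbound}, so one must first nail down their six faces exactly, and then carry out the ``exactly three'' verification with care, in particular handling the degenerate values $k\in\{1,2r-1\}$ and checking that gluing imposes no constraint beyond the outermost layer. Once the shells are explicit, conditions (a)--(b) become linear inequalities, the case check is routine, and the distance decomposition together with $N(r)=3N(r-1)$ finishes the count.
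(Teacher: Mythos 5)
Your proposal is correct and follows essentially the same route as the paper: induction on $r$ via the core/shell decomposition, using the equality-case analysis of \cref{lemma:csscradiusupperbound} to see that a maximal-distance ideal must have a maximal-distance core together with one of the equality shells $S_{k,2r}$, and then checking that each maximal-distance core with shell $S_{k',2r-2}$ admits exactly the three compatible shells $S_{1,2r}$, $S_{2r-1,2r}$, $S_{k'+1,2r}$, giving $N(r)=3N(r-1)=3^{r-1}$. Your write-up simply makes explicit the face descriptions and the gluing conditions that the paper asserts more briefly.
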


\begin{proof}
    We prove this via induction. The base case of $r=1$ is clear because there is only one CSSC ideal $C_2$ in $[2]\times[2]\times[2]$, of distance 0 from itself. The three ideals of distance 1 from $C_{2r}$ in the case of $r=2$ were illustrated previously in \cref{fig:scgraph} as well.

    Let $P_{2r}$ denote the set of CSSC ideals of $[2r]\times [2r]\times [2r]$ that are a distance of $\frac{1}{6}(r-1)r(r+1)$ from $C_{2r}$ in the flip graph. We will first construct three ways to add on a shell to each $I\in P_{2r-2}$ to form a CSSC ideal in $P_{2r}$. 

    Fix a CSSC ideal $I\in P_{2r-2}$, and recall from the equality cases of the inductive step in \cref{lemma:csscradiusupperbound} that its shell is $S_{k, 2r-2}$ for some $1\le k\le 2r-3$. Then the only three shells $S_{i, 2r}$ that can fit around $I$ to form a CSSC ideal in $P_{2r}$ are 
    \[S_{1, 2r}, S_{2r-1, 2r}, \text{ and } S_{k+1, 2r}.\]
    The first two shells can in fact fit around any ideal of $[2r-2]\times [2r-2]\times [2r-2]$. The third shell is specific to the fact that the shell of the ideal $I\in P_{2r-2}$ is $S_{k, 2r-2}$.
    For the particular $I\in P_{2r-2}$ for $r=5$ shown in \cref{fig:csscshells1}, the three shells are demonstrated in \cref{fig:csscshells2}, and the resulting ideals in $P_{2r}$ are shown in \cref{fig:csscshells3}. 
    Thus, given that $\abs{P_{2r-2}} = 3^{r-2}$, we have constructed exactly $3^{r-1}$ ideals in $P_{2r}$.
    An example of the tree structure arising from this proof is given in \cref{fig:tree}.
\end{proof}

\begin{figure}[htbp]
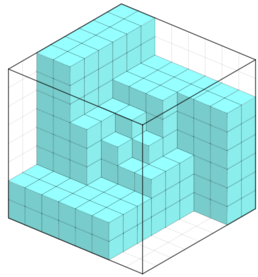

    \centering
    \begin{diagram}[0.2]
        backframe(8, 8, 8);
        ideal(new int[][] {{8, 8, 6, 6, 6, 6, 6, 6}, {8, 8, 6, 6, 6, 6, 6, 6}, {8, 8, 6, 6, 6, 3, 0, 0}, {8, 8, 5, 5, 4, 3, 0, 0}, {8, 8, 5, 4, 3, 3, 0, 0}, {8, 8, 5, 2, 2, 2, 0, 0}, {2, 2, 2, 2, 2, 2, 0, 0}, {2, 2, 2, 2, 2, 2, 0, 0}});
        frontframe(8, 8, 8);
    \end{diagram}
    \caption{An example of $I \in P_8$ with shell $S_{6, 8}$.}
    \label{fig:csscshells1}
\end{figure}

\begin{figure}[htbp]
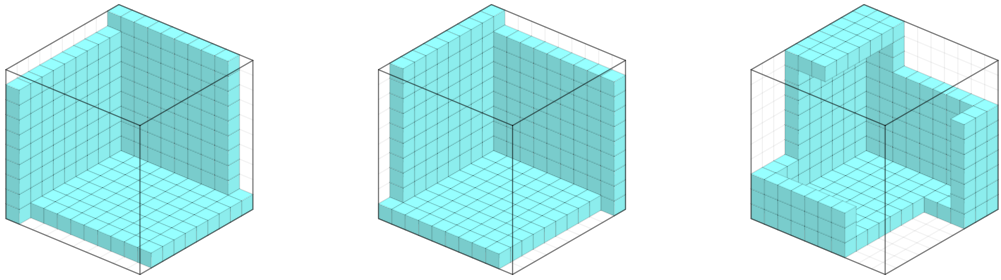

    \centering
    \begin{diagram}[0.8]
        pair s = (-25, -25);
        backframe(10, 10, 10, s);
        ideal(new int[][] {{10, 10, 10, 10, 10, 10, 10, 10, 10, 1},
                           {9, 1, 1, 1, 1, 1, 1, 1, 1, 1},
                           {9, 1, 1, 1, 1, 1, 1, 1, 1, 1},
                           {9, 1, 1, 1, 1, 1, 1, 1, 1, 1},
                           {9, 1, 1, 1, 1, 1, 1, 1, 1, 1},
                           {9, 1, 1, 1, 1, 1, 1, 1, 1, 1},
                           {9, 1, 1, 1, 1, 1, 1, 1, 1, 1},
                           {9, 1, 1, 1, 1, 1, 1, 1, 1, 1},
                           {9, 1, 1, 1, 1, 1, 1, 1, 1, 1},
                           {9, 0, 0, 0, 0, 0, 0, 0, 0, 0}}, s);
        frontframe(10, 10, 10, s);
        pair s = (0, -25);
        backframe(10, 10, 10, s);
        ideal(new int[][] {{10, 9, 9, 9, 9, 9, 9, 9, 9, 9},
                           {10, 1, 1, 1, 1, 1, 1, 1, 1, 0}, 
                           {10, 1, 1, 1, 1, 1, 1, 1, 1, 0},
                           {10, 1, 1, 1, 1, 1, 1, 1, 1, 0},
                           {10, 1, 1, 1, 1, 1, 1, 1, 1, 0},
                           {10, 1, 1, 1, 1, 1, 1, 1, 1, 0},
                           {10, 1, 1, 1, 1, 1, 1, 1, 1, 0},
                           {10, 1, 1, 1, 1, 1, 1, 1, 1, 0},
                           {10, 1, 1, 1, 1, 1, 1, 1, 1, 0},
                           {1, 1, 1, 1, 1, 1, 1, 1, 1, 0},}, s);
        frontframe(10, 10, 10, s);

        pair s = (25, -25);
        backframe(10, 10, 10, s);
        ideal(new int[][] {{10, 10, 10, 7, 7, 7, 7, 7, 7, 7},
                           {10, 1, 1, 1, 1, 1, 1, 1, 1, 7}, 
                           {10, 1, 1, 1, 1, 1, 1, 1, 1, 7},
                           {10, 1, 1, 1, 1, 1, 1, 0, 0, 0},
                           {10, 1, 1, 1, 1, 1, 1, 0, 0, 0},
                           {10, 1, 1, 1, 1, 1, 1, 0, 0, 0},
                           {10, 1, 1, 1, 1, 1, 1, 0, 0, 0},
                           {3, 1, 1, 1, 1, 1, 1, 0, 0, 0},
                           {3, 1, 1, 1, 1, 1, 1, 0, 0, 0},
                           {3, 3, 3, 3, 3, 3, 3, 0, 0, 0},}, s);
        for (int i = 1; i <= 6; ++i) {
            for (int j = 1; j <= 2; ++j) {
                box(i, j, 9, 1, 1, 1, s);
            }
        }
        frontframe(10, 10, 10, s);
    \end{diagram}
    \caption{The three shells $S_{1, 10}, S_{9, 10}, S_{7, 10}$ that can surround $I$ in \cref{fig:csscshells1} to form an ideal in $P_{10}$.}
    \label{fig:csscshells2}
\end{figure}

\begin{figure}[htbp]
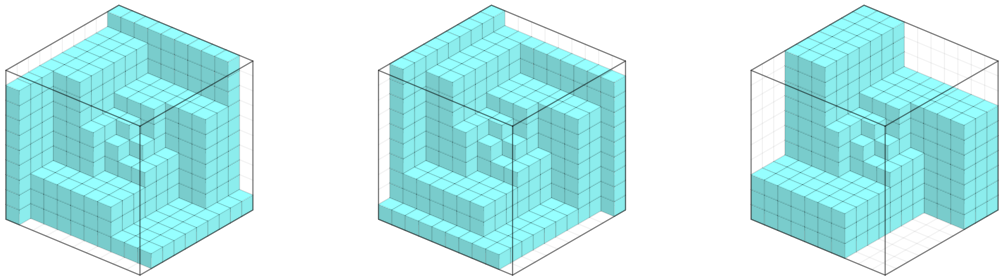

    \centering
    \begin{diagram}[0.8]
        pair s = (-25, -50);
        backframe(10, 10, 10, s);
        ideal(new int[][] {{10, 10, 10, 10, 10, 10, 10, 10, 10, 1},
                           {9, 9, 9, 7, 7, 7, 7, 7, 7, 1},
                           {9, 9, 9, 7, 7, 7, 7, 7, 7, 1},
                           {9, 9, 9, 7, 7, 7, 4, 1, 1, 1},
                           {9, 9, 9, 6, 6, 5, 4, 1, 1, 1},
                           {9, 9, 9, 6, 5, 4, 4, 1, 1, 1},
                           {9, 9, 9, 6, 3, 3, 3, 1, 1, 1},
                           {9, 3, 3, 3, 3, 3, 3, 1, 1, 1},
                           {9, 3, 3, 3, 3, 3, 3, 1, 1, 1},
                           {9, 0, 0, 0, 0, 0, 0, 0, 0, 0}}, s);
        frontframe(10, 10, 10, s);
        pair s = (0, -50);
        backframe(10, 10, 10, s);
        ideal(new int[][] {{10, 9, 9, 9, 9, 9, 9, 9, 9, 9},
                           {10, 9, 9, 7, 7, 7, 7, 7, 7, 0},
                           {10, 9, 9, 7, 7, 7, 7, 7, 7, 0},
                           {10, 9, 9, 7, 7, 7, 4, 1, 1, 0},
                           {10, 9, 9, 6, 6, 5, 4, 1, 1, 0},
                           {10, 9, 9, 6, 5, 4, 4, 1, 1, 0},
                           {10, 9, 9, 6, 3, 3, 3, 1, 1, 0},
                           {10, 3, 3, 3, 3, 3, 3, 1, 1, 0},
                           {10, 3, 3, 3, 3, 3, 3, 1, 1, 0},
                           {1, 1, 1, 1, 1, 1, 1, 1, 1, 0}}, s);
        frontframe(10, 10, 10, s);
        pair s = (25, -50);
        backframe(10, 10, 10, s);
        ideal(new int[][] {{10, 10, 10, 7, 7, 7, 7, 7, 7, 7},
                           {10, 10, 10, 7, 7, 7, 7, 7, 7, 7},
                           {10, 10, 10, 7, 7, 7, 7, 7, 7, 7},
                           {10, 10, 10, 7, 7, 7, 4, 0, 0, 0},
                           {10, 10, 10, 6, 6, 5, 4, 0, 0, 0},
                           {10, 10, 10, 6, 5, 4, 4, 0, 0, 0},
                           {10, 10, 10, 6, 3, 3, 3, 0, 0, 0},
                           {3, 3, 3, 3, 3, 3, 3, 0, 0, 0},
                           {3, 3, 3, 3, 3, 3, 3, 0, 0, 0},
                           {3, 3, 3, 3, 3, 3, 3, 0, 0, 0}}, s);
        frontframe(10, 10, 10, s);
    \end{diagram}
    \caption{Three ideals in $P_{10}$ corresponding to $I$ in \cref{fig:csscshells1} and the shells in \cref{fig:csscshells2}.}
    \label{fig:csscshells3}
\end{figure}

\begin{figure}[htbp]
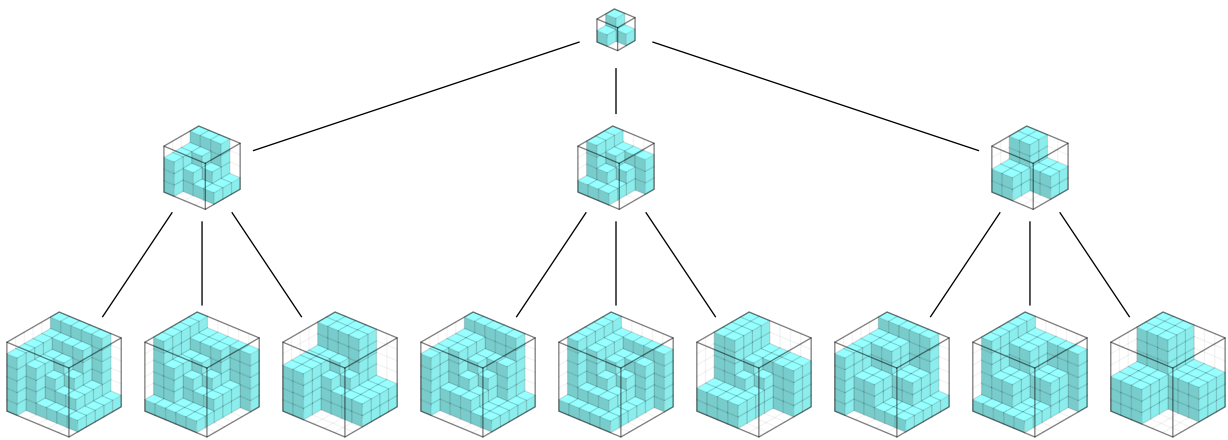

    \centering
    \begin{diagram}[1]
        void v(int[][] heights, pair o) {
            int l = heights.length;
            pair s = (l*v1+l*v2+l*v3)/2;
            fill(circle(o, 2/3*l+2), white);
            backframe(l, l, l, o-s);
            ideal(heights, o-s);
            frontframe(l, l, l, o-s);
        }
        int[][][] ideals = new int[][][] {
            {{2, 1}, {1, 0}},
            {{4, 4, 4, 1}, {3, 3, 2, 1}, {3, 2, 1, 1}, {3, 0, 0, 0}},
            {{4, 3, 3, 3}, {4, 3, 2, 0}, {4, 2, 1, 0}, {1, 1, 1, 0}},
            {{4, 4, 2, 2}, {4, 4, 2, 2}, {2, 2, 0, 0}, {2, 2, 0, 0}},
            {{6, 6, 6, 6, 6, 1}, {5, 5, 5, 5, 2, 1}, {5, 4, 4, 3, 2, 1}, {5, 4, 3, 2, 2, 1}, {5, 4, 1, 1, 1, 1}, {5, 0, 0, 0, 0, 0}},
            {{6, 5, 5, 5, 5, 5}, {6, 5, 5, 5, 2, 0}, {6, 4, 4, 3, 2, 0}, {6, 4, 3, 2, 2, 0}, {6, 4, 1, 1, 1, 0}, {1, 1, 1, 1, 1, 0}},
            {{6, 6, 6, 6, 2, 2}, {6, 6, 6, 6, 2, 2}, {4, 4, 4, 3, 2, 2}, {4, 4, 3, 2, 2, 2}, {4, 4, 0, 0, 0, 0}, {4, 4, 0, 0, 0, 0}},
            {{6, 6, 6, 6, 6, 1}, {5, 5, 4, 4, 4, 1}, {5, 5, 4, 3, 1, 1}, {5, 5, 3, 2, 1, 1}, {5, 2, 2, 2, 1, 1}, {5, 0, 0, 0, 0, 0}},
            {{6, 5, 5, 5, 5, 5}, {6, 5, 4, 4, 4, 0}, {6, 5, 4, 3, 1, 0}, {6, 5, 3, 2, 1, 0}, {6, 2, 2, 2, 1, 0}, {1, 1, 1, 1, 1, 0}},
            {{6, 6, 4, 4, 4, 4}, {6, 6, 4, 4, 4, 4}, {6, 6, 4, 3, 0, 0}, {6, 6, 3, 2, 0, 0}, {2, 2, 2, 2, 0, 0}, {2, 2, 2, 2, 0, 0}},
            {{6, 6, 6, 6, 6, 1}, {5, 5, 5, 3, 3, 1}, {5, 5, 5, 3, 3, 1}, {5, 3, 3, 1, 1, 1}, {5, 3, 3, 1, 1, 1}, {5, 0, 0, 0, 0, 0}},
            {{6, 5, 5, 5, 5, 5}, {6, 5, 5, 3, 3, 0}, {6, 5, 5, 3, 3, 0}, {6, 3, 3, 1, 1, 0}, {6, 3, 3, 1, 1, 0}, {1, 1, 1, 1, 1, 0}},
            {{6, 6, 6, 3, 3, 3}, {6, 6, 6, 3, 3, 3}, {6, 6, 6, 3, 3, 3}, {3, 3, 3, 0, 0, 0}, {3, 3, 3, 0, 0, 0}, {3, 3, 3, 0, 0, 0}}};
        real s = 12;
        pair[] locations = new pair[] {
            s*(0, 2.5),
            s*(-3, 1.5),
            s*(0, 1.5),
            s*(3, 1.5),
            s*(-4, 0),
            s*(-3, 0),
            s*(-2, 0),
            s*(-1, 0),
            s*(0, 0),
            s*(1, 0),
            s*(2, 0),
            s*(3, 0),
            s*(4, 0)};
        int[][] edges = new int[][] {
            {0, 1},
            {0, 2},
            {0, 3},
            {1, 4},
            {1, 5},
            {1, 6},
            {2, 7},
            {2, 8},
            {2, 9},
            {3, 10},
            {3, 11},
            {3, 12}};
        for (int i = 0; i < edges.length; ++i) {
            int v1 = edges[i][0];
            int v2 = edges[i][1];
            draw(locations[v1] -- locations[v2]);
        }
        for (int i = 0; i < ideals.length; ++i) {
            v(ideals[i], locations[i]);
        }
    \end{diagram}
    \caption{The first three layers of the tree structure arising from the proof of \cref{theorem:furthestfromcenter}.}
    \label{fig:tree}
\end{figure}

\begin{remark}
    The perimeter of $G_{C_3}(2r, 2r, 2r)$ is a subset of $P_{2r}$ (and appears to be a strict subset for $r\ge 4$).
\end{remark}

\section{Totally Symmetric Self-Complementary Ideals}\label{section:tssc}
The definition of a flip graph can be modified to accommodate totally symmetric self-complementary (TSSC) ideals of a poset $P = [\ell] \times [\ell] \times [\ell] = [2r] \times [2r] \times [2r]$ for an even positive integer $\ell=2r$.

Given two distinct TSSC ideals $I, J \subset P$, $I$ and $J$ differ by a \emph{flip} if there exists a point $(a_1, a_2, a_3) \in I$ such that the replacements
\[(a_{\sigma(1)}, a_{\sigma(2)}, a_{\sigma(3)})\mapsto (2r + 1 - a_{\sigma(1)}, 2r + 1 - a_{\sigma(2)}, 2r + 1 - a_{\sigma(3)})\]
over all $\sigma\in S_3$ yield $J$. The \emph{flip graph on totally symmetric self-complementary ideals of $P$} is a weighted graph whose edges are constructed between TSSC ideals of $P$ that differ by a flip. If $a_1, a_2, a_3$ are distinct, connect $I$ and $J$ with an edge of weight one. If exactly two of $a_1, a_2, a_3$ are equal, connect $I$ and $J$ with an edge of weight two. Note that, like the CSSC ideal case, it is not possible for $a_1 = a_2 = a_3$.

\cref{fig:tsscgraph} shows the flip graph structure for the specific case $P = [6] \times [6] \times [6]$. Edges of weight one are denoted with a thinner line segment, and edges of weight two are denoted with a thicker line segment.

\begin{figure}[htbp]
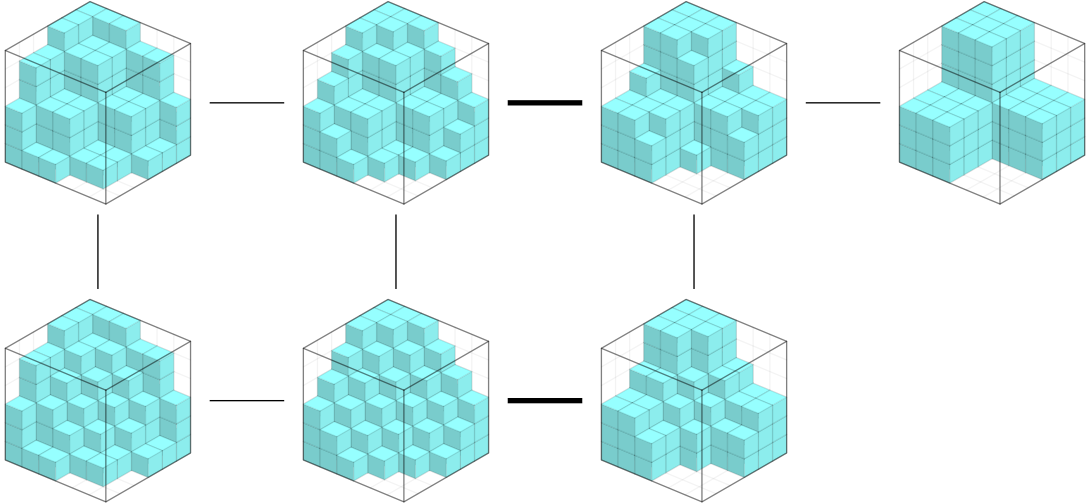

    \centering
    \begin{diagram}[0.9]
        void v(int[][] heights, pair o) {
            pair s = (6*v1+6*v2+6*v3)/2;
            fill(circle(o, 6), white);
            backframe(6, 6, 6, o-s);
            ideal(heights, o-s);
            frontframe(6, 6, 6, o-s);
        }
        int[][][] ideals = new int[][][] {
            {{6, 6, 6, 3, 3, 3}, {6, 6, 6, 3, 3, 3}, {6, 6, 6, 3, 3, 3}, {3, 3, 3, 0, 0, 0}, {3, 3, 3, 0, 0, 0}, {3, 3, 3, 0, 0, 0}},
            {{6, 6, 6, 4, 3, 3}, {6, 6, 6, 3, 3, 3}, {6, 6, 5, 3, 3, 2}, {4, 3, 3, 1, 0, 0}, {3, 3, 3, 0, 0, 0}, {3, 3, 2, 0, 0, 0}},
            {{6, 6, 6, 4, 3, 3}, {6, 6, 6, 4, 3, 3}, {6, 6, 4, 3, 2, 2}, {4, 4, 3, 2, 0, 0}, {3, 3, 2, 0, 0, 0}, {3, 3, 2, 0, 0, 0}},
            {{6, 6, 6, 5, 4, 3}, {6, 6, 5, 3, 3, 2}, {6, 5, 5, 3, 3, 1}, {5, 3, 3, 1, 1, 0}, {4, 3, 3, 1, 0, 0}, {3, 2, 1, 0, 0, 0}},
            {{6, 6, 6, 5, 4, 3}, {6, 6, 5, 4, 3, 2}, {6, 5, 4, 3, 2, 1}, {5, 4, 3, 2, 1, 0}, {4, 3, 2, 1, 0, 0}, {3, 2, 1, 0, 0, 0}},
            {{6, 6, 6, 5, 5, 3}, {6, 5, 5, 3, 3, 1}, {6, 5, 5, 3, 3, 1}, {5, 3, 3, 1, 1, 0}, {5, 3, 3, 1, 1, 0}, {3, 1, 1, 0, 0, 0}},
            {{6, 6, 6, 5, 5, 3}, {6, 5, 5, 4, 3, 1}, {6, 5, 4, 3, 2, 1}, {5, 4, 3, 2, 1, 0}, {5, 3, 2, 1, 1, 0}, {3, 1, 1, 0, 0, 0}}};
        real s = 16;
        pair[] locations = new pair[] {
            (3, 1)*s,
            (2, 1)*s,
            (2, 0)*s,
            (1, 1)*s,
            (1, 0)*s,
            (0, 1)*s,
            (0, 0)*s};
        int[][] edges1 = new int[][] {
            {0, 1},
            {1, 2},
            {3, 4},
            {3, 5},
            {4, 6},
            {5, 6}};
        int[][] edges2 = new int[][] {
            {1, 3},
            {2, 4}};
        for (int i = 0; i < edges1.length; ++i) {
            int v1 = edges1[i][0];
            int v2 = edges1[i][1];
            draw(locations[v1] -- locations[v2]);
        }
        for (int i = 0; i < edges2.length; ++i) {
            int v1 = edges2[i][0];
            int v2 = edges2[i][1];
            draw(locations[v1] -- locations[v2], linewidth(2));
        }
        for (int i = 0; i < ideals.length; ++i) {
            v(ideals[i], locations[i]);
        }
    \end{diagram}
    \caption{The flip graph on totally symmetric self-complementary ideals of $[6] \times [6] \times [6]$.}
    \label{fig:tsscgraph}
\end{figure}

\begin{lemma}
    Let $I$ and $J$ be totally symmetric self-complementary ideals of $[2r]\times [2r]\times [2r]$. Then the distance between $I$ and $J$ in the flip graph is $\frac{1}{3}\abs{I\setminus J}$. 
\end{lemma}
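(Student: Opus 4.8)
The plan is to mimic the proof of \cref{lemma:scdistance}: prove a matching lower and upper bound on $\dist(I,J)$, where the upper bound is attained by greedily flipping orbits of maximal elements of $I\setminus J$.

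For the lower bound, observe that a single flip at a point $(a_1,a_2,a_3)$ replaces the $S_3$-orbit of $(a_1,a_2,a_3)$ by the orbit of its complement, and this orbit has $3$ points if exactly two of the $a_i$ coincide and $6$ points if the $a_i$ are distinct. Matching the weights in \cref{fig:tsscgraph} (weight one in the first case, weight two in the second), the weight of an edge is always one third of the number of points it moves. Thus a path $I=I_0,I_1,\dots,I_k=J$ with edge weights $w_1,\dots,w_k$ satisfies $\big|\,\abs{I_{t-1}\setminus J}-\abs{I_t\setminus J}\,\big|\le 3w_t$ for each $t$, whence $\abs{I\setminus J}\le 3\sum_t w_t$ and the total weight is at least $\tfrac13\abs{I\setminus J}$.

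For the upper bound, suppose $I\ne J$. Since $\abs I=\abs J$ the set $I\setminus J$ is nonempty, and it is upward closed inside $I$: if $x\in I\setminus J$ and $x<y\in I$ then $y\notin J$ because $J$ is an ideal. Hence any element $m$ maximal in $I\setminus J$ is maximal in $I$. Because $I$ and $J$ are totally symmetric, the whole orbit $O_m=\{(m_{\sigma(1)},m_{\sigma(2)},m_{\sigma(3)})\mid\sigma\in S_3\}$ consists of elements maximal in $I$ and is contained in $I\setminus J$, and dually $\varphi(O_m)\subseteq J$. The key point is that flipping $O_m$ is legal: $m\notin J$ forces $\varphi(m)\in J$, so since $J$ is an ideal the down-set $\{q\mid q\le\varphi(m)\}$, and by symmetry $\{q\mid q\le\sigma(\varphi(m))\}$ for every $\sigma\in S_3$, lies in $J$ and is therefore disjoint from $O_m$; together with the fact that each $\sigma(\varphi(m))$ is minimal in $P\setminus I$, this is exactly what is needed for $I':=(I\setminus O_m)\cup\varphi(O_m)$ to be an ideal (it is clearly again totally symmetric and self-complementary). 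Finally $\abs{I'\setminus J}=\abs{I\setminus J}-\abs{O_m}$ while the flip costs weight $\abs{O_m}/3$, so iterating this construction $\abs{I\setminus J}$ strictly decreases to $0$ and the accumulated weight telescopes to $\tfrac13\abs{I\setminus J}$, matching the lower bound.

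The main obstacle is the legality claim in the upper bound, namely that $(I\setminus O_m)\cup\varphi(O_m)$ is an ideal. This reduces to checking that for each $p\in\varphi(O_m)$ the set $\{q\mid q<p\}$ lies in $I\setminus O_m$: it lies in $I$ because $p$ is minimal in $P\setminus I$ (using maximality of $m$ in $I$ and self-complementarity), and it misses $O_m$ by the down-set observation above applied to $J$. Everything else — down-closedness after deleting the maximal set $O_m$, and the total symmetry and self-complementarity of the new ideal — is routine. (It is worth flagging that the proof uses the convention, visible in \cref{fig:tsscgraph}, that a flip at a point with three distinct coordinates has weight two and a flip at a point with exactly two equal coordinates has weight one, so that the edge weight equals one third of the number of points moved.)
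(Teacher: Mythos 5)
Your proof is correct and takes essentially the same route as the paper, whose proof simply points to the arguments for \cref{lemma:scdistance} and \cref{lemma:csscdistance}: a lower bound because each edge of weight $w$ changes $|I\setminus J|$ by at most $3w$, and an upper bound by repeatedly flipping the $S_3$-orbit of a maximal element of $I\setminus J$, with the legality check you spell out being the only detail the paper leaves implicit. Your flag about the weights is well taken: the convention you use (edge weight equal to one third of the number of points moved, so weight two for an orbit of six points and weight one for an orbit of three) is the one consistent with \cref{fig:tsscgraph} and with the statement of the lemma, whereas the prose definition of the TSSC flip graph in the paper assigns the two weights the other way around, which appears to be a typo.
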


\begin{proof}\label{lemma:tsscdistance}
    This is essentially the same as the proofs of \cref{lemma:scdistance} and \cref{lemma:csscdistance}.
\end{proof}

Given a positive integer $r$, let $G_{S_3}(2r, 2r, 2r)$ denote the flip graph on totally symmetric self-complementary ideals of $[2r] \times [2r] \times [2r]$. Like before, for each tuple $(t_1, t_2, t_3) \in \{0, 1\}^3$ define
\[O_{t_1t_2t_3} = \{(a_1, a_2, a_3) \in [2r] \times [2r] \times [2r] \mid t_1 = \bbmone_{a_1 > r}, t_2 = \bbmone_{a_2 > r}, t_3 = \bbmone_{a_3 > r}\}.\]

\subsection{Vertex Count}

The number of vertices in $G_{S_3}(2r, 2r, 2r)$, or the number of totally symmetric self-complementary ideals of $[2r] \times [2r] \times [2r]$, was determined by Andrews in 1994.

\begin{theorem}\label{theorem:tsscvertex}
    Let $r$ be a positive integer. The number of vertices in $G_{S_3}(2r, 2r, 2r)$ is
    \[\prod _{j=0}^{r-1}{\frac {(3j+1)!}{(r+j)!}}.\]
\end{theorem}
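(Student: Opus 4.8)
The plan is to identify the totally symmetric self-complementary ideals of $[2r]\times[2r]\times[2r]$ with totally symmetric self-complementary plane partitions (TSSCPPs) inside a $2r\times 2r\times 2r$ box, exactly as the CSSC count in \cref{theorem:csscvertex} was reduced to Kuperberg's plane partition enumeration, and then invoke the Andrews enumeration from \cite{andrews1994plane}.

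\textbf{Step 1 (ideals as plane partitions).} First I would record an ideal $I\subseteq [2r]\times[2r]\times[2r]$ by the array $\pi_{i,j}=\abs{\{k\in[2r] : (i,j,k)\in I\}}$ for $(i,j)\in[2r]\times[2r]$. Because $I$ is a down-set, the $k$-fiber over each pair $(i,j)$ is an initial segment $\{1,\dots,\pi_{i,j}\}$, so $\pi$ is weakly decreasing in each of $i$ and $j$ with entries in $\{0,\dots,2r\}$; conversely every such array arises from a unique ideal. Thus $I\mapsto\pi$ is a bijection between ideals of $[2r]^3$ and plane partitions fitting inside the $2r\times 2r\times 2r$ box, and the solid region $\{(i,j,k):k\le \pi_{i,j}\}$ is precisely $I$.

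\textbf{Step 2 (translating the symmetries).} Next I would verify that under this bijection the two defining conditions become the classical ones. Since the solid region attached to $\pi$ is $I$ itself, $I$ is totally symmetric (invariant under all coordinate permutations $\sigma\in S_3$) if and only if $\pi$ is a totally symmetric plane partition. For self-complementarity, $(i,j,k)\in I$ iff $k\le\pi_{i,j}$, while $(2r+1-i,2r+1-j,2r+1-k)\notin I$ iff $k\le 2r-\pi_{2r+1-i,2r+1-j}$; hence $I$ is self-complementary precisely when $\pi_{i,j}+\pi_{2r+1-i,2r+1-j}=2r$ for all $i,j$, i.e.\ $\pi$ coincides with its complement in the box. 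Therefore $v(G_{S_3}(2r,2r,2r))$ equals the number of TSSCPPs in the $2r\times 2r\times 2r$ box. Finally I would apply Andrews' theorem \cite{andrews1994plane}, which gives this count as $\prod_{j=0}^{r-1}\frac{(3j+1)!}{(r+j)!}$, yielding the claimed vertex count. (As a consistency check, $r=3$ gives $\frac{1!}{3!}\cdot\frac{4!}{4!}\cdot\frac{7!}{5!}=7$, matching \cref{fig:tsscgraph}.)

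The main obstacle — which we bypass by citation — is Andrews' enumeration itself, whose proof is a substantial computation (a determinant/Pfaffian evaluation carried out through $q$-series and hypergeometric identities) and which we are entitled to assume. On our side the only point genuinely requiring care is the bookkeeping in Step 2: confirming that the $S_3$-invariance and the antipodal condition on the ideal match the standard "totally symmetric" and "self-complementary" conditions on plane partitions, with the box side length equal to $2r$ and no off-by-one error in the parameter $r$.
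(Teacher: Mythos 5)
Your proposal is correct and is essentially the paper's proof: the paper also identifies TSSC ideals of $[2r]\times[2r]\times[2r]$ with totally symmetric self-complementary plane partitions in the $2r\times 2r\times 2r$ box and cites \cite{andrews1994plane} for the count. The only difference is that you spell out the ideal-to-plane-partition bookkeeping explicitly, which the paper leaves implicit.
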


\begin{proof}
    This is given in \cite{andrews1994plane}.
\end{proof}

\begin{corollary}
    As $r$ tends to infinity,
    \[v(G_{S_3}(2r, 2r, 2r)) \sim Cr^{-5/36}\left(\frac{3\sqrt{3}}{4}\right)^{r^2}\]
    for some real number $C$. In particular, $\log v(G_{S_3}(2r, 2r, 2r)) \sim \frac{1}{2} \log(27/16) r^2 \approx 0.262r^2$.
\end{corollary}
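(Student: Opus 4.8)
The plan is to avoid any new asymptotic analysis and instead exploit an exact algebraic coincidence between the TSSC and CSSC vertex counts. Comparing \cref{theorem:tsscvertex} with \cref{theorem:csscvertex}, one sees immediately that
\[
v(G_{S_3}(2r,2r,2r))^2 = \left(\prod_{j=0}^{r-1}\frac{(3j+1)!}{(r+j)!}\right)^2 = v(G_{C_3}(2r,2r,2r)),
\]
so $v(G_{S_3}(2r,2r,2r)) = \sqrt{v(G_{C_3}(2r,2r,2r))}$ for every positive integer $r$. This reduces the problem entirely to \cref{corollary:csscasymptotics}.

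The next step is to take square roots of the known asymptotic. Since $v(G_{C_3}(2r,2r,2r)) \sim C' r^{-5/18}(27/16)^{r^2}$ with $C' = \bigl(e^{\zeta'(-1)}2^{1/4}\Gamma(2/3)/(3^{1/12}\Gamma(1/3))\bigr)^{2/3}$, and since $x \mapsto \sqrt{x}$ is continuous and all terms involved are positive, $a_r \sim b_r$ implies $\sqrt{a_r} \sim \sqrt{b_r}$; hence
\[
v(G_{S_3}(2r,2r,2r)) \sim \sqrt{C'}\, r^{-5/36}\left(\frac{27}{16}\right)^{r^2/2}.
\]
Finally one identifies the constants: $\bigl(27/16\bigr)^{1/2} = 3\sqrt3/4$ and $r^{-5/18} $ becomes $r^{-5/36}$ under the square root, so the asymptotic takes the stated form with $C = \sqrt{C'} = \bigl(e^{\zeta'(-1)}2^{1/4}\Gamma(2/3)/(3^{1/12}\Gamma(1/3))\bigr)^{1/3} \approx 0.775$. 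The logarithmic statement is then immediate: $\log v(G_{S_3}(2r,2r,2r)) = \tfrac12 \log v(G_{C_3}(2r,2r,2r)) \sim \tfrac12 \log(27/16)\, r^2 \approx 0.262\, r^2$.

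There is essentially no main obstacle here, since the CSSC corollary already carries the analytic weight; the only point requiring a word of care is the passage from $\sim$ to $\sqrt{\cdot}$ of $\sim$, which is harmless given positivity. Should a self-contained derivation be preferred instead, the alternative route is to write the denominator $\prod_{j=0}^{r-1}(r+j)! = G(2r+1)/G(r+1)$ in terms of the Barnes $G$-function and the numerator $\prod_{j=0}^{r-1}(3j+1)!$ via the asymptotics for triple-factorial products (equivalently, $G$ evaluated at arguments in $\tfrac13\mathbb{Z}$), then apply the standard expansion $\log G(z+1) = \tfrac{z^2}{2}\log z - \tfrac{3z^2}{4} + \tfrac{z}{2}\log(2\pi) - \tfrac{1}{12}\log z + \zeta'(-1) + o(1)$ and collect the $r^2$, $r$, $\log r$, and constant terms; this reproduces the same answer but duplicates the computation already done for \cref{corollary:csscasymptotics}, so the square-root argument is the economical choice.
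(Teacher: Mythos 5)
Your argument is correct and is essentially the paper's own proof: the paper likewise observes that $v(G_{S_3}(2r,2r,2r))$ is the square root of $v(G_{C_3}(2r,2r,2r))$ and takes the square root of the asymptotic in \cref{corollary:csscasymptotics}, obtaining the same constant $C = \bigl(e^{\zeta'(-1)}2^{1/4}\Gamma(2/3)/(3^{1/12}\Gamma(1/3))\bigr)^{1/3} \approx 0.775$. Your added remark that positivity justifies passing $\sim$ through the square root is a fine (if minor) point of extra care.
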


\begin{proof}
    One can take \[C=\left(\frac{e^{\zeta'(-1)}2^{1/4}\Gamma(2/3)}{3^{1/12}\Gamma(1/3)}\right)^{1/3} \approx 0.775.\] This is the square root of the result in \cref{corollary:csscasymptotics} because the vertex count of $G_{S_3}(2r, 2r, 2r)$ is the square root of the vertex count of $G_{C_3}(2r, 2r, 2r)$.
\end{proof}

\subsection{Diameter}

The diameter of the $G_{S_3}(2r, 2r, 2r)$ is known exactly. To determine this exact value, first consider the following lemma.

\begin{lemma}\label{lemma:tsscmustinclude}
    All totally symmetric self-complementary ideals $I\subset [2r]\times [2r]\times [2r]$ must contain the points $(a_1, a_2, a_3)$ that satisfy
    \begin{itemize}
        \item $2a_1\le 2r+1$ and $a_2 + a_3\le 2r+1$, or
        \item $2a_2\le 2r+1$ and $a_3 + a_1\le 2r+1$, or
        \item $2a_3\le 2r+1$ and $a_1 + a_2\le 2r+1$.
    \end{itemize}
\end{lemma}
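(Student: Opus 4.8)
The plan is to argue by contradiction, after first reducing to the case where the ``distinguished'' coordinate — the one satisfying $2a_i \le 2r+1$ — is the first one. If $(a_1,a_2,a_3)$ satisfies the second bullet, i.e.\ $2a_2 \le 2r+1$ and $a_3+a_1 \le 2r+1$, then the point $(a_2,a_1,a_3)$ satisfies $2a_2 \le 2r+1$ and $a_1+a_3 \le 2r+1$, which is the first bullet; since $I$ is totally symmetric, $(a_1,a_2,a_3) \in I$ if and only if $(a_2,a_1,a_3) \in I$, so it suffices to treat the first bullet, and the third bullet is handled in exactly the same way.

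So assume $2a_1 \le 2r+1$ and $a_2+a_3 \le 2r+1$, and suppose for contradiction that $(a_1,a_2,a_3) \notin I$. Since $I$ is self-complementary, $(2r+1-a_1, 2r+1-a_2, 2r+1-a_3) \in I$; applying the transposition of the last two coordinates and using total symmetry, $(2r+1-a_1, 2r+1-a_3, 2r+1-a_2) \in I$ as well. I would then verify the three coordinatewise inequalities $a_1 \le 2r+1-a_1$ (from $2a_1 \le 2r+1$), $a_2 \le 2r+1-a_3$, and $a_3 \le 2r+1-a_2$ (both from $a_2+a_3 \le 2r+1$), which show that $(a_1,a_2,a_3) \le (2r+1-a_1, 2r+1-a_3, 2r+1-a_2)$ in the product order. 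Since $I$ is an ideal, this forces $(a_1,a_2,a_3) \in I$, a contradiction.

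There is essentially no hard step here. The only point requiring care is choosing which transposition to apply to the complement so that the resulting point dominates $(a_1,a_2,a_3)$ coordinatewise, together with the routine bookkeeping of the three symmetric bullets. The argument parallels the one for cyclically symmetric ideals in \cref{lemma:csscmustinclude}; the difference is that total symmetry allows an arbitrary transposition — here the swap of the two ``large'' coordinates — rather than only cyclic shifts, which is precisely what makes the domination inequality go through.
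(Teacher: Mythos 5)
Your proof is correct and takes essentially the same approach as the paper: assume $(a_1,a_2,a_3)\notin I$, use self-complementarity to place the complement in $I$, apply a transposition (here the swap of the two large coordinates) via total symmetry, and observe that the resulting point dominates $(a_1,a_2,a_3)$ in the product order, a contradiction. The only difference is stylistic — you reduce to the first bullet by symmetry before applying the argument, whereas the paper lists all three transposed complements at once and notes that the relevant bullet singles out the one that covers $(a_1,a_2,a_3)$.
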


\begin{proof}
    If $(a_1, a_2, a_3)\notin I$,  then $(2r+1-a_1, 2r+1-a_2, 2r+1-a_3)\in I$, as are $(2r+1-a_1, 2r+1-a_3, 2r+1-a_2)$, $(2r+1-a_3, 2r+1-a_2, 2r+1-a_1)$, and $(2r+1-a_2, 2r+1-a_1, 2r+1-a_3)$. If any of the above inequalities hold, then one of these three points covers $(a_1, a_2, a_3)$, a contradiction.
\end{proof}

To visualize the points satisfying the above inequalities, we show the example of $r=5$ in \cref{fig:tsscmustinclude}.

This also means that the TSSC ideal with the fewest points in $O_{001}$ contains precisely those points only and the remaining octants filled in accordingly. On the other hand, the TSSC ideal with the most points in $O_{001}$ is the one that contains the entirety of $O_{001}$. The case of $r=5$ is depicted in \cref{fig:tsscdiameter}.

\begin{figure}[htbp]
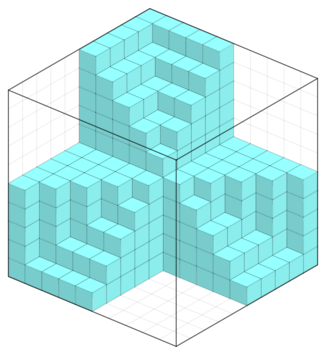

    \centering
    \begin{diagram}[0.25]
        backframe(10, 10, 10);
        ideal(new int[][] {{10, 10, 10, 10, 10, 5, 5, 5, 5, 5},
                           {10, 9, 9, 9, 9, 5, 5, 5, 5, 1}, 
                           {10, 9, 8, 8, 8, 5, 5, 5, 2, 1}, 
                           {10, 9, 8, 7, 7, 5, 5, 3, 2, 1}, 
                           {10, 9, 8, 7, 6, 5, 4, 3, 2, 1}, 
                           {5, 5, 5, 5, 5, 0, 0, 0, 0, 0}, 
                           {5, 5, 5, 5, 4, 0, 0, 0, 0, 0}, 
                           {5, 5, 5, 3, 3, 0, 0, 0, 0, 0}, 
                           {5, 5, 2, 2, 2, 0, 0, 0, 0, 0}, 
                           {5, 1, 1, 1, 1, 0, 0, 0, 0, 0}});
        frontframe(10, 10, 10);
    \end{diagram}
    \caption{Points that must be included in all totally symmetric self-complementary ideals of $[10] \times [10] \times [10]$.}
    \label{fig:tsscmustinclude}
\end{figure}

\begin{figure}[htbp]
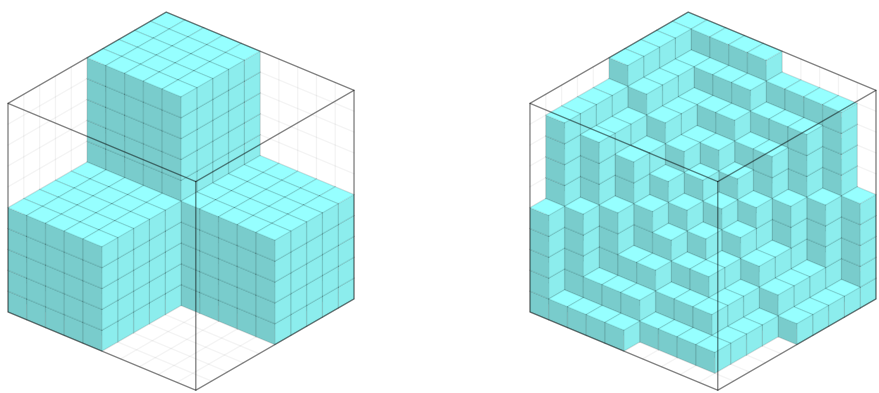

    \centering
    \begin{diagram}[0.7]
        backframe(10, 10, 10);
        ideal(new int[][] {{10, 10, 10, 10, 10, 5, 5, 5, 5, 5},
                           {10, 10, 10, 10, 10, 5, 5, 5, 5, 5}, 
                           {10, 10, 10, 10, 10, 5, 5, 5, 5, 5},
                           {10, 10, 10, 10, 10, 5, 5, 5, 5, 5},
                           {10, 10, 10, 10, 10, 5, 5, 5, 5, 5},
                           {5, 5, 5, 5, 5, 0, 0, 0, 0, 0},
                           {5, 5, 5, 5, 5, 0, 0, 0, 0, 0},
                           {5, 5, 5, 5, 5, 0, 0, 0, 0, 0},
                           {5, 5, 5, 5, 5, 0, 0, 0, 0, 0},
                           {5, 5, 5, 5, 5, 0, 0, 0, 0, 0}});
        frontframe(10, 10, 10);
        pair s = (25, 0);
        backframe(10, 10, 10, s);
        ideal(new int[][] {{10, 10, 10, 10, 10, 9, 9, 9, 9, 5},
                           {10, 9, 9, 9, 9, 8, 8, 8, 5, 1}, 
                           {10, 9, 8, 8, 8, 7, 7, 5, 2, 1},
                           {10, 9, 8, 7, 7, 6, 5, 3, 2, 1},
                           {10, 9, 8, 7, 6, 5, 4, 3, 2, 1},
                           {9, 8, 7, 6, 5, 4, 3, 2, 1, 0},
                           {9, 8, 7, 5, 4, 3, 3, 2, 1, 0},
                           {9, 8, 5, 3, 3, 2, 2, 2, 1, 0},
                           {9, 5, 2, 2, 2, 1, 1, 1, 1, 0},
                           {5, 1, 1, 1, 1, 0, 0, 0, 0, 0}}, s);
        frontframe(10, 10, 10, s);
    \end{diagram}
    \caption{$I$ and $J$ for the case $r=5$. These ideals attain the maximum possible distance in the flip graph on totally symmetric self-complementary ideals of $[10]\times [10]\times [10]$.}
    \label{fig:tsscdiameter}
\end{figure}

In general for the $[2r]\times [2r]\times [2r]$ case, the symmetric difference between these two TSSC ideals in $O_{001}$, which is also their distance in the flip graph, is $1^2 + 2^2 + \cdots + (r-1)^2 = \frac{1}{6}(r-1)r(2r-1)$.

\begin{theorem}\label{theorem:tsscdiameter}
    The diameter of $G_{S_3}(2r, 2r, 2r)$ is $\frac{1}{6}(r-1)r(2r-1)$.
\end{theorem}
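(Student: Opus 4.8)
The plan is to reduce the whole question to counting inside a single octant. The key observation is that for any two totally symmetric self-complementary ideals $I,J$ of $[2r]^3$, the flip-graph distance $\dist(I,J)=\tfrac13\abs{I\setminus J}$ (\cref{lemma:tsscdistance}) can be rewritten as $\abs{(I\symdif J)\cap O_{001}}$. To see this, I would first note that every point of $O_{000}$ satisfies all three conditions of \cref{lemma:tsscmustinclude} (each coordinate is $\le r$, so $2a_i\le 2r<2r+1$ and $a_i+a_j\le 2r<2r+1$), hence $O_{000}$ lies in every TSSC ideal and dually $O_{111}$ lies in none; so $I\symdif J$ is supported on the six ``middle'' octants. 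Since $I\symdif J$ is invariant under all coordinate permutations (as $I,J$ are totally symmetric) and under the self-dual involution $\varphi$ (as $I,J$ are self-complementary), and the group generated by these acts transitively on the six middle octants, the six numbers $\abs{(I\symdif J)\cap O_{t_1t_2t_3}}$ are all equal. This gives $\abs{I\symdif J}=6\abs{(I\symdif J)\cap O_{001}}$, and since $\abs I=\abs J=\tfrac12(2r)^3$ we get $\abs{I\setminus J}=\tfrac12\abs{I\symdif J}=3\abs{(I\symdif J)\cap O_{001}}$, so $\dist(I,J)=\abs{(I\symdif J)\cap O_{001}}$.

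With that reduction in hand, the upper bound $\dist(I,J)\le\tfrac16(r-1)r(2r-1)$ is a sandwiching argument: by \cref{lemma:tsscmustinclude} every TSSC ideal contains all ``forced'' points of $O_{001}$ and is of course contained in $O_{001}$, so $(I\symdif J)\cap O_{001}$ lies in the set of non-forced points of $O_{001}$. A short count finishes it: for a point $(a_1,a_2,a_3)\in O_{001}$ with $a_3=r+j$ ($1\le j\le r$), the first two conditions of \cref{lemma:tsscmustinclude} reduce to $a_2\le r+1-j$ and $a_1\le r+1-j$ while the third never holds, so the non-forced points with that third coordinate are exactly those with $a_1,a_2>r+1-j$, numbering $(j-1)^2$; summing, $\sum_{j=1}^{r}(j-1)^2=\tfrac16(r-1)r(2r-1)$. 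The matching lower bound is already exhibited in the discussion preceding the theorem: the TSSC ideal $O_{000}\cup O_{001}\cup O_{010}\cup O_{100}$ (all of $O_{001}$) together with the minimal TSSC ideal (only the forced points of $O_{001}$) realize distance equal to this same count.

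I do not expect a serious obstacle: once the octant reduction is in place the argument is bookkeeping. The one point needing care is making the symmetry reduction airtight — in particular checking that $I\symdif J$ is genuinely $\varphi$-invariant (it is, since $a\in I\symdif J$ iff exactly one of $a\in I$, $a\in J$ holds, iff exactly one of $\varphi(a)\notin I$, $\varphi(a)\notin J$ holds, iff $\varphi(a)\in I\symdif J$) and that coordinate permutations together with $\varphi$ act transitively on the six middle octants — and correctly translating the three ``must-include'' inequalities of \cref{lemma:tsscmustinclude} into the clean description of the non-forced points of $O_{001}$ used in the count.
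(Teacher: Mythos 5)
Your proposal is correct and follows essentially the same route as the paper: both arguments rest on \cref{lemma:tsscmustinclude}, the count $\sum_{j=1}^{r}(j-1)^2=\tfrac16(r-1)r(2r-1)$ of non-forced points per octant, and the same extremal pair of ideals for the lower bound. The only difference is cosmetic: you localize to $O_{001}$ via the permutation-and-$\varphi$ transitivity argument, whereas the paper bounds $\abs{I\cap J}$ globally by the total number of forced points and uses $\abs{I\symdif J}=\abs{I}+\abs{J}-2\abs{I\cap J}$.
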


\begin{proof}
    It remains to show that for any two TSSC ideals $I$ and $J$ of $[2r]\times [2r]\times [2r]$, we have that $\frac{1}{6}\abs{I\symdif J}\le \frac{1}{6}(r-1)r(2r-1)$. Since $\abs{I\cap J}\ge 4r^3 - 3\cdot\frac{1}{6}(r-1)r(2r-1)$ by \cref{lemma:tsscmustinclude}, we have that 
    \begin{align*}
    \abs{I\symdif J} &= \abs{I} + \abs{J} - 2\abs{I\cap J}\\
    &\le 4r^3 + 4r^3 - 2\left(4r^3 - 3\cdot\tfrac{1}{6}(r-1)r(2r-1)\right)\\
    &= (r-1)r(2r-1),
    \end{align*}
    as desired.
    
    Observe that equality holds when $|I\cap J| = 4r^3 - 3\cdot\frac{1}{6}(r-1)r(2r-1)$, i.e.\ when the intersection of $I$ and $J$ is the ideal as depicted in \cref{lemma:tsscmustinclude}. This can only occur when one ideal contains no points in $O_{110}$ and the other ideal contains the minimum possible number of points in $O_{001}$, which are the two ideals depicted in \cref{fig:tsscdiameter}.
\end{proof}

\subsection{Radius}
Only a lower bound for the radius is known, which follows directly from the diameter bound (\cref{theorem:tsscdiameter}).
\begin{conjecture}\label{conjecture:tsscradius}
    The radius of $G_{S_3}(2r, 2r, 2r)$ is $\ceil{\frac{1}{12}(r-1)r(2r-1)}$.
\end{conjecture}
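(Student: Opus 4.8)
The lower bound is immediate: every edge of $G_{S_3}(2r,2r,2r)$ has integer weight, so all distances are integers and the radius is at least half the diameter rounded up, which by \cref{theorem:tsscdiameter} is $\ceil{\tfrac12\cdot\tfrac16(r-1)r(2r-1)}=\ceil{\tfrac1{12}(r-1)r(2r-1)}$. So the conjecture reduces to exhibiting, for each $r$, a TSSC ideal $C$ of eccentricity at most $\ceil{\tfrac1{12}(r-1)r(2r-1)}$. As in the CSSC case this can be repackaged: since any self-complementary ideal of $[2r]^3$ contains $O_{000}$ and avoids $O_{111}$ (if $p\in O_{000}$ then $p\le\varphi(p)$, so $p\notin I$ would force $p\in I$), the set $C\symdif I$ is supported on the six non-extreme octants, on which the symmetry group of $C\symdif I$ — coordinate permutations together with the self-duality $\varphi$ — acts transitively; hence $\dist(C,I)=\tfrac16|C\symdif I|=|O_{001}\cap(C\symdif I)|$, and it suffices to find $C$ with $|O_{001}\cap(C\symdif I)|\le\ceil{\tfrac1{12}(r-1)r(2r-1)}$ for every TSSC ideal $I$. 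The first, and least mechanical, task is to pin down the right $C$: it is \emph{not} the ideal $C_{2r}=\{(a_1,a_2,a_3)\in[2r]^3\mid a_1+a_2+a_3\le 3r+1\}$ that served as the CSSC center. Already for $r=3$ the ``maximal'' TSSC ideal $O_{000}\cup O_{001}\cup O_{010}\cup O_{100}$ (one of the two diameter endpoints) sits at distance $4$ from $C_6$, whereas $\ceil{\tfrac12\cdot 5}=3$; so the true central ideal must be read off from the computer search for small $r$ and then written down as a candidate family.

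The plan for the upper bound itself is to construct this $C$ and bound its eccentricity by induction on $r$, along the core/shell lines of \cref{lemma:csscradiusupperbound}: let the core of $C$ (its intersection with the inner $[2r-2]^3$ cube) be the central ideal supplied inductively for $[2r-2]^3$, and glue to it a totally symmetric shell on the three outer faces containing exactly the points forced by \cref{lemma:tsscmustinclude} plus whatever additional points minimize the added eccentricity. To bound $\dist(C,I)$, split a general TSSC ideal $I$ into core and shell as well: the inductive hypothesis controls the core discrepancy in $O_{001}$ by $\ceil{\tfrac1{12}(r-2)(r-1)(2r-3)}$, and since $\tfrac1{12}(r-1)r(2r-1)-\tfrac1{12}(r-2)(r-1)(2r-3)=\tfrac12(r-1)^2$, what remains is to bound the shell's contribution by (essentially) $\tfrac12(r-1)^2$. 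The shell of $I$ inside $O_{001}$ lives on the faces $a_1=1$, $a_2=1$, $a_3=2r$; after using \cref{lemma:tsscmustinclude} to pin down its mandatory part, one unfolds these faces into a planar region and bounds the discrepancy against $C$'s shell by a two-dimensional estimate in the spirit of \cref{lemma:cssc2didealdifference}, but now with substantially more cells forced, reflecting the greater rigidity of total symmetry over cyclic symmetry.

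I expect the genuine difficulty to be twofold. First is the identification problem: the central ideal has no obvious closed form — for $r=3$ its core happens to be the ``majority-vote'' self-complementary center of the inner $[4]^3$ cube, but the shells do not visibly follow a rule — so one must guess the correct family from data and then verify it, and a wrong guess can easily produce a bound off by a small additive constant, as happens in \cref{theorem:scradius3d} for $(\ell_1,\ell_2,\ell_3)\equiv(2,3,3)\pmod 4$. Second is the shell estimate: because the three outer faces are coupled by the full $S_3$-action and \cref{lemma:tsscmustinclude} forces a non-rectangular region, the clean ``rectangle-minus-staircase'' argument from the CSSC proof does not carry over, and isolating the right planar lemma together with its equality cases — which one needs both to run the induction and to describe the center — is the crux. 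A minor additional nuisance is that $\tfrac12(r-1)^2$ is a half-integer for even $r$, so the per-step budget alternates between $\floor{\tfrac12(r-1)^2}$ and $\ceil{\tfrac12(r-1)^2}$, forcing the induction to be arranged so that the off-by-$\tfrac12$ terms cancel across consecutive steps; as always, one should cross-check the resulting constant against the code for the first several $r$.
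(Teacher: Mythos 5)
This statement is a conjecture in the paper; the authors prove only the lower bound (as in your first paragraph, via \cref{theorem:tsscdiameter} and integrality of weighted distances) and report computations for $r\le 6$. Your octant-symmetry reduction is consistent with \cref{lemma:tsscdistance}, and your check that the CSSC center $C_{2r}$ is not central in the TSSC graph already for $r=3$ is correct.

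The concrete gap in your upper-bound plan is the inductive hypothesis embedded in the core/shell gluing: you stipulate that the core of $C$ be the center of $G_{S_3}(2r-2,2r-2,2r-2)$. The paper remarks, immediately after the conjecture, that by computation the cores of central ideals of $G_{S_3}(2r,2r,2r)$ are \emph{not} central ideals of $G_{S_3}(2r-2,2r-2,2r-2)$ for $r\in\{4,5\}$. Since the center is unique for $r=4$ (per the tabulated code output), any ideal of $[8]^3$ whose core is the unique center of $[6]^3$ is therefore not central, so its eccentricity strictly exceeds the radius, and your glued $C$ cannot hit the conjectured bound. You frame this as a ``risk of misidentifying $C$,'' but it is a documented failure of the exact skeleton you are importing from \cref{lemma:csscradiusupperbound}. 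Any successful argument would have to either track a weaker invariant of the core than centrality, or pair a deliberately non-central core with a compensatingly tighter shell bound; both that choice and the $S_3$-coupled planar replacement for \cref{lemma:cssc2didealdifference} remain unidentified, which is precisely why the paper leaves this statement as a conjecture.
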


From our code for $r\le 6$, it appears that the radius equals the ceiling of half of the diameter (see \cref{section:code} for a link to the source code). Our code gave the following results:

\begin{center}
    \begin{tabular}{c|c|c|c}
        $r$ & vertex count & diameter & radius \\
        \hline
        1 & 1 & 0 & 0\\
        \hline
        2 & 2 & 1 & 1\\
        \hline
        3 & 7 & 5 & 3\\
        \hline
        4 & 42 & 14 & 7\\
        \hline
        5 & 429 & 30 & 15\\
        \hline
        6 & 7436 & 55 & 28
    \end{tabular}
\end{center}

The size of the center is 1 for $r=3$ and $r=4$, but greater than 1 for $r=5$ and $r=6$ (and most likely for $r > 6$ as well). The elements in the center of the flip graph for $r\le 5$ are depicted in \cref{fig:tssccenter3}, \cref{fig:tssccenter4}, and \cref{fig:tssccenter5}.

\begin{figure}[p]
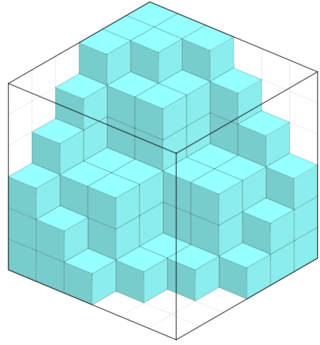

    \centering
    \begin{diagram}[0.25]
        backframe(6, 6, 6);
        ideal(new int[][] {{6, 6, 6, 5, 4, 3}, {6, 6, 5, 3, 3, 2}, {6, 5, 5, 3, 3, 1}, {5, 3, 3, 1, 1, 0}, {4, 3, 3, 1, 0, 0}, {3, 2, 1, 0, 0, 0}});
        frontframe(6, 6, 6);
    \end{diagram}
    \caption{The center for $r=3$.}
    \label{fig:tssccenter3}
\end{figure}

\begin{figure}[p]
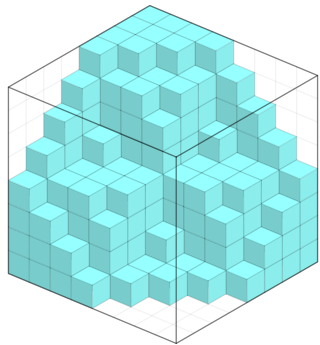

    \centering
    \begin{diagram}[0.25]
        backframe(8, 8, 8);
        ideal(new int[][] {{8, 8, 8, 8, 7, 6, 5, 4}, {8, 8, 8, 7, 5, 4, 4, 3}, {8, 8, 7, 7, 4, 4, 4, 2}, {8, 7, 7, 6, 4, 4, 3, 1}, {7, 5, 4, 4, 2, 1, 1, 0}, {6, 4, 4, 4, 1, 1, 0, 0}, {5, 4, 4, 3, 1, 0, 0, 0}, {4, 3, 2, 1, 0, 0, 0, 0}});
        frontframe(8, 8, 8);
    \end{diagram}
    \caption{The center for $r=4$.}
    \label{fig:tssccenter4}
\end{figure}

\begin{figure}[p]
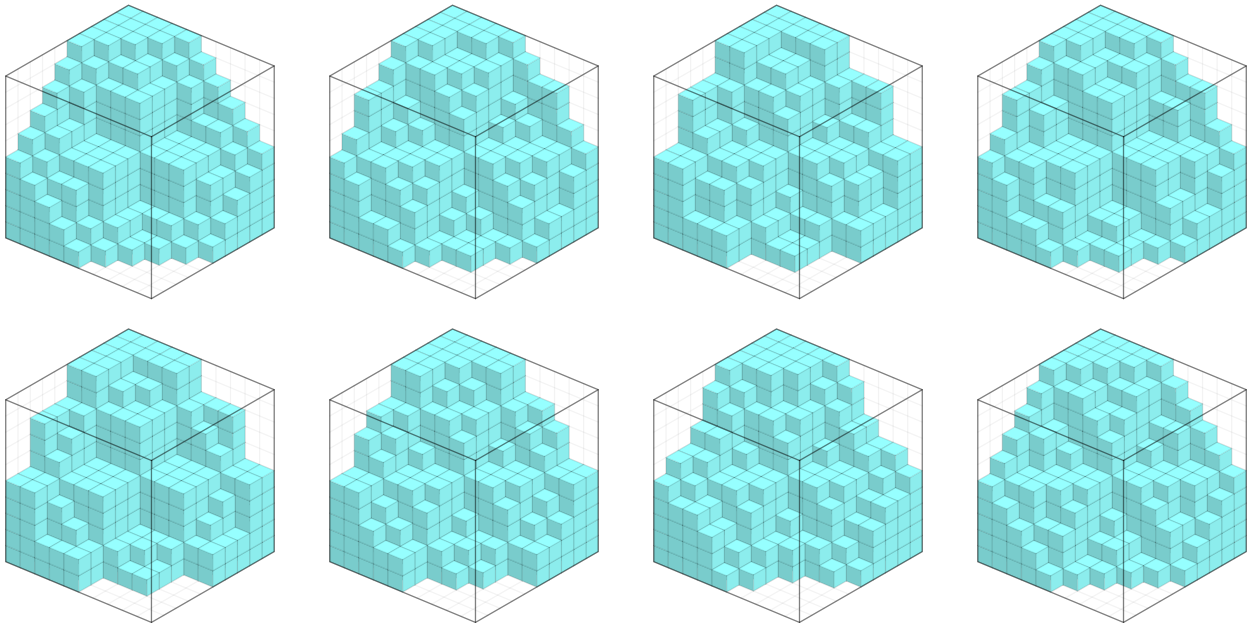

    \centering
    \begin{diagram}[1.0]
        int[][][] ideals = new int[][][]
        {
            {{10, 10, 10, 10, 10, 9, 8, 7, 6, 5}, {10, 10, 10, 10, 9, 8, 6, 6, 5, 4}, {10, 10, 10, 9, 8, 5, 5, 5, 4, 3}, {10, 10, 9, 9, 8, 5, 5, 5, 4, 2}, {10, 9, 8, 8, 8, 5, 5, 5, 2, 1}, {9, 8, 5, 5, 5, 2, 2, 2, 1, 0}, {8, 6, 5, 5, 5, 2, 1, 1, 0, 0}, {7, 6, 5, 5, 5, 2, 1, 0, 0, 0}, {6, 5, 4, 4, 2, 1, 0, 0, 0, 0}, {5, 4, 3, 2, 1, 0, 0, 0, 0, 0}},
            {{10, 10, 10, 10, 10, 9, 8, 8, 6, 5}, {10, 10, 10, 10, 9, 7, 6, 5, 5, 4}, {10, 10, 9, 9, 9, 6, 5, 5, 5, 2}, {10, 10, 9, 9, 8, 5, 5, 5, 4, 2}, {10, 9, 9, 8, 7, 5, 5, 4, 3, 1}, {9, 7, 6, 5, 5, 3, 2, 1, 1, 0}, {8, 6, 5, 5, 5, 2, 1, 1, 0, 0}, {8, 5, 5, 5, 4, 1, 1, 1, 0, 0}, {6, 5, 5, 4, 3, 1, 0, 0, 0, 0}, {5, 4, 2, 2, 1, 0, 0, 0, 0, 0}},
            {{10, 10, 10, 10, 10, 8, 8, 8, 5, 5}, {10, 10, 10, 10, 10, 8, 6, 6, 5, 5}, {10, 10, 9, 9, 8, 6, 5, 5, 4, 2}, {10, 10, 9, 9, 8, 5, 5, 5, 4, 2}, {10, 10, 8, 8, 7, 5, 5, 4, 2, 2}, {8, 8, 6, 5, 5, 3, 2, 2, 0, 0}, {8, 6, 5, 5, 5, 2, 1, 1, 0, 0}, {8, 6, 5, 5, 4, 2, 1, 1, 0, 0}, {5, 5, 4, 4, 2, 0, 0, 0, 0, 0}, {5, 5, 2, 2, 2, 0, 0, 0, 0, 0}},
            {{10, 10, 10, 10, 10, 9, 8, 8, 6, 5}, {10, 10, 10, 10, 9, 7, 7, 5, 5, 4}, {10, 10, 9, 9, 9, 5, 5, 5, 5, 2}, {10, 10, 9, 8, 8, 5, 5, 5, 3, 2}, {10, 9, 9, 8, 8, 5, 5, 5, 3, 1}, {9, 7, 5, 5, 5, 2, 2, 1, 1, 0}, {8, 7, 5, 5, 5, 2, 2, 1, 0, 0}, {8, 5, 5, 5, 5, 1, 1, 1, 0, 0}, {6, 5, 5, 3, 3, 1, 0, 0, 0, 0}, {5, 4, 2, 2, 1, 0, 0, 0, 0, 0}},
            {{10, 10, 10, 10, 10, 8, 8, 8, 5, 5}, {10, 10, 10, 10, 10, 8, 7, 6, 5, 5}, {10, 10, 9, 9, 8, 5, 5, 5, 4, 2}, {10, 10, 9, 8, 8, 5, 5, 5, 3, 2}, {10, 10, 8, 8, 8, 5, 5, 5, 2, 2}, {8, 8, 5, 5, 5, 2, 2, 2, 0, 0}, {8, 7, 5, 5, 5, 2, 2, 1, 0, 0}, {8, 6, 5, 5, 5, 2, 1, 1, 0, 0}, {5, 5, 4, 3, 2, 0, 0, 0, 0, 0}, {5, 5, 2, 2, 2, 0, 0, 0, 0, 0}},
            {{10, 10, 10, 10, 10, 8, 8, 7, 5, 5}, {10, 10, 10, 10, 10, 8, 7, 6, 5, 5}, {10, 10, 10, 9, 8, 6, 5, 5, 4, 3}, {10, 10, 9, 8, 8, 5, 5, 5, 3, 2}, {10, 10, 8, 8, 7, 5, 5, 4, 2, 2}, {8, 8, 6, 5, 5, 3, 2, 2, 0, 0}, {8, 7, 5, 5, 5, 2, 2, 1, 0, 0}, {7, 6, 5, 5, 4, 2, 1, 0, 0, 0}, {5, 5, 4, 3, 2, 0, 0, 0, 0, 0}, {5, 5, 3, 2, 2, 0, 0, 0, 0, 0}},
            {{10, 10, 10, 10, 10, 9, 7, 6, 6, 5}, {10, 10, 10, 10, 9, 8, 7, 6, 5, 4}, {10, 10, 10, 10, 8, 6, 5, 5, 4, 4}, {10, 10, 10, 8, 8, 5, 5, 5, 3, 3}, {10, 9, 8, 8, 7, 5, 5, 4, 2, 1}, {9, 8, 6, 5, 5, 3, 2, 2, 1, 0}, {7, 7, 5, 5, 5, 2, 2, 0, 0, 0}, {6, 6, 5, 5, 4, 2, 0, 0, 0, 0}, {6, 5, 4, 3, 2, 1, 0, 0, 0, 0}, {5, 4, 4, 3, 1, 0, 0, 0, 0, 0}},
            {{10, 10, 10, 10, 10, 9, 8, 7, 6, 5}, {10, 10, 10, 10, 9, 7, 7, 5, 5, 4}, {10, 10, 10, 9, 9, 6, 5, 5, 5, 3}, {10, 10, 9, 8, 8, 5, 5, 5, 3, 2}, {10, 9, 9, 8, 7, 5, 5, 4, 3, 1}, {9, 7, 6, 5, 5, 3, 2, 1, 1, 0}, {8, 7, 5, 5, 5, 2, 2, 1, 0, 0}, {7, 5, 5, 5, 4, 1, 1, 0, 0, 0}, {6, 5, 5, 3, 3, 1, 0, 0, 0, 0}, {5, 4, 3, 2, 1, 0, 0, 0, 0, 0}}
        };
        for (int i = 0; i < ideals.length; ++i) {
            pair s = 20*((i 
            backframe(10, 10, 10, s);
            ideal(ideals[i], s);
            frontframe(10, 10, 10, s);
        }
    \end{diagram}
    \caption{The center for $r=5$.}
    \label{fig:tssccenter5}
\end{figure}

An ideal in the center must contain $\floor{\frac{1}{12}(r-1)r(2r-1)}$ or $\ceil{\frac{1}{12}(r-1)r(2r-1)}$ points in $O_{110}$ by considering its distance to two ideals shown in \cref{fig:tsscdiameter}.
However, there are many such ideals, and this condition appears to be necessary but not sufficient, as the majority of TSSC ideals satisfying this condition are not in the center for $r=5$.
A proof of the upper bound for the radius may involve picking a certain ``nice" element in the center and proving that all TSSC ideals are a distance of at most $\ceil{\frac{1}{12}(r-1)r(2r-1)}$ in the flip graph.
A similar shell induction argument as in the proof of \cref{lemma:csscradiusupperbound} may not necessarily work, as the core of ideals in the center of $G_{S_3}(2r, 2r, 2r)$ do not appear to be ideals in the center of $G_{S_3}(2r-2, 2r-2, 2r-2)$ for $r\in\{4, 5\}$.

We are still hopeful that this conjecture is relatively tractable, as the radius appears to be essentially half of the diameter.

\section{Code}\label{section:code}
The code we wrote while doing research can be found at \url{https://github.com/anser0/spur}. A description of each of the folders is as follows:

\begin{itemize}
    \item \href{https://github.com/anser0/spur/tree/main/chainproducts}{\texttt{/chainproducts}} contains code to generate all self-complementary ideals of $[\ell_1]\times [\ell_2]\times [\ell_3]$ for small $\ell_i$.
    \item \href{https://github.com/anser0/spur/tree/main/cssc}{\texttt{/cssc}} contains code to generate all cyclically-symmetric ideals of $[2r]\times [2r]\times [2r]$, their eccentricities, and the ideals furthest from the unique element in the center for $r\le 4$.
    \item \href{https://github.com/anser0/spur/tree/main/data}{\texttt{/data}} contains select data from other files, such as all totally symmetric self-complementary ideals of $[2r]\times [2r]\times [2r]$ and their eccentricities for $r\le 4$.
    \item \href{https://github.com/anser0/spur/tree/main/graphvisualizer}{\texttt{/graphvisualizer}} contains code to generate and visually represent the flip graph on self-complementary ideals of chain products; six examples are shown in \cref{fig:csflipgraphs}. 
    \begin{itemize}
        \item \href{https://github.com/anser0/spur/tree/main/graphvisualizer/images}{\texttt{/graphvisualizer/images}} contains images of the flip graphs of $[\ell_1] \times \dots \times [\ell_d]$ for all $\ell_1 \cdots \ell_d \leq 64$.
    \end{itemize}
    \item \href{https://github.com/anser0/spur/tree/main/tssc}{\texttt{/tssc}} contains code to generate all totally symmetric self-complementary ideals of $[2r]\times [2r]\times [2r]$, their eccentricities for $r\le 6$, and the points that all totally symmetric self-complementary ideals must contain.
\end{itemize}

\begin{figure}[p]
    \centering
    \begin{tabular}{ll}
    \includegraphics[height=.42\linewidth]{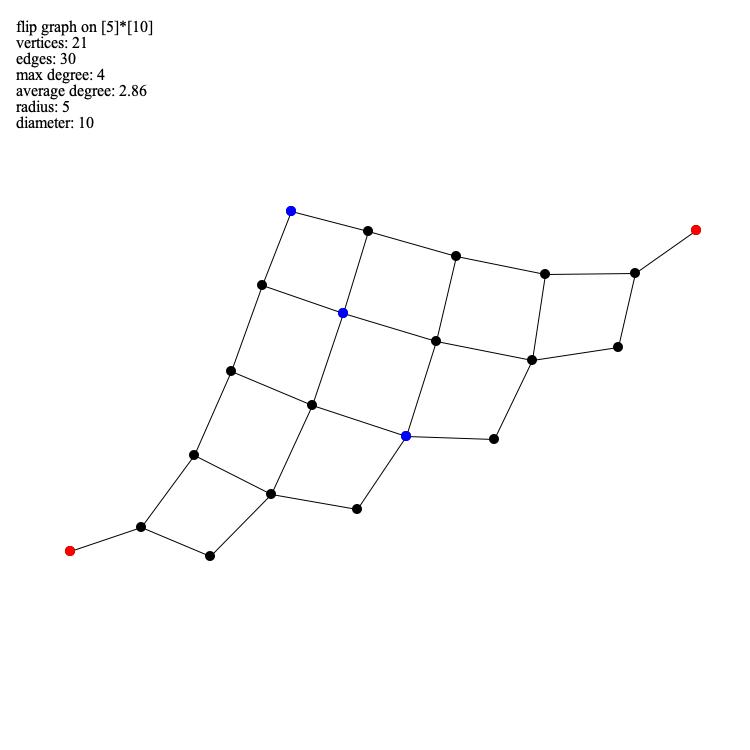} & \includegraphics[width=.42\linewidth]{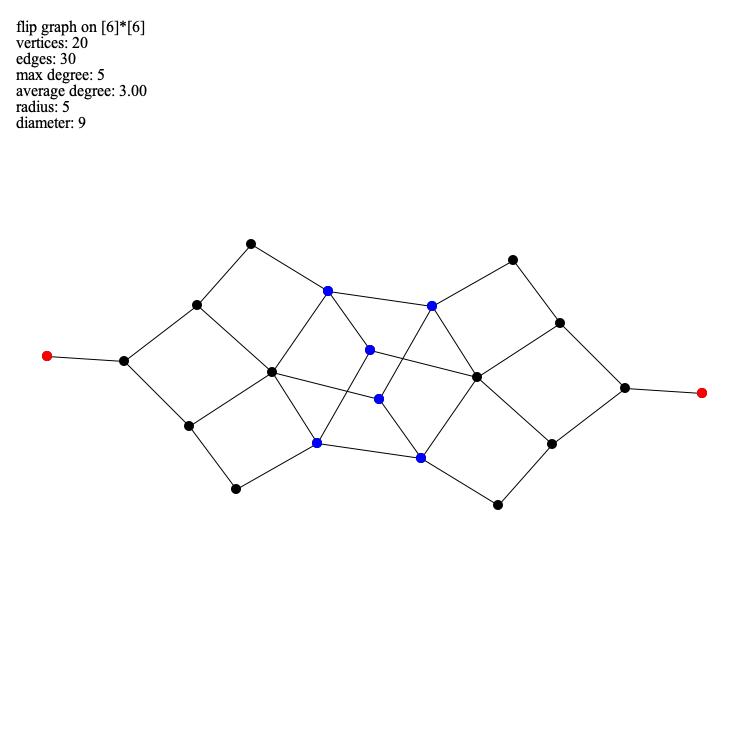} \\
    \includegraphics[width=.42\linewidth]{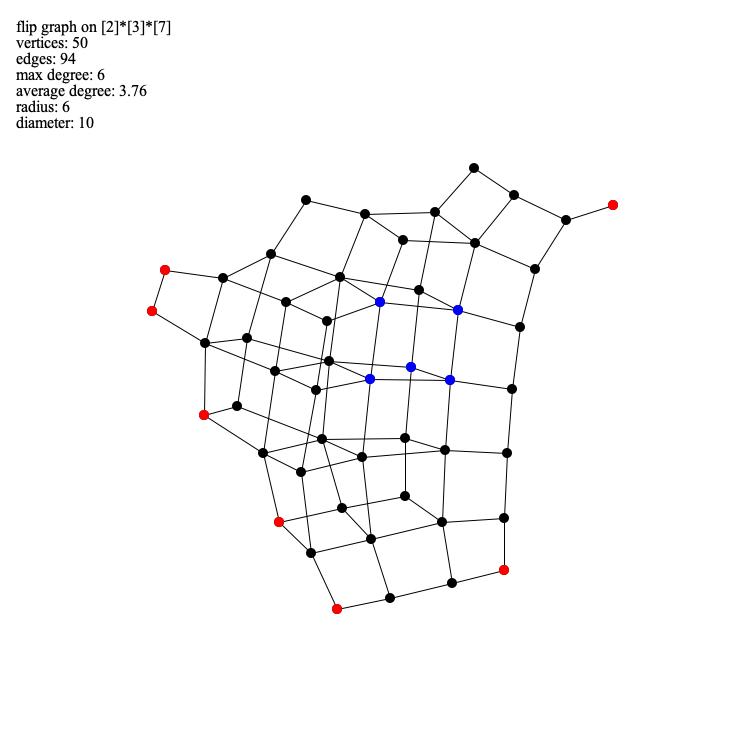} & \includegraphics[width=.42\linewidth]{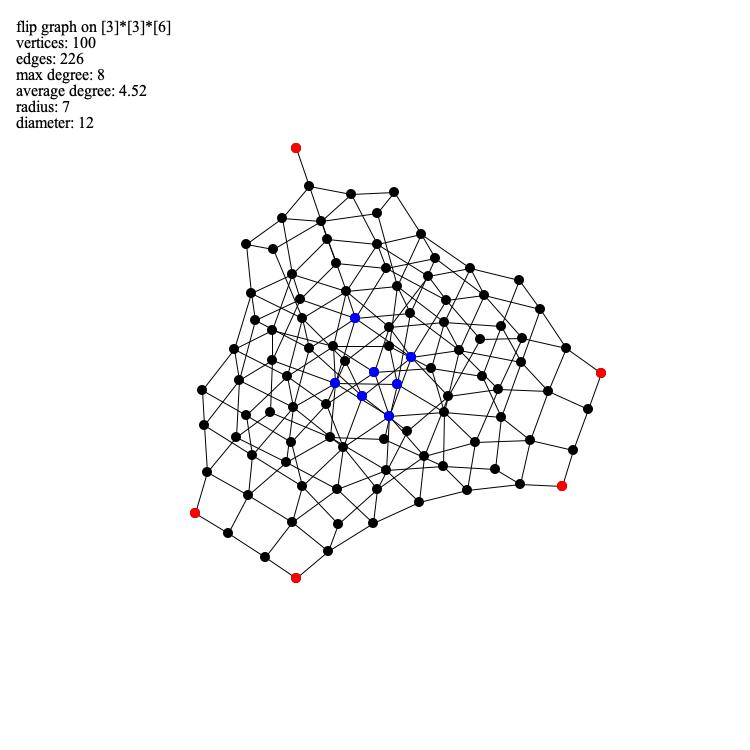} \\
    \includegraphics[width=.42\linewidth]{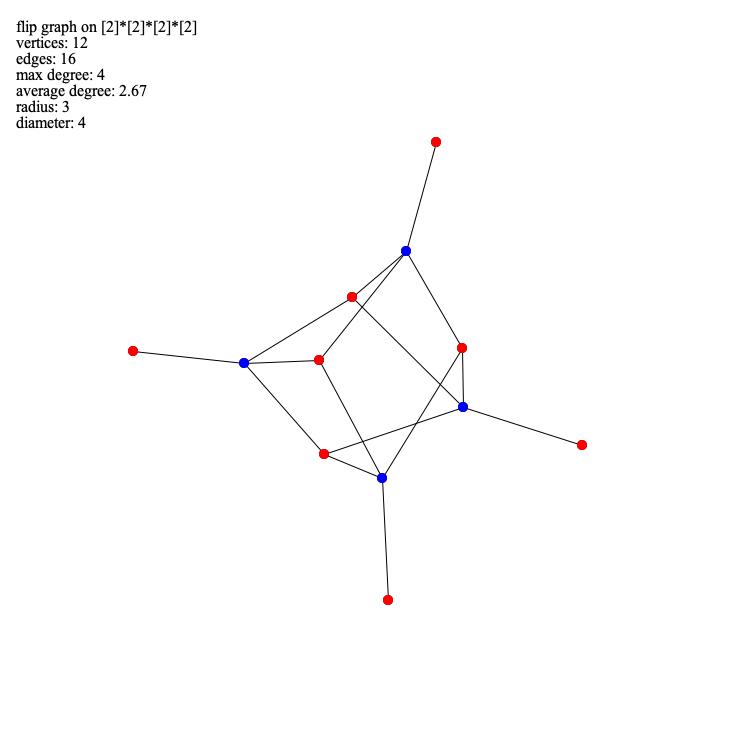} & \includegraphics[width=.42\linewidth]{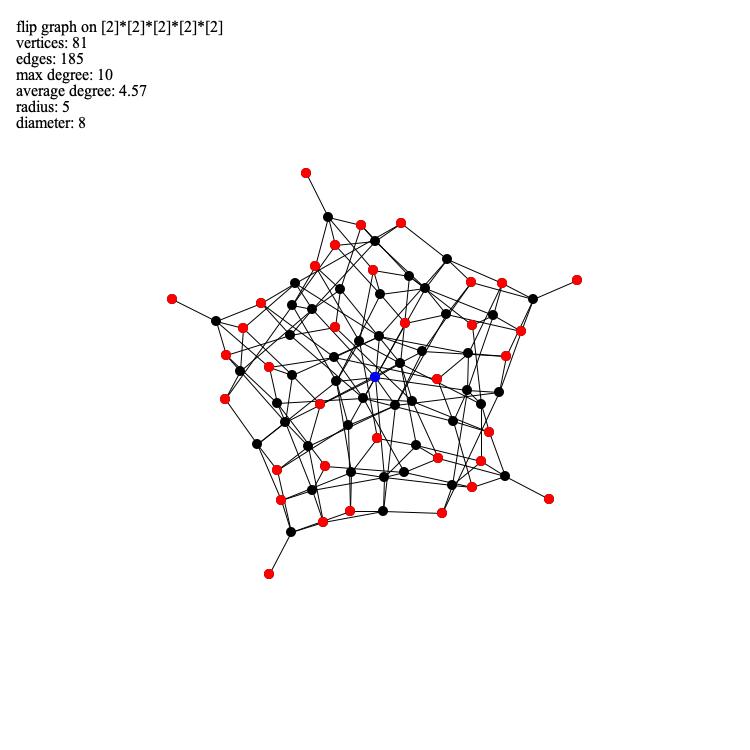} \\
    \end{tabular}
    \caption{Visual representations of sample flip graphs on self-complementary ideals of chain products. Elements in the center are colored blue, and elements in the perimeter are colored red.}
    \label{fig:csflipgraphs}
\end{figure}

\section{Future Directions}\label{section:futuredirections}
Currently, there are three main open conjectures in this paper:
\begin{itemize}
    \item \cref{conjecture:scvertex4} conjectures an asymptotic bound on the number of self-complementary ideals of $[\ell]^n$ for a fixed even positive integer $\ell$, as $n$ tends to infinity.
    \item \cref{conjecture:chvatal3} is a specific case of Chv\'atal's conjecture. If true, it would imply an exact answer for the radius of the flip graph on self-complementary ideals of chain products.
    \item \cref{conjecture:tsscradius} conjectures that the radius of the flip graph on totally symmetric self-complementary ideals of $[\ell]^3$ is given by the ceiling of half of the diameter.
\end{itemize}

In addition to the three conjectures above, there are also a number of other questions that can be asked about flip graph structures on self-dual posets. These include, but are not limited to, the following questions:

\begin{itemize}
    \item Is it possible to improve on \cref{theorem:scvertex3} to obtain more precise vertex count asymptotics for flip graphs on self-complementary ideals?
    \item In this paper, the only graph properties studied were vertex count, diameter, and radius. Other properties of interest could be:
    \begin{itemize}
        \item edge count and average degree
        \item maximal degree, which is related to Sperner's theorem
        \item characterization of the center, and bounds on its size
        \item characterization of the perimeter, and bounds on its size.
    \end{itemize}
\end{itemize}

\section{Acknowledgments}\label{section:acknowledgments}
This research was conducted through the MIT math department's Summer Program for Undergraduate Research (SPUR). First and foremost, we would like to thank our mentor Elisabeth Bullock for her continuous support and guidance throughout the program. We would also like to thank Professor Alexander Postnikov for suggesting this project. Lastly, we would like to thank Professor David Jerison for organizing SPUR and for his thoughtful comments about our research.

\bibliographystyle{alpha}
\bibliography{bibliography}

\end{document}